\newcommand{\note}[1]{{\color{red}#1}}
\def\Rt{\mathbb{R}\{\!\{t\}\!\}}
\def\kt{k\{\!\{t\}\!\}}
\DeclareMathOperator{\Tr}{Tr}
\DeclareMathOperator{\Trop}{Trop}
\DeclareMathOperator{\mult}{mult}
\DeclareMathOperator{\val}{val}
\DeclareMathOperator{\ini}{in}
\DeclareMathOperator{\supp}{supp}
\DeclareMathOperator{\GW}{GW}
\def\Qtype{\mathrm{Qtype}_{L_{\infty}}}
\DeclareMathOperator{\ddiv}{div}
\newcommand{\legendre}[2]{\ensuremath{\left( \frac{#1}{#2} \right) }}
\newtheorem{theorem}{Theorem}
\numberwithin{theorem}{section}
\newtheorem{conjecture}[theorem]{Conjecture}
\newtheorem{proposition}[theorem]{Proposition}
\newtheorem{lemma}[theorem]{Lemma}
\newtheorem{corollary}[theorem]{Corollary}
\newtheorem*{proposition*}{Proposition}
\newtheorem*{theorem*}{Theorem}
\newtheorem*{lemma*}{Lemma}
\newtheorem*{claim*}{Claim}
\newtheorem*{conjecture*}{Conjecture}
\theoremstyle{definition}
\newtheorem{definition}[theorem]{Definition}
\theoremstyle{remark}
\newtheorem{example}[theorem]{Example}
\newtheorem{remark}[theorem]{Remark}
\newtheorem*{example*}{Example}
\newcommand{\Subsets}{\operatorname{Subsets}}
\newcommand{\expec}{\ensuremath{\operatorname{exp}}}
\newcommand{\noNones}{\ensuremath{\operatorname{true}}}
\newcommand{\new}{\ensuremath{\operatorname{new}}}
\renewcommand{\algorithmautorefname}{Algorithm}
\title[Bitangents to plane quartics via tropical geometry]{Bitangents to plane quartics via tropical geometry: rationality, $\mathbb{A}^1$-enumeration, and real signed count}
\author {Hannah Markwig}
\address {Hannah Markwig, Eberhard Karls Universit\"at T\"ubingen, Fachbereich Mathematik, Auf der Morgenstelle 10, 72076 T\"ubingen, Germany }
\email {hannah@math.uni-tuebingen.de}
\author {Sam Payne}
\address {Department of Mathematics, University of Texas at Austin, 2515 Speedway, PMA 8.100, Austin, TX 78712, USA}
\email {sampayne@utexas.edu}
\author {Kris Shaw}
\address {Department of Mathematics, University of Oslo, Postboks 1053, Blindern, 0316 Oslo, Norway}
\email {krisshaw@math.uio.no}
\subjclass[2010]{14N10, 14T20, 14T25, 14G27}
\keywords{bitangents to plane quartics, real enumerative geometry, tropical bitangent classes, $\mathbb{A}^1$-enumerative geometry, Grothendieck-Witt ring, rationality of enumerative solutions}
\begin{document}
\maketitle

\begin{abstract}
We explore extensions of tropical methods to arithmetic enumerative problems such as $\mathbb{A}^1$-enumeration with values in the Grothendieck-Witt ring and rationality over Henselian valued fields, using bitangents to plane quartics as a test case. We consider quartic curves over valued fields whose tropicalizations are smooth and satisfy a mild genericity condition.  We then express obstructions to rationality of bitangents and their points of tangency in terms of twisting of edges of the tropicalization; the latter depends only on the tropicalization and the initial coefficients of the defining equation modulo squares. 
 We also show that the $\GW$-multiplicity of a tropical bitangent, i.e., the multiplicity with which its lifts contribute to the $\mathbb{A}^1$-enumeration of bitangents as defined by Larson and Vogt \cite{LV21}, can be computed from the tropicalization of the quartic together with the initial coefficients of the defining equation. As an application, we show that the four lifts of most tropical bitangent classes contribute $2\mathbb{H}$, twice the class of the hyperbolic plane, to the $\mathbb{A}^1$-enumeration.
 These results rely on a degeneration theorem relating the Grothendieck-Witt ring of a Henselian valued field to the Grothendieck-Witt ring of its residue field, in residue characteristic not equal to two.

\end{abstract}

\tableofcontents

\section{Introduction}

Tropical geometry is well-known for its applications to complex and real enumerative geometry, especially for plane curves.  Here, we explore extensions of tropical methods to arithmetic enumerative problems such as $\mathbb{A}^1$-enumeration with values in the Grothendieck-Witt ring, and rationality over Henselian valued fields, using bitangents to plane quartics as a test case.

If the tropicalization of a plane quartic is smooth (locally isomorphic to the tropicalization of a linear space) then it has exactly 7 deformation classes of tropical bitangents. These are in natural bijection with the odd tropical theta characteristics and each contains the tropicalization of 4 algebraic bitangents \cite{Chan2, JensenLen18, LM17}.  Cueto and the first author have given an exhaustive classification of the combinatorial types of these bitangent classes \cite{CM20}. As an application, when the ground field $K$ is real Puiseux series $\Rt$ and the tropicalization satisfies the genericity constraints from \cite[Remark~2.10]{CM20}, they showed that the number of $K$-rational bitangents tropicalizing into a bitangent class is either 0 or 4. In other words, roughly speaking, the obstruction to rationality over $\Rt$ is independent of the choice of bitangent in a given bitangent class. 

\medskip 

Throughout, let $K$ be a Henselian valued field of residue characteristic not equal to 2, with 2-divisible value group, and with a section of the valuation $\sigma \colon \val(K^\times) \to K^\times$. Let $k$ be the residue field of $K$. Let $Q \in K[x,y,z]$ be a homogeneous polynomial of degree 4 such that the projective plane algebraic curve $C = V(Q)$ is smooth and $\Trop(C)$ is tropically smooth. Assume furthermore that $\Trop(C)$ satisfies the genericity constraints from \cite[Remark~2.10]{CM20}, the details of which we recall in Section~\ref{sec:prelimsbitangents}.

Note that $K$ could be the Puiseux series field $\kt$ over any field $k$ of characteristic not equal to $2$. Such fields are commonly considered in tropical geometry.  Other possibilities include mixed characteristic fields such as $\mathbb{Q}_p(p^{1/2^{\infty}}) = \bigcup_n \mathbb{Q}_p (p^{1/2^{n}})$, for $p \neq 2$.  Our arguments are not sensitive to distinctions between pure and mixed characteristic; any reader who prefers to do so may safely assume that $K = \kt$ is a field of Puiseux series.

\addtocontents{toc}{\protect\setcounter{tocdepth}{1}}
\subsection*{Lifting of tropical bitangents to general ground fields}
Our work begins with the observation that the algebraic calculations used to prove the lifting result from \cite{CM20} over the reals are only mildly sensitive to the ground field. 

\begin{theorem} [Theorem \ref{thm:twistlift}]\label{thm:twistliftIntro}
 Let $C = V(Q)$ be a quartic curve defined over $K$ with $\Trop(C)$ a generic tropicalized quartic and suppose $\Lambda$ is a liftable tropical bitangent to $C$. Then whether or not $\Lambda$ lifts over $K$ is determined by $\Trop(C)$  and the equivalence classes of  initials of the coefficients of $Q$ in $k^\times/(k^\times)^2$. 
\end{theorem}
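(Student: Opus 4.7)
The plan is to revisit the explicit algebraic computations underlying the lifting theorem of Cueto and Markwig in \cite{CM20} and verify that they carry over verbatim from $\Rt$ to our more general field $K$, with signs in $\mathbb{R}^\times/(\mathbb{R}^\times)^2$ replaced by classes in $k^\times/(k^\times)^2$. For each combinatorial type of a liftable tropical bitangent class $\Lambda$, they parametrize the four candidate lifts as the solutions of a small explicit polynomial system whose coefficients are polynomial expressions in the coefficients of $Q$. After rescaling the unknowns by elements of $\sigma(\val(K^\times))$ whose exponents are read off from the slopes of the edges of $\Lambda$ and from the valuations of the coefficients of $Q$, all of which are encoded in $\Trop(C)$, the question of whether a given candidate lift is defined over $K$ reduces to the question of whether finitely many elements of $K^\times$ of valuation zero are squares in $K$.

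The next step is to translate squareness in $K$ into an assertion about the residue field $k$. Because $K$ is Henselian of residue characteristic not equal to $2$, Hensel's lemma applied to $T^2 - a$ shows that a unit $a \in K^\times$ of valuation zero is a square in $K$ if and only if its initial $\ini(a) \in k^\times$ is a square in $k$. Combined with the $2$-divisibility of $\val(K^\times)$ and the existence of the section $\sigma$, this shows that the square class of an arbitrary $a \in K^\times$ is determined by $\val(a)$ together with the class of $\ini(a/\sigma(\val(a)))$ in $k^\times/(k^\times)^2$; equivalently, $\sigma$ induces an isomorphism $K^\times/(K^\times)^2 \cong k^\times/(k^\times)^2$.

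Combining the two ingredients, the entire input to the lifting criterion of \cite{CM20} for $\Lambda$ is determined by $\Trop(C)$ together with the classes of the initials of the coefficients of $Q$ in $k^\times/(k^\times)^2$, which is what the theorem asserts. The main obstacle I anticipate is bookkeeping: one must check, using the classification of combinatorial types in \cite{CM20} together with the genericity hypothesis of \cite[Remark~2.10]{CM20}, that the rescalings reducing the lifting equations to unit equations are genuinely determined by $\Trop(C)$, and that the intermediate polynomial expressions whose coefficients enter the squareness test have initials computable from the initials of the coefficients of $Q$ alone. The hypothesis that $k$ has characteristic different from $2$ is essential here, since it is precisely what allows Hensel's lemma to reduce the detection of squares to the residue field; in residue characteristic $2$ the map $K^\times/(K^\times)^2 \to k^\times/(k^\times)^2$ would acquire a nontrivial kernel and the conclusion would fail.
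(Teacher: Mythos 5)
Your proposal is correct and follows essentially the same route as the paper: a case-by-case pass through the Cueto--Markwig classification of bitangent shapes, solving the local lifting equations whose solutions are Laurent terms in the initials of the coefficients of $Q$ or square roots thereof, and then invoking Hensel's lemma together with the $2$-divisibility of the value group and residue characteristic $\neq 2$ to reduce the existence of the required square roots in $K$ to squareness of the corresponding classes in $k^\times/(k^\times)^2$. The paper's stated proof is terse on exactly the points you flag as bookkeeping, but the substance (local lifting equations, reduction to square classes of initials, Henselian lifting) matches yours.
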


When the residue field $k = \mathbb{R}$, then $\mathbb{R}^\times/(\mathbb{R}^\times)^2 \cong \mathbb{Z}/2\mathbb{Z}$. Knowing the tropicalization of $C$ together with the equivalence classes of initials of the coefficients of $Q$ modulo squares is precisely the  information required to carry out Viro's patchworking \cite{IMS09, Viro}. 
 Motivated by this connection, we express obstructions to rationality in the case of arbitrary fields in terms of \emph{twisting} of edges  of the tropicalization, thus extending to general fields the concept of twisting coming from amoebas over the real numbers, see  \cite{BIMS}.

\begin{theorem} [Theorem \ref{thm-allornone}] \label{thm:rationalbitangents}
The number of $K$-rational bitangents to $C$ that tropicalize into each bitangent class of tropical bitangents is either 0 or 4.
\end{theorem}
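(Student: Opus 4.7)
The plan is to reduce the statement to a combinatorial verification via Theorem~\ref{thm:twistliftIntro}, and then to upgrade the sign-arguments of \cite{CM20} to identities modulo squares in $k^\times$, now phrased in the language of edge twisting. Fix a tropical bitangent class $\Lambda$ and let $\ell_1,\ldots,\ell_4$ denote the four algebraic bitangents to $C$ whose tropicalizations lie in $\Lambda$. By Theorem~\ref{thm:twistliftIntro}, whether each $\ell_i$ is defined over $K$ is controlled by data depending only on $\Trop(C)$ and the classes of the initial coefficients of $Q$ modulo squares; concretely, $\ell_i$ lifts to a $K$-rational bitangent if and only if a certain twisting obstruction $[\alpha_i]\in k^\times/(k^\times)^2$ is trivial. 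It therefore suffices to establish the equality $[\alpha_1]=[\alpha_2]=[\alpha_3]=[\alpha_4]$, independently of the particular values of the initial coefficients, and the theorem will follow.

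To do this I would run through the classification of bitangent classes in \cite{CM20}: there are seven combinatorial types, and in each of them the four bitangents $\ell_1,\ldots,\ell_4$ and their obstructions $\alpha_i$ can be written out explicitly in terms of the initial coefficients of $Q$ along the edges and vertices of $\Trop(C)$ meeting $\Lambda$. In the case $K=\Rt$ the authors proved the analogous ``$0$ or $4$'' dichotomy by checking that $\mathrm{sign}(\alpha_i)=\mathrm{sign}(\alpha_j)$ for all $i,j$; this was achieved by writing each quotient $\alpha_i/\alpha_j$ as an explicit rational function of the initial coefficients and then using the genericity constraints from \cite[Remark~2.10]{CM20}. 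Inspecting those formulas, I expect that in every case the quotient $\alpha_i/\alpha_j$ is in fact a perfect square in $k^\times$, not merely a positive real number, so that the equality $[\alpha_i]=[\alpha_j]$ already holds in $k^\times/(k^\times)^2$ over any residue field of characteristic not equal to two.

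The main obstacle is the bookkeeping in the case-by-case verification. The easy types are those for which the quotient $\alpha_i/\alpha_j$ is visibly of the form $(\beta/\gamma)^2$ for explicit initial coefficients $\beta,\gamma$; there the upgrade from signs to square classes is immediate. The delicate types are those in which \cite{CM20} appeals to convexity of an auxiliary Newton polygon or to the sign of a discriminant in order to conclude positivity over $\mathbb{R}$, and here one must verify that the underlying algebraic identity actually expresses $\alpha_i/\alpha_j$ as a square in $k^\times$ rather than merely as a positive real when $k=\mathbb{R}$. Once this is checked for each of the seven combinatorial types, the dichotomy stated in Theorem~\ref{thm:rationalbitangents} follows directly by combining these square-class identities with Theorem~\ref{thm:twistliftIntro}.
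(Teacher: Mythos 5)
Your proposal matches the paper's argument: the paper also reduces to the case-by-case solution of the local lifting equations over the classification of bitangent shapes, observes that the solutions are Laurent terms in the initials or square roots thereof, and concludes by checking that for the four lifts of a given class the radicands that appear agree up to squares in $k^\times$ — exactly your claim that $[\alpha_i]=[\alpha_j]$ in $k^\times/(k^\times)^2$. The only point you omit is the hypothesis that the residue characteristic is also not $3$, which the paper needs because of the exceptional shape (II) where factors of $\sqrt{2}$ and $\sqrt{3}$ enter the solutions.
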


When $K$ is $\Rt$, Cueto and the first author showed that the points of tangency of a $K$-rational bitangent are $K$-rational \cite[Theorem~7.1]{CM20}. Again, we observe that this is only mildly sensitive to the ground field.

\begin{theorem} [Corollary \ref{cor-totallyreal}] \label{thm:rationaltangencies}
Assume that $2$ and $3$ are squares in $k^\times$.  Then the points of tangency of a $K$-rational bitangent to $C$ are $K$-rational.
\end{theorem}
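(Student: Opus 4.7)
The plan is to adapt the strategy of \cite[Theorem~7.1]{CM20}, where the result is proved over $K=\Rt$, to a general Henselian valued field, and to identify the hypothesis ``$2$ and $3$ are squares in $k^\times$'' as exactly what is needed in the generalization.

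First I reduce the assertion to checking that a single element of $K^\times$ is a square. Let $L$ be a $K$-rational bitangent, and fix a $K$-isomorphism $L \simeq \mathbb{P}^1_K$. The restriction of $Q$ to $L$ is a quartic $h(t)\in K[t]$ that is a square in $\bar{K}[t]$, so by unique factorization $h = c\cdot g(t)^2$ for some monic $g\in K[t]$ and $c\in K^\times$. The two tangency points are individually $K$-rational if and only if $g$ splits over $K$, equivalently $\mathrm{disc}(g)\in (K^\times)^2$. Since the value group of $K$ is $2$-divisible, $\val(\mathrm{disc}(g))$ is automatically even, and by Hensel's lemma (using residue characteristic $\neq 2$) it remains to show that $\ini(\mathrm{disc}(g))\in k^\times$ is a square.

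Second, I proceed by case analysis over the combinatorial types of liftable tropical bitangent classes from \cite{CM20}. Theorem~\ref{thm:twistliftIntro} and its proof provide, for each such type, explicit formulas for the initials of the coefficients defining $L$ in terms of the initials of coefficients of $Q$, subject to twisting conditions in $k^\times/(k^\times)^2$. Substituting these into $h = c g^2$ and computing the discriminant of $g$, one expects $\ini(\mathrm{disc}(g))$ to factor in each case as a product of squares of initials of coefficients of $Q$, times an integer of the form $\pm 2^a 3^b$ whose sign is compatible, via the liftability twist, with being a square in $k^\times$. Under the hypothesis that $2$ and $3$ are squares, the resulting product is then a square, proving the theorem.

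The main obstacle is the combinatorial bookkeeping: enumerating all types of liftable bitangent classes, solving the tangency equations symbolically in terms of initials of coefficients, and verifying that no residue-field constants beyond $2$, $3$, and elements whose squareness is already guaranteed by the liftability twists of Theorem~\ref{thm:twistliftIntro} appear in the resulting discriminants. This mirrors the computations of \cite{CM20} over $\mathbb{R}$, where positivity automatically supplies the squareness of $2$ and $3$.
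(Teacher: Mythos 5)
Your proposal is correct and follows essentially the same route as the paper: a case-by-case analysis over the classification of bitangent shapes, using the explicit local lifting solutions (which express the tangency-point initials as Laurent terms in the $a_{ij}$ times square roots whose radicands agree, up to squares, with those governing rationality of the line itself), combined with Hensel's lemma and $2$-divisibility of the value group to reduce squareness to the residue field. The only cosmetic difference is that you package the check as the squareness of $\mathrm{disc}(g)$ for the global quadratic factor $g$ with $Q|_L = c\,g^2$, whereas the paper works tropical tangency component by component; in either formulation the constants $\sqrt{2}$ and $\sqrt{3}$ enter only through the exceptional shape (II), which is exactly what the hypothesis on $k$ absorbs.
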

\noindent Note that the hypotheses of Theorem~\ref{thm:rationaltangencies} imply that the residue characteristic of $K$ is also not equal to 3.

The proofs of Theorems~\ref{thm:rationalbitangents} and \ref{thm:rationaltangencies} involve solving systems of equations over the residue field to determine the initials of the coefficients of the defining equation of the bitangents and points of tangency in suitable local coordinates, and then lifting via Hensel's lemma. This is carried out case-by-case, according to the Cueto-Markwig classification from \cite{CM20}. Details of the case analysis are provided in the Appendix.

Using the lifting theorems discussed above, we also prove that the  liftability of tropical bitangents over different fields are related  when the groups of the residue fields modulo squares are isomorphic, see Theorem \ref{thm-comparinglifting} and Corollary \ref{cor:compareliftingRk}.

\addtocontents{toc}{\protect\setcounter{tocdepth}{1}}
\subsection*{The tropical approach to the $\mathbb{A}^1$-enumeration of bitangents to a plane quartic, and its foundations}
Even when it is possible to count $K$-rational solutions to a geometric problem, the naive enumeration may not be invariant under deformations. Familiar examples include counts of real rational plane curves of given degree and genus, passing through an appropriate number of specified points in general position. In such cases, there are often \emph{refined} enumerative counts, such as Welschinger invariants for real rational plane curves, where objects are counted with multiplicities in such a way that the result is invariant under deformation.  More recently, techniques from arithmetic geometry and $\mathbb{A}^1$-homotopy theory have produced more general \emph{$\mathbb{A}^1$-enumerative invariants} taking values in Grothendieck-Witt rings of quadratic forms that specialize to classical enumerative invariants over $\mathbb{C}$, to Welschinger invariants and related real enumerative invariants over $\mathbb{R}$, and produce new invariants of interest over other fields \cite{Hoyois14, KW19, Levine20}.  For the $\mathbb{A}^1$-enumeration of the 27 lines on cubic surfaces and the 28 bitangents to plane quartics, see \cite{KW20} and \cite{LV21}, respectively. We recall the relevant definitions in Section~\ref{sec-GW}.

We write $t^v:=\sigma(v)$ for $v\in \val(K^{\times})$, where $\sigma$ denotes the section of the valuation on our valued field $K$. For $A\in K$, we let $\ini(A)$ be the image of $At^{-\val(A)}$ in the residue field and call it the initial of $A$. If $K$ is the field of Puiseux series, $\ini(A)$ is the leading coefficient of the series $A$.

\begin{theorem}[Theorem \ref{thm-2H}] \label{thm:tropicalenumeration}
The $\mathbb{A}^1$-enumerative multiplicity of a bitangent $L$ to $C$ depends only on the tropicalization of $C$ and on the initials of its coefficients modulo squares.
\end{theorem}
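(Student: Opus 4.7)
The plan is to combine the explicit formula of Larson and Vogt \cite{LV21} for the $\mathbb{A}^1$-multiplicity of a bitangent with a degeneration theorem identifying $\GW(K)$ with $\GW(k)$. Larson and Vogt express the multiplicity of a bitangent $L$ as (a trace of) a one-dimensional class $\langle f(Q,L)\rangle$, where $f$ is a rational function in the coefficients of $Q$ and of a defining equation of $L$, evaluated over a field of definition of $L$. On the other hand, since the residue characteristic of $K$ is not $2$ and the value group is $2$-divisible, every element of $K^\times$ is a unit times a square, so the assignment $\langle u\rangle \mapsto \langle \ini(u)\rangle$ extends to a well-defined isomorphism $\GW(K) \cong \GW(k)$; this is the degeneration statement alluded to in the abstract, and I would record it as a preliminary.

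Using this isomorphism, it suffices to show that $\ini(f(Q,L)) \in k^\times/(k^\times)^2$ is determined by $\Trop(C)$ together with the initials of the coefficients of $Q$ modulo squares. To do this I would pass to affine coordinates on a toric chart adapted to the tropical bitangent class of $\Trop(L)$, so that the explicit lifting analysis underlying Theorem~\ref{thm:twistliftIntro} pins down the initials of a defining equation of $L$ as the unique solutions, modulo squares in $k^\times$, of a polynomial system over $k$ read off from the initials of $Q$. Substituting these initials into the Larson-Vogt expression for $f(Q,L)$ then yields a class in $k^\times/(k^\times)^2$ depending only on the tropicalization and the initials of $Q$, which translates back to the desired class in $\GW(K)$ via the degeneration isomorphism.

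The main obstacle will be handling bitangents that are not $K$-rational: $L$ may only be defined over a quadratic extension $K'/K$, and the $\mathbb{A}^1$-multiplicity is then the image under the trace $\Tr_{K'/K}\colon \GW(K')\to\GW(K)$ of a one-dimensional class. To push the argument through, I must verify that the degeneration isomorphism is compatible with traces along finite extensions of Henselian valued fields in residue characteristic not $2$, so that $\ini\bigl(\Tr_{K'/K}\langle f\rangle\bigr)$ modulo squares in $k^\times$ can still be computed from the initials of $Q$. The extension $K'$ arising here is itself controlled by $\Trop(C)$ and the initials of $Q$ (via the splitting behavior of the relevant quadratic appearing in the case analysis of \cite{CM20}), so I expect the compatibility to reduce, case by case, to a direct computation of the transfer in terms of residue data. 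Once this is in place, a case-by-case verification along the Cueto-Markwig classification, parallel to that of Theorems~\ref{thm:rationalbitangents} and \ref{thm:rationaltangencies}, should complete the proof.
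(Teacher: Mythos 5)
Your proposal follows essentially the same route as the paper: the isomorphism $\GW(K)\cong\GW(k)$ of Theorem \ref{thm-GWKGWk}, substitution of the local lifting solutions into the Larson--Vogt expression for the $\Qtype$ (Theorem \ref{thm-Qtypetropical}), direct computation of the trace forms for bitangents defined only over quadratic or degree-four extensions (Examples \ref{ex:degree2trace} and \ref{ex:degree4extension}, where the relevant classes turn out to be ``purely imaginary'' so the transfer is hyperbolic), and a case-by-case verification along the Cueto--Markwig classification. The one step you should not underestimate is showing that $\ini(\partial_L Q(P))$ really equals the corresponding derivative of the initial form evaluated at $\ini(P)$: the paper must rule out cancellation between the two summands of $\partial_L Q(P)$ at leading order, which it does by exploiting the vanishing of the Wronskian together with the hypothesis that the residue characteristic is not $2$.
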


\noindent Moreover, we give an effective algorithm for computing the $\mathbb{A}^1$-enumerative count of all bitangents to $C$ in each of the 7 deformation classes, and hence the $\mathbb{A}^1$-enumerative count of all 28 bitangents, as elements of the Grothendieck-Witt ring $\GW(K)$ (see Remark \ref{rem-computation}).  

Theorem~\ref{thm:tropicalenumeration} relies on the following observation relating the Grothendieck-Witt ring of $K$ to that of its residue field.  Recall our standing assumption that $K$ is a Henselian valued field of residue characteristic not equal to 2, with 2-divisible value group, and with a section of the valuation $\sigma \colon \val(K^\times) \to K^\times$.

\begin{theorem}[Theorem \ref{thm-GWKGWk}]\label{thm-GWKGWkintro}
Let $K$ be a Henselian valued field of residue characteristic not equal to 2, with 2-divisible value group, and with a section of the valuation.  Then there is an isomorphism of Grothendieck-Witt rings $$\GW(K) \xrightarrow{\sim} \GW(k), \ \ \langle A \rangle \mapsto \langle \ini(A) \rangle.$$ This isomorphism does not depend on the section $\sigma$.
\end{theorem}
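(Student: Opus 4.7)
The plan is to define the map on the standard additive generators $\langle A \rangle$ of $\GW(K)$, check the defining Witt relations, and exhibit an inverse; section independence will fall out along the way. The foundational observation is that under the given hypotheses there is a canonical bijection $K^\times/(K^\times)^2 \xrightarrow{\sim} k^\times/(k^\times)^2$ given by $[A] \mapsto [\ini A]$. Indeed, any $A \in K^\times$ factors as $A = u \cdot \sigma(\val A)$ with $u \in \mathcal{O}_K^\times$, and since $\Gamma = \val(K^\times)$ is 2-divisible, $\sigma(\val A) = \sigma(\tfrac{1}{2}\val A)^2$ is a square, so $[A] = [u]$ modulo squares and $\ini A = \ini u$. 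Hensel's lemma applied to $X^2 - u$ (using residue characteristic $\neq 2$) then identifies $\mathcal{O}_K^\times/(\mathcal{O}_K^\times)^2$ with $k^\times/(k^\times)^2$.

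With this in hand, I would define $\phi \colon \GW(K) \to \GW(k)$ on generators by $\phi(\langle A \rangle) = \langle \ini A \rangle$. Multiplicativity $\ini(AB) = \ini A \cdot \ini B$ is immediate from $\sigma$ being a group homomorphism, and invariance under squares is equally automatic. The main obstacle is the Witt relation
\[
\langle A \rangle + \langle B \rangle = \langle A+B \rangle + \langle AB(A+B) \rangle \qquad (A + B \neq 0),
\]
which I would verify by cases on the valuations of $A, B, A+B$: (i) if $\val A \neq \val B$, say $\val A < \val B$, then $\ini(A+B) = \ini A$ and $\ini(AB(A+B)) \equiv \ini B$ modulo squares, so both sides of the image relation reduce to $\langle \ini A \rangle + \langle \ini B \rangle$; (ii) if $\val A = \val B$ and $\ini A + \ini B \neq 0$, then $\ini(A+B) = \ini A + \ini B$ and the image relation is exactly the Witt relation for $\ini A, \ini B$ in $\GW(k)$; (iii) if $\val A = \val B$ but $\ini A + \ini B = 0$, so the valuation of $A+B$ jumps, then $\ini(AB(A+B)) \equiv -\ini(A+B)$ modulo squares, and both sides of the image relation collapse to $\mathbb{H} = \langle 1 \rangle + \langle -1 \rangle$ using the identity $\langle c \rangle + \langle -c \rangle = \mathbb{H}$ in $\GW(k)$. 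Case (iii) is the delicate one, since the naive term-by-term analysis fails there.

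For the inverse, I would define $\psi \colon \GW(k) \to \GW(K)$ by $\psi(\langle a \rangle) = \langle \tilde a \rangle$ for an arbitrary lift $\tilde a \in \mathcal{O}_K^\times$ of $a$; Hensel's lemma shows two such lifts differ by $1+m$ with $m$ in the maximal ideal, hence by a square, so $\psi$ is well-defined. Verifying the Witt relation for $\psi$ is routine once one chooses $\widetilde{a+b} = \tilde a + \tilde b$ and $\widetilde{ab(a+b)} = \tilde a \tilde b(\tilde a + \tilde b)$; the relation in $\GW(K)$ then follows from the Witt relation applied to $\tilde a, \tilde b$. Both compositions $\phi\circ\psi$ and $\psi\circ\phi$ are the identity on generators, so $\phi$ is a ring isomorphism. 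Finally, section independence follows because for any two sections $\sigma, \sigma'$, the homomorphism $\sigma/\sigma' \colon \Gamma \to \mathcal{O}_K^\times$ satisfies $(\sigma/\sigma')(v) = ((\sigma/\sigma')(\tfrac{1}{2}v))^2 \in (\mathcal{O}_K^\times)^2$, so $\ini A$ is defined modulo squares in $k^\times$ independently of the section.
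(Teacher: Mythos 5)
Your proposal is correct and follows essentially the same route as the paper's proof: the same three-case verification of the Witt relation (with the delicate case $\val A=\val B$, $\ini A+\ini B=0$ handled via $\langle c\rangle+\langle -c\rangle=\mathbb{H}$), the same Hensel-lemma construction of the inverse on lifts, and the same observation that $\sigma$ lands in squares for section independence. No gaps.
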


The above theorem is a variant of Springer's theorem on the Witt group of a Henselian discretely valued field, as generalized to arbitrary value groups, e.g., in \cite{ET}, in the special case where the value group is 2-divisible. Theorems of this form are a natural starting point for the application of tropical methods to $\mathbb{A}^1$-enumerative geometry. 
In forthcoming work, Jaramillo Puentes  and Pauli  define  enriched tropical intersection multiplicities for $0$-dimensional intersections of tropical hypersurfaces \cite{JPP}. As a consequence they prove a tropical version of a quadratically  enriched  B\'ezout theorem from \cite{McKean} and  also introduce an enriched  Bernstein-Khovanskii-Kushnirenko theorem. 

\begin{remark} \label{rem:2-divis}
Our hypothesis that the value group $\val(K^\times)$ is 2-divisible is a simplifying assumption that is necessary for Theorems~\ref{thm:tropicalenumeration} and \ref{thm-GWKGWkintro}, as stated. Nevertheless, this framework, and all of the computations that we carry out for bitangents to plane quartics, can be extended to the case where $K$ is Henselian of residue characteristic not equal to 2 with arbitrary value group, but then one must also keep track of the valuations of the coefficients, modulo valuations of squares.  The calculations needed for our case analysis of bitangents are already sufficiently complicated when the value group is 2-divisible that we have chosen to compromise generality in this way. This work, along with that of Jaramillo Puentes and Pauli \cite{JPP}, demonstrates that tropical methods are relevant and useful to $\mathbb{A}^1$-enumerative geometry.
\end{remark}

If an $\mathbb{A}^1$-enumerative multiplicity equals the hyperbolic plane $\mathbb{H} = \langle 1 \rangle \oplus \langle -1 \rangle$, then it contributes $2$ to the corresponding complex count and $0$ to the corresponding signed count over the real numbers. 
We prove that for many types of tropical bitangent classes in the classification, the $\mathbb{A}^1$-enumerative multiplicities of the four algebraic bitangents that tropicalize into this class add up to $2\mathbb{H}$. The exceptional cases for which this does not hold true are listed in Appendix \ref{app-tableQtypes}.
To give an impression which tropical bitangent classes contribute $2\mathbb{H}$ to the $\mathbb{A}^1$-enumerative count, we list some sufficient conditions here. For the complete list in terms of the classification of tropical bitangent classes, see Theorem \ref{thm-2Hdetails}.

\begin{theorem}[Theorems \ref{thm-2H} and \ref{thm-2Hdetails}] \label{thm:2Hintro}
Let $S$ be a tropical bitangent class that is compact.  Then the contribution of $S$ to the $\mathbb{A}^1$-enumerative count of bitangents to $C$ is $2 \mathbb{H}$.
\end{theorem}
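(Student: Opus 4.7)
The plan is to combine Theorem~\ref{thm:tropicalenumeration} with the Cueto--Markwig classification of bitangent classes from \cite{CM20} and carry out a case-by-case computation in $\GW(k)$. By Theorem~\ref{thm:tropicalenumeration} together with the identification $\GW(K) \xrightarrow{\sim} \GW(k)$ of Theorem~\ref{thm-GWKGWkintro}, the $\mathbb{A}^1$-multiplicity of each of the four lifts of $S$ is determined by $\Trop(C)$ and the initials of the coefficients of $Q$ modulo squares. Since there are exactly four lifts of $S$ over the algebraic closure, the contribution of $S$ has rank $4$; the content of the claim is that the resulting rank-$4$ form over $k$ is always isometric to $\mathbb{H} \oplus \mathbb{H}$ when $S$ is compact.

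Next, I would enumerate the combinatorial types of compact bitangent classes in the Cueto--Markwig classification and handle each type individually. For a fixed compact type, choose local coordinates adapted to the polyhedral structure of $S$. Apply the lifting machinery behind Theorem~\ref{thm:twistliftIntro}, namely solve the bitangency equations over $k$ and then invoke Hensel's lemma, to write down the four lifts of $S$ explicitly in terms of the initials of the coefficients of $Q$. Then, for each lift $L_i$, extract its $\mathbb{A}^1$-multiplicity in $\GW(k)$ via the Larson--Vogt formula \cite{LV21} as translated through Theorem~\ref{thm-GWKGWkintro}; this yields a class $\langle a_i \rangle$ for some $a_i \in k^\times/(k^\times)^2$.

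The key structural point is that compactness of $S$ forces the four initials $a_1, a_2, a_3, a_4$ to pair up in the form $\{a, -a, b, -b\}$ modulo squares, so that the sum telescopes to $\langle a \rangle + \langle -a \rangle + \langle b \rangle + \langle -b \rangle = 2\mathbb{H}$. The reason is that when $S$ is bounded, the tangency data along the two tangency loci enter symmetrically into the bitangency equations: each of the two bounded tangency edges contributes an independent sign ambiguity in the Hensel-step that can be traced through to a sign flip in the initial defining the multiplicity. For non-compact classes, an unbounded ray breaks this symmetry and some of the $\pm$ pairings are replaced by other square classes, which is why those cases require separate treatment in Theorem~\ref{thm-2Hdetails}.

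The main obstacle is not conceptual but combinatorial: one must identify precisely which types in the Cueto--Markwig list are compact, write the local lifting equations uniformly across those types, and verify the $\pm$-pairing of initials in each. Because the classification contains many types distinguished by subtle polyhedral features (relative positions of edges, vertex types, length parameters), the bookkeeping is substantial and is naturally deferred to the detailed statement Theorem~\ref{thm-2Hdetails} and its case analysis in the Appendix.
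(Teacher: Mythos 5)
Your overall strategy coincides with the paper's: restrict to the compact shapes in the Cueto--Markwig classification and show, case by case via the local lifting equations, that the four multiplicities cancel into $2\mathbb{H}$. However, two specific points in your argument do not survive contact with the case analysis. First, your telescoping $\langle a\rangle+\langle -a\rangle+\langle b\rangle+\langle -b\rangle$ presupposes that all four lifts are rational over $K$, so that each contributes a rank-one form $\langle a_i\rangle$ with $a_i\in k^\times$. For compact classes this is frequently false: the zero-dimensional shapes (A), (B), (C) are compact, and their lifts may only exist over a quadratic or biquadratic extension of $K$. In those cases the contribution of a Galois orbit is a trace form $\Tr_{K_L|K}\langle a\rangle$, and one needs the separate computations of Examples~\ref{ex:degree2trace} and~\ref{ex:degree4extension} (using that the relevant element has the shape $c\alpha$ or $c\alpha\beta$ in a basis adapted to the extension, i.e.\ $r=0$ in the notation there) to conclude that the orbit contributes $\mathbb{H}$ or $2\mathbb{H}$. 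Your argument as written does not cover this branch at all.

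Second, the proposed uniform mechanism --- that each bounded tangency edge contributes an independent sign ambiguity in the Hensel step which propagates to a sign flip of the initial defining the multiplicity --- is not correct for all edge directions. For a tangency in the interior of a horizontal or vertical bounded edge the two Hensel branches do give opposite values of $\partial_L Q(P)$ (Lemma~\ref{lem-edgeonedgetangency}), but for a \emph{diagonal} bounded edge the two branches give the \emph{same} value up to squares (Lemma~\ref{lem-edgeonedgediagonal}); the sign flip needed to pair the four lifts then has to come from the other tangency component, and this is verified by an explicit computation (see the treatment of shape (Db), and similarly (Fc)). Likewise, some compact shapes require the pairing to be done across the two liftable tropical bitangents rather than between the two lifts of a single one (Lemma~\ref{lem-movehorizontal}, Lemma~\ref{lem-Hb}). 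So there is no single symmetry argument; the proof genuinely needs the several distinct pairing lemmas of Section~\ref{subsec-tropQtype} together with the trace computations, and then the identification of (A), (B), (C), (D), (E), (F), (G), (W) as the compact shapes.
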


\noindent We also show that the contributions of  tropical bitangents classes  to the $\mathbb{A}^1$-enumerative counts  
over different fields are related if the groups of the residue fields modulo squares are isomorphic, see Theorem \ref{thm:qtypescomparison}  and Corollary \ref{cor:LegendreRfinite}.

\addtocontents{toc}{\protect\setcounter{tocdepth}{1}}
\subsection*{The real signed count of bitangents to plane quartics}

In \cite{LV21}, Larson and Vogt investigated the $\mathbb{A}^1$-enumeration of bitangents to a plane quartic, relative to a fixed line at infinity, and the resulting signed count over reals. Over the reals, they show that the signed count is equal to 4 if the real locus of the quartic does not meet the line at infinity. They also show that the signed count is nonnegative and conjectured that it is bounded above by 8.\footnote{Kummer and McKean have now announced a proof of this conjecture \cite{KM23}.} The field of real Puiseux series $\mathbb{R}\{\!\{t\}\!\}$ also has its Grothendieck-Witt ring generated by the two elements $\langle 1 \rangle$ and $\langle -1 \rangle$. The $\mathbb{A}^1$-enumeration of bitangents can thus be expressed as $a\cdot \langle 1 \rangle+b\cdot \langle -1 \rangle$ for some $a,b$, and the signed count is $a-b$. 

\begin{theorem}[Theorem \ref{thm-realQtypes}] \label{thm-realQtypesintro}
Let $C=V(Q)$ be a quartic over $\mathbb{R}\{\!\{t\}\!\}$ whose tropicalization is smooth and generic. Then the signed count of bitangents of $C$ is either $0$, $2$ or $4$.
\end{theorem}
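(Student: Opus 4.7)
The plan is to combine the Cueto--Markwig classification of tropical bitangent classes with the explicit Qtype computation afforded by Theorem \ref{thm:tropicalenumeration}, and then aggregate the contributions class by class. Over $K=\mathbb{R}\{\!\{t\}\!\}$, we have $\GW(K)=\mathbb{Z}\langle 1\rangle\oplus\mathbb{Z}\langle -1\rangle$. Label the seven tropical bitangent classes $S_1,\ldots,S_7$ and let $q_i=a_i\langle 1\rangle+b_i\langle -1\rangle$ be the total $\mathbb{A}^1$-multiplicity of the four bitangents in $S_i$, so $a_i+b_i=4$. The signed count is $\sum_{i=1}^7 s_i$ with $s_i:=a_i-b_i$, and we must show this lies in $\{0,2,4\}$.

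First I would record the per-class constraints. By Theorem \ref{thm:rationalbitangents}, each $S_i$ contains either $0$ or $4$ real lifts. If it contains $0$, the four complex lifts form two conjugate pairs contributing $\mathbb{H}$ each, forcing $q_i=2\mathbb{H}$ and $s_i=0$. If it contains $4$, then $s_i\in\{-4,-2,0,2,4\}$. By Theorem \ref{thm:2Hintro}, every compact class satisfies $s_i=0$, so the only classes that can contribute nontrivially are the non-compact classes of the combinatorial types enumerated in Appendix \ref{app-tableQtypes}. For each such type I would compute $q_i$ as a function of the initials of $Q$ modulo squares, using the algorithm from Theorem \ref{thm:tropicalenumeration}, and tabulate the resulting possibilities for $s_i$.

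With this local data in hand, the proof becomes a finite bookkeeping exercise. The Larson--Vogt nonnegativity result and the parity of $28$ already restrict $\sum s_i$ to $\{0,2,4,6,8\}$, so the substantive content is ruling out the values $6$ and $8$. I would do this by establishing that the initial-coefficient conditions needed to make $s_i$ attain its largest positive value in one class impose sign constraints on the initials of $Q$ that force $s_j\le 0$ (or at least bound $s_j$) in the remaining classes. Concretely, the twisting patterns of edges of $\Trop(C)$ recorded in Theorem \ref{thm:twistliftIntro} are shared data across all classes; a configuration that maximizes signed count in one combinatorial type fixes enough twistings that the multiplicity tables of the remaining types collapse.

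The main obstacle I anticipate is this global compatibility step: the individual Qtype tables for non-compact classes allow a wide range of $s_i$ in isolation, and the required bound $\sum s_i\le 4$ is visibly tight, so the argument cannot afford much slack. I expect the cleanest formulation to pair the case tables of Appendix \ref{app-tableQtypes} with an inspection of which edge twistings are forced by, versus compatible with, each exceptional type. Once all admissible combinations of bitangent class types realized on a smooth generic tropical quartic are enumerated, the signed count can be read off directly, and the claim that it always belongs to $\{0,2,4\}$ reduces to checking each entry of the resulting (finite) table.
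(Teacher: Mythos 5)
Your reduction is set up correctly: splitting into the seven classes, observing that a class with no real lifts contributes $2\mathbb{H}$ (two conjugate pairs), invoking Theorem \ref{thm:2Hintro} for the compact shapes, and reducing to the exceptional non-compact types of Appendix \ref{app-tableQtypes} is exactly the right skeleton, and your closing sentence --- enumerate the admissible combinations of exceptional types on a single tropical quartic and read off the total --- is in fact what the paper does. But the mechanism you propose for that enumeration is not the one that works, and this is precisely the step where the bound $\sum s_i \le 4$ has to come from. You suggest that the constraint is arithmetic: that the sign conditions on the initials needed to make one class contribute positively force the contributions of the other classes to collapse, via the shared twisting data. In the paper the per-class signed contribution of each exceptional shape is a \emph{constant} ($+2$, or $+4$ for shape (BBb)) whenever the class lifts over $\mathbb{R}$ --- e.g.\ a table entry $\langle 1\rangle+\langle 1\rangle+\mathbb{H}$ gives $s_i=2$ regardless of the coefficients --- so there is no slack left for sign constraints to exploit, and no propagation of twisting conditions between classes is used or available.

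The missing idea is combinatorial, not arithmetic: each exceptional shape has a \emph{dual motif}, a prescribed subcomplex of the unimodular triangulation of the degree-$4$ Newton triangle, and one checks that the dual motifs of the shapes with nonzero contribution are pairwise incompatible inside a single triangulation, with the short list of exceptions ((Qb)/(Rb)/(Ub)/(Vb), resp.\ (YbII)/(BBb)/(CCb), which are distinguished only by edge lengths and hence still mutually exclusive for a generic quartic; the residual possibility being one $x$--$y$-symmetric pair from (Ub)/(Vb), and the alignment condition singling out one variant of (Nb)). This lattice-geometry argument is what caps the total at one class contributing $4$ or at most two classes contributing $2$. Without replacing your ``shared twisting forces $s_j\le 0$'' step by this dual-motif exclusion argument (or something equivalent), the proof does not close: nothing in the sign data alone prevents several exceptional classes from each contributing $+2$.
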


As many, but not all, of the tropical bitangent classes contribute $2\mathbb{H}$ to the $\mathbb{A}^1$-enumeration of bitangents, we deduce that many tropical bitangents give a total contribution of $0$ to the signed count when working over the reals. The total signed count of $2$ and $4$ can only be reached for those tropicalized quartics which admit exceptional tropical bitangent classes as listed in Appendix \ref{app-tableQtypes}. The strategy to prove Theorem \ref{thm-realQtypes} is to investigate how the dual motifs (see Definition \ref{def-dualmotif}) of such exceptional tropical bitangents can fit into the dual Newton subdivision of a tropicalized quartic and show that, at most, there can be either one that contributes 4, or one or two that contribute 2 to the signed count, with the rest contributing 0.

\addtocontents{toc}{\protect\setcounter{tocdepth}{1}}
\subsection*{Organization of this paper}

This paper is organized as follows. In Section~\ref{sec-prelim}, we discuss preliminaries. We fix our convention for the fields we study, introduce tropicalizations of plane quartics and discuss tropical bitangents and their bitangent classes. 

In Section \ref{sec-Lifting}, we discuss obstructions to rationality. Section \ref{subsec-locallift} reviews known techniques to solve so-called local lifting equations which provide local obstructions for a tropical bitangent to have a $K$-rational lift. In Section \ref{subsec-twisting}, we generalize the concept of twisted edges from the reals to more general fields. We also generalize to situations with so-called relative twisting, in which the relative behaviour of a tropical bitangent line and the tropicalized quartic play a role. We express obstructions to $K$-rationality in terms of twisting of edges. In Section~\ref{subsec-comparelift}, we sum up our results on $K$-rationality of tropical bitangent classes, and compare the behaviour over different fields. 

Section \ref{sec-GW} discusses aspects of $\mathbb{A}^1$-enumeration. Section~\ref{subsecGWintro} recalls the definition of the Grothendieck-Witt ring. Section~\ref{subsec:GWisom} contains our result (Theorem \ref{thm-GWKGWkintro}  (\ref{thm-GWKGWk})) relating the Grothendieck-Witt ring of $K$ to the Grothendieck-Witt ring of its residue field. This result lays the foundation for the infusion of tropical methods into the study of $\mathbb{A}^1$-enumerative geometry, and we apply it in Section~\ref{subsec-tropQtype} to investigate the $\mathbb{A}^1$-enumerative geometry of tropical bitangent classes to tropicalized quartics. Our main result here is Theorem~\ref{thm-2H}, which sums up the statements of Theorems \ref{thm:tropicalenumeration} and \ref{thm:2Hintro} in the introduction. Also in Section~\ref{subsec-tropQtype}, we provide the methods to prove that many tropical bitangent classes contribute $2\mathbb{H}$ to the $\mathbb{A}^1$-enumeration of bitangents to quartics, and spell out the details for one case of the classification of tropical bitangent classes. The remaining cases are revisited in Appendix \ref{app:2H}.
Before we turn to the  $\mathbb{A}^1$-enumerative geometry of tropical bitangents, in Section~\ref{subsec-GWbitangent}, we review the results of \cite{LV21} on the $\mathbb{A}^1$-enumeration of bitangents to quartics. Section~\ref{subsec-tropQtype} presents the results 
from the tropical approach to the $\mathbb{A}^1$-enumeration of bitangents to quartics and Section~\ref{subsec-comparingQtype} explores relations of the  $\mathbb{A}^1$ types of lifts of tropical bitangents over different fields. 

In Section \ref{sec-real}, we consider tropical $\mathbb{A}^1$-enumeration of bitangents to quartics over real closed fields. We prove Theorem \ref{thm-realQtypesintro} (\ref{thm-realQtypes}) providing evidence in support of Larson and Vogt's conjecture on the possible values for the signed count of bitangents over the reals. We also point out that this result can be obtained without referring to the techniques of $\mathbb{A}^1$-enumeration. The signed count of bitangents to a real quartic has a geometric interpretation (see Section~\ref{subsec-GWbitangent}), and one can thus also use the method of Viro's patchworking to associate the correct sign to a tropical bitangent.

The Appendix contains details of the classification of tropical bitangents. Part \ref{app:classification} spells out the details about all tropical bitangent classes and their dual motifs (see Definition \ref{def-dualmotif}) up to $\mathbb{S}_2$-symmetry. The original classification in \cite{CM20} is up to $\mathbb{S}_3$-symmetry, but we have to break some of the symmetry to fix the line at infinity. Part \ref{app:2H} adds the details to Theorem \ref{thm-2H} (see Theorem \ref{thm-2Hdetails}) by going through the classification and showing which bitangent classes precisely contribute $2\mathbb{H}$ to the $\mathbb{A}^1$-enumeration of bitangents to quartics. Part \ref{app-tableQtypes} finally lists the exceptional tropical bitangent classes and their  $\mathbb{A}^1$-enumerative multiplicity, which is given in terms of the initials of the coefficients of the defining equation of the quartic, as claimed in Theorem \ref{thm:tropicalenumeration}.

\addtocontents{toc}{\protect\setcounter{tocdepth}{1}}
\subsection*{Acknowledgements}
The authors would like to thank Mar\'ia Ang\'elica Cueto, Alheydis Geiger, Andr\'es Jaramillo Puentes,  Danny Krashen, Mario Kummer, Hannah Larson, Marta Panizzut, Sabrina Pauli, Eugenii Shustin, Isabel Vogt, and Kirsten Wickelgren for valuable discussions related to this work and helpful comments on earlier drafts.
The first author acknowledges support by DFG-grant MA 4797/9-1. The second author acknowledges support from 
NSF grants DMS-2001502 and DMS-2053261. 
The third author acknowledges support of the Trond Mohn Foundation project ``Algebraic and topological cycles in complex and tropical geometry" and the Centre of Advanced study Young Fellows project 	``Real structures in discrete, algebraic, symplectic, and tropical geometries". The project was initiated during a stay of the second author in T\"ubingen funded by the Max-Planck Humboldt-Medal 2018. We thank the Max-Planck society and the Humboldt society for support. We thank two anonymous referees for useful comments on an earlier version of this paper.

\medskip 

Data sharing is not applicable to this article as no new data were created or analyzed in this study.

\section{Preliminaries}\label{sec-prelim}
\addtocontents{toc}{\protect\setcounter{tocdepth}{2}}
\subsection{Field conventions}
 Throughout the paper, let $K$ be a Henselian valued field whose value group $\val(K^\times)$ is $2$-divisible, and let $\sigma$ be a section of the valuation, i.e., a group homomorphism $$\sigma\colon \val(K^\times) \to K^\times$$ such that $\val \circ\,\sigma$ is the identity.  Let $R \subset K$ denote the valuation ring $\mathfrak{m} \subset R$ the maximal ideal, and $k = R/\mathfrak{m}$ the residue field.  Throughout, we assume the characteristic of $k$ is not 2.  

\begin{example}
Such fields exist with any given residue field $k$; for instance, one could take the Puiseux series field $k\{\!\{t\}\!\}$, the generalized power series field $k(\!(t^\mathbb{R})\!)$, or the completion of either. The hypotheses are also satisfied by mixed characteristic fields such as $\mathbb{Q}_p (p^{1/2}, p^{1/4}, p^{1/8}, \ldots )$, where $p^{1/2}$ is a square root of $p$, $p^{1/4}$ is a square root of $p^{1/2}$, and so on; a section is given by $\sigma(v) = p^{v}$.
\end{example}

\begin{definition}
We write $ \sigma(v):=t^v$, for $v \in \val(K^\times)$.  
\end{definition}

\noindent This notation intentionally emphasizes the analogy with generalized power series. 
\begin{definition}[Initials]
Let $A \in K^\times$.  Then the initial of $A$, $\ini(A)$, is the image of $A t^{-\val(A)}$ in $k^\times$.
\end{definition}

\noindent In case $K$ is a generalized power series field, $\ini(A)$ is the ``initial coefficient.''

\subsection{Tropicalizations of plane quartics}

Tropicalization can be viewed as degeneration of algebraic varieties defined over $K$. Here, we introduce it only for plane curves.

\begin{definition}[Tropicalization]
Let $C\subset \mathbb{P}^2_K$ be a plane curve and assume that $K$ is algebraically closed. The \emph{tropicalization} is defined by taking $-\val$ componentwise to the torus points:
$$\Trop(C):=\overline{\{(-\val x,-\val y)\;|\; (x,y) \in C\cap (K^\times)^2\}},$$
where the bar denotes the Euclidean closure in $\mathbb{R}^2$.

If $K$ is not algebraically closed, we apply minus valuation coordinatewise to all points of $C$ in the algebraic closure of $K$ to obtain the tropicalization.
\end{definition}
Since we require our field to be Henselian, the valuation extends uniquely to the algebraic closure. Equivalently, $\Trop(C)$ is the image of the Berkovich analytification of $C \cap (\mathbb{G}_m^2)$ under coordinatewise valuation \cite{analytification}.

\begin{remark}
One can also define a compactification of $\mathbb{R}^2$ that is the tropicalization of $\mathbb{P}^2_K$, including its toric boundary, as in Section 3 of \cite{analytification}. The definition above then extends naturally to take boundary points into account. A smooth quartic is never contained in the toric boundary, and hence meets the torus non-trivially. Consequently, this compact extended tropicalization equals the closure of the tropicalization of its points in the torus as defined above (see Lemma 3.1.1 \cite{OP13}). We can therefore restrict attention to the tropicalization in the torus as described above.
\end{remark}

\begin{example}[Tropical line]\label{ex-tropline}
Let $C=V(x+y+t)$. We can parametrize $C$ as $$\{(x,-x-t)\;|\; x\neq -t, x\in K^\times\}.$$ By considering the three cases $\val(x)>1$, $\val(x)<1$, and $\val(x)=1$, where in the last case the situation $\ini(x)=-1$ plays an important role, we can see that the tropicalization consists of three rays starting at the point $(-1,-1)$, one diagonal, one horizontal and one vertical.
\end{example}

\begin{remark}[Tropical dual plane]\label{rem-dualtropicalplane}
By varying the coefficients in Example \ref{ex-tropline}, one can see that the tropicalization of any line whose defining equation has three nonzero coefficients (what we are calling a \emph{tropical line}) consists of three rays centered at a vertex whose coordinates are determined by the valuations of the coefficients. Thus, we can and do identify the \emph{tropical dual plane} parametrizing tropical lines in $\mathbb{R}^2$ with the tropical plane $\mathbb{R}^2$ itself, where a tropical line is identified with its vertex. (Note, however, that this does not extend to an identification of the compact tropicalization of $\mathbb{P}^2$ with that of $(\mathbb{P}^2)^\vee$.)
\end{remark}

\begin{definition}[Newton subdivision]
\label{def-newtonsub}
Let $Q\in K[x,y,z]$ be a homogeneous polynomial of degree $d$, and denote the coefficient of the monomial $x^iy^jz^{d-i-j}$ by $A_{ij}$. Let $D  \subset \mathbb{R}^2\times \mathbb{R}$ be the convex hull of the set $$\{((i,j), -\val(A_{ij})) \ | \ A_{ij}\neq 0 \}.$$ Project the faces of $D$ which can be seen from above down to $\mathbb{R}^2$. The images of these faces form a convex subdivision $\Delta_Q$ of the Newton polygon, called the \emph{Newton subdivision}. 
\end{definition}

\begin{example}\label{example-newtonsubd}
Let $Q=t^{-1}x^2+t^{-2}xy+t^{-1}y^2+t^{-2}x+t^{-2}y+t^{-1}$.
Figure \ref{fig-Newtonsubd} shows the set $D$ on the left and the projection of its upper faces, i.e.\ its   Newton subvision, on the right.
\begin{figure}

  \begin{minipage}[c]{0.5\linewidth}
  \centering
  \input{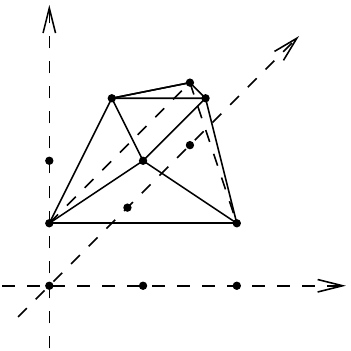_t}            
  \end{minipage}\hfill
  \begin{minipage}[c]{0.5\linewidth}
\centering  
    \input{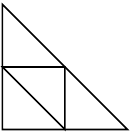_t}
   
  \end{minipage}

\caption{On the left, the set $D$ for the polynomial $Q$ in Example~\ref{example-newtonsubd}, on the right is the Newton subdivision $\Delta_Q$.}\label{fig-Newtonsubd}
\end{figure}

\end{example}
\begin{theorem}[Duality theorem]\label{theorem-dualnewtonsub}
For $C=V(Q)$, the tropical curve $\Trop(C)$ is the $1$-skeleton of a subdivision of $\mathbb{R}^2$ that is dual to the Newton subdivision $\Delta_Q$, i.e., there is a natural inclusion reversing bijection of faces between these two subdivisions.

\end{theorem}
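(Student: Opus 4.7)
The plan is to prove the duality theorem in three stages, using the tropical polynomial associated to $Q$ as the bridge between the Newton subdivision $\Delta_Q$ on one side and a polyhedral subdivision of $\mathbb{R}^2$ on the other.

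First, associated to $Q = \sum A_{ij}\, x^{i}y^{j}z^{d-i-j}$, I would define the tropical polynomial
\[
\Trop(Q)(v_{1},v_{2}) \;=\; \max_{A_{ij}\neq 0}\bigl(-\val(A_{ij}) + i v_{1} + j v_{2}\bigr),
\]
a piecewise linear convex function on $\mathbb{R}^{2}$ (after dehomogenization). For each face $F$ of $\Delta_{Q}$, let $\sigma_{F}\subset\mathbb{R}^{2}$ be the closed polyhedron where the max in $\Trop(Q)$ is attained precisely by the lattice points lying in $F$. The key elementary observation is that $F \mapsto \sigma_{F}$ is an inclusion-reversing bijection from the faces of $\Delta_{Q}$ to the cells of a polyhedral subdivision $\Sigma$ of $\mathbb{R}^{2}$, with $\dim\sigma_{F} = 2 - \dim F$, and the $1$-skeleton of $\Sigma$ is exactly the non-smooth locus (the corner locus) of $\Trop(Q)$. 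This is a straightforward consequence of the definition of $\Delta_{Q}$ as the projection of the upper faces of the lifted polytope $D$, because the upper faces are in bijection with the domains of linearity of $\Trop(Q)$ via the slope/intercept correspondence.

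Second, I would show that $\Trop(C)$ equals the corner locus of $\Trop(Q)$. The easy inclusion is the valuative one: if $(x,y)\in C\cap (K^{\times})^{2}$ and $(v_{1},v_{2}) = (-\val x, -\val y)$, then $\sum A_{ij}x^{i}y^{j} = 0$ forces the minimum of $\val(A_{ij}x^{i}y^{j}) = \val(A_{ij}) - i v_{1} - j v_{2}$ to be attained at least twice, by the ultrametric triangle inequality; equivalently, the maximum defining $\Trop(Q)$ is attained at least twice at $(v_{1},v_{2})$. Then take the Euclidean closure. For the reverse inclusion, at a point $(v_{1},v_{2})$ of the corner locus with rational coordinates, one considers the initial form $\ini_{(v_{1},v_{2})}(Q) \in k[x,y,z]$ obtained by extracting the terms of maximal weight; it is a non-monomial polynomial (because at least two terms achieve the max), so it vanishes at some point of $(k^{\times})^{2}$. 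Using the section $\sigma$ of the valuation, one rescales to place the hypothetical zero near $(t^{-v_{1}},t^{-v_{2}})$, and then Hensel's lemma (the Henselian hypothesis on $K$ is used exactly here) produces a genuine zero of $Q$ with the required valuations. Density and closure then handle non-rational points in the corner locus.

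Combining the two stages: $\Trop(C) = \{\text{corner locus of }\Trop(Q)\} = \text{(1-skeleton of }\Sigma\text{)}$, and the face bijection $F\leftrightarrow \sigma_{F}$ of Stage 1 is inclusion-reversing and dimension-complementary, proving the theorem. The main obstacle is the ``hard'' direction in Stage 2 — producing an actual lift of a tropical point to $C$ — which is the content of Kapranov's theorem; everything else is essentially bookkeeping with the piecewise linear function $\Trop(Q)$. Since this is a standard fact recorded in the preliminaries, in practice I would cite Kapranov's theorem (e.g.\ in the form given in the standard references on tropical geometry) for the correspondence between $\Trop(C)$ and the corner locus, and then spell out only the elementary combinatorial correspondence between $\Delta_{Q}$ and $\Sigma$ to make the duality explicit.
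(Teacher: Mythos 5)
Your outline is correct and is the standard argument; the paper does not prove this statement itself but cites \cite{Mi03}, Proposition 3.11, which proceeds exactly along your route — Legendre-type duality between the regular subdivision $\Delta_Q$ and the domains of linearity of the piecewise linear function $\Trop(Q)$, combined with Kapranov's theorem identifying $\Trop(C)$ with the corner locus. The only point worth making explicit is that the zero of the non-monomial initial form in the torus is guaranteed because tropicalization is defined via points of $C$ over the algebraic closure $\overline{K}$, whose residue field is algebraically closed.
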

 \noindent A proof can be found, for example, in \cite{Mi03}, Proposition 3.11.
This duality is illustrated in Figure \ref{fig-exdual}.
\begin{figure}

\begin{center}
\input{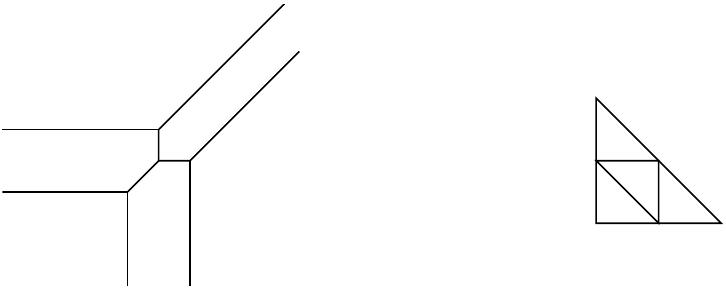_t}
\end{center}
\caption{The tropicalization of the curve from Example~\ref{example-newtonsubd} and the Newton subdivision $\Delta_Q$.}\label{fig-exdual}
\end{figure}

The tropicalization of a plane curve is called a \emph{tropical plane curve}. From duality to a Newton subdivision, we conclude that tropical curves satisfy the \emph{balancing condition}: if we weight each edge by the lattice length of its dual edge, then the weighted sum of the primitive edge generators around a vertex is $0$. 
 
A tropical plane curve is \emph{smooth} of degree $d$ if it is dual to a unimodular triangulation of the triangle with vertices $(0,0), (0,d), (d,0)$. Here, unimodular means that it is subdivided into $d^2$ triangles each having area $\frac{1}{2}$.

\subsection{Bitangents to tropicalized quartics} \label{sec:prelimsbitangents}

Suppose that two tropical plane curves $\Gamma_1,\Gamma_2$ intersect transversally, i.e.\ the intersection is a finite set of points each of which is contained in the interior of an edge of both $\Gamma_1$ and $\Gamma_2$. Let $P\in \Gamma_1\cap \Gamma_2$. Choose weighted direction vectors $u_1,u_2$ for edges of $\Gamma_1,\Gamma_2$ emanating from $P$. Then the \emph{intersection multiplicity} of the two curves at $P$ is $\Gamma_1\cdot\Gamma_2\mid_P = |\det(u_1,u_2)|$, and $\Gamma_1\cdot \Gamma_2=\sum_{P\in \Gamma_1\cap \Gamma_2}\Gamma_1\cdot\Gamma_2\mid_P$. The balancing condition ensures that this definition does not depend on the choice of direction vectors. If the curves do not intersect transversally, we  use the \emph{stable} intersection: choose a direction $v$ so that $\Gamma_1$ and $\Gamma_2 + \epsilon\cdot v$ intersect transversally whenever $\epsilon$ is small enough. Then the stable intersection is
$$
\lim_{\epsilon\to 0} \Gamma_1\cdot(\Gamma_2+\epsilon\cdot v).
$$

When the intersection is not transverse, the number of intersection points, counted with multiplicity, in the preimage of a connected component of the  intersection of the tropicalizations is the sum of the multiplicities of  the points of the stable tropical intersection that lie in this component, see \cite{OR13}, Theorem~6.4. 

\begin{definition}[Bitangent]\label{def-tropbitangent}
We say that a tropical line $\Lambda$ is \emph{bitangent} to a tropicalized quartic $\Trop(C)$ in $\mathbb{R}^2$ if $\Lambda$ and $\Trop(C)$ intersect in either 
\begin{itemize}
\item one connected component with total intersection multiplicity $4$, or
\item two connected components each with total intersection multiplicity $2$.
\end{itemize}
\end{definition}
In the first case, the intersection can be a point, or a segment of an edge of $\Gamma$ containing a vertex, or three edges adjacent to the same vertex (in which case --- if liftable, see Definition \ref{def-lift} --- the two tangency points tropicalize to the midpoints of the two segments we obtain by subtracting the shortest edge length from the others; see Figure \ref{fig-starshaped}).

In the second case of Definition \ref{def-tropbitangent}, the intersection can be a point, or a segment of an edge of $\Gamma$ (in which case --- if liftable, see Definition \ref{def-lift} ---  the tangency point tropicalizes to the midpoint of the segment).

For a bitangent $\Lambda$ to $\Trop(C)$, we call a connected component of the intersection $\Lambda\cap \Trop(C)$ a \emph{tropical tangency component}. 

\begin{figure}
\begin{center}

\tikzset{every picture/.style={line width=0.75pt}} 

\begin{tikzpicture}[x=0.75pt,y=0.75pt,yscale=-1,xscale=1]

\draw    (140,80) -- (190,80) ;
\draw    (190,80) -- (240,30) ;
\draw    (190,90) -- (190,80) ;
\draw    (140,80) -- (130,70) ;
\draw    (250,30) -- (240,30) ;
\draw    (140,90) -- (140,80) ;
\draw    (240,30) -- (240,20) ;
\draw    (190,90) -- (180,90) ;
\draw    (200,100) -- (190,90) ;
\draw  [color={rgb, 255:red, 74; green, 144; blue, 226 }  ,draw opacity=1 ][fill={rgb, 255:red, 74; green, 144; blue, 226 }  ,fill opacity=1 ] (168.26,80.12) .. controls (168.26,79.31) and (168.92,78.65) .. (169.73,78.65) .. controls (170.55,78.65) and (171.2,79.31) .. (171.2,80.13) .. controls (171.2,80.94) and (170.54,81.6) .. (169.73,81.6) .. controls (168.91,81.6) and (168.25,80.93) .. (168.26,80.12) -- cycle ;
\draw  [color={rgb, 255:red, 74; green, 144; blue, 226 }  ,draw opacity=1 ][fill={rgb, 255:red, 74; green, 144; blue, 226 }  ,fill opacity=1 ] (207.75,60.27) .. controls (207.75,59.46) and (208.41,58.8) .. (209.23,58.8) .. controls (210.04,58.81) and (210.7,59.47) .. (210.7,60.28) .. controls (210.7,61.09) and (210.04,61.75) .. (209.22,61.75) .. controls (208.41,61.75) and (207.75,61.09) .. (207.75,60.27) -- cycle ;

\end{tikzpicture}

\end{center}

\caption{A local picture of a tropicalized quartic $\Trop(C)$ together with 
the tropicalization of the two tangency points of a bitangent $\ell$ to $C$. Here the line $\ell$ is a lift of a tropical line $\Lambda$ whose vertex coincides with the center vertex of $\Trop(C)$. The intersection of $\Lambda $ and $\Trop(C)$ is a single connected component with tropical intersection multiplicity $4$.  The tropical line does not appear in the figure. }
\label{fig-starshaped}
\end{figure}
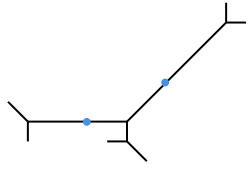

It is often the case that a tropicalized quartic $\Trop(C)$ in $\mathbb{R}^2$ admits infinitely many tropical bitangents. 
A natural question to ask is: which of the infinitely many tropical bitangent lines are tropicalizations of bitangent lines of $C$?

\begin{definition}[Lift of a bitangent]\label{def-lift}
Let $C$ be a quartic defined over $K$. Let $\Lambda$ be a tropical line which is tropically bitangent to $\Trop(C)$. A bitangent line $L$ of $C$ is called a \emph{lift} of $\Lambda$ if $\Trop(L)=\Lambda$. If such a lift exists, we say that $\Lambda$ is \emph{liftable}. If in addition the lift is defined over $K$, we say that $\Lambda$ is \emph{liftable over $K$}.
\end{definition}

We declare that two tropical bitangent lines are equivalent if we can move one tropical line to the other (connecting their corresponding points in the tropical dual plane) while maintaining bitangency, i.e., the equivalence classes are the connected components of the locus of tropical bitangents in the tropical dual plane. For tropicalized quartics, this is equivalent to saying that the tropical bitangent lines correspond to the same theta characteristic in the tropical Jacobian \cite[Definition 3.8]{BLMPR}. 

When studying tropical bitangents and their lifts, we restrict to \emph{generic} tropicalized quartics (see Remark \ref{rem-gen} and \cite{LM17}, 3.3). This ensures that the tropicalized quartic is smooth, that the tropical tangency components are not contained on the same ray, and that when the intersection $\Trop(C) \cap \Lambda$ consists of a vertex and three ray segments for a liftable tropical bitangent $\Lambda$, then the tropicalization of tangency points are not on the vertex. 

\begin{definition}[Bitangent class]
A connected component of the set of tropical bitangents lines to a tropicalized quartic $\Trop(C)$ in the tropical dual plane (see Remark \ref{rem-dualtropicalplane}) is called a \emph{bitangent class} of $\Trop(C)$. 
\end{definition}
By \cite{BLMPR}, a smooth tropicalized quartic has precisely $7$ bitangent classes. The statement can be generalized for non-smooth tropicalized quartics, in which case bitangent classes must be counted with a suitable multiplicity \cite{LL}.

By \cite{CM20}, a bitangent class is a polyhedral complex. First, we equip it with the coarsest polyhedral complex structure that is needed (for all the shapes in the classification, such a coarsest polyhedral complex structure exists). By duality (see Theorem \ref{theorem-dualnewtonsub}), $\mathbb{R}^2$ obtains a polyhedral complex structure with cells the vertices and edges of $\Trop(C)$, and the connected components of $\mathbb{R}^2\setminus \Trop(C)$. We use this to refine the polyhedral complex structure of a bitangent class:
\begin{definition}[Shape of a bitangent class]\label{def-shape}
The \emph{shape} of a bitangent class is the polyhedral complex structure obtained by refining the bitangent class with the polyhedral complex structure of $\mathbb{R}^2$ given by $\Trop(C)$ (where the containment of a polyhedral cell in $\Trop(C)$ is encoded using colors in Figure \ref{fig:2dCells}). We identify the tropical dual plane with the original $\mathbb{R}^2$ to make this refinement.
\end{definition}
Shapes of bitangent classes are classified up to $\mathbb{S}_3$-symmetry in \cite{CM20}; see especially Figure~6 in \emph{loc.\ cit.} There are 41 such shapes, denoted with letters (A), (B), $\ldots$ or double letters (EE) etc. For convenience of the reader, we reproduce this figure here; see Figure \ref{fig:2dCells}.
\begin{figure}[htb]
\includegraphics[scale=0.3]{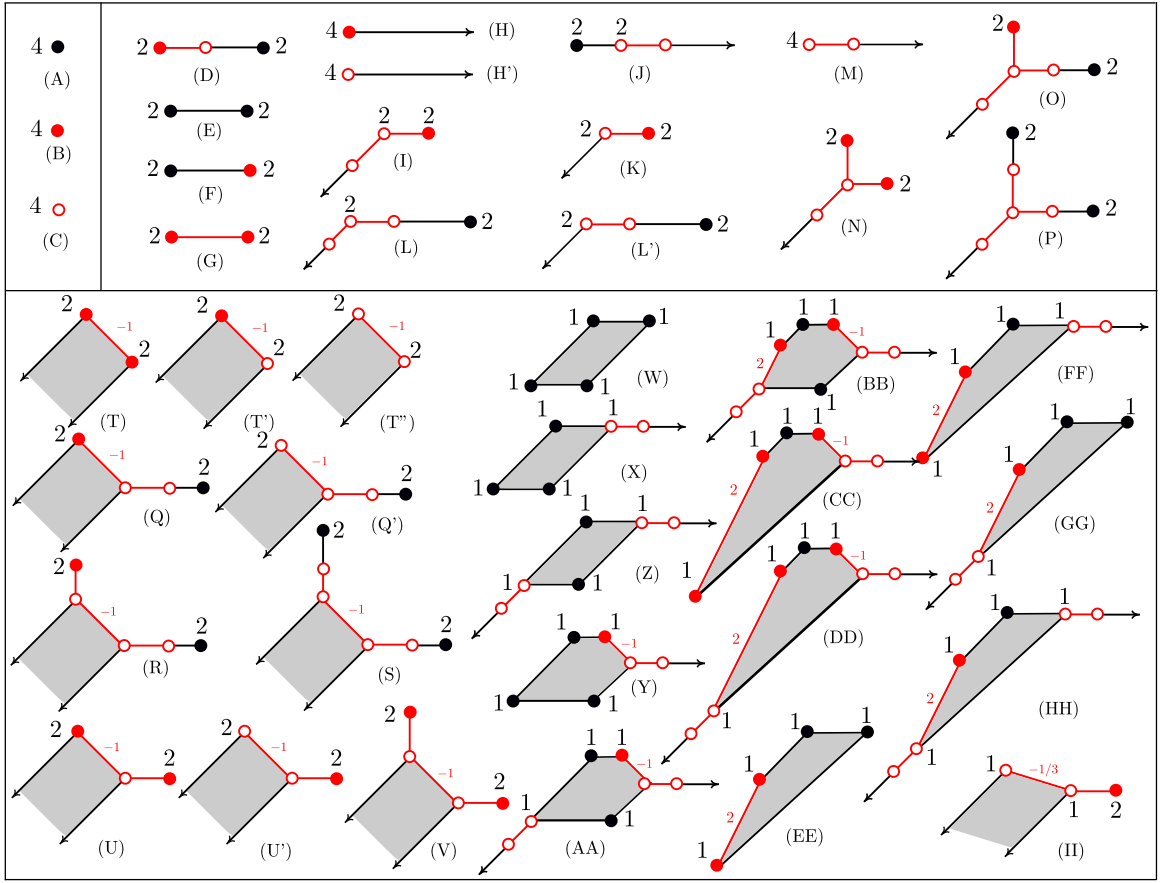}
\caption{Taken from \cite{CM20}: Orbit representatives of all 41 shapes of bitangent classes to $\Gamma$, grouped by the dimension of its maximal cells. The numbers above each vertex indicate lifting multiplicities over the complex numbers, whereas the red ones above edges indicate slopes. The black cells of each bitangent class miss $\Gamma$, whereas the red ones lie on it. The unfilled dots are vertices of $\Gamma$.\label{fig:2dCells}}
\end{figure}

\begin{definition}[Dual motif, \cite{GP21}]\label{def-dualmotif}
Let $\Trop(C)$ be a smooth tropicalized quartic and $B$ one of its seven bitangent classes. Let $\Lambda$ be a tropical bitangent in $B$. The \emph{dual motif} of the bitangent class $B$ is the set of all triangles and edges in the Newton subdivision whose dual vertices resp.\ edges intersect a bitangent in $B$.

 \end{definition}
For an example, see Figure \ref{fig-tropbitangentclass}. All tropical bitangents in the bitangent class considered there share the same tropical tangency component on the lower right, the intersection of the vertical segment. Dual to this edge, with its two end vertices, are the two triangles on the right in the dual Newton subdivision. The other tropical tangency component varies for the tropical bitangents in $B$. It is either on an edge, or an end vertex of the same edge. Accordingly, we obtain another pair of triangles on the left in the dual Newton subdivision for the dual motif. If, as in Figure \ref{fig-tropbitangentclass}, all tropical bitangents in a bitangent class have two disjoint tropical tangency components, we can divide the dual motif into two parts, one for each tropical tangency component.

\begin{example}\label{ex-bitangentclass}
Figure \ref{fig-tropbitangent} shows a tropicalized quartic and a tropical bitangent line. The tropical line intersects the tropicalized quartic in two tropical tangency components. For the right tropical tangency component, we have to use stable intersection to check that it is a tangency. The tropicalization of a tangency point for any lift is the midpoint of the segment of intersection. We can see that we can move the vertex of the tropical bitangent upwards or downwards, until we hit vertices of the tropicalized quartic, maintaining the bitangency. The bitangent class is therefore a line segment as depicted in Figure \ref{fig-tropbitangentclass}. As it is disjoint from $\Trop(C)$, its shape is also just a segment. This is a shape of type (E) in the classification of \cite{CM20}. In \cite{LM17}, it is shown that, if $C$ is any algebraic quartic with this tropicalization, exactly $2$ of the $28$ bitangent lines to $C$ tropicalize to the tropical line with vertex the upper red point, exactly $2$ to the one with vertex the lower red point, and none to a point in the interior of the red segment (see Theorem \ref{thm-4lifts}). Figure \ref{fig-tropbitangentclass} also depicts the dual motifs for the two tropical tangency components. Circled in red is the vertex dual to the connected component of $\mathbb{R}^2$ in which the bitangent class is contained. 

\begin{figure}
\begin{center}
\input{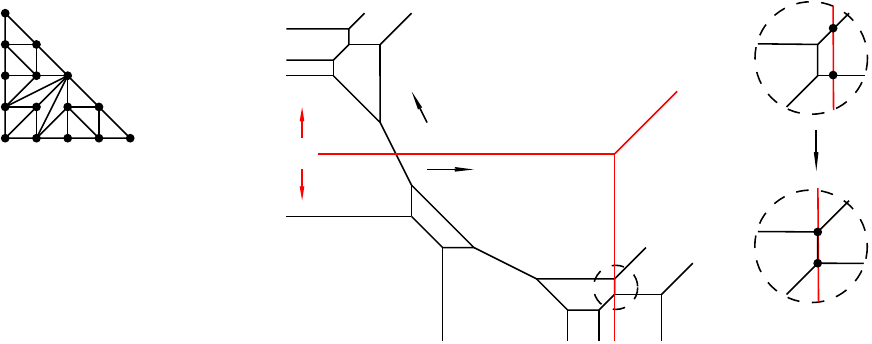_t}
\end{center}
\caption{A tropicalized quartic, with its dual Newton subdivision, and a tropical bitangent line. 
We can move the vertex of the tropical bitangent upwards or downwards maintaining the bitangency.}\label{fig-tropbitangent}
\end{figure}

\begin{figure}
\begin{center}
\input{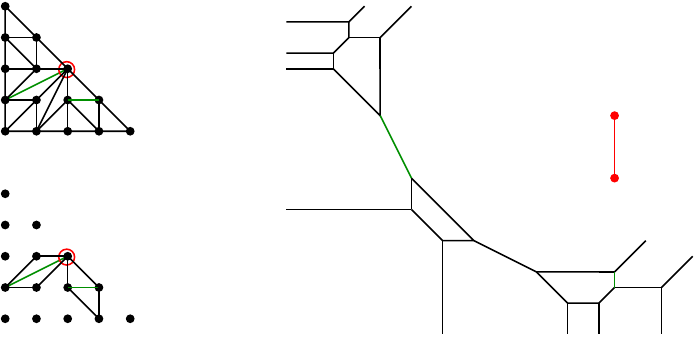_t}
\end{center}
\caption{The bitangent class and dual motif of the tropical bitangent to the tropicalized quartic depicted in Figure \ref{fig-tropbitangent}.}\label{fig-tropbitangentclass}
\end{figure}

\end{example}

\begin{remark}[Genericity of tropicalized quartics]\label{rem-gen}
Compared to \cite{LM17}, we require an additional genericity assumption on our tropicalized quartics: we subdivide the cone in the secondary fan corresponding to the unimodular triangulation according to the types of tropical bitangent classes that can occur and require our tropicalized quartic to correspond to a point in the interior of a cone in this subdivision. This subdivision of the secondary fan is computed in \cite{GP21a}. Put differently, we require the edge lengths of $\Trop(C)$ to be generic in the sense that no unexpected alignment of vertices happens, see Figure \ref{fig-align}. This picture shows a local piece of a tropicalized quartic on the left. For a generic element in the corresponding cone of the secondary fan, we expect the two lengths $l_1$ and $l_2$ to be different. If these lengths are equal, as depicted on the right, the two lower vertices align.

We call a bitangent shape that occurs for such a generic tropicalized quartic a \emph{generic bitangent shape}.
\end{remark}

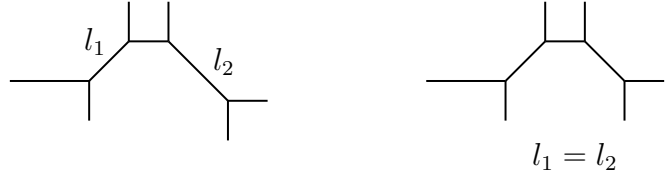
\begin{figure}
\begin{center}

\tikzset{every picture/.style={line width=0.75pt}} 

\begin{tikzpicture}[x=0.75pt,y=0.75pt,yscale=-1,xscale=1]

\draw    (370,150) -- (410,150) ;
\draw    (410,150) -- (410,170) ;
\draw    (430,130) -- (410,150) ;
\draw    (450,130) -- (470,150) ;
\draw    (430,130) -- (450,130) ;
\draw    (430,110) -- (430,130) ;
\draw    (450,110) -- (450,130) ;
\draw    (470,150) -- (470,170) ;
\draw    (490,150) -- (470,150) ;
\draw    (160,150) -- (200,150) ;
\draw    (200,150) -- (200,170) ;
\draw    (220,130) -- (200,150) ;
\draw    (240,130) -- (270,160) ;
\draw    (220,130) -- (240,130) ;
\draw    (220,110) -- (220,130) ;
\draw    (240,110) -- (240,130) ;
\draw    (270,160) -- (270,180) ;
\draw    (290,160) -- (270,160) ;

\draw (196,122) node [anchor=north west][inner sep=0.75pt]   [align=left] {$\displaystyle l_{1}$};
\draw (261,132) node [anchor=north west][inner sep=0.75pt]   [align=left] {$\displaystyle l_{2}$};
\draw (422,180) node [anchor=north west][inner sep=0.75pt]   [align=left] {$\displaystyle l_{1} =l_{2}$};

\end{tikzpicture}

\end{center}
\caption{Local pictures of tropicalized quartics. An unexpected alignment of two vertices is depicted on the right. For a generic quartic, as shown on the left, the two lengths $l_1$ and $l_2$ are different.}\label{fig-align}
\end{figure}

The following statements concern lifts of tropical bitangent classes over $\mathbb{C}\{\!\{t\}\!\}$ and $\mathbb{R}\{\!\{t\}\!\}$. 

\begin{theorem}[\cite{Chan2, LM17}]\label{thm-4lifts}
Each bitangent class of a generic tropicalized quartic has $4$ lifts over $\mathbb{C}\{\!\{t\}\!\}$. More precisely, there are either $4$ tropical lines in the bitangent class which each lifts once, or $2$ which each lift twice, or $3$ of which one lifts twice, or $1$ which lifts four times.
\end{theorem}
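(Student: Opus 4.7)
My strategy combines a structural argument producing the count of $4$ lifts per bitangent class with a case-by-case shape analysis to extract the finer distribution. For the count of $4$ per class, I would exploit the classical bijection between the $28$ bitangents of the smooth plane quartic $C$ and its $28$ odd theta characteristics $\theta$: each $\theta$ corresponds to the effective divisor of tangency points of the bitangent line, satisfying $2\theta = K_C$. On the tropical side, a smooth genus-$3$ tropical plane curve has exactly $2^{g-1}(2^g-1)=7$ odd tropical theta characteristics, corresponding to the $7$ bitangent classes. Specialization of divisor classes of degree $g-1$ commutes with tropicalization, so the map $\theta \mapsto \Trop(\theta)$ sends odd algebraic theta characteristics to odd tropical ones; moreover it is equivariant for the surjection $\mathrm{Jac}(C)[2] \to \mathrm{Jac}(\Trop(C))[2]$. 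Since theta characteristics form a torsor over the $2$-torsion of the Jacobian, the fibers of $\Trop$ on odd theta characteristics all have the same size, which must be $28/7 = 4$.

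To obtain the finer ``$4$, or $2{+}2$, or $2{+}1{+}1$, or $4$'' decomposition, I would go through each of the $41$ shapes from the Cueto-Markwig classification (Figure \ref{fig:2dCells}) and directly compute the lifting multiplicity at each tropical line $\Lambda$ in the class. In a given shape my plan is: choose local affine coordinates centered at the vertex of $\Lambda$; write a candidate algebraic bitangent as $y = ax + b$ with $a, b \in \mathbb{C}\{\!\{t\}\!\}$ of the valuations dictated by $\Lambda$; expand the tangency conditions (two simple tangencies at points whose tropicalizations are the midpoints of the edges of $\Trop(C)$ cut by $\Lambda$, or a single order-$4$ tangency in the ``star-shaped'' configuration of Figure \ref{fig-starshaped}); extract the resulting initial system over the residue field $\mathbb{C}$; count its solutions; and use Hensel's lemma to upgrade each simple residue-field solution to a lift over $\mathbb{C}\{\!\{t\}\!\}$. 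The multiplicities per tropical line in the class should match the black integers labeling vertices in Figure \ref{fig:2dCells} and sum to $4$ over each class, which is precisely the list of allowed patterns $1{+}1{+}1{+}1$, $2{+}2$, $2{+}1{+}1$, $4$.

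The main obstacle is the combinatorial bookkeeping in the second step. Even though each individual local system is low-degree, there are many shapes; for each one must select the correct local parametrization, treat carefully the cases where a tropical tangency component is not a point but an entire edge segment or a ``fan'' of three edges at a vertex (so that the tropicalizations of the two algebraic tangency points are the midpoints of truncated segments, as in Figure \ref{fig-starshaped}), and handle the order-$4$ tangency scenario separately from the pair of order-$2$ tangencies. The genericity hypothesis from Remark \ref{rem-gen} is essential here, ensuring on the one hand that no accidental vertex alignment forces several lifts to coalesce and on the other hand that the relevant initial systems have only simple roots, so that Hensel applies without correction and the local counts agree with the tropical multiplicities.
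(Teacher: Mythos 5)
The paper does not prove this statement itself; it cites it from \cite{Chan2, LM17}, so the comparison is with those arguments. Your second half (local lifting equations, initial systems over the residue field, Hensel's lemma, shape-by-shape bookkeeping under the genericity assumption) is exactly the Len--Markwig route and is fine in outline. The problem is in your first half. Theta characteristics of $C$ form a torsor over $\mathrm{Jac}(C)[2]$ and the specialization map is equivariant for the surjection $\mathrm{Jac}(C)[2]\to\mathrm{Jac}(\Trop(C))[2]$, but what this buys you is only that all $8$ fibers of the map on the full set of $64$ theta characteristics have the same size, namely $8$ (the order of the kernel $K$). It does \emph{not} follow that the $28$ odd theta characteristics are equidistributed, $4$ per effective tropical theta characteristic. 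The parity is a quadratic refinement of the Weil pairing, not a group homomorphism, so it is not constant on $K$-cosets and the torsor structure alone cannot control how many odd elements each coset contains. Concretely: since $K$ is isotropic, the function $v\mapsto q(\theta_0+v)+q(\theta_0)$ is a homomorphism $K\to\mathbb{Z}/2$, so each fiber contains $0$, $4$, or $8$ odd theta characteristics; that the answer is $4$ over each of the $7$ effective tropical theta characteristics and $0$ over the non-effective one is precisely the nontrivial theorem of Jensen--Len (\cite{JensenLen18}), which requires relating the algebraic parity to tropical effectivity. Writing ``$28/7=4$'' asserts the conclusion rather than proving it.

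A secondary issue: your case-by-case analysis, as described, verifies which candidate tropical lines in a given shape admit lifts and with what Hensel multiplicity, but it does not by itself certify \emph{completeness} --- that every one of the $28$ algebraic bitangents tropicalizes into one of the $7$ classes and that no lifts have been missed, so that the local multiplicities in each class genuinely sum to $4$. In \cite{LM17} this is closed off by playing the local counts against the global count $7\times 4=28$ (or against tropical B\'ezout-type bounds), i.e., precisely the input your first half was supposed to supply. As written, the two halves of your argument lean on each other, so the gap in the theta-characteristic step is material. Either import the Jensen--Len lifting theorem as a black box, or strengthen the case analysis so that it independently accounts for all $28$ bitangents.
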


\begin{theorem}[\cite{CM20}]\label{thm-reallifts}
Each bitangent class of a generic tropicalized quartic has either $0$ or $4$ lifts over $\mathbb{R}\{\!\{t\}\!\}$, i.e.\ the possible obstruction for real lifting is the same for all representatives of a given tropical bitangent class that have a complex lift.
\end{theorem}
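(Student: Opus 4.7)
The plan is to reduce to a case-by-case analysis using the classification of generic bitangent shapes from \cite{CM20} (Figure \ref{fig:2dCells}), and to show that in each case the real obstruction is a single condition on the initials of the quartic's coefficients that is shared by all four complex lifts of the class.

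First I would set up the local lifting framework. For a fixed tropical bitangent $\Lambda$ with vertex in the interior of a bitangent class $B$, write a candidate lift as $L = \alpha x + \beta y + \gamma z$, normalized so that $\val(\alpha),\val(\beta),\val(\gamma)$ are determined by the vertex of $\Lambda$. The bitangency condition requires that $Q|_L$, restricted locally to each tropical tangency component, factor as a square times a unit in the appropriate local ring. Parametrizing $L$ in suitable local coordinates and expanding, one obtains a system of polynomial equations in $\ini(\alpha)$, $\ini(\beta)$, $\ini(\gamma)$ (together with the initials of the tangency points) whose coefficients are polynomials in the initials $\ini(A_{ij})$ of the coefficients of $Q$ that are actually seen by the dual motif of $B$. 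A lift over $\mathbb{R}\{\!\{t\}\!\}$ exists if and only if these equations have a solution over $\mathbb{R}$, by Hensel's lemma applied after truncation.

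Next I would examine these local lifting equations case by case, for each of the 41 shapes in Figure \ref{fig:2dCells}. By Theorem \ref{thm-4lifts}, there are exactly four complex solutions distributed among the tropical bitangents in $B$. The key observation to establish is: after eliminating variables, the reality of each complex solution is controlled by a single quadratic discriminant (or, in the shapes where pairs of lifts sit on the same tropical line, a pair of discriminants with the same sign), whose value lies in $\mathbb{R}^\times/(\mathbb{R}^\times)^2 = \{\pm 1\}$ and is a monomial expression in the $\ini(A_{ij})$ indexed by the dual motif of $B$. The crucial claim in each shape is that this class in $\mathbb{R}^\times/(\mathbb{R}^\times)^2$ does not depend on which of the four lifts one considers; equivalently, Galois (complex conjugation) acts on the four lifts with orbits of size $1$ or $2$ such that the parity forces either $0$ or $4$ real lifts, ruling out a $2+2$ split.

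The main obstacle is precisely the verification of this parity claim across all shapes. For the shapes with four distinct tropical bitangents each contributing one lift, one has to check that the four discriminants --- one per vertex of the bitangent class of type (A), (B), $\ldots$ --- all have the same class modulo squares; this uses the relations coming from the unimodular triangulation and the monomial dependencies among the $\ini(A_{ij})$ seen by the dual motif. For shapes where a tropical line lifts with multiplicity $2$ or $4$, one must additionally check that the internal splitting of that multiplicity into real versus complex conjugate lifts is governed by the same discriminant. Once this bookkeeping is done for each shape, the conclusion follows uniformly: either the single reality condition is satisfied, in which case all four complex lifts are real, or it fails, in which case none of them are.
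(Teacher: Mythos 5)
Your proposal follows essentially the same route as the proof in \cite{CM20} that this paper cites (and as the paper's own generalization in Theorem~\ref{thm-allornone}): solve the local lifting equations shape by shape, observe that the solutions are Laurent monomials in the initials or square roots thereof, and check that the radicands attached to the four lifts of a given class agree modulo squares, so that real solvability is an all-or-nothing condition. The only caveat is that for the zero-dimensional shapes (A), (B), (C) the four lifts involve two independent radicands rather than a single discriminant (and shape (C) needs extra care, cf.\ Proposition~6.4 of \cite{CM20}), but the all-or-nothing conclusion persists there for the same reason, so your plan is sound.
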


These results are based on the classification of shapes of bitangent classes in \cite{CM20} up to $\mathbb{S}_3$-symmetry. When considering $\GW$-multiplicities as in Section \ref{sec-GW}, we fix the line $\{z=0\}$ and do not have $\mathbb{S}_3$-symmetry for that reason, only $\mathbb{S}_2$-symmetry for exchanging the variables $x$ and $y$. 

In Appendix \ref{app:classification}, we classify tuples of shapes of generic bitangent classes together with their dual motifs up to $\mathbb{S}_2$-symmetry exchanging $x$ and $y$. 
The classification is built on the classification in \cite{CM20}. It restricts the cases studied there to generic shapes, 
considers $\mathbb{S}_3/\mathbb{S}_2$-orbits, and pairs up with dual motifs. In Appendix \ref{app:classification}, we depict all dual motifs together with local pictures of the tropicalized tangency points of the liftable tropical bitangents in the bitangent class are given.

\begin{example} \label{ex:possiblerunningexample}
Consider the plane quartic $C=V(Q)$ for
\begin{align*}
Q(x,y)  = & t^{36} x^4 + t^{18}x^3 y + t^2x^2y^2 + t^{18}xy^3 + t^{36} y^4 + t^{23} x^3 + t^6 x^2y +t^6xy^2  \\  &  + t^{23} y^3 + t^{12}x^2 + xy + t^{12}y^2 + t^2x+t^2y + 1
\end{align*}
over the field of Puiseux series $\mathbb{C}\{\!\{t\}\!\}$.

Its tropicalization $\Trop(C)$ together with the seven bitangent classes (in red) is depicted in Figure \ref{fig-7bitangents}. The liftable tropical bitangents are depicted as red dots. If a bitangent class has four red dots, each lifts once, if it has two red dots, each lifts twice and a single dot lifts to four bitangents.

\begin{figure}
\input{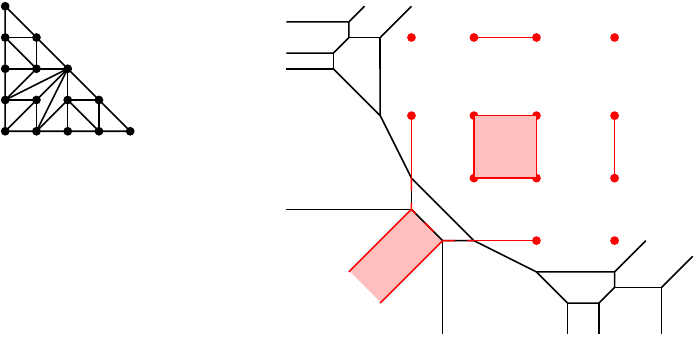_t}
\caption{A tropicalized quartic with its seven bitangent classes.}\label{fig-7bitangents}
\end{figure}

\end{example}

\section{Lifting tropical bitangents and tropical points of tangencies}\label{sec-Lifting}

\subsection{Local lifting equations and their solutions}\label{subsec-locallift}
We review the local lifting techniques for tropical bitangent lines to tropicalized plane quartics from \cite{CM20, LM17}.

As before, we let $C=V(Q)$ be a quartic and denote the coefficients of the defining polynomial $Q$ by $A_{ij}$.
For a point $p\in \Trop(C)$, let $\ini_p(Q)$ denote the sum of the (dehomogenized) terms $\ini(A_{ij})x^iy^j\in k[x,y]$ for which the maximum $\max\{-\val(A_{ij})+ip_x+jp_y\}$ is attained. We call it the \emph{initial form} of $Q$ at $p$. The initial form can be be read off from the dual Newton subdivision: if $p$ is contained in the interior of a region of $\mathbb{R}^2\setminus C$, then the initial form is given precisely by the term of $Q$ corresponding to this region. If $p$ is in the interior of an edge, we use all terms that correspond to points of the dual edge. If $p$ is a vertex, we use all terms that correspond to points of the dual polygon.

\begin{definition}[Local equations for tropicalized  tangency points]\label{def:localpoly}
Let $T$ be a connected component of the intersection of $\Trop(C)$ with a tropical bitangent $\Lambda$, and assume a lift of $\Lambda$ is given by an equation of the form $\ell=y+M+Nx$, where $M$ and $N$ are unknowns. Let $A(T)=\bigcup_{p\in T}\supp \{\ini_p Q\}$ be the union of the supports of the initial forms of $Q$ for all $p\in T$. We let $Q_T$ be the restriction of $Q$ to the terms appearing in $A(T)$, and call it the \emph{local equation for $C$ at $T$}. 

Analogously, we let $B(T)$ be the union of the supports of the initial forms of $\ell$ for all $p\in T$, and $\ell_T$, the restriction of $\ell$ to $B(T)$, the local equation of $\ell$.

We let $W_T=\frac{\partial Q_T}{\partial x}\cdot \frac{\partial \ell_T}{\partial y} - \frac{\partial Q_T}{\partial y}\cdot \frac{\partial \ell_T}{\partial x}$ be the local version of the Wronskian.
\end{definition}
Notice that vanishing of the Wronskian $W= \frac{\partial Q}{\partial x}\cdot \frac{\partial \ell}{\partial y} - \frac{\partial Q}{\partial y}\cdot \frac{\partial \ell}{\partial x}$ implies that the gradients of $V(Q)$ and $V(\ell)$ coincide, i.e.\ that the line $V(\ell)$ is tangent to $V(Q)$ at $(x,y)$.

If $p\in T$ is the tropicalization of a tangency point, we solve the \emph{local lifting equations}
\begin{align*}
& Q_T(x,y)=0,\;\; \ell_T(x,y)=0, \;\;W_T(x,y)=0.
\end{align*}
If the tropical tangency component is just a point, $T=\{p\}$, then this amounts to solving the equations of the initial forms of $Q$, $\ell$ and $W$ at $p$. If $T$ is a segment of intersection involving two vertices of an edge $E$ of $\Trop(C)$, then $Q_T$ contains not only the terms of the edge $E^\vee$ dual to $E$, but also the terms corresponding to the vertices which form triangles with $E^\vee$ in the dual subdivision. If $T$ is a segment containing one vertex $V$ of $\Trop(C)$ and the vertex of $\Lambda$, then $\ell_T=\ell$ and $Q_T$ contains the terms corresponding to $E^\vee$ and the vertex of the triangle dual to $V$. If $T$ consists of three segments joined at a vertex $V$, $Q_T$ contains the terms of the triangle dual to $V$ plus the terms of its neighbouring triangles, and $\ell_T=\ell$.

If $T$ is more than just a point, the equations above are hard to solve. One uses a technical trick which is called a tropical modification or tropical refinement to produce solutions up to the order which appears as biggest valuation of the terms involved. For a discussion on how to treat the case of a segment $T$ containing two vertices of $\Trop(C)$, see \cite{LM17} Proposition 3.7 and 3.9. For the case of a segment containing the vertex of $\Lambda$, see Lemma 5.2, Case (3a) \cite{CM20} and Proposition 3.12 in \cite{LM17}. For the case of three segments joined at a vertex, see Proposition 3.12 in \cite{LM17}. For general background on the technique of tropical modification, see e.g.\ \cite{BLdM11, IMS09, Mi06}.
Once we solved in such a way for $x,y$ up to terms of higher valuation, it follows from the Henselian property (see also \cite{Eisenbud}, Exercises 7.25, 7.26  and \cite{IMS09}, Chapter 2) that these solutions can uniquely be completed to an element in $K$. For more details, see Section 2.3 in \cite{LM17}.
If the initials are in the residue field $k$, then the complete solutions are in $K$.

\begin{example}\label{ex-locallifting}
We compute the initials of the four lifts of the bitangent class from Example \ref{ex-bitangentclass}.
Consider first the upper vertex. There are two lifts over the algebraic closure which tropicalize to it. We denote the equations of those two lifts by $M_1+N_1x+y=0$ resp.\ $M_2+N_2x+y=0$. 
Let $p_1$ be the left tropical tangency component. In $\Trop(C)$, it is at the vertex dual to $A_{01}y+A_{12}xy^2+A_{22}x^2y^2$. In the tropical bitangent line, it is in the horizontal ray, i.e.\ dual to $M_i+y$.
We denote the initials of the $A_{ij}$ by $a_{ij}$ and analogously for the coefficients of the line equations. Then the initials $(x_i,y_i)$ of the lift of the left tropical tangency component satisfy
\begin{align*}
&a_{01}y_{i1}+a_{12}x_{i1}y_{i1}^2+a_{22}x_{i1}^2y_{i1}^2=0,\\&
m_i+y_{i1}=0,\\&
2\cdot a_{22}\cdot x_{i1}\cdot y_{i1}^2+a_{12}\cdot y_{i1}^2=0,
\end{align*}
for $i=1,2$.
Solving for $m_i$, $x_{i1}$ and $y_{i1}$ (for example by computing a Gr\"obner basis of the ideal defined by the three equations using a computer algebra system such as \textsc{Singular} \cite{DGPS} or \textsc{Oscar} \cite{Oscar}) we obtain, for $i=1,2$,
\begin{align*}
m_i=-\frac{4\cdot a_{01}\cdot a_{22}}{a_{12}^2},\;\;  x_{i1}=-\frac{a_{12}}{2\cdot a_{22}},\;\;  y_{i1}= \frac{4\cdot a_{01}\cdot a_{22}}{a_{12}^2}.
\end{align*}

The second tropical tangency component is the same for all liftable tropical bitangents in this bitangent class (i.e.\ for the tropical line with vertex the upper vertex of the red edge, and for the tropical line with vertex the lower vertex of the red edge in Figure \ref{fig-tropbitangentclass}). Therefore, we now first study the two lifts tropicalizing to the lower vertex and their tropical tangency component on the left. It is at the vertex dual to $A_{01}y+A_{11}xy+A_{22}x^2y^2$. The three equations to solve are thus:
\begin{align*}
&a_{01}y_{i1}+a_{11}x_{i1}y_{i1}+a_{22}x_{i1}^2y_{i1}^2=0,\\&
m_i+y_{i1}=0,\\&
2\cdot a_{22}\cdot x_{i1}\cdot y_{i1}^2+a_{11}\cdot y_{i1}=0,
\end{align*}
for $i=3,4$, and the solutions are
\begin{align*}
&m_i=-\frac{a_{11}^2}{4\cdot a_{01}\cdot a_{22}},\;\; x_{i1}=-\frac{2\cdot a_{01}}{ a_{11}},\;\;y_{i1}=\frac{a_{11}^2}{4\cdot a_{01}\cdot a_{22}} .
\end{align*}

Let $p_2$ be the tangency point in the right tropical tangency component. This tropical tangency component appears not only for the two lifts of the upper vertex of the bitangent class, but also for the lower ones. That is, we can use the corresponding local lifting equations to solve for one tropical tangency component for each of the four lifts.
The point $p_2$ is
contained in a connected component of the intersection $\Trop(C)$ and a tropical bitangent which consists of a bounded edge of $\Trop(C)$. 
Dual to the vertices of this bounded edge are two triangles of the dual subdivision which meet along the edge  joining $(2,1)$ and $(3,1)$. 
Therefore, 
we must take as local equation $$A_{21}x^2y+A_{31}x^3y + A_{30}x^3 + A_{22}x^2y^2.$$ 
In the tropical line, the tropical tangency component  $p_2$ is contained in the vertical ray, thus the local equation to consider is $m_i+n_ix$ for $i=1,\ldots,4$. As we saw earlier, $m_i$ is determined by the left (upper or lower) tropical tangency component. In the local equations, we now solve for the ratio $\frac{m_i}{n_i}$, which is then sufficient to compute all coefficients of the lifts. It turns out that there are two solutions whose initials coincide. In order to be able to differentiate the two solutions whose initials coincide, we also list some contributions of higher valuation.

The solutions we obtain are 
\begin{align*}
&\frac{M_i}{N_i}=\frac{A_{21}}{A_{31}}\pm \frac{2A_{22}}{A_{31}} \sqrt{-\frac{A_{30}A_{21}}{A_{22}A_{31}}}+ \ldots\\ & X_{i2}=- \frac{A_{21}}{A_{31}} \mp \frac{2A_{22}}{A_{31}}\sqrt{-\frac{A_{30}A_{21}}{A_{22}A_{31}}}+ \ldots\\ &Y_{i2}=\pm \sqrt{-\frac{A_{30}A_{21}}{A_{22}A_{31}}}+ \ldots
\end{align*}
for $i=1,\ldots,4$. Here, we write the solutions not in terms of the initial $a_{ij}$ but in terms of the whole coefficients $A_{ij}$, to keep track of the contributions of higher valuation.

Combining the results of the local lifting equations, we can now list the initials of the coefficient of the bitangent equation and the tangency point for all four lifts:

\hspace{0.2cm}

\begin{tabular}{|l||c|c|c|c|c|c|}
\hline
$i$ &$m_i$&$n_i$&$x_{i1}$&$y_{i1}$&$x_{i2}$&$y_{i2}$\\ \hline \hline
 1&$-\frac{4\cdot a_{01}\cdot a_{22}}{a_{12}^2}$ &$-\frac{4\cdot a_{01}\cdot a_{22}\cdot a_{31}}{a_{12}^2\cdot a_{21}}$ &$-\frac{a_{12}}{2\cdot a_{22}}$&$\frac{4\cdot a_{01}\cdot a_{22}}{a_{12}^2}$ &$- \frac{a_{21}}{a_{31}}$ &  $\sqrt{-\frac{a_{30}a_{21}}{a_{22}a_{31}}} $\\ \hline
 2&$ -\frac{4\cdot a_{01}\cdot a_{22}}{a_{12}^2}$& $-\frac{4\cdot a_{01}\cdot a_{22}\cdot a_{31}}{a_{12}^2\cdot a_{21}}$&$-\frac{a_{12}}{2\cdot a_{22}}$&$\frac{4\cdot a_{01}\cdot a_{22}}{a_{12}^2}$ & $- \frac{a_{21}}{a_{31}}$& $- \sqrt{-\frac{a_{30}a_{21}}{a_{22}a_{31}}}$ \\ \hline
 3&$-\frac{a_{11}^2}{4\cdot a_{01}\cdot a_{22}}$ &$-\frac{a_{11}^2\cdot a_{31}}{4\cdot a_{01}\cdot a_{22}\cdot a_{21}}$ &$-\frac{2\cdot a_{01}}{ a_{11}}$ & $\frac{a_{11}^2}{4\cdot a_{01}\cdot a_{22}}$& $- \frac{a_{21}}{a_{31}}$& $\sqrt{-\frac{a_{30}a_{21}}{a_{22}a_{31}}} $ \\ \hline
 4& $-\frac{a_{11}^2}{4\cdot a_{01}\cdot a_{22}}$& $-\frac{a_{11}^2\cdot a_{31}}{4\cdot a_{01}\cdot a_{22}\cdot a_{21}}$&$-\frac{2\cdot a_{01}}{ a_{11}}$ &$\frac{a_{11}^2}{4\cdot a_{01}\cdot a_{22}}$ &$- \frac{a_{21}}{a_{31}}$ & $- \sqrt{-\frac{a_{30}a_{21}}{a_{22}a_{31}}}$ \\ 
\hline
\end{tabular}

\end{example}

As in the example above, these local lifting equations can be solved for any case appearing in the combinatorial classification in Appendix \ref{app:classification}, and the solutions are always given by Laurent terms in the initials $a_{ij}$ of the coefficients of $Q$, or by square roots thereof.

The consequence for lifting can be summed up as follows:

\begin{theorem}\label{thm:twistlift}
 Let $C = V(Q)$ be a quartic curve defined over $K$ with $\Trop(C)$ a generic tropicalized quartic and suppose $\Lambda$ is a liftable tropical bitangent to $C$. Then whether or not $\Lambda$ lifts over $K$ is determined by $\Trop(C)$  and the equivalence classes of  initials of the coefficients of $Q$ in $k^\times/(k^\times)^2$. 

\end{theorem}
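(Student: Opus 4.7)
The plan is to prove the theorem by an exhaustive case analysis over the combinatorial classification of generic tropical bitangent classes and the shapes of their tropical tangency components tabulated in Appendix \ref{app:classification}. The general principle is this: by the Henselian property of $K$ (cf.\ \cite{Eisenbud, IMS09}), a tropical bitangent $\Lambda$ lifts over $K$ if and only if, for each of its tropical tangency components $T$, the associated local lifting equations of Definition \ref{def:localpoly} admit a solution whose initials lie in the residue field $k$. It therefore suffices to show that, for every combinatorial type of tangency component, solvability of the local lifting equations over $k$ is determined by $\Trop(C)$ together with the classes $[a_{ij}] \in k^\times/(k^\times)^2$ of the initials of the coefficients of $Q$.

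Concretely, I would set up the system $Q_T=0$, $\ell_T=0$, $W_T=0$ for each type of tangency component appearing in the classification, distinguishing the cases in which $T$ is a point dual to a triangle, a point dual to an edge, a segment containing one or two vertices of $\Trop(C)$, or the three-segment configuration at a vertex of $\Trop(C)$ (see Figure \ref{fig-starshaped}). When $T$ contains more than a single point, tropical modification (\cite{LM17}, Propositions 3.7, 3.9, 3.12, and \cite{CM20}, Lemma 5.2) replaces the original system by an equivalent finite polynomial system whose coefficients are still the relevant initials $a_{ij}$ and whose solvability over $k$ controls the lifting over $K$.

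The key computation, already illustrated in Example \ref{ex-locallifting}, is that in every such case the resulting polynomial system in $(m, n, x, y)$ admits solutions which are either Laurent monomials in the $a_{ij}$ (in which case the system is unconditionally solvable over $k$) or involve a single square root of a Laurent monomial in the $a_{ij}$ (in which case solvability over $k$ is equivalent to that specific monomial representing the trivial class in $k^\times/(k^\times)^2$). Because a Laurent monomial in the $a_{ij}$ descends to a well-defined class in $k^\times/(k^\times)^2$, the answer depends only on $\Trop(C)$, which dictates which indices $(i,j)$ enter, together with the classes $[a_{ij}]$. Combining the (at most two) tangency component contributions for each liftable $\Lambda$ yields the conclusion.

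The main obstacle is the bookkeeping inherent in the exhaustive verification across the classification in Appendix \ref{app:classification}. This amounts to reinterpreting the case-by-case computations of \cite{CM20}, performed there over $\Rt$, with the condition ``positive'' on a real initial replaced by ``a square in $k^\times$''. Because the algebraic steps in those computations (Gr\"obner basis reductions and extraction of square roots from quadratic discriminants) are insensitive to this replacement, the Cueto--Markwig arguments transport verbatim; one only needs to check that no division by zero or parasitic factor of $2$ occurs, which is guaranteed by our standing hypothesis $\mathrm{char}(k)\neq 2$ and by the genericity assumption on $\Trop(C)$ from Remark \ref{rem-gen}.
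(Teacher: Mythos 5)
Your proposal is correct and follows essentially the same route as the paper: the authors likewise reduce to solving the local lifting equations of Definition \ref{def:localpoly} case by case over the classification in Appendix \ref{app:classification}, and rely on the observation (made explicit in Proposition \ref{prop-initialsolutionsareLaurentterms} and Example \ref{ex-locallifting}) that the solutions are Laurent terms in the initials $a_{ij}$ or square roots thereof, so that liftability over $K$ depends only on $\Trop(C)$ and the classes of the $a_{ij}$ in $k^\times/(k^\times)^2$. Your write-up is in fact more detailed than the paper's own two-line proof about how the Henselian reduction and the transport of the Cueto--Markwig computations from $\Rt$ to general $k$ are carried out.
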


The proof follows along the same lines as in the complex and real case. we go through the classification in Appendix \ref{app:classification} and solve the local lifting equations, no over the ground field $K$.

\subsection{Lifting conditions and twisted edges}\label{subsec-twisting}

It follows from the classification of bitangent shapes in \cite{CM20} and their lifting multiplicities that some bitangent shapes lift over any field (this holds for tropical bitangent lines with lifting multiplicity one, the rough argument being that a uniquely solvable system of linear equations over a field $k$ also has its solution over $k$). Obstructions to lifting arise due to higher lifting multiplicities. 
In the following, we study tropical tangency components which produce a factor leading to such higher lifting multiplicities. 
Roughly, such factors arise whenever a tropical tangency component is locally fixed, i.e.\ we cannot vary the local part of the tropical bitangent without destroying the tangency. This is the case for segments of intersections, or for intersections involving the vertex of the tropical bitangent line.

In the case of a tropical tangency component on a segment of intersection, the potential obstruction for lifting can be phrased in terms of twisted edges or  relatively twisted edges. 
By going through the cases in Appendix \ref{app:classification}, it turns out that the cases for lifting we study here cover all bitangent shapes except (C), for which lifting is more involved. For our purpose of understanding arithmetic multiplicities in Section \ref{sec-GW}, precise lifting conditions for (C) are not needed and we therefore do not consider these details here.

We start by stating the lifting solutions, which can be determined with the methods described in Section~\ref{subsec-locallift}.
In the following, we denote the vertex of the dual subdivision corresponding to the term $A_{ij}x^iy^j$ in (the dehomogenized version of) $Q$ by $V_{ij}$.

\begin{lemma}\label{lem-localliftsegment} Assume the tropical line $\Lambda$ intersects the tropicalized quartic $\Trop(C)$ in two connected components of which one is a segment of a horizontal edge dual to an edge connecting $V_{1l}$ and $V_{1l+1}$. This edge forms triangles with two points $V_{0i}$ and $V_{j2}$.
Then there are two lifts for the coefficient $M$ of a line $V(M+Nx+y)$ and for the tangency points $(x,y)$ tropicalizing to this segment. 

If the vertex of $\Lambda$ is not contained in the segment, we have

$$M=\frac{A_{1l}}{A_{1l+1}}\pm 2\sqrt{(-1)^{i+j}A_{0i}A_{2j}\left(\frac{A_{1l}}{A_{1l+1}}\right)^{i+j}}\frac{(-1)^l A_{1l+1}^{l-1}}{A_{1l}^{l}} +\ldots,$$

$$y=- \frac{A_{1l}}{A_{1l+1}}\mp 2\sqrt{(-1)^{i+j}A_{0i}A_{2j}\left(\frac{A_{1l}}{A_{1l+1}}\right)^{i+j}}\frac{(-1)^l A_{1l+1}^{l-1}}{A_{1l}^{l}} +\ldots,$$

$$x=\pm \sqrt{(-1)^{i-j}\frac{A_{0i}}{A_{2j}}\left(\frac{A_{1l}}{A_{1l+1}}\right)^{i-j}}+\ldots,$$
where we assume that the lift of $\Lambda$ has the equation $y+M+Nx$. Here, the dots stand for terms of higher valuation.

If the vertex of $\Lambda$ is contained in the segment, we have

$$M=\frac{A_{1l}}{A_{1l+1}}\pm 2\sqrt{(-1)^{i+l+1}A_{0i}A_{1l+1} N \left(\frac{A_{1l}}{A_{1l+1}}\right)^{i+l}}\frac{(-1)^{l} A_{1l+1}^{l-1}}{A_{1l}^{l}} +\ldots,$$

$$y=- \frac{A_{1l}}{A_{1l+1}}\mp 2\sqrt{(-1)^{i+l+1}A_{0i}A_{1l+1} N\left(\frac{A_{1l}}{A_{1l+1}}\right)^{i+l}}\frac{(-1)^l A_{1l+1}^{l-1}}{A_{1l}^{l}} +\ldots,$$

$$x=\pm \sqrt{(-1)^{i-l+1}\frac{A_{0i}}{A_{1l+1} N}\left(\frac{A_{1l}}{A_{1l+1}}\right)^{i-l}}+ \ldots.$$
\end{lemma}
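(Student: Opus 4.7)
My plan is to solve the local lifting equations $Q_T = \ell_T = W_T = 0$ from Definition~\ref{def:localpoly} by explicit elimination, following the template of Example~\ref{ex-locallifting}, and then promote the residue-field solutions to $K$-points via Hensel's lemma as in Section~\ref{subsec-locallift}.

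First I would write down the local polynomials in the two sub-cases. Since $T$ lies on the horizontal edge of $\Trop(C)$ dual to $V_{1l}V_{1,l+1}$, the horizontal ray of $\Lambda$ always forces $\ell_T$ to include the monomials $y$ and $M$. When the vertex of $\Lambda$ lies outside the segment, those are the only contributions, so $\ell_T = y + M$ and $Q_T$ is supported on the full pair of triangles adjacent to the edge,
\[
Q_T = A_{0i}y^i + A_{1l}xy^l + A_{1,l+1}xy^{l+1} + A_{2j}x^2y^j,
\]
with $W_T = \partial_x Q_T$ because $\partial_x \ell_T = 0$. When the vertex of $\Lambda$ lies in the segment, the vertical ray of $\Lambda$ also contributes, giving $\ell_T = M + Nx + y$; now the segment only reaches the end-vertex of $\Trop(C)$ dual to the triangle containing $V_{0i}$, so the $A_{2j}x^2y^j$ summand drops out of $Q_T$, and $W_T = \partial_x Q_T - N\partial_y Q_T$.

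Next I would eliminate variables. From $\ell_T = 0$ I get $y = -M$ (resp.\ $y = -M - Nx$). The tropical constraints force the leading behavior $\ini(M) = \ini(A_{1l}/A_{1,l+1})$, since the edge $V_{1l}V_{1,l+1}$ is exactly the binomial constraint for this segment, so I would substitute $M = A_{1l}/A_{1,l+1} + \epsilon$ with $\val(\epsilon)$ strictly larger, producing the useful linearization $A_{1l} - A_{1,l+1}M = -A_{1,l+1}\epsilon$. Solving $W_T = 0$ expresses $x$ as a rational function of $\epsilon$ (and $N$), and plugging both into $Q_T = 0$ reduces the system to a single scalar equation in $\epsilon$. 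In the first sub-case this reduces at leading order to
\[
\bigl((-M)^l A_{1,l+1}\,\epsilon\bigr)^2 = 4\,A_{0i}A_{2j}\,(-M)^{i+j},
\]
whose $\pm$ square root delivers the claimed correction for $M$. In the second sub-case the role of the absent $A_{2j}x^2y^j$ is played by the linear term $Nx$ of $\ell_T$ after substituting $y = -M - Nx$, and the analogous discriminant becomes $-4A_{0i}A_{1,l+1}N(-M)^{i+l+1}$. The formulas for $y$ and $x$ then follow by back-substitution, and expanding $(-M)^l \equiv (-1)^l(A_{1l}/A_{1,l+1})^l$ collects the overall prefactor $(-1)^l A_{1,l+1}^{l-1}/A_{1l}^l$ into exactly the stated form.

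Finally I would promote the two leading-order solutions to actual lifts in $K$. At each branch the Jacobian of $(Q_T, \ell_T, W_T)$ is invertible, because the two square roots are distinct by the non-vanishing of the relevant initials and by the genericity of $\Trop(C)$; Hensel's lemma combined with the tropical modification argument recalled in Section~\ref{subsec-locallift} then extends each solution uniquely to a point in $K$, with the higher-order corrections absorbed into the ``$\ldots$'' terms. The hard part will be the bookkeeping of signs and integer exponents through the substitutions, especially in the second sub-case where $y = -M - Nx$ introduces cross terms so that $\epsilon x$ and $Nx^2$ enter $Q_T = 0$ at the same order; this is a mechanical but delicate check to do once per sub-case, and I would sanity-check it against the explicit computation in Example~\ref{ex-locallifting}.
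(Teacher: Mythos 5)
Your proposal follows essentially the same route as the paper, which proves this lemma simply by carrying out the local lifting computation of Section~\ref{subsec-locallift} (citing Proposition~5.2 of \cite{CM20}); your elimination scheme, the identification of $W_T=\partial_x Q_T$ resp.\ $W_T=\partial_x Q_T-N\partial_y Q_T$, and the reduction to a discriminant condition in $\epsilon=M-A_{1l}/A_{1,l+1}$ are all correct and reproduce the stated formulas. One small slip: in the second sub-case the discriminant should read $-4A_{0i}A_{1,l+1}N(-M)^{i+l}$ rather than $-4A_{0i}A_{1,l+1}N(-M)^{i+l+1}$ --- the ``$+1$'' belongs only in the sign $(-1)^{i+l+1}$ coming from the overall minus, not as an extra power of $M$; as written your expression is off by a factor of $M$ inside the radical, which would shift the valuation of the correction term.
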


This follows from a computation using the local lifting equations, see also Proposition 5.2 in \cite{CM20}.

 In the following, for a vertex $r$ in the dual Newton subdivision of $Q$, we let $a_r$ be the initial of the corresponding coefficient $A_r$ of $Q$.
\begin{definition}[Twisted edge]\label{def:twist} 
 Let $C$  be a curve defined over $K$ with smooth tropicalization $\Trop(C)$. Let $e $  be a bounded edge in $\Trop(C)$ so that the dual subdivision corresponding to $e$ is the segment joined by lattice points $q, q' \in \mathbb{Z}^2$ and the two triangles of the subdivision intersecting along this segment have vertices $r, r' \in \mathbb{Z}^2$.
We say that the edge  $e$ is  \emph{twisted over $k$} if 
$$\sqrt{(-1)^{\delta(e)} a_ra_{r'}  (a_q a_{q'})^{\delta(e)} }\notin k,$$
where $\delta(e) = 0$ if $r =  r' \mod 2$ and $\delta(e) =1$ if $r \neq r' \mod 2$. 
\end{definition}

\begin{example}\label{ex:delta01}
To see the two possible cases for $\delta(e)$ consider the lattice polytopes in Figure \ref{fig:dualSubdivisionTwist}.
Notice that on the left hand side the  coordinates of the vertices  of the two triangles opposite the overlapping edge $e$ have vector difference $(0,0)  $ modulo $2$. Therefore, the points $r$ and $r'$ are equal modulo $2$ and $\delta(e)=0$. On the right hand side the vector difference of the coordinates of the two vertices has difference $(1, 1)$ modulo $2$. Therefore we have $\delta(e) = 1$ in this case. 
\end{example}

\begin{figure}
\includegraphics{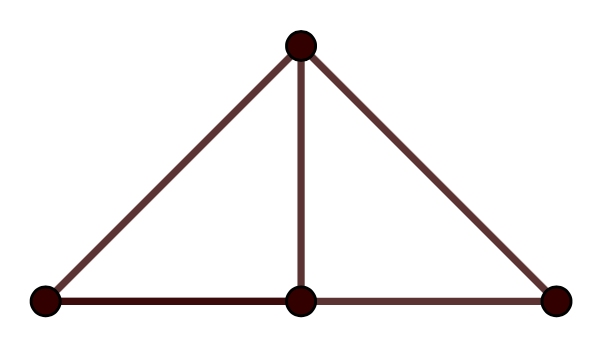}
\hspace{1cm}
\includegraphics{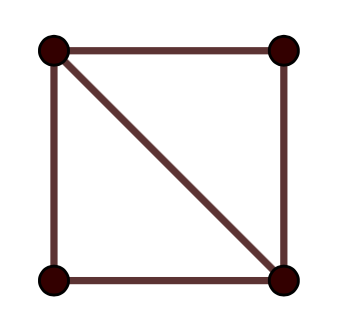}

\caption{The dual subdivision to a bounded edge $e$ and its endpoints  in a tropicalized curve, first in the case when $\delta(e) = 0$ on the left, and then when $\delta(e) = 1$ on the right. } \label{fig:dualSubdivisionTwist}
\end{figure}

\begin{example}\label{ex:realTwist}
The motivation for the terminology of twisting comes from considering amoebas of curves defined over the real numbers. 
Throughout this example, let $k = \mathbb{R}$ and $K = \mathbb{R} \{\!\{t\}\!\}$. Let $C$  be a curve defined over $K$ with smooth tropicalization $\Trop(C) = \Gamma$. Suppose that the defining polynomial $Q$ of   $C$ has convergent coefficients  for $t$ sufficiently small. 
Then we can consider the family of real curves $C_t$ defined by the family of polynomials $Q_t$ for $t$ sufficiently small. The tropical 
curve $\Gamma$ is also obtained as the limit as $s$ tends to infinity of amoebas of the family of curves $\widetilde{C}_s$ defined by the 
family of polynomials $Q_s$, where $s = t^{-1}$.  The amoeba of a curve  $\widetilde{C}_s$ is defined to be $\mathcal{A}_s := \text{Log}_s(\widetilde{C}_s)$, where $\text{Log}_s$ denotes the coordinatewise base $s$ logarithm of  absolute values,  see \cite[Section 2]{BIMS}. 
Moreover, for $s$ sufficiently large, the real amoeba $\mathbb{R}\mathcal{A}_s := \text{Log}_s(\mathbb{R} \widetilde{C}_s)  $ 
either crosses a bounded edge $e$ of $\Gamma$ or it does not, see Figure \ref{fig:amoebatwist}. Following  \cite[Section 3]{BIMS},  call the edge $e$ \emph{twisted} about the edge $e$ if the amoeba crosses it.

The twisting of the edges of $\Gamma$ are determined solely by the signs of the leading terms of the coefficients of the defining polynomial of $C$ as described in \cite[Remark 3.9]{BIMS}. Again let $q, q'$ and $r, r'$ denote the lattice points dual to the edge $e$ as in Definition \ref{def:twist}. Letting $\gamma_q$,$\gamma_{q'}$,$\gamma_r$ and  $\gamma_{r'}$ denote the signs of the leading terms  of the corresponding coefficients, then 
\begin{enumerate}
\item if  $r$ and $r'$ are distinct modulo $2$ then $e$ is twisted if and only if $\gamma_q\gamma_{q'}\gamma_r\gamma_{r'} >0; $
\item if $r$ and $r'$ are equal  modulo $2$ then $e$ is twisted if and only if $\gamma_r\gamma_{r'} <0$. 

\end{enumerate}
This is equivalent to our notion of twist in Definition \ref{def:twist} when $k = \mathbb{R}$. 
See Figure \ref{fig:dualSubdivisionTwist} for two examples of dual subdivisions exhibiting both cases in terms of $r$ and $r'$ above.  
\end{example}

\begin{figure}
\includegraphics[scale = 0.7]{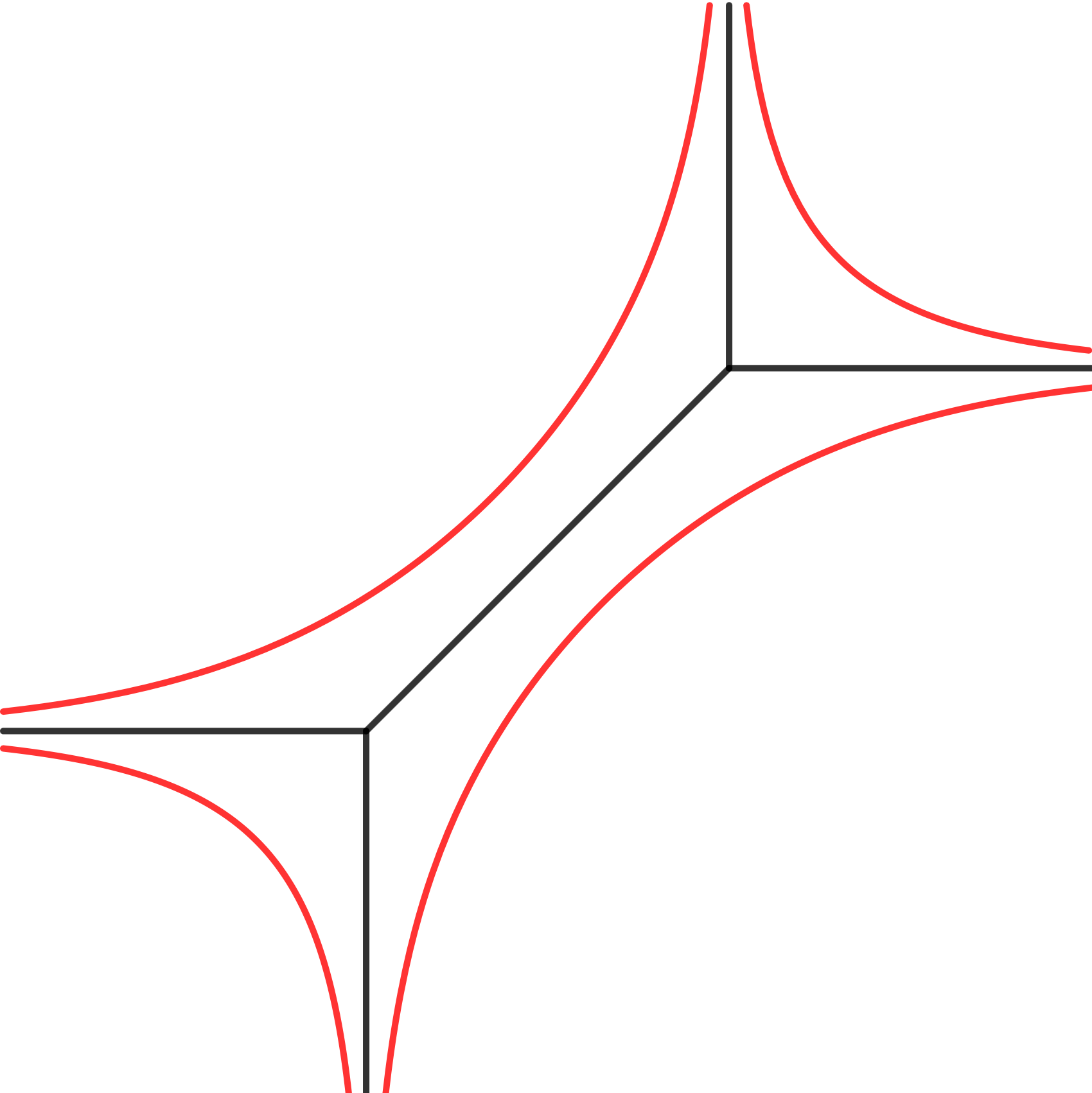} 
\includegraphics[scale = 0.7]{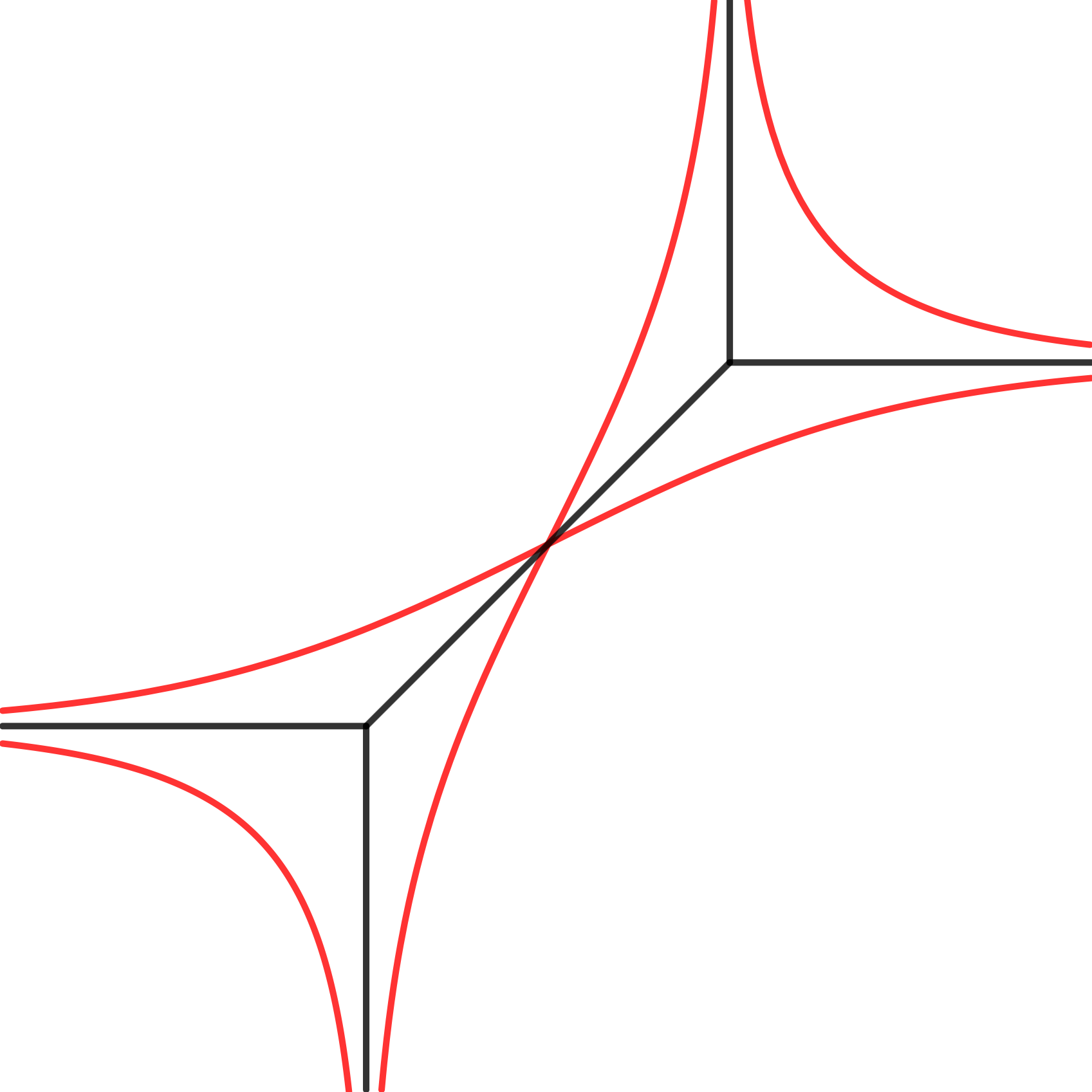}
\caption{An illustration of the non-twisting and twisting of the real amoeba (in red) from Example \ref{ex:realTwist}. } \label{fig:amoebatwist}
\end{figure}

Using our local lifting solutions, obstructions to lifting can be phrased in terms of the twisting of an edge:

\begin{proposition}\label{prop-twisted}
 Let $C$ be a curve defined over $K$ such that $\Trop(C)$ is a smooth tropicalized curve. Let $\Lambda $ be a tropical line and suppose a bounded edge $e$ of $\Trop(C)$ is a tropical tangency component of  $\Lambda \cap \Trop(C)$ strictly contained in a ray of $\Lambda$. 
If $L=V(\ell)$ is a lift of $\Lambda$ which is tangent to $C$ at a point tropicalizing to $e$ then the local equation  $\ell_e$ is defined over $K$ if and only if  $e$ is not  twisted over $k$. 
\end{proposition}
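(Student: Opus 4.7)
The plan is to extract, from the local lifting solution in Lemma~\ref{lem-localliftsegment}, a single square-root expression that controls whether $\ell_e$ is defined over $K$, and then to verify that its solvability in $K$ is equivalent to the non-twistedness of $e$ as defined in Definition~\ref{def:twist}. Throughout, I would use our standing assumption that $K$ is Henselian, with residue characteristic not $2$ and $2$-divisible value group, together with the fact that under these hypotheses an element $\beta \in K^\times$ is a square in $K$ if and only if $\ini(\beta)$ is a square in $k^\times$: indeed, $2$-divisibility lets us write $\val(\beta) = 2v$ and absorb $\sigma(v)^2$, reducing to the case $\val(\beta) = 0$, and then Hensel's lemma applied to $x^2 - \beta$ (using residue characteristic $\neq 2$) lifts any square root of $\ini(\beta)$ in $k$ to a square root of $\beta$ in $K$.

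First, by $S_3$-symmetry of $\mathbb{R}^2$ permuting the three ray directions of a tropical line, I may assume without loss of generality that the ray of $\Lambda$ containing $e$ is horizontal, so that $e$ is a horizontal edge of $\Trop(C)$ dual to a vertical segment with endpoints $q=(1,l)$ and $q'=(1,l+1)$, and with opposite triangle-vertices $r=(0,i)$ and $r'=(2,j)$. Since the vertex of $\Lambda$ is strictly outside $e$, the first case of Lemma~\ref{lem-localliftsegment} applies and the coefficient $M$ of the lift satisfies
\[
M \;=\; \frac{A_{1l}}{A_{1l+1}} \;\pm\; 2\sqrt{\beta}\,\frac{(-1)^l A_{1l+1}^{l-1}}{A_{1l}^{l}} + \cdots, \qquad \beta \;:=\; (-1)^{i+j} A_{0i} A_{2j} \left(\tfrac{A_{1l}}{A_{1l+1}}\right)^{i+j}.
\]
Since everything else appearing in the expression for $\ell_e$ lies in $K$, the local equation $\ell_e$ is defined over $K$ if and only if $\sqrt{\beta}\in K$, which by the preceding paragraph happens if and only if $\ini(\beta)\in k^\times$ is a square.

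Second, I would compare $\ini(\beta) = (-1)^{i+j} a_r a_{r'} (a_q/a_{q'})^{i+j}$ with the twist invariant $(-1)^{\delta(e)} a_r a_{r'} (a_q a_{q'})^{\delta(e)}$ of Definition~\ref{def:twist} modulo $(k^\times)^2$. Note first that $r-r' = (-2,i-j)$, so $\delta(e)=0$ iff $i\equiv j\pmod 2$ and $\delta(e)=1$ otherwise; in particular $(-1)^{i+j} = (-1)^{\delta(e)}$. In the case $\delta(e)=0$, the exponent $i+j$ is even, so $(a_q/a_{q'})^{i+j}$ is already a square in $k^\times$ and $\ini(\beta)$ agrees with $a_r a_{r'}$ modulo squares. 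In the case $\delta(e)=1$, the exponent $i+j$ is odd; writing $(a_q/a_{q'})^{i+j} = (a_q a_{q'}) \cdot (a_q^{i+j-1}/a_{q'}^{i+j+1}) \cdot \square$, one checks that it agrees with $a_q a_{q'}$ modulo squares. Thus in both cases $\ini(\beta)$ and the twist invariant differ by a square in $k^\times$, and their square-root solvability in $k$ is equivalent. Combining with the first step, $\ell_e$ is defined over $K$ iff $e$ is not twisted, as claimed.

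The main obstacle is not conceptual but bookkeeping: one must track the exponents and signs in the square-root expression from Lemma~\ref{lem-localliftsegment} carefully, verify the parity identification between $i+j \pmod 2$ and the indicator $\delta(e)$ of Definition~\ref{def:twist}, and handle the reduction modulo squares in the two parity cases. Once this is done, the equivalence between lifting the local equation and non-twistedness of $e$ follows directly.
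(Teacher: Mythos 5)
Your proposal is correct and follows exactly the route the paper takes: the published proof is literally the one-line statement that the result follows from Lemma~\ref{lem-localliftsegment}, and you have supplied the bookkeeping that the authors omit (the Henselian reduction of square-root solvability in $K$ to squares in $k^\times$, and the identification of $\ini(\beta)$ with the twist invariant modulo $(k^\times)^2$ in the two parity cases for $\delta(e)$). The details check out, including the observation that $r-r'=(-2,i-j)$ forces $(-1)^{i+j}=(-1)^{\delta(e)}$.
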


\begin{proof}
The statement follows by applying Lemma \ref{lem-localliftsegment}.
\end{proof}
We now want to consider the situation when $\Trop(C)$ and $\Lambda$ have a  tropical tangency component $T$ which is a segment strictly contained in an edge $e$  of $\Trop(C)$,  i.e.\ the vertex of $\Lambda$ is contained in the segment of intersection, see Figure \ref{fig:relativeTwist},
This happens e.g.\ for the shape (D) we discuss in the proof of Theorem \ref{thm-2H}.
Let $s, s'$ be the lattice points dual to the edge of $\Lambda$ containing $T$ and $q, q'$ be the lattice points dual to $e$ in $C$. Moreover we choose the labels such that $s$ and $q$ correspond to the region of $\mathbb{R}^2\setminus C$ resp.\ $\mathbb{R}^2\setminus \Lambda$ on the same side of $e$, and $s'$ and $q'$ correspond to the region on the other side of $e$.
 As before, for a vertex $r$ in the dual Newton subdivision of $Q$, we let $a_r$ be the initial of the corresponding coefficient $A_r$ of $Q$. Similarly, for a vertex $s$ of the triangle dual to $\Lambda$, we let $b_s$ denote the coefficient of the defining equation of a lift $L$ of $\Lambda$.
By Lemma \ref{lem-localliftsegment} (where the coefficient $M$ of the line can be expressed as  $\frac{b_s}{b_{s'}}$ in the present notation, as the $y$-coefficient there was $1$), in order for a lift $L$ of $\Lambda $ to have a tangency point $P$ with $C$ which tropicalizes to $T$ we must have
 \begin{equation}\label{eq:relativelytwistedRelation1} 
 \frac{b_s}{b_{s'}}=\frac{a_q}{a_{q'}}.
 \end{equation}

\begin{definition}[Relatively twisted edge]\label{def:relTwist}

Let $\Trop(C)$ be smooth, and let $\Lambda$ be a tropical line which has a tropical tangency component $T$ which is a segment strictly contained in an edge $e$  of $\Trop(C)$. 
Assume $e$ is dual to the edge $q q'$ and $\Lambda$ meets the vertex of $e$ which is dual to the triangle spanned by $q{q'}$ and $r$. Let $a_q, a_{q^{\prime}}, a_r$ denote the corresponding initials.  
We use the analogous indices for the initials of the coefficients of the polynomial defining $L$, i.e.\ $b_s$ and $b_{s'}$ are the initials of the monomials of the endpoints of the edge in the dual subdivision  to the ray of $\Lambda$ which contains $e$, and $b_{r'}$ is the 
initial of the remaining coefficient of the defining polynomial of $L$.

We say that the edge $e$ is relatively twisted over $k$ with respect to $C$ and $L$ if 
$$\frac{b_s}{b_{s'}}=\frac{a_q}{a_{q'}} \qquad \text{and} \qquad \sqrt{ (-1)^{\delta(e,L)+1} a_r (a_qa_{q'})^{\delta(e,L)} b_{r'} a_qb_s } \in k,
$$
where $\delta(e,L) = 0$ if the edge in the dual subdivision with endpoints  $a_r, a_q$ is parallel modulo $2$ to the edge in the dual subdivision with endpoints $b_{r'}, b_s$, and  otherwise $\delta (e,L)= 1$.  

\end{definition}

Notice that the first condition $\frac{b_s}{b_{s'}}=\frac{a_q}{a_{q'}}$ implies that the second condition is equivalent to 
$$\sqrt{ (-1)^{\delta(e,L)+1} a_r (a_qa_{q'})^{\delta(e,L)} b_{r'} a_{q'}b_{s'}} \in k.
$$

Also notice that by the balancing condition for tropical curves, the term $\delta(e,L)$ in the above definition can be equivalently be computed by comparing the directions mod $2$ of the edges with endpoints $r, {q^{\prime}}$ and ${r'}, {s'}$.

\begin{figure}
\includegraphics[scale=0.8]{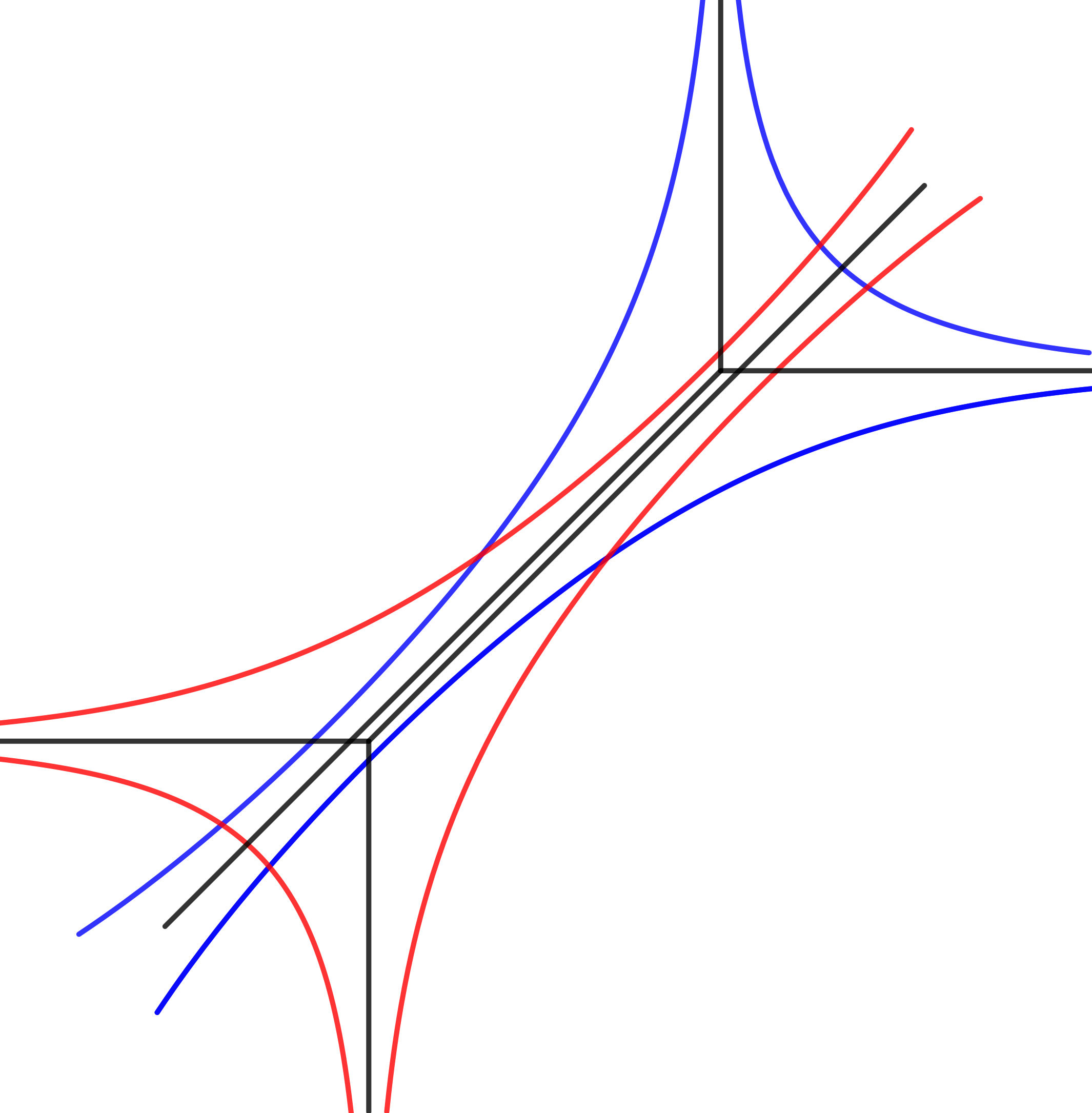}
\put(-20, 70){$\varepsilon_{1}^C$}
\put(-67, 120){$\varepsilon_{2}^C$}
\put(-120,60){$ \varepsilon_{1}^L$}
\put(-75, 20){$\varepsilon_{2}^L$}
\caption{
The picture shows the real amoebas of the two curves, the one of the line in red and (locally) of the quartic in blue.   The segment in the overlap of $\Gamma$ and $\Lambda$ is relatively twisted if the two branches of the two amoebas which are on the same side of the tropicalized curve $\Gamma$ are the image under the coordinatewise logarithm map of pieces of the real algebraic curves which live in the same orthants of $\mathbb{R}^2$.} \label{fig:relativeTwist}
\end{figure}

\begin{example}\label{ex:relativeTwistsReal}
Once again the above definition of relative twists is motivated by the geometric picture over the real numbers, as was explained in the case of twisted edges in Example \ref{ex:realTwist}. 
When considering real tropical intersections of curves, Le Texier defines the notion of relative twists \cite{LeTexier}.
As in Example \ref{ex:realTwist}, consider the real amoebas of both the family of lines and the  family of curves, as drawn in Figure \ref{fig:relativeTwist}. The coordinatewise  logarithm map is the composition of the absolute value map and the logarithm map $\log_t \: \mathbb{R}_{> 0} \to \mathbb{R}$. Therefore, each branch of the amoeba of a real curve can be labelled by the orthant of $\mathbb{(R}^\times)^2$ from which it came. In Figure \ref{fig:relativeTwist}, the branches of the amoebas of the family $C$ along  the edge $e$ are labelled $\varepsilon_{1}^C, \varepsilon_{2}^C \in \{+, -\}^2$, and the branches of the amoebas of the family $L$ along  the edge $e$ are labelled $\varepsilon_{1}^L, \varepsilon_{2}^L \in \{+, -\}^2$. The condition that $$\frac{b_s}{b_{s'}}=\frac{a_q}{a_{q'}}$$ amounts to the equality  of the sets
$$\{ \varepsilon_{1}^C, \varepsilon_{2}^C \} = \{ \varepsilon_{1}^L, \varepsilon_{2}^L \}.$$
In the real case, the edge $e$ being relatively twisted with respect to $C$ and $L$ amounts to $ \varepsilon_{1}^C = \varepsilon_{1}^L$ and  $ \varepsilon_{2}^C= \varepsilon_{2}^L$, where $\varepsilon_{i}^C$ and $\varepsilon_{i}^L$
are signs of the branches the amoebas of $C$ and $L$ respectively, which are on opposite sides of the bounded edge $e$. See  Figure \ref{fig:relativeTwist} for the labelling and \cite[Proposition 4.13]{LeTexier} for a proof of this statement. 

In order for $C$ and $L$ to have a real  tangency point tropicalizing to the segment, the edge $e$  must be relatively twisted, see \cite[Theorem 1.4]{LeTexier}. Otherwise there would be two real points in the intersection.  Notice that this condition is opposite to the case when the tropical tangency component is an entire edge. In that case, in order to have a real tangency we required the edge to be non-twisted. 
\end{example}

\begin{proposition}\label{prop-reltwist}
Let $C$ be a curve defined over $K$ such that $\Trop(C)$ is a  smooth tropicalized curve. Let $\Lambda $ be a tropical line and suppose the segment  $T$ is a tropical tangency component of  $\Lambda \cap \Trop(C)$ strictly contained in an edge of $\Lambda$ and of $\Trop(C)$.
Then a lift $L=V(\ell)$ of $\Lambda$ which is tangent to $C$ at a point tropicalizing to $T$ has local equation $\ell_T$ defined over $K$ if and only if  the edge $T$ is relatively twisted over $k$. 
\end{proposition}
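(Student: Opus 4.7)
The plan is to deduce this, as in the proof of Proposition~\ref{prop-twisted}, directly from the explicit formulas in the second case of Lemma~\ref{lem-localliftsegment} --- the case where the vertex of $\Lambda$ lies inside the segment. I would first put the local picture into the normal form used in that lemma: by a unimodular change of coordinates on the character lattice, I may assume $T$ is horizontal, that the Newton edge dual to $e$ has endpoints $q=(1,l)$ and $q'=(1,l+1)$, that the third vertex of the triangle dual to the vertex of $e$ met by $\Lambda$ is $r=(0,i)$, and that $\ell$ is written as $y+M+Nx$, so that $b_{s'}=1$, $b_s=\ini(M)$, and $b_{r'}=\ini(N)$. In this frame, the equality $b_s/b_{s'}=a_q/a_{q'}$ from Definition~\ref{def:relTwist} is exactly the condition $\ini(M)=\ini(A_{1l}/A_{1l+1})$, which is equivalent to the vertex of $\Lambda$ being interior to~$T$. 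If this equality fails, the combinatorial type assumed in the statement does not occur, so I may assume it holds.

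Next, I would extract the liftability condition from Lemma~\ref{lem-localliftsegment}. The two candidate solutions for $M$ differ by the quantity
$$\pm\, 2\sqrt{(-1)^{i+l+1}\, A_{0i}\, A_{1l+1}\, N\, (A_{1l}/A_{1l+1})^{i+l}}\cdot \frac{(-1)^l A_{1l+1}^{l-1}}{A_{1l}^l},$$
so $\ell_T$ is defined over $K$ if and only if this square root lies in $K$. Since $K$ is Henselian of residue characteristic not equal to~$2$, this happens precisely when the initial of the radicand, namely $(-1)^{i+l+1}\, a_r\, a_{q'}\, b_{r'}\, (a_q/a_{q'})^{i+l}$, admits a square root in $k$. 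Substituting $b_s=a_q/a_{q'}$ and absorbing $(k^\times)^2$-factors, a short calculation shows that this radicand coincides modulo $(k^\times)^2$ with the expression
$$(-1)^{\delta(e,L)+1}\, a_r\, (a_qa_{q'})^{\delta(e,L)}\, b_{r'}\, a_q\, b_s$$
appearing in Definition~\ref{def:relTwist}, provided $\delta(e,L)\equiv i+l\pmod{2}$.

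The remaining task, which is the main source of bookkeeping, is the parity identification. In the local model the two edges relevant for $\delta(e,L)$ are $(r,q)=(0,i)$--$(1,l)$, with primitive direction $(1,l-i)\equiv (1,l+i)\pmod 2$, and $(r',s)=(1,0)$--$(0,0)$, with primitive direction $(1,0)\pmod 2$. These are parallel modulo~$2$ exactly when $i+l$ is even, so indeed $\delta(e,L)=0$ iff $i+l$ is even. I expect the main obstacle to be this parity-and-sign bookkeeping: absorbing the right powers of $a_q$ and $a_{q'}$ into squares depending on the parity of $i+l$, and reconciling the exponent $i+l+1$ on $-1$ appearing in Lemma~\ref{lem-localliftsegment} with the intrinsic $\delta(e,L)$ of Definition~\ref{def:relTwist}. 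Once this parity match is verified, the two square-root expressions agree up to $(k^\times)^2$ and the equivalence in the proposition follows.
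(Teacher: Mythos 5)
Your proposal is correct and follows exactly the route the paper takes: its own proof of this proposition is the one-line observation that the statement follows from Lemma~\ref{lem-localliftsegment}, and you have simply supplied the details that reference leaves implicit. Your normalization ($b_{s'}=1$, $b_s=\ini(M)$, $b_{r'}=\ini(N)$, $r=(0,i)$), the mod-squares comparison of the radicand $(-1)^{i+l+1}a_r a_{q'} b_{r'}(a_q/a_{q'})^{i+l}$ with the expression in Definition~\ref{def:relTwist}, and the identification $\delta(e,L)\equiv i+l\pmod 2$ all check out.
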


\begin{proof}
The statement once again follows  from Lemma \ref{lem-localliftsegment}.
\end{proof}
A condition similar to twisting can be given also for another type of tangency, as follows.
\begin{proposition}\label{prop:vertexEll} 
Let $C$ be a curve defined over $K$ such that $\Trop(C)$ is a  smooth tropicalized curve. Let $\Lambda $ be a tropical line and suppose $p$ is an isolated point of $\Lambda \cap \Trop(C)$ which is a vertex $p$ of $\Lambda$ with intersection multiplicity $2$. 
Then a lift $L$ of $\Lambda$  tangent to $C$ at a point tropicalizing to $p$ is defined over $K$  if and only if $b_{r'},b_{s'} \in k$ and 
$$\sqrt{-a_ra_sb_{r'}b_{s'}} \in k,$$
where $r, s \in \mathbb{Z}^2$ are the lattice endpoints of the edge dual to the edge $E$ of $\Trop(C)$ and $r', s'\in \mathbb{Z}^2$  are lattice endpoints dual to the unique edge of $\Lambda$ which has intersection multiplicity $2$ with $E$. 
\end{proposition}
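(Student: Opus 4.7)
The plan is to prove the proposition by explicitly solving the local lifting equations $Q_p = \ell_p = W_p = 0$ at $p$ using the framework of Section~\ref{subsec-locallift}, and then comparing the resulting conditions on the coefficients of $\ell$ to the stated conditions on initials. Because $p$ is a vertex of $\Lambda$, the local form $\ell_p$ equals the entire line equation $\ell = y + M + Nx$, after normalizing the coefficient of $y$ to $1$. Because $p$ is an interior point of the edge $E$ of $\Trop(C)$, the local form $Q_p$ is the binomial
\begin{equation*}
Q_p = A_r x^{r_1} y^{r_2} + A_s x^{s_1} y^{s_2}
\end{equation*}
supported on the two lattice endpoints of the edge of the Newton subdivision dual to $E$.

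First, I would eliminate variables. Setting $(p,q) := s - r$, the relation $Q_p = 0$ substitutes $x^p y^q = -A_r/A_s$ into $W_p$, so $W_p = 0$ reduces to the linear relation $py = Nqx$. Combined with $\ell_p = 0$, this yields $x = -pM/(N(p+q))$ and $y = -qM/(p+q)$ when $p+q \neq 0$, and substituting back into $Q_p = 0$ produces the master relation
\begin{equation*}
M^{\,p+q} \;=\; \frac{(-1)^{p+q+1}\,A_r\,N^{p}\,(p+q)^{p+q}}{A_s\,p^{p}q^{q}}.
\end{equation*}
The hypothesis that the stable intersection at $p$ equals $2$ and that a unique ray of $\Lambda$ contributes multiplicity $2$ with $E$ forces the primitive direction $u_E = (a,b)$ of $E$ to satisfy $\max(|a|, |b|, |a-b|) = 2$. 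Up to sign this leaves exactly three subcases: a diagonal subcase with $u_E = (1,-1)$ and $(p,q) = (1,1)$, a horizontal subcase with $u_E = (1,2)$ and $(p,q) = (-2,1)$, and a vertical subcase with $u_E = (2,1)$ and $(p,q) = (-1,2)$. In each of these subcases the lattice points $r', s'$ dual to the mult-$2$ ray of $\Lambda$ are determined combinatorially inside the Newton triangle of $\ell$.

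In the diagonal subcase the master relation reads $M^2 = -4 A_r N/A_s$, so $L$ is defined over $K$ if and only if $N \in K$ and $-A_r N/A_s$ is a square in $K$; by $2$-divisibility of $\val(K^\times)$ and the section $\sigma$, this is equivalent to $b_{r'}, b_{s'} \in k$ and $\sqrt{-a_r a_s b_{r'} b_{s'}} \in k$, since $\{b_{r'}, b_{s'}\} = \{\ini(N), 1\}$ here and the factor $a_s^2$ may be absorbed into the square. In the horizontal and vertical subcases the master relation collapses to an explicit Laurent monomial expression for $M$ in $N$, so $M \in K$ as soon as $N \in K$ and no further obstruction arises; a direct substitution then shows that $-a_r a_s b_{r'} b_{s'}$ is automatically an explicit square in $k$, so the stated square-root condition holds vacuously and the uniform criterion in the proposition remains correct. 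Hensel's lemma, in the form discussed after Definition~\ref{def:localpoly}, upgrades these initial-level solutions to actual solutions in $K$. The main technical obstacle will be the uniform packaging of the three subcases into the single condition stated in the proposition, which requires careful bookkeeping of signs, powers of $2$, and the combinatorial identification of $r', s'$ in each subcase so that the automatically-satisfied square-root conditions in the horizontal and vertical subcases line up with the genuine obstruction arising in the diagonal subcase.
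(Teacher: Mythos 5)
Your overall strategy is the same as the paper's: reduce to the local system for the binomial $Q_p$ and the trinomial $\ell$, note that the multiplicity-$2$ hypothesis pins the primitive direction of $E$ to one of $(1,-1)$, $(2,1)$, $(1,2)$, and extract a discriminant/square-root condition in each case (the paper substitutes the line into the binomial and sets the discriminant of the resulting quadratic to zero rather than eliminating via the Wronskian, but these are equivalent). Your master relation is correct, and your diagonal subcase is handled correctly and matches the paper.

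There is, however, a genuine gap in your horizontal subcase. Throughout you treat $N$ as given and $M$ as the unknown, but the proposition's $b_{r'},b_{s'}$ are by definition the coefficients dual to the multiplicity-$2$ ray of $\Lambda$, and the coefficient that the tangency condition actually determines is the third one, $b_{q'}$. In the horizontal subcase ($u_E=(1,2)$, the horizontal ray carries multiplicity $2$, so its dual Newton edge joins the constant and $y$ vertices), one has $b_{r'}b_{s'}=m\cdot 1$ and the unknown is $n=b_{q'}$. Your relation $M=-A_sN^2/(4A_r)$ gives $-a_ra_sm=(a_sn/2)^2$, which is a perfect square \emph{as an expression} but is a square \emph{in} $k$ precisely when $n\in k$ --- i.e.\ the square-root condition is the genuine obstruction here, not vacuous. (If $m\in k$ but $n\notin k$, then $-a_ra_sb_{r'}b_{s'}\in k$ yet is a non-square.) As written, your argument establishes only ``if $N\in K$ then $M\in K$,'' which does not prove the ``if'' direction of the proposition from its stated hypotheses in this subcase. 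The vacuousness you describe does occur, but only in the vertical subcase, where $b_{r'}b_{s'}=mn$ is forced to equal $-a_r/(4a_s)\in k$ by the relation $MN=-A_r/(4A_s)$. The fix is to solve, in each subcase, the quadratic satisfied by the coefficient dual to the Newton vertex \emph{not} on the multiplicity-$2$ edge; its discriminant condition is uniformly $b_{q'}^2\doteq -a_ra_sb_{r'}b_{s'}$ up to squares, which is exactly the paper's argument.
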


\begin{proof}
Let $Q_p$ and $\ell_p$ denote the local equations for the curve $C$ and a lift of the tropical line $\Lambda$ at $p$, respectively. Then $Q_p$ is the  binomial $A_r x^{r_1}y^{r_2} + A_sx^{s_1 }y^{s_2}$ and $\ell_p = \ell$ is a trinomial. 
We denote the coefficients of the  linear form  $\ell$  by $B_{q'}, B_{r'}, B_{s'}$. 

If  $\Lambda$ and $C$ have intersection multiplicity $2$ at the vertex $p$ and $r', s'\in \mathbb{Z}^2$  are lattice endpoints dual to the unique edge of $\Lambda$ which has intersection multiplicity $2$ with $E$, then the direction of $E$ is determined. For example, if $r' = (0,0)$ and $s' = (1, 0)$ then  the direction of $E$ must be $(2, 1)$ and hence $Q_p = A_r x^{r_1}y^{r_2} + A_sx^{r_1 + 1}y^{r_2 - 2}$ and set $\ell =  B_{q'}y + B_{s'}x + B_{r'}$. Solving this system of two equations in the torus reduces to the degree two equation in  $y$ given by $$A_ry^2 - \frac{A_sB_t}{B_{s'}}y - \frac{A_sB_{r'}}{B_{s'}} = 0.$$
In order to have a tangency tropicalizing to $p$ we require the  discriminant of this equation to be equal to zero, so that 
$A_r^2 B_{q'}^2 + 4 A_r A_sB_{r'}B_{s'} = 0$. 
 Passing to the initials this implies  that $b_{q'} = \pm \sqrt{  \frac{ - 4 a_r a_sb_{r'}b_{s'}} { a_r^2}} $. 
Thus we have  $b_{q'}  \in k$ if and only if $\sqrt{-a_ra_sb_{r'}b_{s'}} \in k$ and the statement is proven. 
The other two cases for $r'$ and $s'$ are solved analogously. 
\end{proof}

By going through the cases in Appendix \ref{app:classification}, one can see that Propositions \ref{prop-twisted}, \ref{prop-reltwist} and \ref{prop:vertexEll} cover all necessary lifting conditions of generic bitangent shapes except for shape (C). Shape (C) is more involved (see also Proposition 6.4 in \cite{CM20} for the case of real lifting), and we leave it out here.

\subsection{Comparing lifting for different fields}\label{subsec-comparelift}

In this subsection, we sum up the results about lifting of tropical bitangents, and use our study of lifting to compare lifting over different fields.

\begin{proposition}\label{prop-initialsolutionsareLaurentterms}
Let $C=V(Q)$ be a quartic defined over $K$ with generic tropicalization $\Trop(C)$ and $\Lambda$ a tropical bitangent, and assume that the residue characteristic of $K$ is not $2$ or $3$. Then the initials of the coefficients of the defining equation $\ell$ of a lift $L=V(\ell)$ of $\Lambda$, viewed in the algebraic closure of $k$, are Laurent terms in the initials $a_{ij}$ of the coefficients of $Q$ or square roots thereof.

If $\Lambda$ is not contained in a bitangent class of shape (II) in the classification in \cite{CM20}, the coefficients of these Laurent terms are of the form $\pm 2^m$ for some $m\in \mathbb{Z}$. 

For some of the liftable tropical bitangents of a class of shape (II), the coefficients involve $\sqrt{2}$ and $\sqrt{3}$.
\end{proposition}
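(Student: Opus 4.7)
The plan is to proceed case-by-case through the classification of generic bitangent shapes spelled out in Appendix~\ref{app:classification}. For each shape and each of its liftable representatives $\Lambda$, I will solve the local lifting equations $Q_T(x,y) = \ell_T(x,y) = W_T(x,y) = 0$ of Definition~\ref{def:localpoly} at every tropical tangency component $T \subset \Lambda \cap \Trop(C)$, exactly as illustrated in Example~\ref{ex-locallifting}. A sizable portion of these computations is already packaged in Lemma~\ref{lem-localliftsegment}, which covers tangency along a segment with or without the vertex of $\Lambda$ in its interior.

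To obtain the first assertion, the key structural observation will be that, after eliminating the tangency coordinates, every such local system reduces to a polynomial equation of degree at most $2$ in a single unknown parameter of $\ell$ (typically a ratio such as $M/N$). This happens because for generic bitangent shapes the supports of $Q_T$ and $\ell_T$ are very small, so that the resultants obtained during elimination collapse to single Laurent monomials. Consequently the initials of the coefficients of $\ell$ are Laurent terms in the $a_{ij}$, or square roots of such Laurent terms.

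For the second assertion, I will track which rational constants can appear in these closed-form expressions. The only integers entering the elimination come from exponents of monomials in $Q_T$ (via the partial derivatives in $W_T$) and from completing the square in the resulting quadratic; for a smooth tropical quartic these exponents are bounded by $4$, so a priori only the primes $2$ and $3$ can appear. A case-by-case inspection will show that outside shape~(II) all factors of $3$ cancel during elimination, leaving only constants of the form $\pm 2^m$; the hypothesis on the residue characteristic ensures that these constants remain nonzero and well-defined in $k$. In shape~(II), by contrast, the direct computation will show that the primes $2$ and $3$ do not cancel, and the resulting square roots involve $\sqrt{2}$ and $\sqrt{3}$ exactly as claimed.

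The principal obstacle is not conceptual but combinatorial: one must verify the claim for every shape and every liftable representative in Appendix~\ref{app:classification}. This is however a mechanical task, reducing in each case to a small Gr\"obner basis computation of the type performed in Example~\ref{ex-locallifting}, and many of the individual cases have already been carried out in the analyses leading to Theorem~\ref{thm:twistlift} and the tables in Appendix~\ref{app-tableQtypes}.
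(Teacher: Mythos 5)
Your proposal matches the paper's argument: the paper proves this proposition precisely by a case-by-case analysis over the classification of generic bitangent shapes, solving the local lifting equations as in Example~\ref{ex-locallifting} and Lemma~\ref{lem-localliftsegment}, and noting that (unlike the real case of Theorem~1.2 in \cite{CM20}) one must additionally track the rational coefficients of the resulting Laurent terms. Your outline of which constants can arise and why shape~(II) is exceptional is exactly the bookkeeping the paper delegates to that case analysis, so the approach is essentially identical.
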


\begin{proof}
This is a generalization of Theorem 1.2 in \cite{CM20}. As the proof of this theorem, the result follows from a case-by-case analysis involving the classification of shapes of bitangent classes from \cite{CM20}, see also Appendix \ref{app:classification}. Unlike in Theorem 1.2 \cite{CM20}, which focused on the case of lifting over $\mathbb{R}$, we have to pay attention to the coefficients of the solutions of our local lifting equations in order to decide liftability over $K$.
\end{proof}

If we check all $4$ local lifting computations for a tropical bitangent class, we observe that the obstruction for lifting to $K$ is the same for all $4$, i.e.\ the radicands which appear are all equal up to square (see e.g.\ Example \ref{ex-locallifting}). Thus we can conclude:

\begin{theorem}\label{thm-allornone}
Assume the residue characteristic of $K$ is not $2$ or $3$.  Given a generic tropicalization $\Trop(C)$ of a quartic $C=V(Q)$ defined over $K$, and  a bitangent class $S$ of $\Trop(C)$, 
then either  all four lifts of $S$ are defined    over $K$ or none of them are. 

\end{theorem}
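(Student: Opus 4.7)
The overall approach is a case-by-case analysis based on the classification of generic bitangent shapes in Appendix \ref{app:classification} (derived from \cite{CM20}), combined with the local lifting formulas from Section~\ref{subsec-locallift}. By Theorem \ref{thm:twistlift}, whether a given tropical bitangent $\Lambda$ in the class $S$ lifts over $K$ depends only on $\Trop(C)$ and the classes of the initials $a_{ij}$ in $k^\times/(k^\times)^2$. So it suffices to check, shape by shape, that the obstruction to lifting over $k$ takes the same value in $k^\times/(k^\times)^2$ for all four lifts of $S$.

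For a fixed shape, the plan is as follows. Theorem \ref{thm-4lifts} identifies which tropical lines in $S$ carry how many of the four complex lifts, and Propositions \ref{prop-twisted}, \ref{prop-reltwist}, and \ref{prop:vertexEll} express the obstruction at each tropical tangency component either as a twisting condition, a relative twisting condition, or a vertex-type square-root condition. Combining these with Lemma \ref{lem-localliftsegment}, one writes the initial coefficients of each candidate lift $L = V(M+Nx+y)$ as Laurent terms in the $a_{ij}$, possibly involving a single square root (Proposition \ref{prop-initialsolutionsareLaurentterms}); these square-root radicands are the lifting obstructions. One then checks that for the four lifts of $S$ the radicands differ pairwise only by squares in $k^\times$, so that all four roots lie in $k$ simultaneously or none of them do. Once the initials are in $k$, Hensel's lemma (together with the tropical refinement argument reviewed in Section \ref{subsec-locallift}) extends them uniquely to $K$.

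The key computation to isolate is the comparison at shared and non-shared tropical tangency components. Any tangency component $T$ that is common to several liftable bitangents in $S$ trivially contributes the same radicand to each associated lift, as is visible in Example \ref{ex-locallifting} for shape (E), where the right-hand tangency component produces the same $\sqrt{-a_{30}a_{21}/(a_{22}a_{31})}$ for all four lifts. For tangency components that differ between bitangents in $S$, one must compute the radicands from the corresponding $Q_T$, $\ell_T$, $W_T$ and verify, using the explicit form of the dual motifs tabulated in Appendix \ref{app:classification}, that the ratio of any two such radicands is a square in $k^\times$; this typically follows because the differing monomials lie in the triangles dual to the two vertices of the edge joining the bitangent lines, and the relevant quotient is forced to be a square by the balancing condition and the parity data ($\delta(e)$, $\delta(e,L)$) that entered Definitions \ref{def:twist} and \ref{def:relTwist}.

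The main obstacle is the sheer volume of the case analysis: one must handle all generic bitangent shapes from Appendix \ref{app:classification}, and for each distinguish which of the configurations in Theorem \ref{thm-4lifts} ($4\times 1$, $2\times 2$, $1 + 1 + 2$, or $1 \times 4$) occurs. The genuinely delicate case is shape (C), for which Propositions \ref{prop-twisted}, \ref{prop-reltwist}, \ref{prop:vertexEll} do not directly apply; here one must adapt the argument of Proposition 6.4 of \cite{CM20} to the present setting, performing the local lifting computation explicitly at the relevant star of edges and checking by direct manipulation that the four radicands agree modulo squares. With shape (C) handled and the other shapes dispatched by the routine computation sketched above, the theorem follows.
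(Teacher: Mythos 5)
Your proposal follows essentially the same route as the paper: the paper's own justification is precisely that one runs the four local lifting computations for each generic bitangent shape and observes that the radicands appearing are equal up to squares (as in Example \ref{ex-locallifting}), so that all four lifts are defined over $K$ or none are. Your additional remarks about shared tangency components contributing identical radicands and about shape (C) requiring the separate treatment of Proposition 6.4 of \cite{CM20} match the paper's own caveats, so the argument is correct and not a genuinely different approach.
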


As noted above, the obstructions for lifting are equal not only for the coefficients of the equations of the bitangent lines, but also for the tangency points. Taking the special behaviour of shape (II) into account (see Proposition \ref{prop-initialsolutionsareLaurentterms}), we can deduce:

\begin{corollary}\label{cor-totallyreal}
Assume $\sqrt{2}, \sqrt{3}$ exist in $k^\times$.
Given a generic tropicalization $\Trop(C)$ of a quartic $C=V(Q)$ defined over $K$, and a  bitangent class $S$ of $\Trop(C)$ such that the equations of  all four lifts are defined over $K$. Then 
the tangency points  are also all defined over $K$.

\end{corollary}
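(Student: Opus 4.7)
The plan is to upgrade the case-by-case verification behind Theorem~\ref{thm-allornone} so that it tracks the tangency points as well as the bitangent line coefficients. By Proposition~\ref{prop-initialsolutionsareLaurentterms}, the initials of both sets of quantities---the coefficients of a defining equation $\ell$ of a lift and the coordinates of each tangency point---are obtained by solving the local lifting equations of Definition~\ref{def:localpoly}, and these solutions are Laurent monomials in the $a_{ij}$'s together with at most one square root $\sqrt{D}$. The numerical coefficients of these Laurent monomials are of the form $\pm 2^m$ outside of shape (II), and may involve $\sqrt{2}$ and $\sqrt{3}$ inside shape (II).

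The key observation, already visible in the table at the end of Example~\ref{ex-locallifting}, is that the radicand $D$ appearing in the formula for a tangency point differs from the radicand appearing in the corresponding coefficient of $\ell$ only by a factor that is a Laurent monomial in the $a_{ij}$'s times a perfect square in $k^\times$. Consequently, once the initials of the coefficients of $\ell$ lie in $k$---which is exactly the hypothesis coming out of Theorem~\ref{thm-allornone}---so do the initials of the coordinates of the tangency points, and Hensel's lemma then lifts these initials to $K$-rational tangency points. Under the standing hypothesis that $\sqrt{2}$ and $\sqrt{3}$ lie in $k^\times$, the exceptional square roots that arise in shape (II) also lie in $k$, so that case is handled uniformly with the others.

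The main obstacle is bookkeeping: each shape in the classification of Appendix~\ref{app:classification} must be inspected individually, with the relevant data read off from the tables of local lifting solutions collected there. For each tropical tangency component---whether it is an isolated point, a segment strictly inside an edge of $\Trop(C)$, a segment containing the vertex of $\Lambda$, or a star of three segments joined at a vertex---one has to verify that the radicand governing a coordinate of the tangency point coincides, modulo squares in $k^\times$, with the radicand governing the corresponding coefficient of $\ell$. This is a direct, if somewhat tedious, extension of the case analysis already carried out in the proofs of Proposition~\ref{prop-initialsolutionsareLaurentterms} and Theorem~\ref{thm-allornone}, and the only genuinely new input needed beyond those proofs is the assumption $\sqrt{2},\sqrt{3}\in k^\times$ to absorb the numerical square roots arising in shape (II).
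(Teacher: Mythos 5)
Your proposal matches the paper's argument: the authors likewise deduce the corollary from the observation that the lifting obstructions (radicands) for the tangency points agree, up to squares, with those for the bitangent coefficients in every case of the classification, with the hypothesis $\sqrt{2},\sqrt{3}\in k^\times$ needed only to absorb the numerical square roots arising in shape (II). One small point of care: the factor relating the two radicands must itself be a square (e.g.\ the square of a Laurent monomial, as in Lemma~\ref{lem-localliftsegment}), not merely an arbitrary Laurent monomial times a square, but your examples show you are using it in exactly this way.
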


To compare lifting over different fields $K_1$ and $K_2$ with residue fields $k_1$ and $k_2$ respectively, we first suppose 
there exists an isomorphism $\phi: k_1^{\times} / (k_1^{\times})^2 \to k_2^{\times} / (k_2^{\times})^2$.
For an element $a \in k^{\times} $, let $\overline{a} $ denote its class in $k^{\times} / (k^{\times} )^2$. 
As a main example, consider the case when $k$ is the real numbers or any finite field. Then  $$k^{\times} / (k^{\times})^2 \cong \mathbb{Z}/2 \mathbb{Z},$$
and an isomorphism $k^{\times} / (k^{\times})^2 \to  \mathbb{Z}/2 \mathbb{Z}$ is given by the Legendre symbol. 
For a finite field $k$ we let $\legendre{a}{k}$ denote the Legendre symbol of $a \in k^\times$. Concretely, we have 
$$
\overline{a} = \legendre{a}{k} =  
\begin{cases}
+1  \text{ if } x^2 = a \text{ has a solution in } k \\
-1  \text{ if } x^2 = a \text{ has no solution in } k 
\end{cases}
$$
  If we take $k = \mathbb{R}$ then the Legendre symbol is simply remembering whether $a \in k^{\times}$ is positive or negative. 
For two fields $k_1, k_2$ with isomorphisms $k_i^{\times} / (k_i^{\times})^2 \to  \mathbb{Z}/2 \mathbb{Z}$, there is a unique isomorphism 
$\phi: k_1^{\times} / (k_1^{\times})^2 \to k_2^{\times} / (k_2^{\times})^2$. 

Using Theorem \ref{thm:twistlift} we can relate the lifts over different fields when $k_1^{\times} / (k_1^{\times})^2$ and $k_2^{\times} / (k_2^{\times})^2$ are isomorphic.

\begin{theorem}\label{thm-comparinglifting}
Let $K_1$ and $K_2$ be  fields with residue fields $k_1$ and $k_2$, respectively. Suppose there exists an isomorphism of groups $\phi : k_{1}^{\times}/ (k_{1}^{\times})^2  \to k_{2}^{\times}/ (k_{2}^{\times})^2$ such that  $\phi(\overline{-1}) = \overline{-1}$.

For $i=1,2$, let $C_i = V(Q_i) $ be a quartic curve defined over $K_i$ such that $\Trop(C_i)$ is generic.
Let $Q_1 = \sum A_{ij} x^i y^j z^{4-i-j}$ and $Q_2 = \sum B_{ij} x^i y^j z^{4-i-j}$.
We assume $val(B_{ij}) = val(A_{ij})$, in particular $\Trop(C_1)=\Trop(C_2)$, and
 $\phi(\overline{a_{ij}} ) = \overline{b_{ij}}$ 
 for all $i, j$, where $a_{ij}, b_{ij}$ are the initials of $A_{ij}$ and $B_{ij}$, respectively. 

Then a tropical bitangent  $\Lambda$ to the tropicalized quartic $\Trop(C_1) $ lifts to a bitangent  
of $C_1$ over $K_1$ if and only if $\Lambda$ lifts to a bitangent  of $C_2$ defined over $K_2$.
\end{theorem}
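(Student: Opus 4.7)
The plan is to deduce Theorem \ref{thm-comparinglifting} as an essentially formal consequence of Theorem \ref{thm:twistlift}, which already asserts that liftability of a tropical bitangent is determined by the data $(\Trop(C), \{\overline{a_{ij}}\})$, where $\overline{a_{ij}}$ is the class of the initial of $A_{ij}$ in $k^\times/(k^\times)^2$. The role of the hypotheses of Theorem \ref{thm-comparinglifting} is precisely to ensure that these data agree for $C_1$ and $C_2$ up to the isomorphism $\phi$.

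First I would observe that the assumption $\val(B_{ij}) = \val(A_{ij})$ forces the Newton subdivisions $\Delta_{Q_1}$ and $\Delta_{Q_2}$ to coincide, so by the Duality Theorem \ref{theorem-dualnewtonsub} we have $\Trop(C_1) = \Trop(C_2)$. Hence $C_1$ and $C_2$ share the same tropical bitangent classes, shapes, and dual motifs, and the classification in Appendix \ref{app:classification} applies uniformly to both.

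Next I would unpack the lifting conditions supplied by Propositions \ref{prop-twisted}, \ref{prop-reltwist}, and \ref{prop:vertexEll} (together with the remaining shape-(C) analysis from \cite{CM20}). In every case, the obstruction to lifting a given tropical bitangent $\Lambda$ over $K_i$ is of the form ``$R \in (k_i^\times)^2$,'' where $R$ is a fixed Laurent monomial in the initials $a_{ij}$ (respectively $b_{ij}$) decorated by a sign $\pm 1$ arising from a parity $\delta(e)$ or $\delta(e,L)$ that depends only on $\Trop(C)$. The square class $\overline{R} \in k^\times/(k^\times)^2$ is therefore determined by $\overline{-1}$ and the classes $\overline{a_{ij}}$ via the group law in $k^\times/(k^\times)^2$. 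Applying the homomorphism $\phi$ with $\phi(\overline{-1}) = \overline{-1}$ and $\phi(\overline{a_{ij}}) = \overline{b_{ij}}$ sends the radicand computed for $C_1$ to the radicand computed for $C_2$, and since $\phi$ is an isomorphism, $\overline{R_1} = \overline{1}$ in $k_1^\times/(k_1^\times)^2$ if and only if $\overline{R_2} = \overline{1}$ in $k_2^\times/(k_2^\times)^2$. By Theorem \ref{thm:twistlift}, this is exactly the statement that $\Lambda$ lifts over $K_1$ iff it lifts over $K_2$.

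The main obstacle I anticipate is controlling the numerical prefactors appearing in the lifting formulas of Lemma \ref{lem-localliftsegment}, which by Proposition \ref{prop-initialsolutionsareLaurentterms} are of the form $\pm 2^m$ outside shape (II) and may involve $\sqrt{2}, \sqrt{3}$ inside shape (II). Since the residue characteristic is not $2$, powers of $2$ are units, and one must check shape by shape in Appendix \ref{app:classification} that such numerical factors always enter the radicand $R$ to an even power, so that their square class is trivial and is automatically preserved by $\phi$. For shape (II) the analogous verification either exhibits $2$ and $3$ only to even powers in the obstruction, or else requires treating the $\sqrt{2}$ and $\sqrt{3}$ contributions as part of the tangency-point (rather than bitangent-line) lifting data, which by Corollary \ref{cor-totallyreal} is controlled separately. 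Once each shape is verified, the argument of the preceding paragraph completes the proof.
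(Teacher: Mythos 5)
Your proposal is correct and follows essentially the same route as the paper, which derives Theorem \ref{thm-comparinglifting} directly from Theorem \ref{thm:twistlift}. Your additional observation that the lifting obstructions are square classes of products of $\overline{-1}$ and the $\overline{a_{ij}}$ (with the factors of $2$ sitting outside the radicands in Lemma \ref{lem-localliftsegment} and Propositions \ref{prop-twisted}--\ref{prop:vertexEll}) is exactly the verification needed to make the paper's one-line deduction precise, and correctly explains why only $\phi(\overline{-1})=\overline{-1}$ is required here, in contrast to the Qtype comparison.
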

This follows from Theorem \ref{thm:twistlift}.

The following corollary deals with the special case of comparing lifting over a finite field $k$ with the real numbers. Here we can make use of the Legendre symbols. This is particularly useful, because lifting over the real numbers can be checked computationally using the \textsc{polymake}-extension of Geiger and Panizzut \cite{polymake, GP21a, GP21}. 

\begin{corollary}\label{cor:compareliftingRk}
Let the residue field $k$ be finite and of characteristic $p$ with $p \equiv 3  \mod 4$ and order $p^{2l + 1}.$ 

Let $C = V(Q) $ be a quartic curve defined over $K$ with $Q = \sum A_{ij} x^i y^j$. 
Then a tropical bitangent  $\Lambda$ to the tropicalized curve $\Trop(C) $ lifts to a bitangent 
over $K$ if and only if $L$ lifts to a bitangent  of $C' = V(Q')$ defined over $\mathbb{R}\{\!\{t \}\!\}$,
where $Q' = \sum B_{ij} x^i y^j$ and  $B_{ij} \in \mathbb{R}\{\!\{t \}\!\}$  are such that  $val(B_{ij}) = val(A_{ij})$ and $$ \legendre{a_{ij}}{k} = \legendre{b_{ij}}{\mathbb{R}}.$$

\end{corollary}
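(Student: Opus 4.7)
The plan is to realize Corollary \ref{cor:compareliftingRk} as a direct specialization of Theorem \ref{thm-comparinglifting} to the situation where the residue field $k$ is finite with the stated restrictions on its order, by producing an explicit isomorphism $\phi\colon k^\times/(k^\times)^2 \to \mathbb{R}^\times/(\mathbb{R}^\times)^2$ that sends $\overline{-1}$ to $\overline{-1}$, and then translating the hypothesis $\phi(\overline{a_{ij}}) = \overline{b_{ij}}$ into the equality of Legendre symbols.

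First I would verify that both groups are cyclic of order two. For any finite field $k$ of odd characteristic the squaring map $k^\times \to k^\times$ has kernel $\{\pm 1\}$, so $k^\times/(k^\times)^2 \cong \mathbb{Z}/2\mathbb{Z}$, and of course $\mathbb{R}^\times/(\mathbb{R}^\times)^2 \cong \mathbb{Z}/2\mathbb{Z}$ via the sign map. Between any two groups of order two there is a unique isomorphism, so I define $\phi$ as this unique isomorphism. The Legendre symbol $\legendre{\cdot}{k}\colon k^\times/(k^\times)^2 \to \{\pm 1\}$ and the sign map for $\mathbb{R}$ fit into the identification of $\phi$ via $\phi(\overline{a}) = \overline{b}$ iff $\legendre{a}{k} = \legendre{b}{\mathbb{R}}$.

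Next I would check that under the numerical hypotheses on $k$, one has $\phi(\overline{-1}) = \overline{-1}$; equivalently, that $-1$ is a non-square in $k$. Writing $|k| = q = p^{2l+1}$ with $p \equiv 3 \pmod 4$, one has $p \equiv -1 \pmod 4$, and since $2l+1$ is odd, $q \equiv (-1)^{2l+1} = -1 \equiv 3 \pmod 4$. The quadratic character of $-1$ in a finite field of odd order $q$ is determined by $q \bmod 4$: $-1$ is a square iff $q \equiv 1 \pmod 4$. Therefore $-1$ is not a square in $k$, so $\legendre{-1}{k} = -1 = \legendre{-1}{\mathbb{R}}$, confirming $\phi(\overline{-1}) = \overline{-1}$.

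Finally, I would apply Theorem \ref{thm-comparinglifting} with $K_1 = K$, $K_2 = \mathbb{R}\{\!\{t\}\!\}$, $k_1 = k$, $k_2 = \mathbb{R}$, and the isomorphism $\phi$ just constructed. The hypothesis $\val(B_{ij}) = \val(A_{ij})$ is given, so $\Trop(C_1) = \Trop(C_2)$, and the hypothesis $\legendre{a_{ij}}{k} = \legendre{b_{ij}}{\mathbb{R}}$ is precisely the condition $\phi(\overline{a_{ij}}) = \overline{b_{ij}}$. Theorem \ref{thm-comparinglifting} then yields the equivalence of liftability of $\Lambda$ over $K$ and over $\mathbb{R}\{\!\{t\}\!\}$. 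There is no real obstacle here beyond the elementary verification that $-1 \in k$ is a non-square under the stated conditions; the content of the corollary is entirely concentrated in Theorem \ref{thm-comparinglifting}, which rests on the lifting classification of Theorem \ref{thm:twistlift}.
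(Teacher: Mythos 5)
Your proof is correct and follows the same route as the paper, which simply invokes Theorem \ref{thm-comparinglifting} with $K_2 = \mathbb{R}\{\!\{t\}\!\}$; your explicit check that $|k| = p^{2l+1} \equiv 3 \pmod 4$ forces $-1$ to be a non-square, so that the unique isomorphism $\phi$ satisfies $\phi(\overline{-1}) = \overline{-1}$, is exactly the point the hypotheses are designed to guarantee and is a worthwhile detail to make explicit.
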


This follows from Theorem \ref{thm-comparinglifting} by inserting $K_2=\mathbb{R}\{\!\{t \}\!\}$.

 \section{The Grothendieck-Witt ring and $\mathbb{A}^1$-enumerative invariants}\label{sec-GW}

Inspired by and building upon a broader program using $\mathbb{A}^1$-homotopy theory to introduce arithmetic refinements to enumerative geometry via quadratic forms \cite{Hoyois14, KW19, KW20,  Levine20}, Larson and Vogt considered an arithmetic count of bitangents for smooth plane quartics with values in the Grothendieck-Witt ring of the ground field \cite{LV21}. One of our main goals is to demonstrate, using the arithmetic count of bitangents as an illustrative test case, how tropical methods can be useful for computing such arithmetic counts.

\subsection{The Grothendieck-Witt ring}\label{subsecGWintro}
We now recall the definition and basic properties of the Grothendieck-Witt rings in which these arithmetic counts take their values.  See \cite{Lam05} for a beautiful and comprehensive expository treatment, which includes the proofs that we omit.  In this section, $K$ denotes a field of characteristic not equal to two.

A \emph{quadratic space} is a finite-dimensional $K$-vector space $V$ equipped with a symmetric bilinear form $q \colon V \times V \to K$. Two quadratic spaces $(V,q)$ and $(V',q')$ are isomorphic if there is an isomorphism of $k$-vector spaces $\phi\colon V \to V'$ such that $q(v,w) = q'(\phi(v), \phi(w))$ for all $v$, $w$ in $V$.

\begin{definition}
For $a \in K^\times$, we write $\langle a \rangle$ for the $1$-dimensional quadratic space $(k,q)$ with $q(x,y) = axy$.   The {\em hyperbolic plane} is $\mathbb{H} = \langle 1 \rangle \oplus \langle -1 \rangle.$
\end{definition}

\begin{lemma}\label{lem:hyperbolic}
For any $a \in K^\times$, the quadratic space $\langle a \rangle \oplus \langle -a \rangle$ is isomorphic to $\mathbb{H}$, as is the quadratic space $K^2$ with quadratic form $\big((x_1, x_2),(y_1, y_2)\big) \mapsto ax_1y_2 + ax_2y_1$.
\end{lemma}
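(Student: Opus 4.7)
My plan is to treat both isomorphisms uniformly via the standard fact that any $2$-dimensional nondegenerate symmetric bilinear form admitting a nonzero isotropic vector is isomorphic to $\mathbb{H}$. Both quadratic spaces in the statement are manifestly nondegenerate (the Gram matrices $\mathrm{diag}(a,-a)$ and $\left(\begin{smallmatrix} 0 & a \\ a & 0 \end{smallmatrix}\right)$ have determinant $-a^2 \neq 0$), and both admit an obvious isotropic vector: $(1,1)$ is isotropic for $\langle a\rangle\oplus\langle -a\rangle$ since $a\cdot 1+(-a)\cdot 1=0$, while $(1,0)$ is isotropic for the off-diagonal form $q((x_1,x_2),(y_1,y_2))=ax_1y_2+ax_2y_1$.

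The key intermediate step is therefore to show that any such form has a \emph{hyperbolic basis}, i.e.\ a basis $v_1,v_2$ with $q(v_i,v_i)=0$ and $q(v_1,v_2)=1$, and that a hyperbolic basis is then isomorphic to $\mathbb{H}$. For the existence of a hyperbolic basis, given an isotropic $v\in V$, nondegeneracy yields $w$ with $q(v,w)\neq 0$; after rescaling we may assume $q(v,w)=1$, and then $w' := w-\tfrac{q(w,w)}{2}v$ is isotropic and still satisfies $q(v,w')=1$ (this uses $2\in K^\times$, which holds since $\mathrm{char}(K)\neq 2$).

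To pass from a hyperbolic basis to the diagonal form $\langle 1\rangle\oplus\langle-1\rangle$, I will exhibit an explicit change of basis. If $v_1,v_2$ is hyperbolic, then $f_1=v_1+\tfrac{1}{2}v_2$ and $f_2=v_1-\tfrac{1}{2}v_2$ satisfy $q(f_1,f_1)=1$, $q(f_2,f_2)=-1$, $q(f_1,f_2)=0$, giving the required isomorphism with $\mathbb{H}$. Concretely, for $\langle a\rangle\oplus\langle-a\rangle$ with standard basis $e_1,e_2$, one checks directly that $v_1=e_1+e_2$ and $v_2=\tfrac{1}{2a}(e_1-e_2)$ is a hyperbolic basis; for the off-diagonal form, the standard basis itself is already hyperbolic after scaling $e_2$ by $1/a$.

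No step here is a real obstacle: the whole argument is linear algebra over a field of characteristic not $2$. The only mild subtlety is remembering to use $\tfrac{1}{2}$ and $\tfrac{1}{2a}$ in the change-of-basis formulas, which is exactly where the hypothesis $\mathrm{char}(K)\neq 2$ (assumed at the start of Section \ref{subsecGWintro}) enters, and where the claim $a\in K^\times$ (rather than merely $a\in K$) is needed.
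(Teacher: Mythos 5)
Your proof is correct and complete: both forms are nondegenerate and visibly isotropic, and your reduction (isotropic vector plus nondegeneracy gives a hyperbolic basis, which diagonalizes to $\langle 1\rangle\oplus\langle -1\rangle$ using $\tfrac12$) is exactly the standard argument. The paper does not prove this lemma itself but defers to Corollary~16 of \cite{Lam05}, and your argument is essentially the one found there, with the characteristic-not-$2$ hypothesis entering precisely where you say it does.
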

For a proof, see e.g.\ Corollary 16 in \cite{Lam05}.

Isomorphism classes of quadratic spaces naturally form a semiring with addition and multiplication given by orthogonal direct sum and tensor product, respectively.  A theorem of Witt shows that this addition is cancellative, i.e., if $V$, $W$, and $W'$ are quadratic spaces such that $V \oplus W$ is isomorphic to $V \oplus W'$ then $W$ is isomorphic to $W'$.  As a consequence, this semiring injects into its associated ring of formal differences.

\begin{definition}
The  \emph{Grothendieck-Witt ring} $\GW(K)$ is the ring of formal differences of isomorphism classes of quadratic spaces over $K$.
\end{definition}

\noindent In other words, elements of $\GW(K)$ are formal differences $V - W$, where $V$ and $W$ are isomorphism classes of quadratic spaces.

\begin{definition}
The degree map $\deg \colon \GW(k) \to \mathbb{Z}$ takes a quadratic space $V$ to its dimension as a $K$-vector space. 
\end{definition}

\noindent For instance, $\deg \langle a \rangle = 1$ and $\deg \mathbb{H} = 2$. Note that $\deg$ is a map of rings. All of the $\GW(K)$-valued arithmetic counts that we consider specialize to the classical integer valued enumerative invariants when composed with the degree map.

\begin{lemma}
\label{lem-GWrelations}
As an additive group, $\GW(K)$ is generated by $\{\langle a \rangle : a\in K^\times\}$, with relations generated by
\begin{enumerate}
\item $\langle a \rangle= \langle a b^2 \rangle$ for all $a$, $b$ in $K^\times$, and
\item $\langle a\rangle + \langle b \rangle= \langle a+b \rangle+ \langle ab(a+b) \rangle$,
for all $a$, $b$ in $K^\times$ such that $a + b \neq 0$.
\end{enumerate}
\end{lemma}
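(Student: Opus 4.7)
I would proceed in three steps: establish generation of $\GW(K)$ by the $\langle a \rangle$, verify that relations (1) and (2) hold, and then show that these relations generate all others. The first two steps are essentially standard linear algebra; the third is the substantive content of the lemma and is where I expect the main difficulty.

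For generation, I would invoke the fact that, in characteristic not two, every nondegenerate symmetric bilinear form admits an orthogonal basis: pick any $v$ with $q(v,v) \neq 0$ (which exists by polarization, since otherwise the form vanishes identically) and iterate on the orthogonal complement. This yields $V \cong \bigoplus_i \langle a_i \rangle$, so the classes $\langle a \rangle$ generate $\GW(K)$ additively. Relation (1) is then immediate from rescaling: the map $x \mapsto b^{-1}x$ is an isomorphism $\langle ab^2 \rangle \xrightarrow{\sim} \langle a \rangle$. For relation (2), the vector $(1,1) \in \langle a \rangle \oplus \langle b \rangle$ has norm $a+b \neq 0$, and a direct calculation shows its orthogonal complement is spanned by $(b, -a)$, which has norm $a^2b + ab^2 = ab(a+b)$; this exhibits the desired isomorphism $\langle a, b \rangle \cong \langle a+b, ab(a+b) \rangle$.

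The main obstacle is showing that (1) and (2) generate all relations. Here I would invoke Witt's chain equivalence theorem (see \cite{Lam05}), which asserts that any isomorphism $\langle a_1, \ldots, a_n \rangle \cong \langle b_1, \ldots, b_n \rangle$ of diagonal forms can be realized by a finite sequence of \emph{simple} equivalences, each altering only two diagonal entries at a time. This reduces the question to the two-dimensional case: given $\langle a, b \rangle \cong \langle c, d \rangle$, the element $c$ is represented as $c = ax^2 + by^2$ for some $x, y \in K$. When both $x$ and $y$ are nonzero, applying (1) to replace $a, b$ by $ax^2, by^2$ and then applying (2) identifies $\langle a, b \rangle$ with $\langle c, abc \rangle$; since discriminants agree modulo squares, $abc \equiv d \pmod{(K^\times)^2}$, so a final use of (1) gives $\langle c, d \rangle$. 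The degenerate cases $x = 0$ or $y = 0$ collapse to relation (1) after comparing discriminants.

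Since Witt's chain equivalence theorem is classical, I would simply cite \cite{Lam05} for its proof and concentrate the exposition on the reduction to two dimensions together with the explicit verification above. The only subtle point requiring care is ensuring that a nondegenerate representation $c = ax^2 + by^2$ with $xy \neq 0$ can always be found (replacing $c$ by $d$ and, if necessary, perturbing within the isotropic locus); this is a routine but slightly tedious case analysis that I would defer to a technical remark.
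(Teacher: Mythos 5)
Your proposal is correct and follows the standard argument (diagonalization, the explicit isometry $\langle a,b\rangle\cong\langle a+b, ab(a+b)\rangle$, and Witt's chain equivalence theorem to reduce completeness of the relations to the binary case), which is exactly the treatment in \cite{Lam05} that the paper cites for this lemma; the paper itself gives no proof. The one point worth making explicit is that a relation in the Grothendieck group reduces to an isometry of forms of equal dimension via Witt cancellation (which the paper records just before defining $\GW(K)$), after which your chain-equivalence argument applies.
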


\noindent When no confusion seems possible, we write $\langle a \rangle$, $\mathbb{H}$, and so on, not only for a given quadratic space, but also for its class in $\GW(K)$. 

We will frequently consider how Grothendieck-Witt rings behave with respect to finite field extensions.  Suppose $K' / K$ is a finite extension. Then any finite dimensional $K'$-vector space $V$ is also a finite-dimensional as a $K$-vector space, and we write $V_K$ to denote $V$, viewed as a $K$-vector space. If $(V,q)$ is a quadratic space over $K'$, then $(V_K, \Tr_{K'|K} \circ q)$ is a quadratic space over $K$. One writes 
\[
\Tr_{K'|K} \colon \GW(K') \to \GW(K)
\]
for the induced map of Grothendieck-Witt rings.  Note that $\Tr_{K'|K}$ multiplies degrees by a factor of $[K' : K]$, since $\dim_K V_K = [K':K] \dim_{K'} V$.

\begin{example}\label{ex:degree2trace} 
Let  $K'/K$ be a field extension of degree 2, and let $a \in (K')^\times$. We now explain how to compute $\Tr_{K'|K} \langle a \rangle \in \GW(K)$.
Suppose $K' = K(\beta)$, and $b = \beta^2$.  Use $\{ 1, \beta \}$ as a $K$-basis for $K'$, and write
\[
a = r + s \beta.
\]
Then the bilinear form on $\Tr_{K'|K} \langle a \rangle$ may be expressed by the symmetric matrix
  $$\left(\begin{array}{cc} 2r & 2 bs  \\2 bs & 2br \end{array}\right).$$
If $r = 0$, then $\Tr_{K'|K} \langle a \rangle \cong \mathbb{H}$, by Lemma~\ref{lem:hyperbolic}.  If $r \neq 0$, then we can diagonalize the symmetric matrix to get $\Tr_{K'|K} \langle a \rangle \cong \langle 2r \rangle \oplus \langle 2br (r^2 - bs^2) \rangle$.

A case of particular interest is when $K = \mathbb{R}$ and $b = -1$.  Then $r^2 - bs^2$ is positive and hence is a square, so the above quadratic space is isomorphic to $\langle 2r \rangle \oplus \langle -2r \rangle \cong \mathbb{H}$. Thus, for any $a \in \mathbb{C}^\times$, we have $\Tr_{\mathbb{C}|\mathbb{R}} \langle a \rangle \cong \mathbb{H}$.
\end{example}

\subsection{Grothendieck-Witt rings over valued fields} \label{subsec:GWisom}

We now return to the case where $K$ is a Henselian valued field with residue field $k$. We assume the characteristic of $k$ is not 2 and fix a section of the valuation $\sigma \colon \val(K^\times) \to K^\times$.  The initial of an element $a \in K^\times$, denoted $\ini(a)$, is the image of $\sigma(-\val(a)) \cdot a$ in $k^\times$.  We also make the simplifying assumption that the value group of $K$ is $2$-divisible, cf. Remark~\ref{rem:2-divis}.

\begin{theorem} \label{thm-GWKGWk}
Let $K$ be a Henselian valued field of residue characteristic not equal to 2, with 2-divisible value group, and with a section of the valuation.

There is an isomorphism of Grothendieck-Witt rings $$g: \GW(K) \xrightarrow{\sim} \GW(k), \  \langle A \rangle \mapsto \langle \ini(A) \rangle.$$ Moreover, this isomorphism is independent of the section $\sigma$.
\end{theorem}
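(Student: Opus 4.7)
The plan is to construct $g$ using the presentation of $\GW(K)$ in Lemma~\ref{lem-GWrelations}, verify that it respects both relations, and then produce an inverse via unit lifts.

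The key preliminary observation is that $2$-divisibility allows any $\langle A\rangle$ to be normalized to a unit class: writing $\val(A)=2w$, we have $A=\sigma(w)^2 u$ with $u\in R^\times$, so $\langle A\rangle=\langle u\rangle$. Thus it suffices to define and analyze $g$ on symbols $\langle u\rangle$ with $u\in R^\times$, where it sends $\langle u\rangle\mapsto \langle\ini(u)\rangle$. The square relation $\langle A\rangle=\langle AB^2\rangle$ is carried to $\langle\ini(A)\rangle=\langle\ini(A)\ini(B)^2\rangle$, which holds in $\GW(k)$. For the Witt relation $\langle A\rangle+\langle B\rangle=\langle A+B\rangle+\langle AB(A+B)\rangle$, one first normalizes $A$ and $B$ to units and then splits into two cases. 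If $\ini(A)+\ini(B)\neq 0$, then $\ini(A+B)=\ini(A)+\ini(B)$, and the desired identity follows directly from the Witt relation in $\GW(k)$ applied to the initials. If instead $\ini(B)=-\ini(A)$, then $A+B$ has positive (hence $2$-divisible) valuation, so after extracting a square $\sigma(w)^2$ from $A+B$ one computes that $\langle A+B\rangle$ and $\langle AB(A+B)\rangle$ map to a pair $\langle c\rangle,\langle -c\rangle$ whose sum in $\GW(k)$ equals $\mathbb{H}$, matching $\langle\ini(A)\rangle+\langle -\ini(A)\rangle$ on the left. This case analysis is the main, if mild, technical step; multiplicativity of initials then upgrades $g$ to a ring homomorphism.

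For the inverse, I would define $h\colon \GW(k)\to \GW(K)$ by $h(\langle a\rangle)=\langle\tilde a\rangle$ for any unit lift $\tilde a\in R^\times$ of $a$. Independence of the lift reduces to the following: if $u\in R^\times$ and $\ini(u)$ is a square in $k^\times$, then $u$ is a square in $R^\times$. This is Hensel's lemma applied to $X^2-u$, using that the residue characteristic is not $2$ so the root in $k$ is simple. The same case analysis as above shows that $h$ respects the defining relations of $\GW(k)$. Then $g\circ h=\mathrm{id}$ is tautological, and $h\circ g=\mathrm{id}$ follows from the normalization $\langle A\rangle=\langle u\rangle$.

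For independence of the section, let $\sigma,\sigma'$ be two sections and let $A\in K^\times$ with $\val(A)=v=2w$. Then
\[
\frac{\sigma(v)}{\sigma'(v)}=\left(\frac{\sigma(w)}{\sigma'(w)}\right)^{\!2},
\]
and $\sigma(w)/\sigma'(w)\in R^\times$, so the two initials of $A$ differ by the image of a square in $R^\times$, i.e.\ by a square in $k^\times$. Hence $\langle\ini_\sigma(A)\rangle=\langle\ini_{\sigma'}(A)\rangle$ in $\GW(k)$. This final step makes transparent why $2$-divisibility is the right hypothesis: without it, a single copy of $\GW(k)$ is insufficient and one must index by $\val(K^\times)/2\val(K^\times)$, as in the classical Springer-type decomposition.
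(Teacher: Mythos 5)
Your overall strategy --- define $g$ on generators via initials, check the two relations of Lemma~\ref{lem-GWrelations}, build the inverse from unit lifts using Hensel's lemma, and deduce independence of the section from $2$-divisibility of the value group --- is exactly the route the paper takes, and your treatment of the inverse and of the section-independence is correct. However, your verification of the Witt relation has a gap. You ``first normalize $A$ and $B$ to units and then split into two cases,'' but this normalization is not a valid reduction: relation (2) involves $A+B$, and if $\val(A)\neq\val(B)$ then rescaling $A$ and $B$ by \emph{different} squares $\sigma(w_A)^2,\sigma(w_B)^2$ does not carry $A+B$ to a square multiple of $u_A+u_B$, so the relation for $(A,B)$ is not equivalent to the relation for $(u_A,u_B)$. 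Concretely, your first case asserts that $\ini(A)+\ini(B)\neq 0$ implies $\ini(A+B)=\ini(A)+\ini(B)$; this is false when $\val(A)<\val(B)$ (take $A=1$ and $B=t^v$ with $v>0$: then $\ini(A)+\ini(B)=2\neq 0$ since the residue characteristic is not $2$, but $\ini(A+B)=1$). The unequal-valuation case must be checked separately, as the paper does: if $\val(A)<\val(B)$ then $\ini(A+B)=\ini(A)=a$ and $\ini(AB(A+B))=a^2b$, so the right-hand side maps to $\langle a\rangle+\langle a^2 b\rangle=\langle a\rangle+\langle b\rangle$ by relation (1). With that case added, your two remaining cases (equal valuations with $a+b\neq 0$, and $a+b=0$ giving $\langle c\rangle+\langle -c\rangle=\mathbb{H}$ on both sides) complete the argument exactly as in the paper.
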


\begin{proof}
We first show that $g$ is well-defined, i.e., that $\langle A \rangle \mapsto \langle \ini(A) \rangle$ respects the relations (1) and (2) from Lemma~\ref{lem-GWrelations}. Let $a=\ini(A)$ and $b=\ini(B)$.  For (1), we have $\langle \ini(AB^2) \rangle = \langle ab^2\rangle$.  For (2), we consider a few subcases. First, suppose $\val(A) \neq \val(B)$.  Without loss of generality we may assume $\val(A) < \val(B)$.  Then $\ini (\langle A\rangle +\langle B\rangle)= \langle a\rangle + \langle b\rangle$, while $\ini \langle A+B \rangle = \langle a \rangle$ and  $\ini \langle AB(A+B) \rangle = \langle a^2 b \rangle$.  Using \ref{lem-GWrelations}(1), we see that $ g(\langle A+B \rangle +\langle AB(A+B)\rangle) = \langle a \rangle+\langle a^2 b\rangle = \langle a \rangle +\langle b \rangle$, as required. If $\val(A)=\val(B)$ but $a+b\neq 0$, then (2) is clear.  It remains to consider the case where $\val(A)=\val(B)$ and $a+b=0$.  Let $c=\ini(A + B)$.  Then $g(\langle A + B\rangle)=\langle c\rangle$ and $g(\langle AB(A+B)\rangle) =\langle abc\rangle=\langle -a^2c\rangle$, which is equal to $\langle -c\rangle$ in $\GW(k)$, again by \ref{lem-GWrelations}(1).  Finally, $g(\langle A\rangle+\langle B\rangle) = \langle a\rangle + \langle -a\rangle  = \langle c\rangle  + \langle -c\rangle$, by Lemma \ref{lem:hyperbolic}. This proves that $g$ is well-defined.  

It is evident that $g$ respects addition and multiplication, so it is a ring map. We now construct its inverse.  For each $a \in k^\times$, choose some $A \in R$ with residue $a$.  We claim that $\langle a \rangle \mapsto \langle A \rangle$ gives a well-defined map $\GW(k) \to \GW(K)$.  Indeed, if $A$ and $A'$ are two lifts of $a$, then $A/A'$ has valuation 0 and initial $1$.  Since the characteristic of $k$ is not 2 and $K$ is Henselian, it follows that $A/A'$ is a square and hence $\langle A \rangle \cong  \langle A' \rangle$.  Moreover, since $\sigma$ is a section and $\val(K^\times)$ is 2-divisible, $t^v$ has a square root for every $v \in \val(K^\times)$. Thus $\GW(K)$ is generated by quadratic spaces $\langle A \rangle$ such that $\val(A) = 0$, and hence the map so defined surjects onto $\GW(K)$ and gives an inverse to $g$, as required.

Finally, again using the fact that $\val(K^\times)$ is 2-divisible, the image of the section $\sigma$ is contained in the multiplicative subgroup of squares.  Thus, if we choose a different section, the resulting initials of any $a \in K^\times$ will differ by a square and hence the isomorphism $\GW(K) \to \GW(k)$ is independent of this choice.
\end{proof}

\subsection{The Qtype of a bitangent to a quartic}\label{subsec-GWbitangent}

A line $L$ in $\mathbb{P}^2_k$ is a closed point of $(\mathbb{P}^2_k)^\vee$. Let $K_L$ denote the field of definition of $L$.  So $K_L/K$ is a finite extension and $L$ corresponds to a Galois orbit of geometric lines defined over the finite extension $K_{L}/K$.  Suppose $L$ is a bitangent to our plane quartic $C$.  We assume furthermore that the intersection $L \cap C$ is disjoint from the line at infinity $L_\infty$ given in homogeneous coordinates by $z = 0$. The {\em $\GW$-multiplicity of the bitangent $L$} with respect to the fixed reference line $L_\infty$ is an element of degree $[K_{L}:K]$ in $\GW(K)$ that is defined as follows. 

Let $\mathbb{A}^2 = \mathbb{P}^2 \setminus L_\infty$. Let $Q(x,y)$ be a (non-homogeneous) quartic polynomial that vanishes on the affine plane quartic $C \cap \mathbb{A}^2$, and let $\partial_L$ denote the derivation with respect to a linear form (defined over $K_{L}$) that vanishes on $L$.  Note that both $Q(x,y)$ and $\partial_L$ are defined only up to nonzero scalars (in $K^\times$ and $K_{L}^\times$, respectively). 

\begin{definition}
Let $P_1$, $P_2 \in C(\overline K)$ be the points where $L$ is tangent to $C$. Then the $\mathrm{Qtype}$ of $L$ with respect to $L_\infty$ is
\begin{equation}\label{eq:Qtype}
 \Qtype(L) := \langle\partial_L Q(P_1)\cdot \partial_L Q(P_2)  \rangle \in \GW(K_{L}), 
\end{equation}
and the {\em $\GW$-multiplicity of $L$} is $$ \mult_{\GW} (L) := \Tr_{K_L|K} (\Qtype(L)) \in \GW(K).$$
\end{definition}

\medskip

\noindent To see that $\Qtype(L)$ is well-defined, first suppose $P_1$ and $P_2$ are rational over $K_{L}$.  Note that rescaling $Q$ or $\partial_L$ by a nonzero factor $a$ multiplies  $\partial_L Q(P_1)\cdot \partial_L Q(P_2)$ by $a^2$.  Then since $\langle a^2 b \rangle = \langle b \rangle$, the $\Qtype$ of $L$ is well-defined.

Otherwise, if $P_1$ and $P_2$ are not rational over $K_L$ then they are rational over some quadratic extension $K_1/K_{L}$.  In this case $\partial_L Q(P_1)$ is conjugate to $\partial_L Q(P_2)$, and so $\partial_L Q(P_1)\cdot \partial_L Q(P_2)$ is in $K_{L}$. Once again, rescaling $Q$ or $L$ changes $\partial_L Q(P_1)\cdot \partial_L Q(P_2)$ by a square, and so $\Qtype(L)$ is well-defined in $\GW(K_L)$.

\begin{example}\label{ex:realQtype}
When $K = \mathbb{R}$, the $\Qtype$ has a natural topological interpretation, as explained by Larson and Vogt, see page 3 in \cite{LV21}. Suppose a bitangent is defined over $\mathbb{R}$. If each of the bitangency points of $L$ is defined over $\mathbb{R}$, then near these two points in the affine plane $\mathbb{A}^2(\mathbb{R})$, the real locus of $C$ is either on the same side of the bitangent line, in which case the $\Qtype$ is $\langle 1 \rangle$, or on different sides, in which case the $\Qtype$ is $ \langle -1 \rangle$. See Figure \ref{fig-geomQtypereals}. When these points are not defined over $\mathbb{R}$, the $\Qtype$ is $\langle 1 \rangle$. If the bitangent line is not defined over $\mathbb{R}$ then the $\Qtype$ of $L$ is $\langle 1 \rangle \in \GW(\mathbb{C})$
 and $\mult_{\GW} (L)  = \langle 1 \rangle + \langle -1 \rangle = \mathbb{H}$ by Example \ref{ex:degree2trace}. 
 \end{example}

\begin{figure}
\begin{center}

\tikzset{every picture/.style={line width=0.75pt}} 

\begin{tikzpicture}[x=0.75pt,y=0.75pt,yscale=-1,xscale=1]

\draw    (57.73,180.39) -- (173.07,180.72) ;
\draw    (289.17,146.22) -- (376.94,146.22) ;
\draw [color={rgb, 255:red, 74; green, 144; blue, 226 }  ,draw opacity=1 ]   (124.33,170.17) .. controls (151.33,205.5) and (162.2,139.2) .. (162.6,110.8) ;
\draw [color={rgb, 255:red, 74; green, 144; blue, 226 }  ,draw opacity=1 ]   (97.4,170.47) .. controls (104.67,160.17) and (119,160.83) .. (124.33,170.17) ;
\draw [color={rgb, 255:red, 74; green, 144; blue, 226 }  ,draw opacity=1 ]   (65.4,110.87) .. controls (65.4,138.07) and (66.33,204.17) .. (97.4,170.47) ;
\draw  [fill={rgb, 255:red, 0; green, 0; blue, 0 }  ,fill opacity=1 ] (138.05,180.43) .. controls (138.05,179.8) and (138.56,179.29) .. (139.19,179.29) .. controls (139.82,179.29) and (140.33,179.81) .. (140.33,180.43) .. controls (140.33,181.06) and (139.81,181.57) .. (139.18,181.57) .. controls (138.55,181.57) and (138.04,181.06) .. (138.05,180.43) -- cycle ;
\draw  [fill={rgb, 255:red, 0; green, 0; blue, 0 }  ,fill opacity=1 ] (80.9,179.98) .. controls (80.91,179.35) and (81.42,178.84) .. (82.05,178.85) .. controls (82.68,178.85) and (83.19,179.36) .. (83.19,179.99) .. controls (83.18,180.62) and (82.67,181.13) .. (82.04,181.13) .. controls (81.41,181.12) and (80.9,180.61) .. (80.9,179.98) -- cycle ;
\draw  [fill={rgb, 255:red, 0; green, 0; blue, 0 }  ,fill opacity=1 ] (308.18,145.98) .. controls (308.18,145.35) and (308.7,144.84) .. (309.33,144.85) .. controls (309.96,144.85) and (310.47,145.36) .. (310.46,145.99) .. controls (310.46,146.62) and (309.95,147.13) .. (309.32,147.13) .. controls (308.69,147.12) and (308.18,146.61) .. (308.18,145.98) -- cycle ;
\draw [color={rgb, 255:red, 74; green, 144; blue, 226 }  ,draw opacity=1 ]   (296.06,197.67) .. controls (304.72,128.67) and (314.06,129.11) .. (320.94,197.78) ;
\draw [color={rgb, 255:red, 74; green, 144; blue, 226 }  ,draw opacity=1 ]   (344.06,95.78) .. controls (350.06,161.78) and (368.06,163.11) .. (373.39,95.78) ;
\draw  [fill={rgb, 255:red, 0; green, 0; blue, 0 }  ,fill opacity=1 ] (358.18,145.98) .. controls (358.18,145.35) and (358.7,144.84) .. (359.33,144.85) .. controls (359.96,144.85) and (360.47,145.36) .. (360.46,145.99) .. controls (360.46,146.62) and (359.95,147.13) .. (359.32,147.13) .. controls (358.69,147.12) and (358.18,146.61) .. (358.18,145.98) -- cycle ;

\draw (103.9,213) node [anchor=north west][inner sep=0.75pt]   [align=left] {$\displaystyle \langle 1\rangle $};
\draw (330.9,213) node [anchor=north west][inner sep=0.75pt]   [align=left] {$\displaystyle \langle -1\rangle $};

\end{tikzpicture}

\end{center}
\caption{Geometric characterization of $\Qtype$ for the reals.}\label{fig-geomQtypereals}
\end{figure}
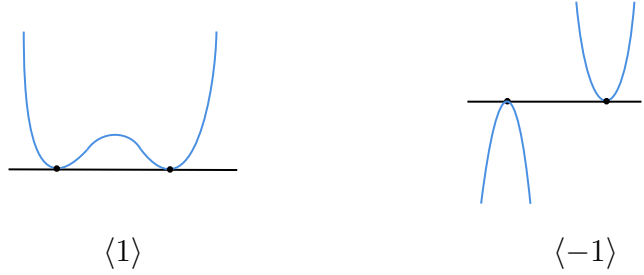

\begin{example}\label{ex:degree2extension} 
Suppose $K = \mathbb{R}$ (or $\mathbb{R}\{\!\{t\}\!\}$).  If $L$ is a bitangent line that is not rational over $K$, (i.e., if $L$ is the point of $(\mathbb{P}^2_\mathbb{R})^\vee$ corresponding to a pair of complex conjugate lines) then, by Example~\ref{ex:degree2trace}, we have $\mult_{\GW}(L) = \mathbb{H}$.
\end{example}

\subsection{The $\GW$-multiplicity of a tropical bitangent class}\label{subsec-tropQtype}

In the following, we study $\GW$-multiplicities for the four liftable members of a tropical bitangent class.

We start, in the following Lemma, with picking a vector for the computation of $\partial_L Q(P)$ for a bitangent line $L$. It turns out that our choice is particularly useful for the tropical degeneration.

\begin{lemma}\label{lem-gradientvector}
Let $L=V(y+M+Nx)$ be a bitangent to a quartic curve defined by $Q$, and let $P$ be a point of tangency.
Then $\partial_L Q(P)=(\frac{\partial}{\partial y}+ \frac{1}{N}\cdot \frac{\partial}{\partial x})Q(P)$.
\end{lemma}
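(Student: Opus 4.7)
The plan is to deduce the identity from the tangency condition at $P$ combined with a one-line computation, and then match up with the normalization convention for $\partial_L$.

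First, I would use the tangency of $L = V(y + M + Nx)$ to $C = V(Q)$ at $P$ to extract a proportionality between the partial derivatives of $Q$ at $P$. Parametrize $L$ by $t \mapsto (t, -M - Nt)$ and set $f(t) := Q(t, -M - Nt)$; since $P = (x_0, y_0)$ is a point of tangency, $f$ has a double root at $t = x_0$. The chain rule then gives
\[
0 \;=\; f'(x_0) \;=\; \tfrac{\partial Q}{\partial x}(P) - N \tfrac{\partial Q}{\partial y}(P),
\]
yielding the key identity $\tfrac{\partial Q}{\partial x}(P) = N \tfrac{\partial Q}{\partial y}(P)$.

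Second, I would substitute this relation into the right-hand side of the asserted formula:
\[
\Big(\tfrac{\partial}{\partial y} + \tfrac{1}{N} \tfrac{\partial}{\partial x}\Big) Q(P) \;=\; \tfrac{\partial Q}{\partial y}(P) + \tfrac{1}{N} \cdot N \, \tfrac{\partial Q}{\partial y}(P) \;=\; 2 \tfrac{\partial Q}{\partial y}(P),
\]
which is a single well-defined element of $K_L$, using the hypothesis that the residue characteristic is not $2$.

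Third, I would verify that this quantity represents $\partial_L Q(P)$ in the sense of Section~\ref{subsec-GWbitangent}: the constant-coefficient operator $D := \tfrac{\partial}{\partial y} + \tfrac{1}{N} \tfrac{\partial}{\partial x}$ satisfies $D\ell = 1 + \tfrac{1}{N} \cdot N = 2 \neq 0$ for $\ell = y + M + Nx$, so $D$ is a legitimate realization of the derivation associated with the linear form $\ell$, and its action on $Q$ at $P$ produces the formula on the left. The main obstacle in executing this plan is simply pinning down the scalar normalization for $\partial_L$ carefully enough that the last identification is rigorous; the mathematical content of the lemma, once that normalization is fixed, reduces to the chain-rule computation in the first step.
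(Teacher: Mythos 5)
Your proposal is correct, and its essential content coincides with the paper's proof: the lemma is purely a statement about which constant-coefficient derivation realizes $\partial_L$ (defined only up to nonzero scalar), and your third step's check that $D\ell = 2 \neq 0$ for $D = \tfrac{\partial}{\partial y} + \tfrac{1}{N}\tfrac{\partial}{\partial x}$ is exactly the paper's verification that the vector $(\tfrac{1}{N},1)$ is transverse to the direction $(-1,N)$ of $L$, both hinging on the characteristic not being $2$. Your first two steps (the Wronskian relation $\tfrac{\partial Q}{\partial x}(P) = N\tfrac{\partial Q}{\partial y}(P)$ and the simplification to $2\tfrac{\partial Q}{\partial y}(P)$) are correct but not needed for this lemma; that computation belongs to the later Theorem~\ref{thm-Qtypetropical}.
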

\begin{proof}
Up to rescaling, the direction vector of the line $L$ is $(-1,N)$. To compute $\partial_L$, we can take any vector which is not parallel to the direction vector and multiply it with the gradient. If $(\frac{1}{N},1)$ was parallel to $(-1,N)$, then $-\frac{1}{N}\cdot (-1,N)=(\frac{1}{N},-1)=(\frac{1}{N},1)$ and thus $-1=1$, which is not the case as we are not in characteristic $2$.
\end{proof}

\begin{remark}\label{rem-Laurentmonomial}
In the following theorem, we make use of our choice of gradient and express $\GW$-multiplicity in terms of derivatives of initial forms, which can be determined using the tropicalization of the quartic.

Using the classification in Appendix \ref{app:classification}, one can show that any such derivative of an initial form that can appear equals, after inserting initials of the tangency point, a Laurent monomial in the initials of the coefficients of the quartic times possibly a square root thereof.
\end{remark}

\begin{theorem}\label{thm-Qtypetropical}

Assume the bitangent $L$ is given by the equation $M+NX+Y$ with $M,N\in K$, and let $m,n$ denote their initials.
If the tropicalization $p$ of a tangency point $P$ is contained in the interior of the horizontal ray of $\Trop(L)$, then $$\ini(\partial_L Q(P))= 2\frac{\partial}{\partial y} \ini_{p}(Q)(\ini(P)).$$ If it is contained in the interior of the vertical ray of $\Trop(L)$, then $$\ini(\partial_L Q(P))= 2\frac{1}{n}\frac{\partial}{\partial x} \ini_{p}(Q)(\ini(P)).$$ If it is contained in the interior of the diagonal ray or at the vertex, then $$\ini(\partial_L Q(P))= (\frac{\partial}{\partial y}+ \frac{1}{n}\cdot \frac{\partial}{\partial x})\ini_{p}(Q)(\ini(P)) .$$

In particular, the $\GW$-multiplicity  $\langle\partial_L Q(P_1)\cdot \partial_L Q(P_2)  \rangle$ can be expressed in terms of derivatives of initial forms.

\end{theorem}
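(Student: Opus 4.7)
The proof will pivot on a single algebraic observation: since $L$ is tangent to the smooth curve $C$ at $P$, the gradient $\nabla Q(P)$ must be a nonzero scalar multiple of the normal $(N,1)$ to $\ell=Y+NX+M$. So I can write $\partial_x Q(P)=\lambda N$ and $\partial_y Q(P)=\lambda$ for some $\lambda\neq 0$, and substituting into Lemma~\ref{lem-gradientvector} yields
\[
\partial_L Q(P) \;=\; \partial_y Q(P) + \tfrac{1}{N}\partial_x Q(P) \;=\; 2\,\partial_y Q(P) \;=\; \tfrac{2}{N}\partial_x Q(P).
\]
The factor of $2$ in all three claimed formulas will trace back entirely to this identity. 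Passing to initials, I will use the elementary fact that for a polynomial $F$ and a torus point $R$ with tropicalization $p$, one has $\ini(\partial_y F(R)) = \partial_y(\ini_p F)(\ini(R))$ whenever the right-hand side is nonzero (so that the tropical-height upper bound is saturated), and analogously for $\partial_x$.

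I will then proceed by case analysis on the position of $p$ in $\Trop(L)$. For $p$ in the interior of the horizontal ray, genericity implies $p$ lies in the interior of a horizontal edge of $\Trop(C)$, and its dual edge in the Newton subdivision joins lattice points $(i,j_1),(i,j_2)$ with common $x$-exponent, so
\[
\ini_p(Q) \;=\; x^{i}\bigl(\ini(A_{i,j_1})\,y^{j_1}+\ini(A_{i,j_2})\,y^{j_2}\bigr).
\]
Since $\ini(x)\neq 0$, the tangency $\ini_p(Q)(\ini(P))=0$ forces $\partial_x\ini_p(Q)(\ini(P))=0$, while a direct computation shows $\partial_y\ini_p(Q)(\ini(P))$ is a nonzero multiple of $j_1-j_2$. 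Hence $\ini(\partial_y Q(P)) = \partial_y\ini_p(Q)(\ini(P))$, and combining with $\partial_L Q(P)=2\partial_y Q(P)$ gives $\ini(\partial_L Q(P)) = 2\,\partial_y\ini_p(Q)(\ini(P))$. The vertical case will be symmetric, using the other half of the identity.

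For $p$ in the interior of the diagonal ray, or at the vertex of $\Lambda$, I will use that $\val(N)=p_x-p_y$, so the tropical-height upper bounds for $\partial_y Q(P)$ and $\tfrac{1}{N}\partial_x Q(P)$ coincide and both terms contribute at leading order. The dual Newton edge has slope $-1$; writing $\ini_p(Q)=y^{c}\,P(x/y)$ for a one-variable polynomial $P$, the line relation $\ini(y)=-n\,\ini(x)$ along the diagonal forces $\partial_y\ini_p(Q)(\ini(P)) = \tfrac{1}{n}\partial_x\ini_p(Q)(\ini(P))$, both nonzero, so their sum yields the claimed formula and agrees with $\partial_L Q(P)=2\partial_y Q(P)=\tfrac{2}{N}\partial_x Q(P)$. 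At the vertex of $\Lambda$ the trinomial $\ini_p(\ell)=m+nX+Y$ replaces the diagonal relation, and the non-degeneracy of the remaining configurations is furnished by the genericity assumption and the classification in Appendix~\ref{app:classification}.

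The hard part will be the bookkeeping needed to verify non-vanishing of the right-hand sides in each case, so that the tropical-height bounds are saturated and no spurious cancellation spoils the initial computation. This is immediate for tangencies in the interior of an edge, as above, but requires a case-by-case check for configurations at vertices. The final clause of the theorem — that the $\GW$-multiplicity $\langle\partial_L Q(P_1)\cdot\partial_L Q(P_2)\rangle$ can be expressed in terms of derivatives of initial forms — then follows at once, since each $\ini(\partial_L Q(P_i))$ has been written purely in terms of $\ini_p(Q)$ and $\ini(P_i)$, and via Theorem~\ref{thm-GWKGWk} the Qtype is determined by the tropicalization and the initial coefficients of $Q$.
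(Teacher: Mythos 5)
Your central mechanism is the same as the paper's: reduce to Lemma~\ref{lem-gradientvector}, use the vanishing of the Wronskian $W=N\,\partial_y Q(P)-\partial_x Q(P)$ to produce the factor of $2$, and then identify $\ini(\partial_\bullet Q(P))$ with $\partial_\bullet(\ini_p Q)(\ini(P))$ subject to non-vanishing of the latter, which both you and the paper ultimately delegate to the case analysis of Appendix~\ref{app:classification}. Your phrasing of the first step is actually cleaner than the paper's: you observe that the gradient-proportionality gives the \emph{exact} identity $\partial_L Q(P)=2\,\partial_y Q(P)=\tfrac{2}{N}\partial_x Q(P)$ in $K$, so there is literally nothing to cancel; the paper instead works with the two summands separately at the level of initials and needs a short argument (the $1=-1$ contradiction) to rule out cancellation between them. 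Your version makes that discussion unnecessary and also makes the consistency of the three displayed formulas transparent.

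There is, however, one false assertion that narrows your explicit argument more than you acknowledge: for $p$ in the interior of the horizontal ray of $\Trop(L)$, it is \emph{not} true that genericity forces $p$ to lie in the interior of a horizontal edge of $\Trop(C)$. The tangency point can tropicalize to a vertex of $\Trop(C)$ lying on the horizontal ray (this is exactly the situation of the left tangency component in Example~\ref{ex-locallifting}, where $\ini_p(Q)=a_{01}y+a_{12}xy^2+a_{22}x^2y^2$ is a trinomial, not of the form $x^i(a_1y^{j_1}+a_2y^{j_2})$), and it can also be a transversal multiplicity-$2$ crossing of the horizontal ray with an edge of $\Trop(C)$ of direction $(1,\pm 2)$ or $(3,\pm2)$ (these are the configurations enumerated in Lemma~\ref{lem-movehorizontal}). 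In those sub-cases your binomial computation of $\partial_y\ini_p(Q)(\ini(P))$ does not apply, and the non-vanishing really does have to be checked configuration by configuration; your closing remark about ``configurations at vertices'' gestures at this but contradicts the earlier blanket claim and omits the transversal-crossing edges entirely. Since the paper's own proof also resolves these cases only by appeal to the appendix computations, this does not invalidate your strategy, but as written the ``immediate'' part of your horizontal-ray argument covers only the overlap sub-case of Lemma~\ref{lem-edgeonedgetangency}, not the general statement of the theorem.
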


\begin{proof}
Without restriction, we can assume $p=(0,0)$ and $Q$ only contains terms whose coefficients have valuation $0$ or higher. The terms of valuation $0$ are then precisely the terms contributing to $\ini_p(Q)$.

By Lemma \ref{lem-gradientvector}, $\partial_L Q(P)= (\frac{\partial}{\partial y}+ \frac{1}{N}\cdot \frac{\partial}{\partial x})Q(P)$. The initial equals \begin{equation}
\ini (\partial_L Q(P)) = \ini \Big(\Big(\frac{\partial}{\partial y}+ \frac{1}{N}\cdot \frac{\partial}{\partial x}\Big)Q(P)\Big)=\ini \Big(\frac{\partial}{\partial y}Q(P)+ \frac{1}{N}\cdot \frac{\partial}{\partial x}Q(P)\Big).\label{eq-iniQ}\end{equation}

If the two summands above are of the same valuation, then cancellation could happen, which would be an obstruction to expressing $\ini(\partial_L Q(P))$ in terms of derivatives of initial forms. 

Assume cancellation happens, then $$\ini\Big(\frac{\partial}{\partial y}Q(P)\Big) = -\frac{1}{n} \ini\Big(\frac{\partial}{\partial x}Q(P)\Big).$$
But the Wronskian $W=N\cdot \frac{\partial}{\partial y}Q(P)- \frac{\partial}{\partial x}Q(P)=0$ vanishes, and thus also its initial vanishes, so $n\cdot \ini(\frac{\partial}{\partial y}Q(P)) = \ini (\frac{\partial}{\partial x}Q(P))$. We substitute $n\cdot \ini(\frac{\partial}{\partial y}Q(P))$ for $\ini (\frac{\partial}{\partial x}Q(P))$ in the above equation, obtaining
$\ini(\frac{\partial}{\partial y}Q(P))= -\ini(\frac{\partial}{\partial y}Q(P))$ from which we deduce $1=-1$ which is a contradiction as we are not in characteristic $2$. Thus no cancellation happens.

There are three cases to distinguish:
\begin{itemize}
\item Assume $p$ is on the horizontal ray. Then the vertex of $\Trop(L)$ is at $(a,0)$ for some $a>0$. The valuation of $N$ is $a$, the valuation of $M$ is zero. 

Consider the Wronskian $W=N\cdot \frac{\partial}{\partial y}Q(P)- \frac{\partial}{\partial x}Q(P)=0$. Consequently, $\ini(N\cdot \frac{\partial}{\partial y}Q(P)- \frac{\partial}{\partial x}Q(P))=0$. But the summand $N\cdot \frac{\partial}{\partial y}Q(P)$ has terms of valuation $a$ or higher, and the summand $\frac{\partial}{\partial x}Q(P)$ has terms of valuation $0$ or higher. Cancellation can thus only appear if $\frac{\partial}{\partial x}Q(P)$ vanishes up to valuation $a$.

Thus both summands in the above equation (\ref{eq-iniQ}) for $\ini(\partial_L Q(P)) $ have expected valuation $0$. 

From the computations for $\GW$-multiplicities we perform in Appendix \ref{app:2H} and \ref{app-tableQtypes} (see Remark \ref{rem-Laurentmonomial}), we conclude that $\ini(\frac{\partial}{\partial y}Q(P))= \frac{\partial}{\partial y}(\ini_p(Q)(\ini(P))\neq 0$ and thus this summand is of the expected valuation. 
The equality $\ini(\frac{\partial}{\partial y}Q(P))= \frac{\partial}{\partial y}(\ini_p(Q)(\ini(P))$ holds since for a polynomial $F(x,y)$ and a point $P=(P_x,P_y)$ that we insert, we have $\ini(F(P_x,P_y))=\ini_{(p_x,p_y)}(F)(\ini(P))$, where $(p_x,p_y)$ contains the valuations, if no cancellation happens in the initial form when inserting $\ini(P)$. Furthermore we have $\ini_{p}(\frac{\partial}{\partial y} Q)= \frac{\partial}{\partial y}(\ini_p(Q))$. This is true since the $p$-weight of a term of $Q$ equals the $p+p_y$-weight of a term of $\frac{\partial}{\partial y} Q$.

Thus
$\val(N\cdot \frac{\partial}{\partial y}Q(P))=a$ and since there is cancellation in the Wronskian, also $\val(\frac{\partial}{\partial x}Q(P))=a$. Furthermore, since there is cancellation in $W$, $n\ini( \frac{\partial}{\partial y}Q(P))= \ini(\frac{\partial}{\partial x}Q(P))$. 

Inserting this into the above equation (\ref{eq-iniQ}), we obtain
\begin{align*}\ini( \partial_L Q(P))& =  \ini \Big(\frac{\partial}{\partial y}Q(P)\Big)+ \ini \Big(\frac{1}{N}\cdot \frac{\partial}{\partial x}Q(P)\Big)\\&
= \ini \Big(\frac{\partial}{\partial y}Q(P)\Big)+ \frac{1}{n}\ini\Big( \frac{\partial}{\partial x}Q(P)\Big)
\\&= \ini \Big(\frac{\partial}{\partial y}Q(P)\Big)+ \frac{1}{n}\cdot n\ini \Big(\frac{\partial}{\partial y}Q(P)\Big)
\\&= \ini \Big(\frac{\partial}{\partial y}Q(P)\Big)+ \ini\Big( \frac{\partial}{\partial y}Q(P)\Big)
\\&= 2 \ini\Big(\frac{\partial}{\partial y}Q(P)\Big) = 2 \frac{\partial}{\partial y}(\ini_p(Q))(\ini(P)).\end{align*}

\item  Assume $p$ is on the vertical ray. Then the vertex of $\Trop(L)$ is at $(0,a)$ for some $a>0$. The valuation of $N$ and $M$ is $-a$. 

Consider the Wronskian $W=N\cdot \frac{\partial}{\partial y}Q(P)- \frac{\partial}{\partial x}Q(P)=0$. Consequently, $\ini(N\cdot \frac{\partial}{\partial y}Q(P)- \frac{\partial}{\partial x}Q(P))=0$. But the summand $N\cdot \frac{\partial}{\partial y}Q(P)$ has terms of valuation $-a$ or higher, and the summand $\frac{\partial}{\partial x}Q(P)$ has terms of valuation $0$ or higher. Cancellation can thus only appear if $\frac{\partial}{\partial y}Q(P)$ vanishes up to valuation $a$.

Thus both summands in the above equation for $\ini(\partial_L Q(P)) $ (\ref{eq-iniQ}) have expected valuation $0$. 

From the computations for $\GW$-multiplicities we perform in Appendix \ref{app:2H} and \ref{app-tableQtypes} (see Remark \ref{rem-Laurentmonomial}), we conclude that $\ini(\frac{\partial}{\partial x}Q(P))= \frac{\partial}{\partial x}(\ini_p(Q)(\ini(P))\neq 0$ and thus this summand is of the expected valuation. Thus $\val(\frac{\partial}{\partial x}Q(P))=0$ and since there is cancellation in the Wronskian, also
$\val(N\cdot \frac{\partial}{\partial y}Q(P))=0$. Furthermore, since there is cancellation in $W$, $n\ini( \frac{\partial}{\partial y}Q(P))= \ini(\frac{\partial}{\partial x}Q(P))$. 

Inserting this into the above equation for $\ini(\partial_L Q(P)) $, we obtain
\begin{align*}\ini(\partial_L Q(P))& =  \ini \Big(\frac{\partial}{\partial y}Q(P)\Big)+ \ini\Big(\frac{1}{N}\cdot \frac{\partial}{\partial x}Q(P)\Big)\\&
= \ini \Big(\frac{\partial}{\partial y}Q(P)\Big)+ \frac{1}{n}\ini\Big( \frac{\partial}{\partial x}Q(P)\Big)
\\&=\frac{1}{n}\ini\Big( \frac{\partial}{\partial x}Q(P)\Big)+ \frac{1}{n}\cdot n\ini\Big(\frac{\partial}{\partial y}Q(P)\Big)
\\&= 2 \frac{1}{n}\ini\Big( \frac{\partial}{\partial x}Q(P)\Big) = 2\frac{1}{n} \frac{\partial}{\partial x}(\ini_p(Q))(\ini(P)).\end{align*}

\item  Assume $p$ is on the vertical ray or the vertex. Then the vertex of $\Trop(L)$ is at $(-a,-a)$ for some $a\geq 0$. The valuation of $M$ is $a$, the valuation of $N$ is zero. 
Then both summands in equation (\ref{eq-iniQ}) are of valuation zero and we can directly express

\begin{align*}\ini(\partial_L Q(P))& =  \ini \Big(\frac{\partial}{\partial y}Q(P)\Big)+ \ini\Big(\frac{1}{N}\cdot \frac{\partial}{\partial x}Q(P)\Big)\\&
= \ini \Big(\frac{\partial}{\partial y}Q(P)\Big)+ \frac{1}{n}\ini\Big( \frac{\partial}{\partial x}Q(P)\Big).\\&
= \frac{\partial}{\partial y}(\ini_p(Q))(\ini(P)) + \frac{1}{n} \frac{\partial}{\partial x}(\ini_p(Q))(\ini(P)).
\end{align*}

\end{itemize}

For the statement in particular, note that by Theorem \ref{thm-GWKGWk}, we have $\langle\partial_L Q(P_1)\cdot \partial_L Q(P_2)  \rangle = \langle\ini(\partial_L Q(P_1)\cdot \partial_L Q(P_2) ) \rangle$. The initial of the product $\ini(\partial_L Q(P_1)\cdot \partial_L Q(P_2) ) $ equals the product of initials $\ini(\partial_L Q(P_1))\cdot \ini( \partial_L Q(P_2) ) $. The factors can be expressed in terms of derivatives of initial forms as described above.
\end{proof}

\begin{remark}\label{rem:degree2extension}
By Example \ref{ex:degree2extension}, a tropical bitangent with lifting multiplicity $2$ which does not lift to $K$ contributes $\mathbb{H}$ to the arithmetic count of bitangents: Such a lift is defined over a field extension of degree $2$. The element whose class in $\GW(K)$ we take equals a Laurent term in the initials of $Q$ times a square root of such a term by Remark \ref{rem-Laurentmonomial} (see also Appendix \ref{app:2H} and \ref{app-tableQtypes}). Thus, in the notation of Example \ref{ex:degree2extension}, $r=0$ and we get a contribution of $\mathbb{H}$. 
\end{remark}

In the following example, we consider tropical bitangents of lifting multiplicity four whose lifts live in a field extension.
\begin{example}\label{ex:degree4extension} 
There are tropical bitangent classes for which we need to add two roots to the ground field to lift to $4$ bitangent lines. These are precisely the zero-dimensional classes (A), (B) and (C) in the classification in \cite{CM20}.
Assume these two roots are $\alpha$ and $\beta$ and $ K(\alpha,\beta)/K$ is of degree $4$.

Let $a \in K(\alpha,\beta)^\times$. We now compute $\Tr_{K(\alpha,\beta)|K} \langle a \rangle \in \GW(K)$.
A basis for $K(\alpha,\beta)$ is given by $1,\alpha,\beta,\alpha\cdot \beta$. We write $a$ in this basis:
\begin{align*}
a=c_1\cdot 1 +c_2\cdot \alpha+c_3\cdot \beta+c_4\cdot \alpha\cdot \beta.
\end{align*}
We study the bilinear map
\begin{align*}
\tilde{q}: K(\alpha,\beta)\times K(\alpha,\beta)\rightarrow K(\alpha,\beta) \rightarrow K,
\end{align*}
where the first arrow is the map $(x,y)\mapsto axy$ and the second is the trace.
Let us first compute the trace for an arbitrary element $b=b_1\cdot 1 +b_2\cdot \alpha+b_3\cdot \beta+b_4\cdot \alpha\cdot \beta$.
The matrix we obtain is
  $$\left(\begin{array}{cccc} b_1 & b_2 \alpha^2 & b_3 \beta^2 & b_4\alpha^2\beta^2  \\b_2 & b_1&b_4 \beta^2 & b_3\beta^2\\ b_3 & b_4\alpha^2 & b_1 & b_2\alpha^2\\ b_4& b_3& b_2& b_1\end{array}\right).$$
The trace is thus $4b_1$.
Now we can insert basis vectors in the above bilinear map to obtain the symmetric matrix
  $$\left(\begin{array}{cccc} 4c_1  &4c_2\alpha^2 &4c_3\beta^2 &4c_4\alpha^2\beta^2  \\4c_2\alpha^2 &4c_1\alpha^2 &4c_4\alpha^2\beta^2 &4c_3\alpha^2\beta^2\\4c_3\beta^2 &4c_4 \alpha^2\beta^2 &4c_1\beta^2 &4c_2\alpha^2\beta^2 \\4c_4\alpha^2\beta^2 &4c_3\alpha^2\beta^2 &4c_2\alpha^2\beta^2 &4c_1\alpha^2\beta^2 \end{array}\right).$$

As we can see in our computations for lifts of tropical bitangents and their $\Qtype$ (see Remark \ref{rem-Laurentmonomial} and Appendix \ref{app:classification}), the bilinear map we obtain for a lift of a tropical bitangent class of type (A), (B) or (C) is of the form $c_2\alpha$ or $c_4\alpha\beta$. Thus, we can insert $c_1=c_3=c_4=0$ or $c_1=c_2=c_3=0$ in the above matrix. In both cases, we obtain a decomposition into two hyperbolic planes.

This computation shows that the $\GW$-multiplicity of a tropical bitangent class whose $4$ lifts live in a field extension of degree $4$ which we obtain by adjoining two roots is $2\mathbb{H}$.
\end{example}

\begin{remark}[Computation of $\GW$-multiplicities]\label{rem-computation}
Theorem \ref{thm-Qtypetropical} together with the theory of tropical bitangents allows to compute $\GW$-multiplicities of quartics as follows:
\begin{itemize}
\item Compute the tropicalization $\Trop(C)$, e.g.\ using the software system \textsc{Polymake} \cite{polymake} or the libraby tropical.lib in the computer algebra system \textsc{Singular} \cite{JMM07a, DGPS}.
\item Using the \textsc{Polymake}-extension on tropical bitangents of quartics by Geiger-Panizzut \cite{GP21a}, compute all its tropical bitangent classes.
\item For each liftable tropical bitangent, use Theorem \ref{thm-Qtypetropical} to compute its $\Qtype$.
\item For each tropical bitangent class which does not lift, deduce from Remark \ref{rem:degree2extension} and \ref{ex:degree4extension} that the $\GW$-multiplicity of the four bitangents equals $2\mathbb{H}$.
\end{itemize}
\end{remark}

In the following, we prepare statements which will be used for the further study of the $\Qtype$ of tropical bitangent lines which lift over $K$.
Our goal is to provide tools for the proof of Theorem \ref{thm-2H}, stating that many bitangent shapes yield a contribution of $2\mathbb{H}$.
Lemma \ref{lem-monomialout} is an observation that simplifies computations in general.
Lemmas \ref{lem-edgeonedgetangency}, \ref{lem-edgeonedgediagonal}, \ref{lem-movehorizontal} and \ref{lem-Hb} will be needed to pair up contributions to arithmetic multiplicities for tropical bitangents in the same bitangent class for various bitangent shapes in Theorem \ref{thm-2H}, spelled out in more detail in the Appendix \ref{app:2H}, see Theorem \ref{thm-2Hdetails}. Going through Appendix \ref{app:2H}, one can see that these four Lemmas are not sufficient to cover all cases. They cover a large subset of the cases however, and each of them appears several times (i.e.\ for many bitangent shapes), which is why we include these statements in the main part of the paper. The remaining cases appear more individually, so we discuss them only in the Appendix after going through the case-by-case analysis of bitangent shapes.

\begin{lemma}\label{lem-monomialout}
Assume the initial form of $Q$ at tropical tangency component $p_1$ equals a monomial $m$ times a form $q$, i.e.\ $\ini_{p_1}(Q)=m\cdot q$. Let  $\partial_L$ be as above. Let $P_1$ be a lift of $p_1$. Then $\partial_L(\ini_{p_1}(Q)) (\ini(P_1))= m\cdot \partial_L(q)(\ini(P_1))$.
\end{lemma}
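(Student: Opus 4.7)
The plan is to use the Leibniz rule combined with the fact that the tangency point $P_1$ lies on the curve $C$, so that one of the two terms produced by Leibniz vanishes upon evaluation.

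First I would observe that since $P_1$ is a point of tangency of $L$ with $C$, we have $Q(P_1)=0$, which forces the initial form to vanish at the initial of $P_1$, i.e.\ $\ini_{p_1}(Q)(\ini(P_1))=0$. Writing $\ini_{p_1}(Q)=m\cdot q$, this gives $m(\ini(P_1))\cdot q(\ini(P_1))=0$. Since $P_1$ lies in the torus $(K^\times)^2$, its coordinates have initials in $k^\times$, and therefore the monomial $m$ evaluated at $\ini(P_1)$ is a nonzero element of $k^\times$. Hence $q(\ini(P_1))=0$.

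Next, applying the Leibniz rule to the operator $\partial_L=\frac{\partial}{\partial y}+\frac{1}{N}\cdot\frac{\partial}{\partial x}$ (which is $k$-linear and satisfies the product rule, since it is a linear combination of partial derivatives) yields
\[
\partial_L(\ini_{p_1}(Q))=\partial_L(m)\cdot q+m\cdot \partial_L(q).
\]
Evaluating both sides at $\ini(P_1)$ and using the vanishing $q(\ini(P_1))=0$ established above, the first summand drops out, leaving
\[
\partial_L(\ini_{p_1}(Q))(\ini(P_1))=m(\ini(P_1))\cdot \partial_L(q)(\ini(P_1)),
\]
which, under the convention of writing $m$ for the value $m(\ini(P_1))$, is the claimed identity.

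There is no real obstacle here; the only subtlety to verify is that $m(\ini(P_1))\neq 0$, which as noted follows from $P_1\in (K^\times)^2$ so that $\ini(P_1)\in (k^\times)^2$. The content of the lemma is essentially that the derivative of a monomial factor in the initial form of $Q$ contributes nothing at a tangency point, because the complementary factor already vanishes there.
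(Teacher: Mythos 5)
Your proof is correct and follows essentially the same route as the paper: apply the product rule to $\partial_L$, note that $q(\ini(P_1))=0$ (since $\ini_{p_1}(Q)$ vanishes at $\ini(P_1)$ while the monomial factor cannot vanish at a point of the torus), so the term involving $\partial_L(m)\cdot q$ drops out. Your write-up just makes explicit the justification for $q(\ini(P_1))=0$ that the paper states in one line.
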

\begin{proof}
This holds true since the second summand we obtain from the derivative of the product disappears since $q(\ini(P_1))=0$. The monomial cannot vanish by inserting nonzero values.
\end{proof}

\begin{lemma}\label{lem-edgeonedgetangency}

Let $Q$ be a quartic polynomial over $K$ and assume $\Trop(V(Q))$ is smooth and generic. 
Assume there is a tropical tangency component in the interior of an untwisted horizonal bounded edge $E$ of $\Trop(V(Q))$. We denote the liftable tangency point by $p$.
Then the lifts of the tropical bitangents at $p$ come in pairs $L_1, L_2$ 
such that the tangency points $P \in L_1\cap V(Q)$ and $\widetilde{P} \in L_2\cap V(Q)$ which tropicalize to $p$ satisfy  $$\ini(\partial_{L_1}(Q(P))   = - \ini( \partial_{L_2}(Q (\widetilde{P}))  .$$ 
\end{lemma}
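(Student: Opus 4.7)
The plan is to combine the explicit lift formulas of Lemma~\ref{lem-localliftsegment} with the horizontal-ray case of Theorem~\ref{thm-Qtypetropical}, and to observe that the resulting expression is odd in the $x$-coordinate of the tangency point. Since the tangency component $T$ lies strictly in the interior of $E$, it is strictly contained in the horizontal ray of any tropical bitangent $\Lambda$ passing through $p$ and does not contain the vertex of $\Lambda$, so I would invoke the first case of Lemma~\ref{lem-localliftsegment}. Writing $E$ as dual to the edge joining $V_{a,l}$ and $V_{a,l+1}$ in the Newton subdivision, the two sign choices $\pm$ in the local-lifting formulas yield two lifts $L_1$ and $L_2$ of $\Lambda$, both defined over $K$ precisely because $E$ is untwisted, whose tangency points $P$ and $\widetilde P$ have initials
\[
\ini(P) = (\xi, \eta), \qquad \ini(\widetilde P) = (-\xi, \eta),
\]
with $\eta = -a_{a,l}/a_{a,l+1}$ and $\xi \in k$ the chosen square root.

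Next I would apply the horizontal-ray case of Theorem~\ref{thm-Qtypetropical}, which gives
\[
\ini\bigl(\partial_{L_i} Q(P_i)\bigr) \;=\; 2\,\tfrac{\partial}{\partial y}\bigl(\ini_p(Q)\bigr)\bigl(\ini(P_i)\bigr).
\]
At a point in the interior of $E$, only the two monomials dual to $E$ contribute to the initial form, so
\[
\ini_p(Q) \;=\; x^a\bigl(a_{a,l}\,y^l + a_{a,l+1}\,y^{l+1}\bigr) \;=\; x^a\,h(y),
\]
and hence $\tfrac{\partial}{\partial y}\ini_p(Q) = x^a\,h'(y)$. Substituting the two tangency initials, the $y$-factor $h'(\eta)$ is common while the $x^a$-factor changes by $(-1)^a$, so the two evaluations differ by a sign whenever $a$ is odd, yielding the claimed identity.

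The hard part will be verifying that the integer $a$ is indeed odd in every shape of generic bitangent class in Appendix~\ref{app:classification} for which a tropical bitangent carries a tangency component in the interior of a horizontal bounded edge of $\Trop(V(Q))$; for the plane quartic this amounts to confirming that only $a \in \{1,3\}$ arises in the classification. The pairing of the two lifts sharing the tangency at $p$ is otherwise built into Lemma~\ref{lem-localliftsegment}, so once this parity check is carried out, the sign cancellation becomes purely formal.
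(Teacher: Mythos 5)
Your argument is essentially the paper's proof: apply the first case of Lemma~\ref{lem-localliftsegment} to get the two tangency points with initials $(\xi,\eta)$ and $(-\xi,\eta)$, feed them into the horizontal-ray case of Theorem~\ref{thm-Qtypetropical}, factor the initial form as a monomial times a binomial in $y$ (Lemma~\ref{lem-monomialout}), and observe that the result is odd in the $x$-coordinate.

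The one point you flag as "the hard part" --- checking that the exponent $a$ of $x$ in $\ini_p(Q)=x^a h(y)$ is odd --- is not actually open, and does not require a sweep through Appendix~\ref{app:classification}: one always has $a=1$. This is already the stated setting of Lemma~\ref{lem-localliftsegment} (the dual edge joins $V_{1l}$ and $V_{1,l+1}$, with apexes $V_{0i}$ and $V_{2j}$), and it is forced: the horizontal ray of $\Lambda$ containing $E$ must run off to $x=-\infty$ without meeting $\Trop(C)$ again (the overlap already accounts for intersection multiplicity $2$, the second tangency accounts for the other $2$, and by genericity the two tangency components are not on the same ray), so past the left endpoint of $E$ the ray lies in a region dual to a lattice point with first coordinate $0$; that endpoint's dual triangle then has vertices $(a,l)$, $(a,l+1)$, $(0,i)$ and area $a/2$, and unimodularity of the smooth subdivision gives $a=1$. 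With that observation your proof closes and coincides with the one in the paper.
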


\begin{proof}
By Theorem \ref{thm-Qtypetropical},  $\ini(\partial_{L_1}(Q(P))= 2 \frac{\partial}{\partial y} \ini_{p}(Q)(\ini(P))$, and anlogously for $L_2$, as the tropical tangency component $p$ is on the horizontal ray.
The initial form is $\ini_{p}(Q)=a_{1k}xy^k+a_{1k+1}xy^{k+1}$.
By Lemma \ref{lem-monomialout}, after inserting the initials of the solutions $(x_0,y_0)$ of the tangency point we have
$$ \ini(\partial_{L_i}(Q(P_i)) = 2xy^k \frac{\partial}{\partial y}(a_{1k}+a_{1k+1}y) (x_0,y_0)=2 a_{1k+1}x_0y_0^k.$$

Plugging in the solutions for the tangency points $P$ and $\widetilde{P}$ from Lemma \ref{lem-localliftsegment} we obtain the result.
\end{proof}

The following is the analogue of Lemma \ref{lem-edgeonedgetangency} where the untwisted horizontal bounded edge 
$E$ is replaced by a diagonal bounded edge. As we broke symmetry by declaring $\{z=0\}$ to be our infinite line, we cannot expect a similar statement. It is interesting to note that while in Lemma \ref{lem-edgeonedgetangency}, the contributions for the two different lifts were negatives of each other (equal up to sign), here we obtain the same contribution.

\begin{lemma}\label{lem-edgeonedgediagonal}
Let $Q$ be a quartic polynomial over $K$ and assume $\Trop(V(Q))$ is smooth and generic. 

Assume there is a tropical tangency component in the interior of an untwisted diagonal bounded edge $E$ of $\Trop(V(Q))$. We denote the liftable tangency point in it by $p$.
 As in Lemma \ref{lem-edgeonedgetangency}, the lifts of the tropical bitangents at $p$ come in pairs $L_1, L_2$ 
but now the tangency points $P \in L_1\cap V(Q)$ and $\widetilde{P} \in L_2\cap V(Q)$ which tropicalize to $p$ satisfy 
 $$\ini(\partial_{L_1}(Q(P))   =  \ini( \partial_{L_2}(Q (\widetilde{P}))  .$$

\end{lemma}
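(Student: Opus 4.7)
The strategy mirrors that of Lemma \ref{lem-edgeonedgetangency}, with the horizontal-ray case of Theorem \ref{thm-Qtypetropical} replaced by the diagonal-ray case. Since the tropical tangency component $p$ lies in the diagonal ray of $\Lambda$, Theorem \ref{thm-Qtypetropical} gives
$$\ini(\partial_{L_i} Q(P_i)) = \left(\frac{\partial}{\partial y} + \frac{1}{n_i}\frac{\partial}{\partial x}\right)\ini_p(Q)(\ini(P_i)),$$
where $n_i$ denotes the initial of the slope of $L_i$. Unlike in Lemma \ref{lem-edgeonedgetangency}, both partial derivatives contribute, and $n_i$ enters the formula directly; this is the source of the new feature compared to the horizontal case.

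Because $E$ is diagonal, its dual edge in the Newton subdivision has direction $(1,-1)$, so the local initial form decomposes as $\ini_p(Q) = a_r x^{r_1} y^{r_2} + a_s x^{r_1+1} y^{r_2-1}$ for appropriate lattice points $r,s$. Factoring out the monomial $x^{r_1} y^{r_2-1}$ and applying Lemma \ref{lem-monomialout} yields
$$\ini(\partial_{L_i} Q(P_i)) = x_{0,i}^{r_1}\, y_{0,i}^{r_2-1}\left(a_r + \frac{a_s}{n_i}\right),$$
where $(x_{0,i},y_{0,i})$ denotes the initial of $P_i$. The task reduces to showing that this expression agrees for $i=1,2$.

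The remaining step is to derive the diagonal analog of Lemma \ref{lem-localliftsegment} by solving the local lifting equations (vanishing of $Q_E$, of $\ell_E$, and of the Wronskian $W_E$) with the tropical refinement technique used in \cite{LM17}, and then to substitute the resulting initials for $x_{0,i}, y_{0,i}, n_i$. The untwistedness of $E$ forces the discriminant of the governing quadratic to be a square in $k$, so both lifts are defined over $K$, and the two solutions correspond to the two signs of its square root.

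The main obstacle is the sign bookkeeping, and explaining the asymmetry with the horizontal case. In Lemma \ref{lem-edgeonedgetangency} the square root appears only in $x_0$, producing the overall minus sign; in the diagonal case the sign propagates into \emph{both} the monomial factor $x_{0,i}^{r_1} y_{0,i}^{r_2-1}$ and the scalar $a_r + a_s/n_i$ through $n_i$. The curve equation $a_r y_{0,i} + a_s x_{0,i}=0$, which holds on the initial level because $\ini_p(Q)$ vanishes at the tangency, pins the ratio $y_{0,i}/x_{0,i} = -a_s/a_r$ independently of $i$, while the tangency condition pins a corresponding relation between $n_i$ and this ratio. I expect these two constraints to show that the two sign contributions from the factors combine to cancel rather than to flip, producing the claimed equality $\ini(\partial_{L_1} Q(P)) = \ini(\partial_{L_2} Q(\widetilde{P}))$.
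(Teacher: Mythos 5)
Your setup follows the paper's proof exactly — the diagonal case of Theorem \ref{thm-Qtypetropical}, factoring the binomial initial form via Lemma \ref{lem-monomialout}, and reducing to the local lifting solutions — but you stop precisely at the step that decides the lemma, and your general parametrization does not contain enough information to finish it. After factoring, the contribution of $L_i$ is $x_{0,i}^{r_1}y_{0,i}^{r_2-1}\bigl(a_r+\tfrac{a_s}{n_i}\bigr)$. The scalar factor is unproblematic: the tangency along $E$ forces $\ini(N_i)=n=a_s/a_r$ for \emph{both} lifts (they differ only at higher order), so $a_r+a_s/n_i=2a_r$ identically; your worry that the sign "propagates through $n_i$" does not materialize. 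The two lifts differ only in the tangency point, and since $y_{0,i}/x_{0,i}=-a_s/a_r$ is fixed while the square root flips, they are $(x_0,y_0)$ and $(-x_0,-y_0)$. Hence the monomial factor changes by $(-1)^{r_1+r_2-1}$ between the two lifts, and the lemma is equivalent to the assertion that $r_1+r_2-1$ is \emph{even}. Your two constraints (the ratio $y_0/x_0$ and the relation to $n$) cannot decide this parity; if $r_1+r_2$ were even you would get the opposite sign, i.e.\ the conclusion of Lemma \ref{lem-edgeonedgetangency} instead.

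What is missing is the geometric input that pins down the degree: the dual edge of a diagonal bounded edge carrying such a tangency must lie on the line $i+j=3$ of the Newton triangle (in the paper's notation the endpoints are $(m,3-m)$ and $(m+1,2-m)$). This follows because the diagonal ray of $\Lambda$ must escape to infinity beyond $E$ without meeting $\Trop(C)$ again, so the adjacent region is dual to a vertex on the boundary $i+j=4$, and unimodularity of the adjacent triangle then forces the edge onto $i+j=3$; this is the diagonal analogue of the observation "$i=0$" in the proof of Lemma \ref{lem-movehorizontal}. With $r_1+r_2=3$ the factored monomial is $x^{m}y^{2-m}$, of even total degree, the sign $(-1)^{r_1+r_2-1}=+1$, and the two contributions agree — this is exactly the paper's closing remark that the answer is independent of which root is inserted for $x_0$. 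Until you supply this degree constraint (or an equivalent argument), the "sign bookkeeping" you correctly identify as the main obstacle remains open, so the proof is incomplete.
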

\begin{proof}
Using Theorem \ref{thm-Qtypetropical} we have to derive with $\frac{\partial}{\partial y}+\frac{1}{n}\cdot \frac{\partial}{\partial x}$, and by Lemma \ref{lem-monomialout}, after inserting the initials of the solutions $(x_0,y_0)$ of the tangency point we have

$$ \partial_L(\ini_{p}(Q)) (x_0,y_0) = x^my^{2-m} \frac{\partial}{\partial y}+\frac{1}{n}\cdot \frac{\partial}{\partial x}(a_{m,3-m}y+a_{m+1,2-m}x) =2 a_{m,3-m}x_0^my_0^{2-m},$$
as $n=\frac{a_{m+1,2-m}}{a_{m,3-m}}$ by Lemma \ref{lem-localliftsegment}. Also, $\frac{y_0}{x_0}=-n$, so we obtain
$$2a_{m,3-m}\Big(-\frac{a_{m+1,2-m}}{a_{m,3-m}}\Big)^{2-m}\cdot x_0^{2m-2}.$$
Up to squares, this equals
$$(-1)^m\cdot 2\cdot a_{m,3-m}^{m-1}a_{m+1,2-m}^m,$$
no matter whether we insert the positive or the negative root which is the solution for $x_0$.
\end{proof}

\begin{lemma}\label{lem-movehorizontal}
Assume the horizontal or vertical ray of a tropical bitangent line can move along a bounded edge maintaining tangency, and the tropicalization of the  tangency points $p_1$ and $p_2$ of the liftable members $L_1, L_2$ of the corresponding bitangent class are the two end vertices.
Then 
$$\ini(\partial_{L_1}(Q(P_1))   = - \ini( \partial_{L_2}(Q (P_2)) \mbox{ up to squares} .$$
\end{lemma}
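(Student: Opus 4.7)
\medskip
\noindent\emph{Proof plan.} The plan is to reduce to a tractable calculation via Theorem \ref{thm-Qtypetropical} and then exploit the combinatorics of the two triangles in the Newton subdivision dual to $p_1$ and $p_2$. Without loss of generality, assume we are in the horizontal ray case (the vertical case is identical after exchanging the roles of $x$ and $y$). Since each $p_i$ lies on the horizontal ray of the tropical line $\Lambda_i$ associated to $L_i$, Theorem \ref{thm-Qtypetropical} yields
$$
\ini(\partial_{L_i} Q(P_i)) \;=\; 2 \,\tfrac{\partial}{\partial y}\ini_{p_i}(Q)\bigl(\ini(P_i)\bigr).
$$
Because $p_1, p_2$ are adjacent vertices of $\Trop(C)$ joined by an edge, they are dual to two triangles $T_1, T_2$ sharing a common edge $\overline{qq'}$, with third vertices $r_1, r_2$ lying on opposite sides of $\overline{qq'}$. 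Consequently
$$
\ini_{p_i}(Q) \;=\; a_q x^q + a_{q'} x^{q'} + a_{r_i} x^{r_i}, \qquad i=1,2,
$$
where the monomials $a_q x^q$ and $a_{q'} x^{q'}$ are common to both initial forms.

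Next, I would solve the local lifting equations at each $p_i$ to obtain $\ini(P_i)$. Because the vertex of $\Lambda_i$ coincides with $p_i$ and the tangency lies on the horizontal ray, the local equation of the line reduces to $y + m_i = 0$, and the system reads
$$
\ini_{p_i}(Q)(x_i,y_i)=0,\qquad y_i+m_i=0,\qquad \tfrac{\partial}{\partial x}\ini_{p_i}(Q)(x_i,y_i)=0,
$$
the last equation coming from the local Wronskian. Factoring out the common monomial of $\ini_{p_i}(Q)$ via Lemma \ref{lem-monomialout} and using the vanishing of $\ini_{p_i}(Q)$ at the tangency point, I can rewrite $\tfrac{\partial}{\partial y}\ini_{p_i}(Q)(\ini(P_i))$ as a Laurent monomial in $a_q, a_{q'}, a_{r_i}$ (possibly up to a square), exactly as in the computations leading to the values $-a_{01}$ and $a_{01}$ obtained in Example \ref{ex-locallifting}.

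Finally, I would compare the two values obtained for $i=1$ and $i=2$. The key observation is that, because $r_1$ and $r_2$ lie on opposite sides of $\overline{qq'}$, the explicit solutions of the local system at $p_1$ and at $p_2$ differ in a way that introduces exactly one sign flip in the resulting expression for $\tfrac{\partial}{\partial y}\ini_{p_i}(Q)(\ini(P_i))$, while the magnitude is the same up to squares. This yields the claim $\ini(\partial_{L_1}Q(P_1)) = -\ini(\partial_{L_2}Q(P_2))$ up to squares. The main obstacle is that this last step is not a single uniform calculation but rather a finite case analysis over the possible combinatorial shapes of the pair $(T_1,T_2)$ allowed by the classification in Appendix \ref{app:classification}; each case is handled by the same pattern illustrated in Example \ref{ex-locallifting}, but verifying the sign reversal requires explicitly running the local lifting equations for each shape in which the hypotheses of the lemma are realized.
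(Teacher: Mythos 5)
Your proposal is correct and follows essentially the same route as the paper's proof: reduce via Theorem~\ref{thm-Qtypetropical} to $2\,\tfrac{\partial}{\partial y}\ini_{p_i}(Q)(\ini(P_i))$, solve the local lifting system at the two endpoints of the shared dual edge, observe the sign flip coming from $r_1,r_2$ lying on opposite sides, and finish by a finite check over the possible dual-edge directions (the paper lists them as $(2,1)$, $(2,-1)$, $(2,3)$ and computes one representative case). One small correction: the vertex of $\Lambda_i$ does \emph{not} coincide with $p_i$ --- the vertex $p_i$ of $\Trop(C)$ lies in the interior of the horizontal ray of $\Lambda_i$ --- but the local system you wrote down ($y_i+m_i=0$ together with $\partial_x\ini_{p_i}(Q)=0$ from the Wronskian) is exactly the correct one for that situation, so nothing downstream is affected.
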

\begin{proof}
By symmetry, we can without restriction assume that the ray is horizontal.
The only direction vectors of dual edges that fit into a the Newton polygon of a quartic, i.e.\ the triangle with vertices $(0,0)$, $(0,4)$ and $(4,0)$, and intersect with multiplicity $2$ a horizontal edge are $\binom{2}{1}$, $\binom{2}{-1}$ and $\binom{2}{3}$.
Because of smoothness, the adjacent vertices correspond to triangles as depicted in Figure \ref{fig-movehorizontal}.
\begin{figure}
\begin{center}

\tikzset{every picture/.style={line width=0.75pt}} 

\begin{tikzpicture}[x=0.75pt,y=0.75pt,yscale=-1,xscale=1]

\draw    (80.13,349.75) -- (100.13,330.25) ;
\draw  [fill={rgb, 255:red, 0; green, 0; blue, 0 }  ,fill opacity=1 ] (78.25,349.75) .. controls (78.25,348.71) and (79.09,347.88) .. (80.13,347.88) .. controls (81.16,347.88) and (82,348.71) .. (82,349.75) .. controls (82,350.79) and (81.16,351.63) .. (80.13,351.63) .. controls (79.09,351.63) and (78.25,350.79) .. (78.25,349.75) -- cycle ;
\draw  [fill={rgb, 255:red, 0; green, 0; blue, 0 }  ,fill opacity=1 ] (98,350) .. controls (98,348.96) and (98.84,348.13) .. (99.88,348.13) .. controls (100.91,348.13) and (101.75,348.96) .. (101.75,350) .. controls (101.75,351.04) and (100.91,351.88) .. (99.88,351.88) .. controls (98.84,351.88) and (98,351.04) .. (98,350) -- cycle ;
\draw  [fill={rgb, 255:red, 0; green, 0; blue, 0 }  ,fill opacity=1 ] (117.75,350) .. controls (117.75,348.96) and (118.59,348.13) .. (119.63,348.13) .. controls (120.66,348.13) and (121.5,348.96) .. (121.5,350) .. controls (121.5,351.04) and (120.66,351.88) .. (119.63,351.88) .. controls (118.59,351.88) and (117.75,351.04) .. (117.75,350) -- cycle ;
\draw  [fill={rgb, 255:red, 0; green, 0; blue, 0 }  ,fill opacity=1 ] (78.25,330.5) .. controls (78.25,329.46) and (79.09,328.63) .. (80.13,328.63) .. controls (81.16,328.63) and (82,329.46) .. (82,330.5) .. controls (82,331.54) and (81.16,332.38) .. (80.13,332.38) .. controls (79.09,332.38) and (78.25,331.54) .. (78.25,330.5) -- cycle ;
\draw  [fill={rgb, 255:red, 0; green, 0; blue, 0 }  ,fill opacity=1 ] (98.25,330.25) .. controls (98.25,329.21) and (99.09,328.38) .. (100.13,328.38) .. controls (101.16,328.38) and (102,329.21) .. (102,330.25) .. controls (102,331.29) and (101.16,332.13) .. (100.13,332.13) .. controls (99.09,332.13) and (98.25,331.29) .. (98.25,330.25) -- cycle ;
\draw  [fill={rgb, 255:red, 0; green, 0; blue, 0 }  ,fill opacity=1 ] (118,330.5) .. controls (118,329.46) and (118.84,328.63) .. (119.88,328.63) .. controls (120.91,328.63) and (121.75,329.46) .. (121.75,330.5) .. controls (121.75,331.54) and (120.91,332.38) .. (119.88,332.38) .. controls (118.84,332.38) and (118,331.54) .. (118,330.5) -- cycle ;
\draw  [fill={rgb, 255:red, 0; green, 0; blue, 0 }  ,fill opacity=1 ] (160,350) .. controls (160,348.96) and (160.84,348.13) .. (161.88,348.13) .. controls (162.91,348.13) and (163.75,348.96) .. (163.75,350) .. controls (163.75,351.04) and (162.91,351.88) .. (161.88,351.88) .. controls (160.84,351.88) and (160,351.04) .. (160,350) -- cycle ;
\draw  [fill={rgb, 255:red, 0; green, 0; blue, 0 }  ,fill opacity=1 ] (179.75,350.25) .. controls (179.75,349.21) and (180.59,348.38) .. (181.63,348.38) .. controls (182.66,348.38) and (183.5,349.21) .. (183.5,350.25) .. controls (183.5,351.29) and (182.66,352.13) .. (181.63,352.13) .. controls (180.59,352.13) and (179.75,351.29) .. (179.75,350.25) -- cycle ;
\draw  [fill={rgb, 255:red, 0; green, 0; blue, 0 }  ,fill opacity=1 ] (199.5,350.25) .. controls (199.5,349.21) and (200.34,348.38) .. (201.38,348.38) .. controls (202.41,348.38) and (203.25,349.21) .. (203.25,350.25) .. controls (203.25,351.29) and (202.41,352.13) .. (201.38,352.13) .. controls (200.34,352.13) and (199.5,351.29) .. (199.5,350.25) -- cycle ;
\draw  [fill={rgb, 255:red, 0; green, 0; blue, 0 }  ,fill opacity=1 ] (160,330.75) .. controls (160,329.71) and (160.84,328.88) .. (161.88,328.88) .. controls (162.91,328.88) and (163.75,329.71) .. (163.75,330.75) .. controls (163.75,331.79) and (162.91,332.63) .. (161.88,332.63) .. controls (160.84,332.63) and (160,331.79) .. (160,330.75) -- cycle ;
\draw  [fill={rgb, 255:red, 0; green, 0; blue, 0 }  ,fill opacity=1 ] (180,330.5) .. controls (180,329.46) and (180.84,328.63) .. (181.88,328.63) .. controls (182.91,328.63) and (183.75,329.46) .. (183.75,330.5) .. controls (183.75,331.54) and (182.91,332.38) .. (181.88,332.38) .. controls (180.84,332.38) and (180,331.54) .. (180,330.5) -- cycle ;
\draw  [fill={rgb, 255:red, 0; green, 0; blue, 0 }  ,fill opacity=1 ] (199.75,330.75) .. controls (199.75,329.71) and (200.59,328.88) .. (201.63,328.88) .. controls (202.66,328.88) and (203.5,329.71) .. (203.5,330.75) .. controls (203.5,331.79) and (202.66,332.63) .. (201.63,332.63) .. controls (200.59,332.63) and (199.75,331.79) .. (199.75,330.75) -- cycle ;
\draw  [fill={rgb, 255:red, 0; green, 0; blue, 0 }  ,fill opacity=1 ] (229,340) .. controls (229,338.96) and (229.84,338.13) .. (230.88,338.13) .. controls (231.91,338.13) and (232.75,338.96) .. (232.75,340) .. controls (232.75,341.04) and (231.91,341.88) .. (230.88,341.88) .. controls (229.84,341.88) and (229,341.04) .. (229,340) -- cycle ;
\draw  [fill={rgb, 255:red, 0; green, 0; blue, 0 }  ,fill opacity=1 ] (248.75,340.25) .. controls (248.75,339.21) and (249.59,338.38) .. (250.63,338.38) .. controls (251.66,338.38) and (252.5,339.21) .. (252.5,340.25) .. controls (252.5,341.29) and (251.66,342.13) .. (250.63,342.13) .. controls (249.59,342.13) and (248.75,341.29) .. (248.75,340.25) -- cycle ;
\draw  [fill={rgb, 255:red, 0; green, 0; blue, 0 }  ,fill opacity=1 ] (268.5,340.25) .. controls (268.5,339.21) and (269.34,338.38) .. (270.38,338.38) .. controls (271.41,338.38) and (272.25,339.21) .. (272.25,340.25) .. controls (272.25,341.29) and (271.41,342.13) .. (270.38,342.13) .. controls (269.34,342.13) and (268.5,341.29) .. (268.5,340.25) -- cycle ;
\draw  [fill={rgb, 255:red, 0; green, 0; blue, 0 }  ,fill opacity=1 ] (229,320.75) .. controls (229,319.71) and (229.84,318.88) .. (230.88,318.88) .. controls (231.91,318.88) and (232.75,319.71) .. (232.75,320.75) .. controls (232.75,321.79) and (231.91,322.63) .. (230.88,322.63) .. controls (229.84,322.63) and (229,321.79) .. (229,320.75) -- cycle ;
\draw  [fill={rgb, 255:red, 0; green, 0; blue, 0 }  ,fill opacity=1 ] (249,320.5) .. controls (249,319.46) and (249.84,318.63) .. (250.88,318.63) .. controls (251.91,318.63) and (252.75,319.46) .. (252.75,320.5) .. controls (252.75,321.54) and (251.91,322.38) .. (250.88,322.38) .. controls (249.84,322.38) and (249,321.54) .. (249,320.5) -- cycle ;
\draw  [fill={rgb, 255:red, 0; green, 0; blue, 0 }  ,fill opacity=1 ] (268.75,320.75) .. controls (268.75,319.71) and (269.59,318.88) .. (270.63,318.88) .. controls (271.66,318.88) and (272.5,319.71) .. (272.5,320.75) .. controls (272.5,321.79) and (271.66,322.63) .. (270.63,322.63) .. controls (269.59,322.63) and (268.75,321.79) .. (268.75,320.75) -- cycle ;
\draw  [fill={rgb, 255:red, 0; green, 0; blue, 0 }  ,fill opacity=1 ] (228.88,379.25) .. controls (228.88,378.21) and (229.71,377.38) .. (230.75,377.38) .. controls (231.79,377.38) and (232.63,378.21) .. (232.63,379.25) .. controls (232.63,380.29) and (231.79,381.13) .. (230.75,381.13) .. controls (229.71,381.13) and (228.88,380.29) .. (228.88,379.25) -- cycle ;
\draw  [fill={rgb, 255:red, 0; green, 0; blue, 0 }  ,fill opacity=1 ] (248.63,379.5) .. controls (248.63,378.46) and (249.46,377.63) .. (250.5,377.63) .. controls (251.54,377.63) and (252.38,378.46) .. (252.38,379.5) .. controls (252.38,380.54) and (251.54,381.38) .. (250.5,381.38) .. controls (249.46,381.38) and (248.63,380.54) .. (248.63,379.5) -- cycle ;
\draw  [fill={rgb, 255:red, 0; green, 0; blue, 0 }  ,fill opacity=1 ] (268.38,379.5) .. controls (268.38,378.46) and (269.21,377.63) .. (270.25,377.63) .. controls (271.29,377.63) and (272.13,378.46) .. (272.13,379.5) .. controls (272.13,380.54) and (271.29,381.38) .. (270.25,381.38) .. controls (269.21,381.38) and (268.38,380.54) .. (268.38,379.5) -- cycle ;
\draw  [fill={rgb, 255:red, 0; green, 0; blue, 0 }  ,fill opacity=1 ] (228.88,360) .. controls (228.88,358.96) and (229.71,358.13) .. (230.75,358.13) .. controls (231.79,358.13) and (232.63,358.96) .. (232.63,360) .. controls (232.63,361.04) and (231.79,361.88) .. (230.75,361.88) .. controls (229.71,361.88) and (228.88,361.04) .. (228.88,360) -- cycle ;
\draw  [fill={rgb, 255:red, 0; green, 0; blue, 0 }  ,fill opacity=1 ] (248.88,359.75) .. controls (248.88,358.71) and (249.71,357.88) .. (250.75,357.88) .. controls (251.79,357.88) and (252.63,358.71) .. (252.63,359.75) .. controls (252.63,360.79) and (251.79,361.63) .. (250.75,361.63) .. controls (249.71,361.63) and (248.88,360.79) .. (248.88,359.75) -- cycle ;
\draw  [fill={rgb, 255:red, 0; green, 0; blue, 0 }  ,fill opacity=1 ] (268.63,360) .. controls (268.63,358.96) and (269.46,358.13) .. (270.5,358.13) .. controls (271.54,358.13) and (272.38,358.96) .. (272.38,360) .. controls (272.38,361.04) and (271.54,361.88) .. (270.5,361.88) .. controls (269.46,361.88) and (268.63,361.04) .. (268.63,360) -- cycle ;
\draw    (100.13,330.25) -- (119.88,330.5) ;
\draw    (80.13,349.75) -- (99.88,350) ;
\draw    (99.88,350) -- (119.88,330.5) ;
\draw    (80.13,349.75) -- (119.88,330.5) ;
\draw    (201.38,350.25) -- (161.88,330.75) ;
\draw    (162.13,330.25) -- (181.88,330.5) ;
\draw    (182.13,350.25) -- (201.88,350.5) ;
\draw    (182.13,350.25) -- (201.88,350.5) ;
\draw    (182.13,350.25) -- (201.88,350.5) ;
\draw    (181.63,350.25) -- (161.88,330.75) ;
\draw    (201.63,350) -- (181.88,330.5) ;
\draw    (270.25,379.5) -- (230.88,320.75) ;
\draw    (250.63,340.25) -- (230.88,320.75) ;
\draw    (270.25,379.5) -- (250.5,360) ;
\draw    (270.25,379.5) -- (250.63,340.25) ;
\draw    (250.5,360) -- (230.88,320.75) ;

\draw (66.75,347) node [anchor=north west][inner sep=0.75pt]   [align=left] {$\displaystyle a$};
\draw (124.75,317.5) node [anchor=north west][inner sep=0.75pt]   [align=left] {$\displaystyle c$};
\draw (93.25,312) node [anchor=north west][inner sep=0.75pt]   [align=left] {$\displaystyle b$};
\draw (100.25,347) node [anchor=north west][inner sep=0.75pt]   [align=left] {$\displaystyle d$};

\end{tikzpicture}

\end{center}
\caption{Possible duals of bounded edges intersecting a horizontal ray with multiplicity $2$.}\label{fig-movehorizontal}
\end{figure}
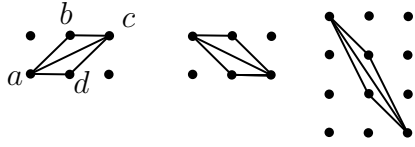

We perform the computation for the left picture in Figure \ref{fig-movehorizontal}, for the others, it is completely analogous.
Here, $\ini_{p_1}(Q)=x^iy^j\cdot (a+bxy+cx^2y)$ and $\ini_{p_2}(Q)=x^iy^j\cdot (a+dx+cx^2y)$. As the horizontal ray cannot meet any other point of the tropicalized quartic, we can conclude that $i=0$.
Using local lifting equations, we obtain $\ini(P_1)=(-\frac{b}{2c},\frac{4ac}{b^2})$ and $\ini(P_2)=(-\frac{2a}{d},\frac{d^2}{4ac})$.
Using Theorem \ref{thm-Qtypetropical} we have to derive with $\frac{\partial}{\partial y}$, and by Lemma \ref{lem-monomialout}, we obtain
 $$\ini(\partial_{L_1}(Q(P_1))=  \Big(\frac{4ac}{b^2}\Big)^j\cdot \Big(-b\frac{b}{2c}+c\frac{b^2}{4c^2}\Big)
 =\Big(\frac{4ac}{b^2}\Big)^j\cdot \Big(-\frac{b^2}{4c}\Big),$$ 
 $$ -\ini(\partial_{L_2}(Q(P_2))= \Big(\frac{d^2}{4ac}\Big)^j\cdot \Big(c\frac{4a^2}{d^2}\Big).
 $$
Up to squares, this equals $\pm a^jc^{j+1}$.
\end{proof}

\begin{remark}
An analogous statement for intersections with the diagonal ray does not hold. (See e.g.\ Appendix \ref{app-tableQtypes}, shape (Ea), where one tropical tangency component contributes $a_{21}$ and the other $-a_{20}a_{31}a_{30}$).
\end{remark}

\begin{lemma}\label{lem-Hb}
Assume a liftable tropical bitangent meets a tropicalized quartic at a point $p$ in the interior of an edge of direction $(1,-1)$ dual to $a_{00}+a_{11}xy$ with its vertex. Assume the second tropical tangency component is on the diagonal ray.
The lifts come in pairs $L_1$, $L_2$ with tangency points $P_1$ and $P_2$ tropicalizing to $p$, and we have
$$\ini(\partial_{L_1}(Q(P_1))   = - \ini( \partial_{L_2}(Q (P_2)).$$
\end{lemma}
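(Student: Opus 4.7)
My plan is to parallel the argument of Lemma \ref{lem-edgeonedgetangency}, but to invoke the third case of Theorem \ref{thm-Qtypetropical} since the tangency component at $p$ contains the vertex of $\Lambda$. The first step is to read off the initial form: since $p$ lies in the interior of the edge dual to $a_{00}+a_{11}xy$, we have $\ini_p(Q) = a_{00}+a_{11}xy$, with partial derivatives $a_{11}y$ and $a_{11}x$ in the $x$- and $y$-directions respectively. Applying Theorem \ref{thm-Qtypetropical} gives
\[
\ini(\partial_L Q(P)) = \Bigl(\frac{\partial}{\partial y} + \frac{1}{n}\frac{\partial}{\partial x}\Bigr)\ini_p(Q)(\ini(P)) = a_{11}\Bigl(x_0 + \frac{y_0}{n}\Bigr),
\]
where $(x_0, y_0) = \ini(P)$ and $n = \ini(N)$.

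Next I would solve the local lifting equations at the tangency component. The local polynomials are $Q_T = a_{00}+a_{11}xy$, $\ell_T = m+nx+y$ (the full linear form, since the vertex of $\Lambda$ is contained in the tangency component), and the Wronskian $W_T = a_{11}(y-nx)$. Setting $Q_T = \ell_T = W_T = 0$ yields in sequence $y_0 = nx_0$, then $m = -2nx_0$, and finally $m^2 = -4na_{00}/a_{11}$. Substituting back one obtains $x_0 + y_0/n = -m/n$, and hence
\[
\ini(\partial_L Q(P)) = -\frac{a_{11}\, m}{n}.
\]

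For the final step, I would pair up the lifts. Since $n$ is determined by the choice of lift at the second tangency component on the diagonal ray, it is constant along any pair $(L_1, L_2)$ in the statement, while the two roots of the equation $m^2 = -4na_{00}/a_{11}$ correspond to the two members of the pair. Linearity of the displayed formula in $m$ then immediately gives $\ini(\partial_{L_1}(Q(P_1))) = -\ini(\partial_{L_2}(Q(P_2)))$, as required.

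I do not expect a serious obstacle; the only delicate point is to confirm that the natural pairing of lifts named in the statement is exactly the one in which $n$ is shared between $L_1$ and $L_2$ while the sign of $m$ is flipped. This is justified because the four lifts of the bitangent class are indexed by independent binary choices at the two tangency components, so fixing the choice at the second (diagonal) component and flipping the choice at $p$ produces precisely the pairing on which the sign-flip identity holds.
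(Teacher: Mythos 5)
Your proposal is correct and follows essentially the same route as the paper: solve the local system $Q_T=\ell_T=W_T=0$ at the vertex-tangency (the paper records the resulting $(x_0,y_0)$ directly, with $y_0=nx_0$), apply the vertex case of Theorem \ref{thm-Qtypetropical} to get $a_{11}(x_0+y_0/n)=2a_{11}x_0=-a_{11}m/n$, and conclude by noting the two lifts over a fixed $n$ differ by the sign of $m$ (equivalently of $x_0$). Your explicit justification of the pairing is a slightly more careful spelling-out of what the paper leaves implicit, but the argument is the same.
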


\begin{proof}
By a local lifting computation, we obtain for the initials of the two lifts $P_1$ and $P_2$ the coordinates $(x_0,y_0)=\Big(\mp \frac{1}{2n}\sqrt{-\frac{4a_{00}n}{a_{11}}}, \mp \frac{1}{2}\sqrt{-\frac{4a_{00}n}{a_{11}}}\,\Big)$, where $n$ as usual denotes the coefficient of $x$ in the normalized equation defining the bitangent lift. Its initial is imposed by the second tropical tangency component which is on the diagonal ray.

Using Theorem \ref{thm-Qtypetropical} we have to derive with $\frac{\partial}{\partial y}+\frac{1}{n}\frac{\partial}{\partial x}$, and by Lemma \ref{lem-monomialout}, we obtain

$$ a_{11}\cdot x_0+\frac{1}{n} a_{11} y_0=  2a_{11} x_0.$$
Inserting the two solutions for $x_0$, which are negative of each other, the statement follows.
\end{proof}

\begin{theorem}\label{thm-2H}
A bitangent class $S$ of a generic tropicalized quartic $\Trop(C)$ contributes either $2\mathbb{H}$ to the $\mathbb{A}^1$-enumerative count of bitangents to $C$, or a sum of four monomials in the initials of the coefficients of the defining polynomial of $C$. 

 In particular, the total $\mathbb{A}^1$-enumerative count of bitangents to $C$ is determined by its tropicalization $\Trop(C)$ and the square classes of the initials of its coefficients.
\end{theorem}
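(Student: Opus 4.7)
The plan is to proceed by a case-by-case analysis over the classification of generic bitangent shapes given in Appendix \ref{app:classification}. For a fixed bitangent class $S$, one first asks whether the four lifts are defined over $K$. If they are not, then by Theorem \ref{thm-allornone} none of the four is, and the lifts live in a quadratic or degree-four extension of $K$. By Remark \ref{rem:degree2extension} together with Example \ref{ex:degree4extension}, the trace from that extension to $K$ produces exactly $2\mathbb{H}$, giving the first alternative.

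Suppose instead that $S$ does lift over $K$. Then for each of the four lifts $L$ and each of its two tangency points $P_1,P_2$, Theorem \ref{thm-GWKGWk} lets us replace $\partial_L Q(P_i)$ by its initial, and Theorem \ref{thm-Qtypetropical} expresses this initial as a partial derivative of the initial form $\ini_{p_i}(Q)$ evaluated at $\ini(P_i)$. The local lifting equations of Section \ref{subsec-locallift} (in particular Lemma \ref{lem-localliftsegment} together with the analogous computations for the remaining tangency configurations) express $\ini(P_i)$ as a Laurent monomial in the initials $a_{ij}$, possibly up to a square root. Applying Lemma \ref{lem-monomialout} to pull the monomial factor out of the derivative, the product $\ini(\partial_L Q(P_1))\cdot \ini(\partial_L Q(P_2))$ is therefore a Laurent monomial in the $a_{ij}$: the stray square roots pair up and square out. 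Thus each of the four lifts contributes a one-dimensional quadratic form $\langle \text{monomial}\rangle$, so $S$ always contributes a sum of four monomials in the $a_{ij}$.

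The remaining task is to identify the shapes for which these four monomials actually group into two hyperbolic pairs $\langle a\rangle+\langle -a\rangle=\mathbb{H}$. This is where Lemmas \ref{lem-edgeonedgetangency}, \ref{lem-edgeonedgediagonal}, \ref{lem-movehorizontal}, and \ref{lem-Hb} are used: each relates the $\Qtype$ contributions of two distinguished lifts according to the combinatorial position of a tangency component (horizontal vs.\ diagonal bounded edge, ray sweeping along an edge, a vertex meeting a $(1,-1)$-edge, etc.). For a non-exceptional shape one can walk through its configuration in Appendix \ref{app:classification} and apply the relevant lemma twice, once for each pair of lifts, to pair the four monomials into two copies of $\mathbb{H}$. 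The shapes where no such pairing scheme is available are precisely the exceptional ones catalogued in Appendix \ref{app-tableQtypes}; for those the four monomials are listed explicitly.

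In either outcome the answer is a function of $\Trop(C)$ and of the square classes $\overline{a_{ij}}\in k^\times/(k^\times)^2$, establishing the ``In particular'' statement via the isomorphism of Theorem \ref{thm-GWKGWk}. The principal obstacle is not conceptual but combinatorial: the bookkeeping across the many shapes, and within each shape the several sub-configurations depending on which rays of $\Lambda$ carry the two tangency components, must be carried out consistently. This extended verification is deferred to Appendix \ref{app:2H}, which refines the present statement into the detailed Theorem \ref{thm-2Hdetails}.
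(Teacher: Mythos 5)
Your proposal is correct and follows essentially the same route as the paper: the non-liftable case is handled by the trace computations over quadratic and degree-four extensions, the liftable case by reducing $\Qtype$ to derivatives of initial forms and pairing the four monomial contributions via Lemmas \ref{lem-edgeonedgetangency}, \ref{lem-edgeonedgediagonal}, \ref{lem-movehorizontal} and \ref{lem-Hb}, with the shape-by-shape verification deferred to Appendix \ref{app:2H} and the exceptional cases to Appendix \ref{app-tableQtypes}. This matches the paper's proof strategy for Theorem \ref{thm-2H} and its detailed version, Theorem \ref{thm-2Hdetails}.
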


The exceptional cases (which do not give $2\mathbb{H}$) and their $\GW$-multiplicities are listed in the Appendix \ref{app:2H}, details are spelled out in Theorem \ref{thm-2Hdetails}.

\begin{proof}
The strategy of the proof is to use Lemmas \ref{lem-edgeonedgetangency}, \ref{lem-movehorizontal} and \ref{lem-Hb} to argue that the four lifts come in pairs contributing $\langle \pm a\rangle$ for some $a$, which sums up to $2\mathbb{H}$.
This works if the lifts exist in the field, else we use Examples \ref{ex:degree2trace} and \ref{ex:degree4extension} to show that nevertheless, we obtain $2\mathbb{H}$ as total contribution.
The proof is a case-by-case analysis relying on the classification of bitangent shapes in Appendix \ref{app:classification}. Here, we show one case and refer to the detailed version, Theorem \ref{thm-2Hdetails} in Appendix \ref{app:2H}, for the remaining ones.

We focus on a bitangent shape of type (D) as in Figure 6 of \cite{CM20}. The bitangent shape is a bounded segment which partly overlaps with an edge of the tropicalized quartic. The liftable points are the two end vertices of the segment, each such point has lifting multiplicity $2$. Thus there are two tropical bitangent lines in the shape which each have two algebraic lifts.

The $\mathbb{S}_3/\mathbb{S}_2$-orbit of this shape consists of three cases (Da), (Db) and (Dc). In (Da), the bounded segment is horizontal, in (Db) it is diagonal and in (Dc) vertical.
Figure \ref{fig:D} shows the dual motifs and local pictures of the tropicalized quartic and the tropical tangency components. The pictures contain the two liftable tropical bitangent lines in blue. In each case, the bitangent class is the segment connecting the two vertices of the two blue tropical lines in each picture. Observe that the segment partially overlaps with an edge of the tropicalized quartic, as mentioned before.

 \begin{figure}
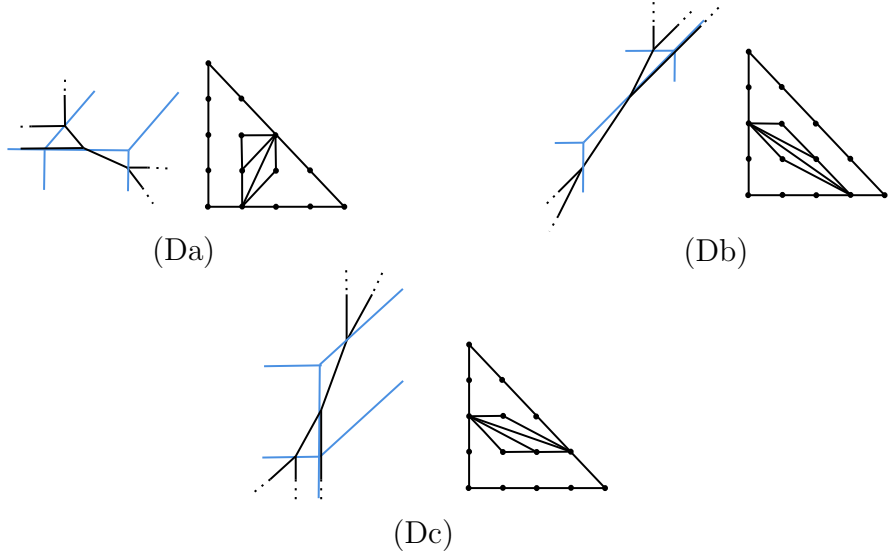

 \begin{center}

\tikzset{every picture/.style={line width=0.75pt}} 



 \end{center}
 \caption{Liftable tropical bitangents and motifs for shapes (Da), (Db) and (Dc).}\label{fig:D}
 \end{figure}

For (Da) and (Dc), each of the two liftable tropical bitangents has a tropical tangency component which is an overlap of a horizontal resp.\ vertical edge. 
Each liftable tropical bitangent has lifting multiplicity $2$, and the two lifts are given by the two choices we obtain for the tangency point on the overlap. The second tangency point is the same for both lifts, for each of the two liftable bitangents.
By Lemma \ref{lem-edgeonedgetangency}, we can pair up the two lifts for each such tropical bitangent such that their Qtypes are negative of each other. Altogether, we obtain $2\mathbb{H}$.

For (Db) this argument does not work, but a computation shows that the contribution of the other tropical tangency component , which is not in the interior of the diagonal edge, equals $a_{21}$ for one resp.\ $-a_{21}$ for the second, up to squares: for the upper tropical tangency component  $p_1$, we have
$\ini_{p_1}(Q)=a_{02}y^2+a_{12}xy^2+a_{21}x^2y$. Solving for the initials of the coordinates of the tangency point and the coefficient $M$ of the line equation $y+Mx+N$, we obtain $m=\ini(M)=-4\frac{a_{02}a_{21}}{a_{12}^2}$, $x=-2\frac{a_{02}}{a_{12}}$ and $y=4\frac{a_{02}a_{21}}{a_{12}^2}$. Using Lemma \ref{lem-monomialout} and Theorem \ref{thm-Qtypetropical}, we obtain $$2\frac{\partial}{\partial y} \ini_{p_1}(Q(\ini(P_1))=8\frac{a_{02}a_{21}}{a_{12}^2} \cdot \Big(a_{02}+a_{12}\cdot \Big(-2\frac{a_{02}}{a_{12}}\Big)\Big)= -8\frac{a_{02}^2}{a_{12}^2}\cdot a_{21}.$$

For the lower tropical tangency component $p_2$, solving for the initials of the coordinates of the tangency point and the ratio $\frac{M}{N}$ of the coefficients of the line equation, we obtain $\frac{m}{n}=\ini\frac{M}{N}=-\frac{a_{11}^2}{4a_{02}a_{30}}$, $x=\frac{a_{11}^2}{4a_{02}a_{30}}$ and $y=-\frac{a_{11}^3}{8a_{02}^2a_{30}}$. We have $\ini_{p_2}(Q)=a_{02}y^2+a_{11}xy+a_{30}x^3$ and
\begin{align*}&2\frac{1}{n}\frac{\partial}{\partial x} \ini_{p_2}(Q(\ini(P_2))=2\frac{1}{m}\cdot \frac{m}{n}\frac{\partial}{\partial x} \ini_{p_2}(Q) \\&= 2\frac{1}{m}\cdot \frac{m}{n} \cdot \Big(a_{11}\cdot \Big(-\frac{a_{11}^3}{8a_{02}^2a_{30}}\Big)+3a_{30}\cdot\Big(\frac{a_{11}^2}{4a_{02}a_{30}}\Big)^2 \Big) \\& = 2\frac{1}{m}\cdot \frac{m}{n} \cdot \Big( \frac{a_{11}^4}{16a_{02}^2a_{30}}\Big)
\\& = 2\Big(-\frac{a_{12}^2}{4a_{02}a_{21}}\Big)\Big( -\frac{a_{11}^2}{4a_{02}a_{30}}\Big)\Big( \frac{a_{11}^4}{16a_{02}^2a_{30}}\Big)
\\&= 2\frac{1}{16^2} \frac{a_{12}^2a_{11}^6}{a_{21}^2a_{02}^4a_{30}^2} a_{21}.
\end{align*}

Using Lemma \ref{lem-edgeonedgediagonal}, we can now pair up a lift of one with a lift of the other tropical bitangent and conclude that the total contribution is again $2\mathbb{H}$.
\end{proof}

\subsection{Comparing the $\Qtype$ for different fields} \label{subsec-comparingQtype}

Building on our comparison of lifting for different fields in Section~\ref{subsec-comparelift}, we can also discuss the comparison of the $\Qtype$ for different fields, as a consequence of Theorem \ref{thm-2H}.
Let $K_1$ and $K_2$ be  fields with residue fields $k_1$ and $k_2$, respectively. Suppose there exists an isomorphism of groups $\phi : k_{1}^{\times}/ (k_{1}^{\times})^2  \to k_{2}^{\times}/ (k_{2}^{\times})^2$ such that  $\phi(\overline{-1}) = \overline{-1}$ and 
$\phi(\overline{2}) = \overline{2}$. Then $\phi$ induces an isomorphism $\GW(k_1)\sim \GW(k_2)$ by sending 
 $\langle a \rangle$ to $\langle \phi(a) \rangle$, as the Grothendieck-Witt ring is generated by elements of the form $\langle a \rangle$ for $a \in k_{i}^{\times}/ (k_{i}^{\times})^2$. As  $\GW(K_i)\sim \GW(k_i)$ by Theorem \ref{thm-GWKGWk}, we also obtain  $\GW(K_1)\sim \GW(K_2)$.

\begin{theorem}\label{thm:qtypescomparison} 
Let $K_1$ and $K_2$ be  fields with residue fields $k_1$ and $k_2$, respectively. Suppose there exists an isomorphism of groups $\phi : k_{1}^{\times}/ (k_{1}^{\times})^2  \to k_{2}^{\times}/ (k_{2}^{\times})^2$ such that  $\phi(\overline{-1}) = \overline{-1}$ and 
$\phi(\overline{2}) = \overline{2}$.

For $i=1,2$, let $C_i = V(Q_i) $ be a quartic curve defined over $K_i$ such that $\Trop(C_i)$ is generic.
Let $Q_1 = \sum A_{ij} x^i y^j z^{4-i-j}$ and $Q_2 = \sum B_{ij} x^i y^j z^{4-i-j}$.
We assume $val(B_{ij}) = val(A_{ij})$, in particular $\Trop(C_1)=\Trop(C_2)$, and
$\phi(\overline{a_{ij}}) = \overline{b_{ij}}$ for all $i, j$, where $a_{ij}, b_{ij}$ are the initials of $A_{ij}$ and $B_{ij}$, respectively.

Suppose all lifts of $\Lambda $ are defined over $K_1$ and $K_2$ then 
$$\sum_{L \text{ lift of } \Lambda} \phi( QType(L) ) = \sum_{L' \text{ lift of } \Lambda}  QType(L') $$ 
where the lifts $L$ are over $K_1$ and the lifts $L'$ are over $K_2$. 
\end{theorem}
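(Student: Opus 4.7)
The plan is to reduce the statement to a direct application of Theorem \ref{thm-2H}. First, I would observe that the hypotheses on $\phi$ combine with Theorem \ref{thm-GWKGWk} to produce a ring isomorphism
$$\Phi \colon \GW(K_1) \xrightarrow{\sim} \GW(k_1) \xrightarrow{\langle a \rangle \mapsto \langle \phi(a) \rangle} \GW(k_2) \xleftarrow{\sim} \GW(K_2),$$
which sends $\mathbb{H}$ to $\mathbb{H}$ (because $\phi(\overline{-1}) = \overline{-1}$) and $\langle 2 \rangle$ to $\langle 2 \rangle$ (because $\phi(\overline{2}) = \overline{2}$). The map denoted $\phi$ on the left-hand side of the claim is precisely this induced $\Phi$.

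Since we assume every lift $L$ of $\Lambda$ is defined over the ground field, we have $K_L = K_i$ and $\Qtype(L) = \mult_{\GW}(L)$. Consequently $\sum_L \Qtype(L)$ computed over $K_1$ (respectively over $K_2$) is exactly the $\mathbb{A}^1$-enumerative contribution of the bitangent class containing $\Lambda$, taken with respect to $C_1$ (respectively $C_2$).

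By Theorem \ref{thm-2H}, this contribution is either $2\mathbb{H}$, or a sum $\sum_{i=1}^{4} \langle m_i(a_{kl}) \rangle$ in which each $m_i$ is a Laurent monomial in the initials of the coefficients. In the first case, $\Phi(2\mathbb{H}) = 2\mathbb{H}$ matches the corresponding sum over $K_2$, which is likewise $2\mathbb{H}$ by Theorem \ref{thm-2H} applied to $C_2$ (the tropicalization and the square classes of initials being preserved by $\phi$). In the second case, the combinatorial recipe that produces the monomials $m_i$ depends only on $\Trop(C)$, not on the field, so the same recipe applied to $C_2$ produces $\sum_i \langle m_i(b_{kl}) \rangle$. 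Using multiplicativity of the class map in $\GW$ together with the identities $\phi(\overline{a_{kl}}) = \overline{b_{kl}}$, $\phi(\overline{2}) = \overline{2}$, and $\phi(\overline{-1}) = \overline{-1}$, one then verifies $\Phi(\langle m_i(a_{kl}) \rangle) = \langle m_i(b_{kl}) \rangle$ for each $i$, and summing over $i$ yields the claim.

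The main obstacle is to verify that the numerical prefactors appearing in each $m_i$ are genuinely controlled by the preserved classes $\overline{-1}$ and $\overline{2}$ alone. This requires traversing the list of exceptional shapes in Appendix \ref{app-tableQtypes}, and using Theorem \ref{thm-Qtypetropical} together with Lemma \ref{lem-localliftsegment} to confirm that square-root factors appearing in the initials of tangency points either square to Laurent monomials in the $a_{kl}$ or cancel against their partners in the product $\ini(\partial_L Q(P_1)) \cdot \ini(\partial_L Q(P_2))$. Shape (II), where $\sqrt{2}$ and $\sqrt{3}$ enter intermediate calculations, is the one that demands the most care: one must check that these square-root factors combine so that only square classes accessible to $\phi$ enter the final sum of $\Qtype$s.
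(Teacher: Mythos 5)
Your proposal is correct and follows essentially the same route as the paper: the paper's entire proof is the observation that each $\Qtype$ is, modulo squares, a Laurent monomial in the initials with numerical coefficient a product of $-1$ and powers of $2$, so the hypotheses $\phi(\overline{a_{ij}}) = \overline{b_{ij}}$, $\phi(\overline{-1}) = \overline{-1}$, $\phi(\overline{2}) = \overline{2}$ immediately give the per-lift matching $\Phi(\langle m_i(a_{kl})\rangle) = \langle m_i(b_{kl})\rangle$, which is exactly your key step. The detour through Theorem \ref{thm-2H} is harmless but not needed (and note that Theorem \ref{thm-2H} concerns the whole bitangent class, whereas the statement is per tropical bitangent $\Lambda$, so it is the per-lift monomial matching via Theorem \ref{thm-Qtypetropical} and Remark \ref{rem-Laurentmonomial} --- which you also supply, including the correct caveat about the $\sqrt{2},\sqrt{3}$ factors in shape (II) --- that actually carries the argument).
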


\begin{proof}
The formulas for the $\Qtype$ of tropical bitangents lifting over $k$ are expressed in terms of a Laurent monomials in the $\{a_{ij}\}$ or the $\{b_{ij}\}$ with coefficients of $-1$, $2$, or squares in the residue fields. Moreover,  the Qtypes are defined modulo squares so the statement follows from the assumptions on the  $\{a_{ij}\}$ and $\{ b_{ij} \}$. 
\end{proof}

We present the special case comparing $\text{QTypes}$ over finite fields and the real numbers. 
Let $K$ be a field with finite residue field $k$ of size  $p^{2l + 1}$ for a prime $p$ satisfying  $p \equiv 7  \mod 8$. 
Let  $\phi : k^{\times}/ (k^{\times})^2  \to \mathbb{R}^{\times}/ (\mathbb{R}^{\times})^2$ denote the unique isomorphism between these two groups.  As before, $\phi$ induces an isomorphism of the Grothendieck-Witt rings.
\begin{corollary}\label{cor:LegendreRfinite}

Let $C = V(Q) $ be a quartic curve defined over $K$ with $Q = \sum A_{ij} x^i y^j$, 
and  $C' = V(Q')$ be defined over $\mathbb{R}\{\!\{t \}\!\}$,
where $Q' = \sum B_{ij} x^i y^j$ and  $B_{ij} \in \mathbb{R}\{\!\{t \}\!\}$. Assume that $val(B_{ij}) = val(A_{ij})$ and $$ \legendre{a_{ij}}{k} = \legendre{b_{ij}}{\mathbb{R}}.$$

Suppose all lifts of $\Lambda $ are defined over $K$ and $ \mathbb{R}\{\!\{t \}\!\}$, then
$$\sum_{L \text{ lift of } \Lambda} \phi( QType(L) ) = \sum_{L' \text{ lift of } \Lambda}  QType(L') $$ 
where the lifts $L$ are over $K_1$, the lifts $L'$ are over $\mathbb{R}\{\!\{t\}\!\}$.
\end{corollary}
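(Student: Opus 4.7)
The plan is to derive Corollary \ref{cor:LegendreRfinite} as a direct specialization of Theorem \ref{thm:qtypescomparison} to the situation $K_1 = K$ (with residue field $k = \mathbb{F}_{p^{2l+1}}$) and $K_2 = \mathbb{R}\{\!\{t\}\!\}$ (with residue field $\mathbb{R}$). Under our hypotheses both $k^\times/(k^\times)^2$ and $\mathbb{R}^\times/(\mathbb{R}^\times)^2$ are cyclic of order $2$, so the isomorphism $\phi$ between them is unique, and it can be described via Legendre symbols as in the paragraph preceding Corollary \ref{cor:compareliftingRk}. The assumption that $\legendre{a_{ij}}{k} = \legendre{b_{ij}}{\mathbb{R}}$ is then precisely the condition $\phi(\overline{a_{ij}}) = \overline{b_{ij}}$ required by Theorem \ref{thm:qtypescomparison}.

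What remains is to verify the two special hypotheses $\phi(\overline{-1}) = \overline{-1}$ and $\phi(\overline{2}) = \overline{2}$. I would do this through elementary quadratic residue computations in $\mathbb{F}_q$ for $q = p^{2l+1}$ and $p \equiv 7 \pmod 8$:
\begin{itemize}
\item For $\overline{-1}$: since $p \equiv 3 \pmod 4$ and the exponent $2l+1$ is odd, $q \equiv p \equiv 3 \pmod 4$, so $-1$ is a non-square in $\mathbb{F}_q$, i.e.\ $\overline{-1} \neq \overline{1}$ in $k^\times/(k^\times)^2$. Since $-1$ is also a non-square in $\mathbb{R}$ and $\phi$ is an isomorphism of order-two groups, $\phi(\overline{-1}) = \overline{-1}$.
\item For $\overline{2}$: the condition $p \equiv 7 \pmod 8$ gives $p \equiv -1 \pmod 8$, and since $p^2 \equiv 1 \pmod 8$, the odd power yields $q = p^{2l+1} \equiv -1 \equiv 7 \pmod 8$. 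By the supplementary law for quadratic residues, $2$ is a square in $\mathbb{F}_q$, so $\overline{2} = \overline{1}$ in $k^\times/(k^\times)^2$. Of course $\overline{2} = \overline{1}$ in $\mathbb{R}^\times/(\mathbb{R}^\times)^2$ as well, so $\phi(\overline{2}) = \overline{2}$.
\end{itemize}

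With both conditions verified, Theorem \ref{thm:qtypescomparison} applies directly and yields the claimed equality of sums of $\Qtype$s over all lifts of $\Lambda$. There is no genuine obstacle here; the only subtlety is the congruence bookkeeping, which has been arranged precisely so that $p \equiv 7 \pmod 8$ together with odd residue degree forces $q \equiv 7 \pmod 8$, thereby simultaneously ensuring that $-1$ is a non-square and $2$ is a square in $k$, matching the corresponding facts in $\mathbb{R}$. Note also that $p \equiv 7 \pmod 8$ implies $p \equiv 3 \pmod 4$, so the liftability hypotheses for both sides are consistent with Corollary \ref{cor:compareliftingRk}.
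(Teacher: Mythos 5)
Your proof is correct and follows the same route as the paper: both deduce the statement directly from Theorem \ref{thm:qtypescomparison} after checking that $p \equiv 7 \pmod 8$ with odd residue degree forces $-1$ to be a non-square and $2$ to be a square in $k$, matching $\mathbb{R}$. Your version merely spells out the congruence computations that the paper leaves implicit.
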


\begin{proof}
The statement  follows immediately from Theorem \ref{thm:qtypescomparison}, since in a field $k$ of characteristic $p$ with $p \equiv 7 \mod 8$ and with order $p^{2l+1}$ we have $\legendre{-1}{k} = \legendre{-1}{\mathbb{R}}$ and $\legendre{2}{k} = \legendre{2}{\mathbb{R}}$.
\end{proof}

\section{The real $\GW$-multiplicity in tropical geometry}\label{sec-real}
Let $K =\mathbb{R}\{\!\{t\}\!\}$  the field of Puiseux series with real coefficients. 
By the Tarski principle (see e.g.\ Chapter 1 in \cite{JL89}), i.e.\ elimination of quantifiers in the first order theory of real closed fields, $\mathbb{R}\{\!\{t\}\!\}$ is equivalent to the reals, so we can count real bitangents by lifting tropical bitangents to $\mathbb{R}\{\!\{t\}\!\}$. This principle has been applied to other problems in tropical geometry, see e.g.\ \cite{ABGJ}.

Recall from Example \ref{ex:realQtype}, that the $\Qtype$ of a real  bitangent is $\langle 1 \rangle$ or $\langle -1 \rangle$, and the two cases can be characterized geometrically (see \cite{LV21}): Assume first that the tangency points are real. In the affine chart of $\mathbb{P}^2$ we obtain by taking out the line at infinity, the quartic $V(Q)$ can pass on the same side of the bitangent line for the two tangency points, or on opposite sides (see Figure \ref{fig-geomQtypereals}). If the tangency points are not real, the $\Qtype$ is $\langle 1 \rangle$.

Larson and Vogt obtained interesting results for counts of real bitangents according to the sign of their $\Qtype$: If $V(Q)$ does not meet the line at infinity, then the number of real bitangents with $\Qtype$ equal to $\langle 1 \rangle$ minus the number of real bitangents  with $\Qtype$ equal to $\langle -1 \rangle$ equals $4$ \cite{LV21}. Based on a randomized search, they formulate the following conjecture:

\begin{conjecture}[{\cite[Conjecture~2]{LV21}}]\label{con-LV}
Let $V(Q)$ be a real quartic such that for each bitangent $L$, $L\cap V(Q)\cap L_\infty=\emptyset$, where $L_\infty$ denotes the line at infinity. Then the number of real bitangents with $\Qtype$ equal to $\langle 1 \rangle$ minus the number of real bitangents  with $\Qtype$ equal to $\langle -1 \rangle$ is in
$$\{0,2,4,6,8\}.$$
\end{conjecture}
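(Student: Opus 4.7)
The strategy is to reduce the conjecture to Theorem \ref{thm-realQtypes} via a deformation argument. In fact, Theorem \ref{thm-realQtypes} gives the stronger conclusion that the signed count lies in $\{0,2,4\}$ for quartics whose tropicalization is smooth and generic, so it suffices to propagate this to arbitrary real quartics satisfying the transversality hypothesis $L \cap V(Q) \cap L_\infty = \emptyset$.

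First, given a real quartic $C = V(Q)$ satisfying the hypothesis, I would introduce a one-parameter family $Q_s := Q + s\widetilde{Q}$, where $\widetilde{Q}$ is an auxiliary real quartic polynomial chosen generically, and view $Q_s$ as a quartic defined over $\mathbb{R}\{\!\{s\}\!\}$. For generic $\widetilde{Q}$, the tropicalization of $V(Q_s)$ over $\mathbb{R}\{\!\{s\}\!\}$ is smooth and satisfies the genericity conditions from \cite[Remark~2.10]{CM20}; indeed, the valuations $-\val(A_{ij} + s \widetilde{A}_{ij})$ are determined by the coefficients of $\widetilde{Q}$ whenever $A_{ij}$ vanishes, and by \cite{GP21a} the genericity locus in the secondary fan is open and dense.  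The transversality hypothesis $L \cap V(Q_s) \cap L_\infty = \emptyset$ is open in $s$, hence preserved for all sufficiently small real $s>0$, so the $\Qtype$ is well-defined on every bitangent along the family.

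Next, I would invoke the deformation invariance of the $\mathbb{A}^1$-enumerative count.  By the construction of $\mathrm{mult}_{\GW}$ in \cite{LV21}, the sum $\sum_{L} \mathrm{mult}_{\GW}(L) \in \GW(\mathbb{R})$ is invariant under variation of $Q$ through quartics satisfying the transversality hypothesis, and therefore so is its signed count.  Combined with the Tarski transfer principle, discussed at the beginning of Section \ref{sec-real}, the signed count for $C$ over $\mathbb{R}$ agrees with the signed count for $V(Q_s)$ computed over $\mathbb{R}\{\!\{s\}\!\}$.  Theorem \ref{thm-realQtypes} then gives that the signed count lies in $\{0,2,4\} \subset \{0,2,4,6,8\}$.

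The main obstacle is making the two invariance steps watertight.  One must verify that during the chosen deformation no bitangent line wanders through $L_\infty$ and temporarily loses a well-defined $\Qtype$; this should reduce to a dimension count showing that a generic $\widetilde{Q}$ simultaneously achieves a smooth generic tropicalization and preserves the transversality hypothesis along the whole real segment $s \in (0, s_0)$ for some $s_0 > 0$.  One also needs to cleanly justify deformation invariance of the signed count in our valued-field setting, for which Theorem \ref{thm-GWKGWk} identifying $\GW(\mathbb{R}\{\!\{s\}\!\}) \cong \GW(\mathbb{R})$ is exactly the required bridge.  Once these points are settled, Theorem \ref{thm-realQtypes} yields not only Conjecture \ref{con-LV} but the sharper bound that the signed count is at most $4$.
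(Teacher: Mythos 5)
First, note that the statement you are proving is stated in the paper as a \emph{conjecture} (Larson--Vogt's Conjecture~2), not as a theorem: the paper explicitly offers only a ``partial proof'' via Theorem~\ref{thm-realQtypes}, restricted to quartics with smooth, generic tropicalization, and observes that this restricted class realizes only the values $\{0,2,4\}$, \emph{not} all of $\{0,2,4,6,8\}$. That observation already signals the fatal gap in your argument: if your deformation argument were correct, it would show that every real quartic satisfying the transversality hypothesis has signed count in $\{0,2,4\}$, which is a strictly stronger statement than the conjecture and is inconsistent with the existence of quartics realizing $6$ or $8$ (the values Larson and Vogt found in their randomized search, and which motivate the stated range).

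The step that fails is the claimed deformation invariance of $\sum_L \mult_{\GW}(L)$. The $\GW$-multiplicity is defined \emph{relative to the fixed line} $L_\infty$, and the signed count is not an invariant of the quartic alone: it depends on the topology of the pair $(V(Q)(\mathbb{R}), L_\infty(\mathbb{R}))$ --- for instance, Larson and Vogt prove it equals $4$ precisely when the real locus of the quartic misses $L_\infty$. When a tangency point of a bitangent crosses $L_\infty$ during a deformation, the transversality hypothesis fails at that instant and the $\Qtype$ of that bitangent jumps by a sign, changing the signed count by $\pm 2$. Your path $Q_s = Q + s\widetilde{Q}$ must, in general, cross such walls: the quartics with smooth generic tropicalization all meet $L_\infty$ in four real points, whereas the initial quartic $C$ may meet it in $0$ or $2$ real points (or have a different arrangement of tangency points relative to $L_\infty$), so no choice of generic $\widetilde{Q}$ keeps the whole segment $s \in (0, s_0)$ inside the locus where the hypothesis holds \emph{and} connects to the tropically generic chamber. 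The openness argument you sketch only controls small $s$; it does not let you reach the tropical limit without leaving the locus. Theorem~\ref{thm-GWKGWk} and the Tarski principle are fine as bridges between $\mathbb{R}$ and $\mathbb{R}\{\!\{s\}\!\}$, but they cannot substitute for the missing wall-crossing control, and in fact no such control can exist given that the answer genuinely varies over $\{0,2,4,6,8\}$.
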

Larson and Vogt prove that the numbers are nonnegative.

In the following, we give a partial proof for the conjecture, for the cases of quartics whose tropicalization is smooth and generic. In particular, the quartics we consider all intersect the infinite line in four points. 
It is interesting to observe that we do not obtain all the possible numbers with this restriction:

\begin{theorem}\label{thm-realQtypes}
Let $C=V(Q)$ be a real quartic whose tropicalization is smooth and generic. Then the number of real bitangents with $\Qtype$ equal to $\langle 1 \rangle$ minus the number of real bitangents  with $\Qtype$ equal to $\langle -1 \rangle$ is in
$$\{0,2,4\}.$$
\end{theorem}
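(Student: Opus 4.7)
My plan is to leverage Theorem~\ref{thm-2H} to reduce the computation of the signed count to a finite combinatorial problem, then carry out a case-by-case analysis based on the classification in Appendix~\ref{app:classification}. The first step is the reduction: for $K = \mathbb{R}\{\!\{t\}\!\}$ we have $\GW(K) \cong \GW(\mathbb{R}) \cong \mathbb{Z}\langle 1 \rangle \oplus \mathbb{Z}\langle -1 \rangle$, and the signed count is the coefficient difference $a - b$ for a class $a\langle 1\rangle + b\langle -1\rangle$. Since $\mathbb{H} = \langle 1 \rangle + \langle -1\rangle$ contributes $0$ to the signed count, every bitangent class that contributes $2\mathbb{H}$ to the $\mathbb{A}^1$-enumeration---by Theorem~\ref{thm-2H}, this is every bitangent class \emph{not} appearing on the exceptional list in Appendix~\ref{app-tableQtypes}---contributes $0$ to the signed count.

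The second step is to read off, for each exceptional shape in Appendix~\ref{app-tableQtypes}, the maximum possible contribution to the signed count over $\mathbb{R}$. Since each such contribution is a sum of four monomials in the initials of the coefficients of $Q$ (by Theorem~\ref{thm-2H}), each of which specializes to $\pm 1$ over the reals, the absolute value of the contribution of a single exceptional bitangent class to the signed count is bounded by $4$; moreover by inspecting the explicit formulas one sees precisely which values in $\{-4,-2,0,2,4\}$ can be achieved by each shape. This is a routine but tedious tabulation that I would relegate to the appendix.

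The third and main step is the combinatorial analysis of dual motifs, following the strategy already announced in the introduction. For every pair of exceptional shapes, I would check whether their dual motifs can simultaneously be embedded into a single unimodular triangulation of the standard size $4$ triangle that corresponds to a \emph{generic} tropicalized smooth quartic (in the sense of Remark~\ref{rem-gen}). I expect to establish the following dichotomy: either a tropicalized quartic admits exactly one exceptional bitangent class whose contribution is $\pm 4$ (and no other exceptional class fits into the same subdivision), or it admits at most two exceptional classes each contributing $\pm 2$, with all remaining classes contributing $0$. In either case, together with the nonnegativity result of Larson--Vogt, the total signed count must lie in $\{0,2,4\}$.

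The main obstacle will be the combinatorial check in step three: several exceptional shapes are $2$-dimensional bitangent classes spread over large regions of $\mathbb{R}^2$, and one has to check carefully that once the dual motif of an exceptional shape occupies a chunk of the triangulation, the geometry of the remaining triangles is forced enough that the other seven bitangent classes are either of non-exceptional type or exceptional with compatible contribution. A secondary subtlety is that certain exceptional shapes have contributions that vanish for particular sign patterns of the initials (e.g.\ a sum of four $\pm 1$'s can be $0$); I would need to be careful not to over-count such cases. Throughout, I would use the $\mathbb{S}_2$-symmetry exchanging $x$ and $y$ (as emphasized in Appendix~\ref{app:classification}) to cut down the number of orbits of configurations that need to be analyzed.
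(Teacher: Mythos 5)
Your proposal follows essentially the same route as the paper: reduce via Theorem~\ref{thm-2H} to the exceptional shapes of Appendix~\ref{app-tableQtypes}, tabulate their real signed contributions, and then argue by a dual-motif compatibility analysis that a generic tropicalized quartic admits at most one exceptional class contributing $4$ or at most two contributing $2$. The only minor deviation is that the paper (Lemma~\ref{lem-realQtypes}) reads off directly from the table that each exceptional class contributes a nonnegative amount to the signed count, whereas you instead exclude negative totals by appealing to the Larson--Vogt nonnegativity result; both routes are valid.
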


Our approach to prove this theorem is via $\GW$-multiplicities of bitangent classes of the tropicalization. It builds on our classification of generic bitangent shapes and their dual motifs in Appendix \ref{app:classification}, on our study of $\GW$-multiplicities of tropical bitangents in Theorem \ref{thm-2H}, \ref{thm-2Hdetails} and Appendix \ref{app-tableQtypes}.
In the following lemma, we use the letters for the bitangent shapes which are introduced in the classification in Appendix \ref{app:classification}. 

\begin{lemma}\label{lem-realQtypes}
Let $V(Q)$ be a real quartic whose tropicalization is smooth and generic. Let $S$ be a liftable tropical bitangent class of $\Trop(V(Q))$. 
Let $s$ denote the number of lifts of $S$ with $\Qtype$ equal to $\langle 1 \rangle$ minus the number of lifts of $S$ with $\Qtype$ equal to $\langle -1 \rangle$.

If $S$ is of shape (Nb), (Ob), (Oc), (Pb), (Qb), (Rb), (Rc), (Sb),  (Ub), (Vb),  (YaI), (Yb), (CCb) or (IIc), then $s= 2$.
If $S$ is of shape  (BBb), then $s=4$.

Otherwise $s=0$.
\end{lemma}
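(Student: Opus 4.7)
The approach is a shape-by-shape analysis, grounded in the classification of Appendix \ref{app:classification}, that separates the shapes covered by Theorem \ref{thm-2H} from the exceptional ones listed in Appendix \ref{app-tableQtypes}.

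First, suppose $S$ has a shape for which Theorem \ref{thm-2H} gives total contribution $2\mathbb{H} = 2\langle 1\rangle + 2\langle -1\rangle$ to the $\mathbb{A}^1$-enumerative count. By Theorem \ref{thm-allornone}, either all four lifts of $S$ are defined over $\mathbb{R}\{\!\{t\}\!\}$, or none is. In the former case, each Qtype lies in $\{\langle 1\rangle, \langle -1\rangle\}$ and the four Qtypes must sum to $2\langle 1\rangle + 2\langle -1\rangle$, forcing two to be $\langle 1\rangle$ and two to be $\langle -1\rangle$, so $s=0$. In the latter case, there are no real lifts and $s=0$ by convention. This covers every shape outside the list in the lemma.

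For an exceptional shape (i.e., one appearing in Appendix \ref{app-tableQtypes}), I will read off the four Qtypes as Laurent monomials in the initials $a_{ij}$ of the coefficients of $Q$ and, over $\mathbb{R}\{\!\{t\}\!\}$, convert each monomial into its sign. Liftability of the class over $\mathbb{R}\{\!\{t\}\!\}$ imposes sign constraints on the $a_{ij}$ through the (relative) twisting conditions of Propositions \ref{prop-twisted}, \ref{prop-reltwist} and \ref{prop:vertexEll}. Plugging these constraints into the Qtype formulas fixes the four signs, so that $s$ can be read off directly. For each of the fifteen listed shapes I will carry out this computation and verify that the outcome is exactly as stated in the lemma; for every other exceptional shape I will check, by the same method, that the four signs cancel pairwise and $s=0$.

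The expected main obstacle is the sign bookkeeping in the exceptional cases. The pairing Lemmas \ref{lem-edgeonedgetangency}, \ref{lem-edgeonedgediagonal}, \ref{lem-movehorizontal} and \ref{lem-Hb} used in Theorem \ref{thm-2H} continue to pair lifts into Qtype-opposite pairs whenever their hypotheses are met; where they apply in an exceptional shape, the resulting $\mathbb{H}$-contributions give $s=0$ from those pairs and only the residual lifts remain to be examined. The real content lies in identifying, shape by shape, this residual and showing that under the liftability constraints the residual Qtypes organize into the configurations $(\langle 1\rangle,\langle 1\rangle)$ or $(\langle -1\rangle,\langle -1\rangle)$ (yielding $|s|=2$), four copies of the same $\langle \pm 1 \rangle$ for shape (BBb) (yielding $|s|=4$), or cancel entirely (yielding $s=0$), exactly as the lemma asserts; a fixed convention on the overall sign is absorbed into the normalization used in writing the Qtype formulas of Appendix \ref{app-tableQtypes}.
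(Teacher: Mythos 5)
Your proposal follows essentially the same route as the paper: the paper's proof is simply ``go through the table in Appendix~\ref{app-tableQtypes} and pay attention to the signs,'' with the non-exceptional shapes disposed of by the $2\mathbb{H}$ statement of Theorem~\ref{thm-2H} exactly as you argue. Your additional remark that the residual signs in the exceptional cases are pinned down by the liftability/twisting constraints is correct and in fact makes explicit a step the paper leaves implicit.
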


\begin{proof}
This follows by going through the table in Appendix \ref{app-tableQtypes} and only paying attention to the signs.
\end{proof}

\begin{proof}[Proof of Theorem \ref{thm-realQtypes}]
By checking the dual motifs of a tropical bitangent class of one of the shapes with non-zero contribution in Lemma \ref{lem-realQtypes} (and their symmetric cases w.r.t.\ $x-y$-symmetry), we can see that the only ones which are not mutually exclusive are (Qb), (Rb), (Ub), (Vb), resp.\ (YbII), (BBb), (CCb). For each of those, within a symmetry class, it is always the same edges which contain the tropicalization of the tangency points, thus, it is only the lengths of the edges of the tropical curve which decide which of (Qb), (Rb), (Ub) or (Vb) (resp.\ which of  (YbII), (BBb) or (CCb)) shows up, in particular, they do not show up together either. The only possibility is to combine a shape from (Ub) or (Vb) with another such shape after applying $x-y$-symmetry.
Also, the three possibilities for (Nb) do not exclude each other on the level of Newton subdivisions, but the vertex dual to triangle can only align with one of the possible three edges, and so it is precisely one of the types of (Nb) that can occur together. We can see that we can combine at most two shapes that contribute $2$, all other shapes with nonzero contribution appear exclusively. As the maximal nonzero contribution from such a shape is $4$, we obtain the desired result.
\end{proof}

One does not need to rely on our results on $\GW$-multiplicities of tropical bitangent classes from Theorem \ref{thm-2H}, \ref{thm-2Hdetails} and Appendix \ref{app-tableQtypes} to show this result. An alternative approach is to use Viro's patchworking method, and read off the signs of the $\GW$-multiplicities using Figure \ref{fig-geomQtypereals}. We illustrate this approach in an example:

\begin{example}
In Viro's combinatorial patchworking \cite{Viro, IMS09}, we start from a regular subdivision dual to a tropical plane curve $C$ given by a tropical polynomial $F$. We associate signs to each term of the polynomial, resp.\ to each region of $\mathbb{R}^2\setminus C$. These signs represent the signs of the coefficients of a polynomial $f$ over the real Puiseux series tropicalizing to $F$. Consider the tropical plane curve $C$ inside an orthant (the tropicalization of affine space is $(\mathbb{R}\cup\{-\infty\})^2$) and glue four such orthants as usual, coming with the reflected versions of $C$. For the reflected copies of $C$, we add signs to the regions by inserting signs for the $x$ and $y$-coordinates into the corresponding term of $f$ as given by the respective orthant. Then, we take only those edges of the four copies of $C$ that disconnect a positive and a negative region. Viro's patchworking theorem states that the object we obtain in this way (viewed in the real projective plane which we obtain from the four glued orthants by compactifying and identifying boundaries accordingly) is homeomorphic to a real plane curve defined by $f$ after inserting a small value for the parameter $t$.

Figure \ref{fig-patchwork} shows the dual Newton subdivision of a tropicalized quartic, and the four copies of the tropicalized quartic. We pick the signs of all coefficients except $a_{03}$ to be positive, and $s_{03}<0$. The signs in the four copies are disctributed accordingly. With thick black lines, the patchworked quartic is depicted. By Viro's Theorem, real quartics close to the tropical limit are homeomorphic to this picture. We consider the bitangent class of shape (E) which is also depicted in Figure \ref{fig-tropbitangentclass}. It has two liftable members. Their patchworked versions are drawn with thick red lines and denoted $\Lambda_1$ and $\Lambda_2$ in Figure \ref{fig-patchwork}. We can see altogether $4$ tangency points, $p_1,\ldots,p_4$, shown in green in Figure \ref{fig-patchwork}. The two lifts of the tropical bitangent line for $\Lambda_1$ are represented by the same thick red line, but they are tangent at different points: one lift, call it $L_{11}$, is tangent at $p_1$ and $p_3$, the other, $L_{12}$, at $p_1$ and $p_4$. Analogously, one lift, $L_{21}$, of the tropical bitangent for $\Lambda_2$ is tangent at $p_2$ and $p_3$, the other, $L_{22}$, at $p_2$ and $p_4$. If we move along the thick red line for $\Lambda_1$, starting at the top right corner, we meet the first tangency point, $p_1$, on the right, and the second, $p_3$, also on the right. Thus, the $\Qtype$ of $L_{11}$ is $\langle 1 \rangle$, see Figure \ref{fig-geomQtypereals}. For $L_{12}$, we follow the same path, but now we meet $p_1$ on the right and $p_4$ on the left, so the $\Qtype$ of $L_{12}$ is $\langle -1 \rangle$. Analogously, the $\Qtype$ for $L_{21}$ is $\langle -1\rangle$ and for $L_{22}$ $\langle 1\rangle$. With this patchworked picture, we thus confirm the result from Lemma \ref{lem-realQtypes} that the overall contribution of a tropical bitangent class of shape (E) is zero.

\begin{figure}
\begin{center}
\input{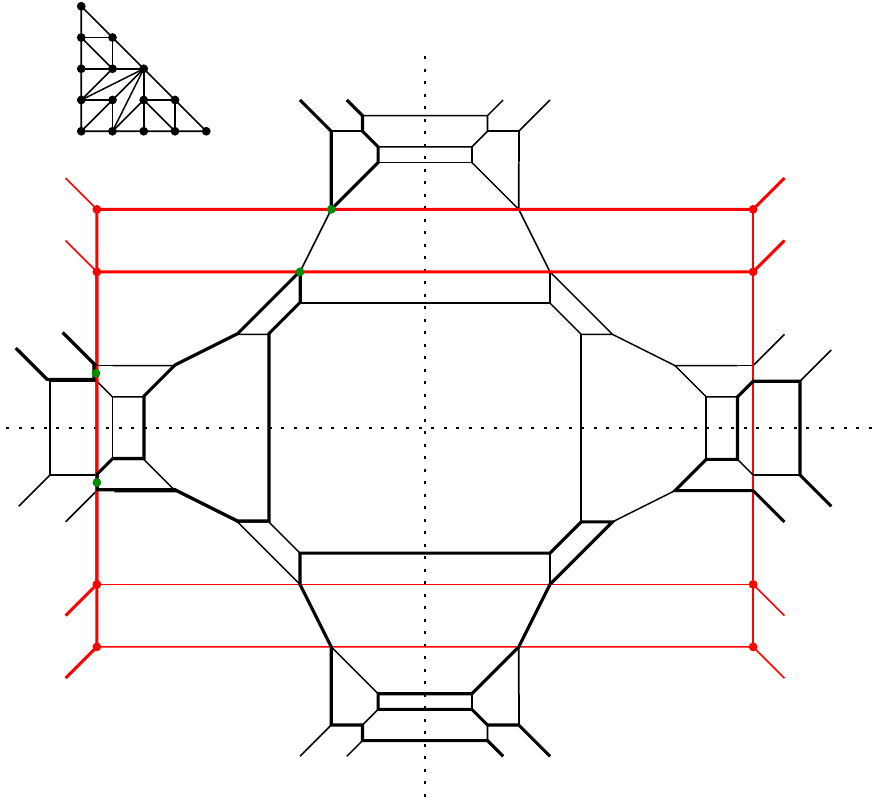_t}
\end{center}
\caption{A patchworked quartic and the lifts of a tropical bitangent class.}\label{fig-patchwork}
\end{figure}

\end{example}



\begin{thebibliography}{10}

\bibitem{ABGJ}
Xavier Allamigeon, Pascal Benchimol, St\'{e}phane Gaubert, and Michael Joswig.
\newblock Tropicalizing the simplex algorithm.
\newblock {\em SIAM J. Discrete Math.}, 29(2):751--795, 2015.

\bibitem{BLMPR}
Matthew Baker, Yoav Len, Ralph Morrison, Nathan Pflueger, and Qingchun Ren.
\newblock Bitangents of tropical plane quartic curves.
\newblock {\em Math. Z.}, 282:1017--1031, 2016.

\bibitem{BIMS}
Erwan Brugall\'{e}, Ilia Itenberg, Grigory Mikhalkin, and Kris Shaw.
\newblock Brief introduction to tropical geometry.
\newblock In {\em Proceedings of the {G}\"{o}kova {G}eometry-{T}opology
  {C}onference 2014},  1--75. G\"{o}kova Geometry/Topology Conference
  (GGT), G\"{o}kova, 2015.

\bibitem{BLdM11}
Erwan Brugall{\'e} and Lucia~M. L{\'o}pez~de Medrano.
\newblock Inflection points of real and tropical plane curves.
\newblock {\em J. Singul.}, 4:74--103, 2012.

\bibitem{Chan2}
Melody Chan and Pakawut Jiradilok.
\newblock Theta characteristics of tropical {$K_4$}-curves.
\newblock In {\em Combinatorial algebraic geometry}, volume~80 of {\em Fields
  Inst. Commun.},  65--86. Fields Inst. Res. Math. Sci., Toronto, ON,
  2017.

\bibitem{CM20}
Mar{\'{\i}}a~Ang{\'e}lica Cueto and Hannah Markwig.
\newblock The combinatorics and real lifting of tropical bitangents to
  quartics.
\newblock {\em Discrete Comput. Geom.} 69:597--658, 2023.

\bibitem{DGPS}
Wolfram Decker, Gert-Martin Greuel, Gerhard Pfister, and Hans Sch\"onemann.
\newblock {\sc Singular} 3-1-3.
\newblock {A} computer algebra system for polynomial computations, Centre for
  Computer Algebra, University of Kaiserslautern, 2011.
\newblock {\tt http://www.singular.uni-kl.de}.

\bibitem{Eisenbud}
David Eisenbud.
\newblock {\em Commutative algebra with a view toward algebraic geometry}.
\newblock Graduate Texts in Mathematics. Springer, 150 edition, 1995.

\bibitem{ET}
M.~A. Elomary and J.-P. Tignol.
\newblock Springer's theorem for tame quadratic forms over {H}enselian fields.
\newblock {\em Math. Z.}, 269:309--323, 2011.

\bibitem{polymake}
Ewgenij Gawrilow and Michael Joswig.
\newblock polymake: a framework for analyzing convex polytopes.
\newblock In Gil Kalai and G\"unter~M. Ziegler, editors, {\em Polytopes ---
  Combinatorics and Computation},  43--74. Birkh\"auser, 2000.

\bibitem{GP21a}
Alheydis Geiger and Marta Panizzut.
\newblock Computing tropical bitangents to smooth quartic curves in polymake.
\newblock {Preprint, arXiv:2112.04447}, 2021.

\bibitem{GP21}
Alheydis Geiger and Marta Panizzut.
\newblock A tropical count of real bitangents to plane quartic curves.
\newblock {Preprint, arXiv:2112.04433}, 2021.

\bibitem{Hoyois14}
Marc Hoyois.
\newblock A quadratic refinement of the {G}rothendieck-{L}efschetz-{V}erdier
  trace formula.
\newblock {\em Algebr. Geom. Topol.}, 14:3603--3658, 2014.

\bibitem{IMS09}
Ilia Itenberg, Grigory Mikhalkin, and Eugenii Shustin.
\newblock {\em Tropical algebraic geometry}, volume~35 of {\em Oberwolfach
  Seminars}.
\newblock Birkh\"auser Verlag, Basel, second edition, 2009.

\bibitem{JMM07a}
Anders~N. Jensen, Hannah Markwig, and Thomas Markwig.
\newblock {tropical.lib. A \textsc{Singular} 3.0 library for computations in
  tropical geometry}.
\newblock http://www.mathematik.uni-kl.de/\textasciitilde
  keilen/de/tropical.html, 2007.

\bibitem{JL89}
Christian~U. Jensen and Helmut Lenzing.
\newblock {\em Model-theoretic algebra with particular emphasis on fields,
  rings, modules}, volume~2 of {\em Algebra, Logic and Applications}.
\newblock Gordon and Breach Science Publishers, New York, 1989.

\bibitem{JensenLen18}
David Jensen and Yoav Len.
\newblock Tropicalization of theta characteristics, double covers, and {P}rym
  varieties.
\newblock {\em Selecta Math. (N.S.)}, 24(2):1391--1410, 2018.

\bibitem{KW19}
Jesse~Leo Kass and Kirsten Wickelgren.
\newblock The class of {E}isenbud-{K}himshiashvili-{L}evine is the local
  {$\bold{A}^1$}-{B}rouwer degree.
\newblock {\em Duke Math. J.}, 168(3):429--469, 2019.

\bibitem{KW20}
Jesse~Leo Kass and Kirsten Wickelgren.
\newblock An arithmetic count of the lines on a smooth cubic surface.
\newblock {\em Compos. Math.}, 157(4):677--709, 2021.


\bibitem{KM23}
Mario Kummer and Stephen McKean.
\newblock  Bounding the signed count of real bitangents to plane quartics.
\newblock {Preprint, arXiv:2303.02008}, 2023.

\bibitem{Lam05}
Tsit~Yuen Lam.
\newblock {\em Introduction to quadratic forms over fields}, volume~67 of {\em
  Graduate Studies in Mathematics}.
\newblock American Mathematical Society, Providence, RI, 2005.

\bibitem{LV21}
Hannah Larson and Isabel Vogt.
\newblock An enriched count of the bitangents to a smooth plane quartic curve.
\newblock {\em Res. Math. Sci.}, 8(26):1--21, 2021.

\bibitem{LeTexier}
C\'edric Le~Texier.
\newblock Hyperbolic plane curves near the non-singular tropical limit. 
\newblock {Preprint arXiv:2109.14961}, 2021.

\bibitem{LL}
Heejong Lee and Yoav Len.
\newblock Bitangents of non-smooth tropical quartics.
\newblock {\em Port. Math.}, 75(1):67--78, 2018.

\bibitem{LM17}
Yoav Len and Hannah Markwig.
\newblock Lifting tropical bitangents.
\newblock {\em J. Symbolic Comput.}, 96:122--152, 2020.

\bibitem{Levine20}
Marc Levine.
\newblock Aspects of enumerative geometry with quadratic forms.
\newblock {\em Doc. Math.}, 25:2179--2239, 2020.

\bibitem{McKean}
Stephen McKean.
\newblock An arithmetic enrichment of {B}\'ezout's {T}heorem.
\newblock {\em Math. Ann.}, 379(02):633--660, 2021.

\bibitem{Mi03}
Grigory Mikhalkin.
\newblock Enumerative tropical geometry in {${\mathbb{R}^2}$}.
\newblock {\em J. Amer. Math. Soc.}, 18:313--377, 2005.

\bibitem{Mi06}
Grigory Mikhalkin.
\newblock Tropical geometry and its applications.
\newblock In M.~Sanz-Sole et~al., editor, {\em Invited lectures v.\ II,
  Proceedings of the ICM Madrid},  827--852, 2006.

\bibitem{Oscar}
{OSCAR -- Open Source Computer Algebra Research system, Version 0.8.3-DEV},
  2022.
\newblock {\tt https://oscar.computeralgebra.de}.

\bibitem{OP13}
Brian Osserman and Sam Payne.
\newblock Lifting tropical intersections.
\newblock {\em Doc. Math.}, 18:121--175, 2013.

\bibitem{OR13}
Brian Osserman and Joseph Rabinoff.
\newblock Lifting nonproper tropical intersections.
\newblock In {\em Tropical and non-{A}rchimedean geometry}, volume 605 of {\em
  Contemp. Math.},  15--44. Amer. Math. Soc., Providence, RI, 2013.

\bibitem{analytification}
Sam Payne.
\newblock Analytification is the limit of all tropicalizations.
\newblock {\em Math. Res. Lett.}, 16(3):543--556, 2009.

\bibitem{JPP}
Andr\'es~Jaramillo Puentes and Sabrina Pauli.
\newblock Quadratically enriched tropical intersections.
\newblock {Preprint, arXiv:2208.00240}, 2022.

\bibitem{Viro}
Oleg Viro.
\newblock Real plane algebraic curves: constructions with controlled topology.
\newblock {\em Algebra i Analiz}, 1(5):1--73, 1989.

\end{thebibliography}

\appendix 
\section{Classification, computations and table}

\subsection{Classification of generic bitangent shapes}\label{app:classification}

For the classification of bitangent shapes up to $\mathbb{S}_3$-symmetry, see Figure 6 in \cite{CM20}. There are 41 shapes up to $\mathbb{S}_3$-symmetry, labeled with capital letters such as (A), double capital letters such as (BB) or dashed capital letters such as (T$''$). For bitangent shapes, we follow the color coding of Figure 6 in \cite{CM20}: The black cells of each bitangent class miss the tropicalized quartic, whereas the red ones lie on it. The unfilled dots are vertices of the tropicalized quartic.

In our study, we fix the line $\{z=0\}$ and do not have $\mathbb{S}_3$-symmetry for that reason, only $\mathbb{S}_2$-symmetry for exchanging the variables $x$ and $y$. We label shapes in an  $\mathbb{S}_3/\mathbb{S}_2$-orbit by adding lower case letters such as (Da). Sometimes, the shapes allow different cases of dual motifs in the dual Newton subdivision. These different cases play a role for the computation of Qtypes. We label such different cases of dual motifs by adding Roman numbers such as (TaI).

We consider only generic bitangent shapes, i.e.\ bitangent shapes which appear in the open cones of the subdivided secondary fan, where the subdivision is chosen such that the bitangent shapes are constant in each cell (see Remark \ref{rem-gen}). This subdivision has been computed in \cite{GP21a}.

\begin{lemma}\label{lem-generic}
Let $S$ be a tropical bitangent class of a generic tropicalized quartic. Then, from the classification of possible shapes for $S$ in \cite{CM20}, Figure 6, $S$ can have the shapes (A), (B), (C), (D), (E), (F), (G), (H), (N), (O), (P), (T), (S), (R), (Q), (U), (V), (W), (Y), (BB), (CC), (EE),  or (II).
\end{lemma}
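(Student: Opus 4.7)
The plan is to argue, using the classification of Figure~6 in \cite{CM20} together with the notion of genericity fixed in Remark~\ref{rem-gen}, that the shapes not appearing in the list — namely (I), (J), (K), (L), (M), (X), (Z), (AA), (DD), (FF), (GG), (HH) — each force an unexpected alignment of vertices of $\Trop(C)$ and therefore correspond to walls of the refined secondary fan subdivision of \cite{GP21a}, rather than to its open maximal cones. The conclusion then follows because, by Remark~\ref{rem-gen}, a generic tropicalized quartic corresponds to an interior point of a maximal cone of that refined subdivision.

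The first step is to recall precisely what genericity buys us: fixing the underlying unimodular triangulation of the Newton polygon, the cone in the secondary fan parametrizing all $\Trop(C)$ with this triangulation is further subdivided in \cite{GP21a} so that the combinatorial type of each of the seven bitangent classes is constant on each open cell, and the walls correspond exactly to coincidences between edge lengths that make two vertices of $\Trop(C)$ collinear in a direction in which, for a generic deformation, they would not be. This is the phenomenon illustrated in Figure~\ref{fig-align}. Thus, to show that a particular shape from Figure~6 of \cite{CM20} is non-generic, it suffices to exhibit in its picture a coincidence of this type — e.g., three vertices of $\Trop(C)$ lying on a common line parallel to a ray direction $(1,0)$, $(0,1)$ or $(-1,-1)$, or two parallel edges of $\Trop(C)$ having equal lattice length — which is exactly the kind of condition cut out by a proper face of the refined secondary cone.

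The second step is the case-by-case inspection: for each of the twelve shapes listed above I would point to the specific alignment visible in Figure~6 of \cite{CM20} (for instance, the shapes that contain a vertex of the bitangent class lying precisely on a vertex of $\Trop(C)$ because two otherwise-independent edge lengths happen to coincide, or shapes whose two-dimensional cell only appears when a horizontal and a diagonal segment of $\Trop(C)$ terminate at the same height), and explain why an arbitrarily small perturbation of the edge lengths within the same secondary cone destroys that alignment and moves the shape into one of the listed generic types. Conversely, for each of the twenty-three shapes in the statement I would observe that it persists under small perturbations of the edge lengths, so it fills an open cell of the refined subdivision and can therefore occur in the generic regime.

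The main obstacle is not conceptual but organizational: going through all 41 entries of Figure~6 and matching each to the corresponding open or boundary stratum in the refinement of the secondary fan is tedious, and the bookkeeping has to be done consistently with the conventions and labeling of \cite{CM20} and \cite{GP21a}. In practice the cleanest route is to invoke the explicit computation in \cite{GP21a}, which already produces the list of shapes arising for generic quartics up to $\mathbb{S}_3$-symmetry; the lemma then amounts to reading off that list and translating it into our notation. I would therefore present the proof as a short argument referring to Remark~\ref{rem-gen}, citing \cite{GP21a} for the subdivision of the secondary fan, and noting explicitly which non-generic alignment excludes each of (I), (J), (K), (L), (M), (X), (Z), (AA), (DD), (FF), (GG), (HH).
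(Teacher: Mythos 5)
Your proposal takes essentially the same route as the paper: the excluded shapes are precisely those whose existence forces equalities among the edge lengths of $\Trop(C)$, so they occur only in lower-dimensional cones of the refined secondary fan of \cite{GP21a}, whereas a generic quartic lies in the interior of a maximal cone (Remark~\ref{rem-gen}). One bookkeeping slip: your list of excluded shapes omits the primed variants (H$'$), (L$'$), (T$'$), (T$''$), (U$'$), (Q$'$) — the complement of the $23$ listed shapes among the $41$ of Figure~6 in \cite{CM20} has $18$ members, not $12$ — though the same alignment argument disposes of these as well.
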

\begin{proof}
This follows since the existence of a tropical bitangent class of one of the remaining shapes ((H$'$), (I), (J), (K), (L$'$), (L), (M), (T$'$), (T$''$), (U$'$), (Q$'$), (X), (Z), (AA), (DD), (FF), (GG), (HH)) requires some equalities on the lengths of the edges of the tropicalized quartic to hold, hence they appear only in lower-dimenional cones of our subdivision of the secondary fan.
\end{proof}

\begin{figure}
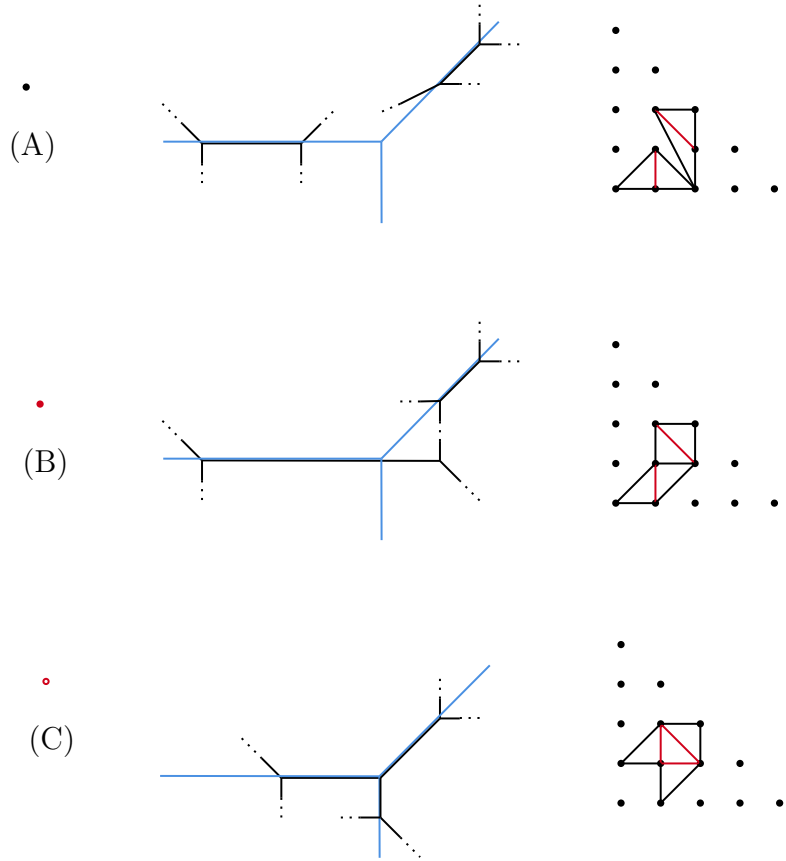


\begin{center}

\tikzset{every picture/.style={line width=0.75pt}} 



\end{center}
\caption{Bitangent shapes, local pictures and motifs of type (A), (B) and (C).} \label{fig:ABC}
\end{figure}

Figure \ref{fig:ABC} shows the three zero-dimensional bitangent shapes (A), (B) and (C). Next to the shape, we sketch the tropical bitangent in blue and parts of the tropicalized quartic which carry the tropicalization of the tangency points. Next, we draw an exemplary dual motif. The red edges are dual to the edges that overlap with the tropicalized quartic. The red edges could also be shifted for case (A) and (B). In case (A) and (B), the vertices on the vertical and diagonal boundary edge of the Newton triangle could also be shifted. In case (C), all vertices on the boundary could be shifted. We refrain from drawing the $\mathbb{S}_3/\mathbb{S}_2$-orbits of these bitangent shapes, as the arguments for the computation for the Qtypes do not change. Note that each such shape has a $\mathbb{S}_3/\mathbb{S}_2$-orbit of size two, as the tropicalization of the tangency points can be on the horizontal and vertical edge, or on the diagonal and horizontal edge.

Figure \ref{fig:D} shows bitangent shape (Da). Shape (Db) and (Dc) are diagonal resp.\ vertical. Figure \ref{fig:D} depicts the liftable tropical bitangents together with the local parts of the tropicalized quartic carrying the tropicalization of the  tangency points, and the dual motifs.

\begin{figure}
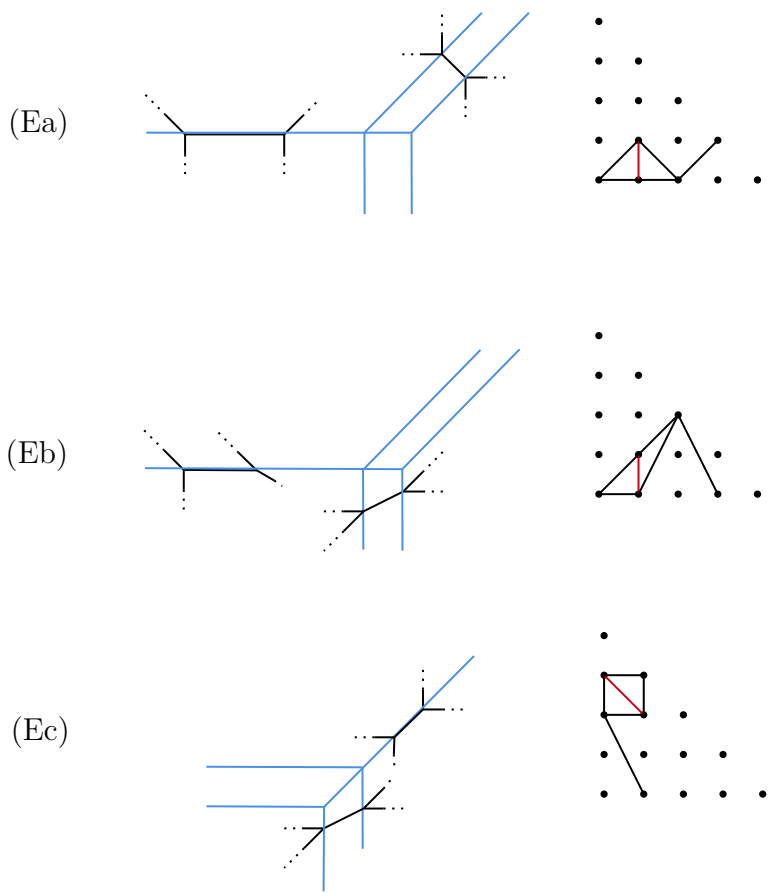


\begin{center}

\tikzset{every picture/.style={line width=0.75pt}} 



\end{center}
\caption{Local pictures of tropicalizations of  tangency points and partial motifs for type (Ea), (Eb) and (Ec).}\label{fig:E}
\end{figure}

Bitangent shape (E) is a segment which does not meet the tropicalized quartic, i.e.\ in Figure 6 \cite{CM20} it is drawn completely in black. The two end points of the segment are the liftable tropical bitangents. Figure \ref{fig:E} shows the behaviour of the tropicalization of the tangency points (i.e.\ local parts of the tropicalized quartic) together with the two liftable tropical bitangents in blue, and the partial dual motifs for the shapes (Ea), (Eb) and (Ec) which we obtain as the $\mathbb{S}_3/\mathbb{S}_2$-orbit of (E). In the dual pictures, the red edge could also be shifted (upwards for (Ea) and (Eb), antidiagonally for (Ec)), as could the vertex which forms the left resp.\ upper triangle with the red edge.

\begin{figure}
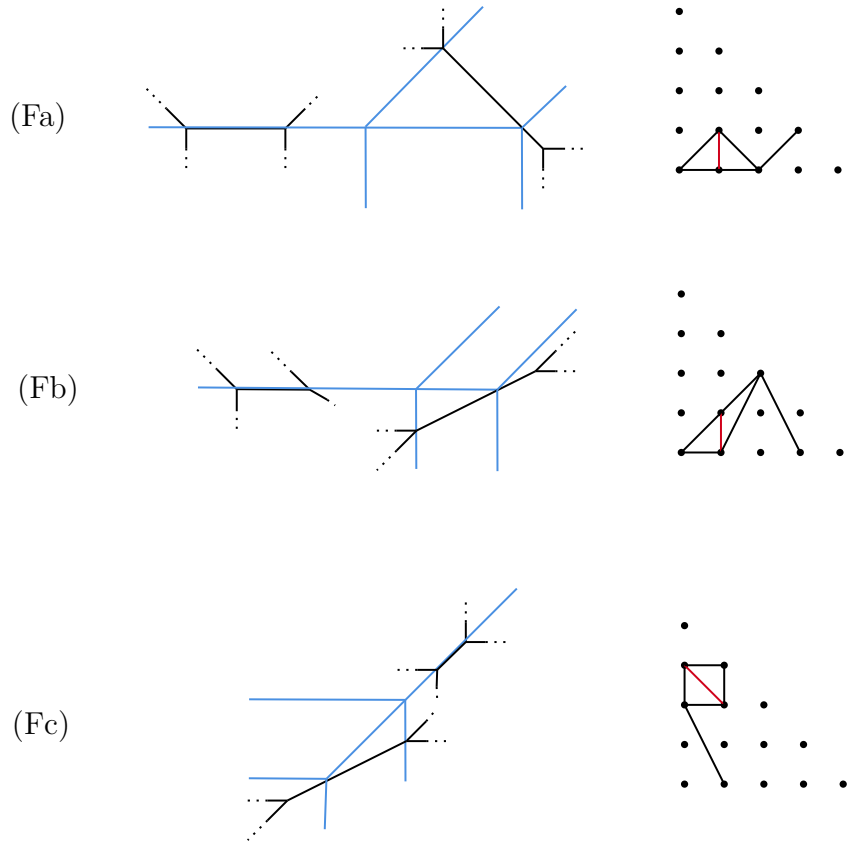

\begin{center}

\tikzset{every picture/.style={line width=0.75pt}} 



\caption{Local pictures of tropicalization of the  tangency points and partial dual motifs for type (Fa), (Fb) and (Fc).}\label{fig:F}
\end{center}
\end{figure}

Bitangent shape (F) is a segment which touches the tropicalized quartic, i.e.\ in the color coding we draw one vertex red. Again, the two end points of the segment are the liftable tropical bitangents. Figure \ref{fig:F} shows the behaviour of the tropicalization of the  tangency points together with the two liftable tropical bitangents in blue, and the partial dual motifs for the shapes (Fa), (Fb) and (Fc) which are the $\mathbb{S}_3/\mathbb{S}_2$-orbit of (F). In the dual pictures, the red edge could also be shifted (upwards for (Fa) and (Fb), antidiagonally for (Fc)), as could the vertex which forms the left resp.\ upper triangle with the red edge.

\begin{figure}
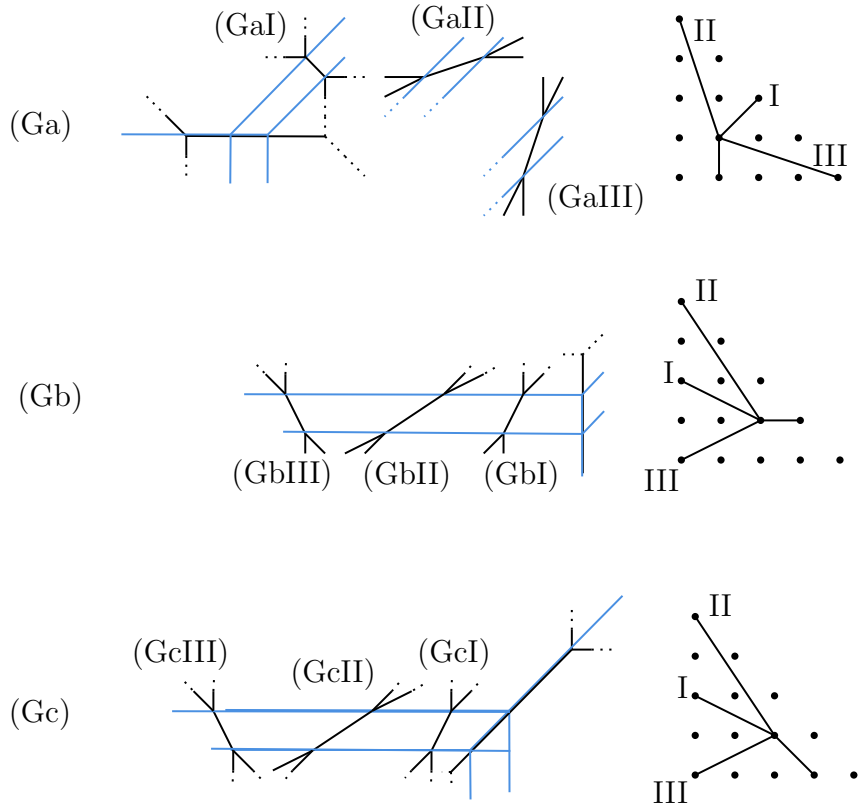

\begin{center}

\tikzset{every picture/.style={line width=0.75pt}} 



\end{center}
\caption{Local pictures of tropical tangency components and partial dual motifs for types (GaI), (GaII), (GaIII), (GbI), (GbII), (GbIII), (GcI), (GcII) and (GcIII).}\label{fig:G}
\end{figure}

Bitangent shape (G) is a segment which is completely contained in the tropicalized quartic. One tropical tangency is an overlap of an edge with the ray spanned by the segment. The two liftable members are the two end points of the segment. As we deform one liftable tropical bitangent to the other, the second tropical tangency component wanders over an edge which can have three possible slopes. We differentiate those three cases by using Roman letters (I), (II) and (III). Figure \ref{fig:G} combines the three possibilities (I), (II), (III) for the three $\mathbb{S}_3/\mathbb{S}_2$-orbits (Ga), (Gb) and (Gc) in one picture each, both for the local pictures of the tropicalization of the tangency points and for the partial dual motifs.

\begin{figure}
\begin{center}

\tikzset{every picture/.style={line width=0.75pt}} 

\begin{tikzpicture}[x=0.75pt,y=0.75pt,yscale=-1,xscale=1]

\draw  [fill={rgb, 255:red, 0; green, 0; blue, 0 }  ,fill opacity=1 ] (347.36,130.71) .. controls (347.36,129.98) and (347.95,129.39) .. (348.68,129.39) .. controls (349.41,129.39) and (350,129.98) .. (350,130.71) .. controls (350,131.44) and (349.41,132.03) .. (348.68,132.03) .. controls (347.95,132.03) and (347.36,131.44) .. (347.36,130.71) -- cycle ;
\draw  [fill={rgb, 255:red, 0; green, 0; blue, 0 }  ,fill opacity=1 ] (367.36,130.71) .. controls (367.36,129.98) and (367.95,129.39) .. (368.68,129.39) .. controls (369.41,129.39) and (370,129.98) .. (370,130.71) .. controls (370,131.44) and (369.41,132.03) .. (368.68,132.03) .. controls (367.95,132.03) and (367.36,131.44) .. (367.36,130.71) -- cycle ;
\draw  [fill={rgb, 255:red, 0; green, 0; blue, 0 }  ,fill opacity=1 ] (387.36,130.71) .. controls (387.36,129.98) and (387.95,129.39) .. (388.68,129.39) .. controls (389.41,129.39) and (390,129.98) .. (390,130.71) .. controls (390,131.44) and (389.41,132.03) .. (388.68,132.03) .. controls (387.95,132.03) and (387.36,131.44) .. (387.36,130.71) -- cycle ;
\draw  [fill={rgb, 255:red, 0; green, 0; blue, 0 }  ,fill opacity=1 ] (407.36,130.71) .. controls (407.36,129.98) and (407.95,129.39) .. (408.68,129.39) .. controls (409.41,129.39) and (410,129.98) .. (410,130.71) .. controls (410,131.44) and (409.41,132.03) .. (408.68,132.03) .. controls (407.95,132.03) and (407.36,131.44) .. (407.36,130.71) -- cycle ;
\draw  [fill={rgb, 255:red, 0; green, 0; blue, 0 }  ,fill opacity=1 ] (427.36,130.71) .. controls (427.36,129.98) and (427.95,129.39) .. (428.68,129.39) .. controls (429.41,129.39) and (430,129.98) .. (430,130.71) .. controls (430,131.44) and (429.41,132.03) .. (428.68,132.03) .. controls (427.95,132.03) and (427.36,131.44) .. (427.36,130.71) -- cycle ;
\draw  [fill={rgb, 255:red, 0; green, 0; blue, 0 }  ,fill opacity=1 ] (347.36,110.71) .. controls (347.36,109.98) and (347.95,109.39) .. (348.68,109.39) .. controls (349.41,109.39) and (350,109.98) .. (350,110.71) .. controls (350,111.44) and (349.41,112.03) .. (348.68,112.03) .. controls (347.95,112.03) and (347.36,111.44) .. (347.36,110.71) -- cycle ;
\draw  [fill={rgb, 255:red, 0; green, 0; blue, 0 }  ,fill opacity=1 ] (367.36,110.71) .. controls (367.36,109.98) and (367.95,109.39) .. (368.68,109.39) .. controls (369.41,109.39) and (370,109.98) .. (370,110.71) .. controls (370,111.44) and (369.41,112.03) .. (368.68,112.03) .. controls (367.95,112.03) and (367.36,111.44) .. (367.36,110.71) -- cycle ;
\draw  [fill={rgb, 255:red, 0; green, 0; blue, 0 }  ,fill opacity=1 ] (387.36,110.71) .. controls (387.36,109.98) and (387.95,109.39) .. (388.68,109.39) .. controls (389.41,109.39) and (390,109.98) .. (390,110.71) .. controls (390,111.44) and (389.41,112.03) .. (388.68,112.03) .. controls (387.95,112.03) and (387.36,111.44) .. (387.36,110.71) -- cycle ;
\draw  [fill={rgb, 255:red, 0; green, 0; blue, 0 }  ,fill opacity=1 ] (407.36,110.71) .. controls (407.36,109.98) and (407.95,109.39) .. (408.68,109.39) .. controls (409.41,109.39) and (410,109.98) .. (410,110.71) .. controls (410,111.44) and (409.41,112.03) .. (408.68,112.03) .. controls (407.95,112.03) and (407.36,111.44) .. (407.36,110.71) -- cycle ;
\draw  [fill={rgb, 255:red, 0; green, 0; blue, 0 }  ,fill opacity=1 ] (347.36,90.71) .. controls (347.36,89.98) and (347.95,89.39) .. (348.68,89.39) .. controls (349.41,89.39) and (350,89.98) .. (350,90.71) .. controls (350,91.44) and (349.41,92.03) .. (348.68,92.03) .. controls (347.95,92.03) and (347.36,91.44) .. (347.36,90.71) -- cycle ;
\draw  [fill={rgb, 255:red, 0; green, 0; blue, 0 }  ,fill opacity=1 ] (367.36,90.71) .. controls (367.36,89.98) and (367.95,89.39) .. (368.68,89.39) .. controls (369.41,89.39) and (370,89.98) .. (370,90.71) .. controls (370,91.44) and (369.41,92.03) .. (368.68,92.03) .. controls (367.95,92.03) and (367.36,91.44) .. (367.36,90.71) -- cycle ;
\draw  [fill={rgb, 255:red, 0; green, 0; blue, 0 }  ,fill opacity=1 ] (387.36,90.71) .. controls (387.36,89.98) and (387.95,89.39) .. (388.68,89.39) .. controls (389.41,89.39) and (390,89.98) .. (390,90.71) .. controls (390,91.44) and (389.41,92.03) .. (388.68,92.03) .. controls (387.95,92.03) and (387.36,91.44) .. (387.36,90.71) -- cycle ;
\draw  [fill={rgb, 255:red, 0; green, 0; blue, 0 }  ,fill opacity=1 ] (347.36,70.71) .. controls (347.36,69.98) and (347.95,69.39) .. (348.68,69.39) .. controls (349.41,69.39) and (350,69.98) .. (350,70.71) .. controls (350,71.44) and (349.41,72.03) .. (348.68,72.03) .. controls (347.95,72.03) and (347.36,71.44) .. (347.36,70.71) -- cycle ;
\draw  [fill={rgb, 255:red, 0; green, 0; blue, 0 }  ,fill opacity=1 ] (367.36,70.71) .. controls (367.36,69.98) and (367.95,69.39) .. (368.68,69.39) .. controls (369.41,69.39) and (370,69.98) .. (370,70.71) .. controls (370,71.44) and (369.41,72.03) .. (368.68,72.03) .. controls (367.95,72.03) and (367.36,71.44) .. (367.36,70.71) -- cycle ;
\draw  [fill={rgb, 255:red, 0; green, 0; blue, 0 }  ,fill opacity=1 ] (347.36,50.71) .. controls (347.36,49.98) and (347.95,49.39) .. (348.68,49.39) .. controls (349.41,49.39) and (350,49.98) .. (350,50.71) .. controls (350,51.44) and (349.41,52.03) .. (348.68,52.03) .. controls (347.95,52.03) and (347.36,51.44) .. (347.36,50.71) -- cycle ;
\draw  [fill={rgb, 255:red, 0; green, 0; blue, 0 }  ,fill opacity=1 ] (348.36,273.32) .. controls (348.36,272.59) and (348.95,272) .. (349.68,272) .. controls (350.41,272) and (351,272.59) .. (351,273.32) .. controls (351,274.05) and (350.41,274.64) .. (349.68,274.64) .. controls (348.95,274.64) and (348.36,274.05) .. (348.36,273.32) -- cycle ;
\draw  [fill={rgb, 255:red, 0; green, 0; blue, 0 }  ,fill opacity=1 ] (368.36,273.32) .. controls (368.36,272.59) and (368.95,272) .. (369.68,272) .. controls (370.41,272) and (371,272.59) .. (371,273.32) .. controls (371,274.05) and (370.41,274.64) .. (369.68,274.64) .. controls (368.95,274.64) and (368.36,274.05) .. (368.36,273.32) -- cycle ;
\draw  [fill={rgb, 255:red, 0; green, 0; blue, 0 }  ,fill opacity=1 ] (388.36,273.32) .. controls (388.36,272.59) and (388.95,272) .. (389.68,272) .. controls (390.41,272) and (391,272.59) .. (391,273.32) .. controls (391,274.05) and (390.41,274.64) .. (389.68,274.64) .. controls (388.95,274.64) and (388.36,274.05) .. (388.36,273.32) -- cycle ;
\draw  [fill={rgb, 255:red, 0; green, 0; blue, 0 }  ,fill opacity=1 ] (408.36,273.32) .. controls (408.36,272.59) and (408.95,272) .. (409.68,272) .. controls (410.41,272) and (411,272.59) .. (411,273.32) .. controls (411,274.05) and (410.41,274.64) .. (409.68,274.64) .. controls (408.95,274.64) and (408.36,274.05) .. (408.36,273.32) -- cycle ;
\draw  [fill={rgb, 255:red, 0; green, 0; blue, 0 }  ,fill opacity=1 ] (428.36,273.32) .. controls (428.36,272.59) and (428.95,272) .. (429.68,272) .. controls (430.41,272) and (431,272.59) .. (431,273.32) .. controls (431,274.05) and (430.41,274.64) .. (429.68,274.64) .. controls (428.95,274.64) and (428.36,274.05) .. (428.36,273.32) -- cycle ;
\draw  [fill={rgb, 255:red, 0; green, 0; blue, 0 }  ,fill opacity=1 ] (348.36,253.32) .. controls (348.36,252.59) and (348.95,252) .. (349.68,252) .. controls (350.41,252) and (351,252.59) .. (351,253.32) .. controls (351,254.05) and (350.41,254.64) .. (349.68,254.64) .. controls (348.95,254.64) and (348.36,254.05) .. (348.36,253.32) -- cycle ;
\draw  [fill={rgb, 255:red, 0; green, 0; blue, 0 }  ,fill opacity=1 ] (368.36,253.32) .. controls (368.36,252.59) and (368.95,252) .. (369.68,252) .. controls (370.41,252) and (371,252.59) .. (371,253.32) .. controls (371,254.05) and (370.41,254.64) .. (369.68,254.64) .. controls (368.95,254.64) and (368.36,254.05) .. (368.36,253.32) -- cycle ;
\draw  [fill={rgb, 255:red, 0; green, 0; blue, 0 }  ,fill opacity=1 ] (388.36,253.32) .. controls (388.36,252.59) and (388.95,252) .. (389.68,252) .. controls (390.41,252) and (391,252.59) .. (391,253.32) .. controls (391,254.05) and (390.41,254.64) .. (389.68,254.64) .. controls (388.95,254.64) and (388.36,254.05) .. (388.36,253.32) -- cycle ;
\draw  [fill={rgb, 255:red, 0; green, 0; blue, 0 }  ,fill opacity=1 ] (408.36,253.32) .. controls (408.36,252.59) and (408.95,252) .. (409.68,252) .. controls (410.41,252) and (411,252.59) .. (411,253.32) .. controls (411,254.05) and (410.41,254.64) .. (409.68,254.64) .. controls (408.95,254.64) and (408.36,254.05) .. (408.36,253.32) -- cycle ;
\draw  [fill={rgb, 255:red, 0; green, 0; blue, 0 }  ,fill opacity=1 ] (348.36,233.32) .. controls (348.36,232.59) and (348.95,232) .. (349.68,232) .. controls (350.41,232) and (351,232.59) .. (351,233.32) .. controls (351,234.05) and (350.41,234.64) .. (349.68,234.64) .. controls (348.95,234.64) and (348.36,234.05) .. (348.36,233.32) -- cycle ;
\draw  [fill={rgb, 255:red, 0; green, 0; blue, 0 }  ,fill opacity=1 ] (368.36,233.32) .. controls (368.36,232.59) and (368.95,232) .. (369.68,232) .. controls (370.41,232) and (371,232.59) .. (371,233.32) .. controls (371,234.05) and (370.41,234.64) .. (369.68,234.64) .. controls (368.95,234.64) and (368.36,234.05) .. (368.36,233.32) -- cycle ;
\draw  [fill={rgb, 255:red, 0; green, 0; blue, 0 }  ,fill opacity=1 ] (388.36,233.32) .. controls (388.36,232.59) and (388.95,232) .. (389.68,232) .. controls (390.41,232) and (391,232.59) .. (391,233.32) .. controls (391,234.05) and (390.41,234.64) .. (389.68,234.64) .. controls (388.95,234.64) and (388.36,234.05) .. (388.36,233.32) -- cycle ;
\draw  [fill={rgb, 255:red, 0; green, 0; blue, 0 }  ,fill opacity=1 ] (348.36,213.32) .. controls (348.36,212.59) and (348.95,212) .. (349.68,212) .. controls (350.41,212) and (351,212.59) .. (351,213.32) .. controls (351,214.05) and (350.41,214.64) .. (349.68,214.64) .. controls (348.95,214.64) and (348.36,214.05) .. (348.36,213.32) -- cycle ;
\draw  [fill={rgb, 255:red, 0; green, 0; blue, 0 }  ,fill opacity=1 ] (368.36,213.32) .. controls (368.36,212.59) and (368.95,212) .. (369.68,212) .. controls (370.41,212) and (371,212.59) .. (371,213.32) .. controls (371,214.05) and (370.41,214.64) .. (369.68,214.64) .. controls (368.95,214.64) and (368.36,214.05) .. (368.36,213.32) -- cycle ;
\draw  [fill={rgb, 255:red, 0; green, 0; blue, 0 }  ,fill opacity=1 ] (348.36,193.32) .. controls (348.36,192.59) and (348.95,192) .. (349.68,192) .. controls (350.41,192) and (351,192.59) .. (351,193.32) .. controls (351,194.05) and (350.41,194.64) .. (349.68,194.64) .. controls (348.95,194.64) and (348.36,194.05) .. (348.36,193.32) -- cycle ;
\draw  [dash pattern={on 0.84pt off 2.51pt}]  (126.67,122.67) -- (126.67,112.67) ;
\draw  [dash pattern={on 0.84pt off 2.51pt}]  (156.67,112.67) -- (156.67,122.67) ;
\draw    (126.67,102.67) -- (156.67,102.67) ;
\draw [color={rgb, 255:red, 74; green, 144; blue, 226 }  ,draw opacity=1 ]   (122.31,102.01) -- (222.51,101.9) ;
\draw    (216.67,112.67) -- (236.67,72.67) ;
\draw    (216.67,112.67) -- (206.67,122.67) ;
\draw    (216.67,112.67) -- (216.67,122.67) ;
\draw  [dash pattern={on 0.84pt off 2.51pt}]  (206.67,122.67) -- (196.67,132.67) ;
\draw  [dash pattern={on 0.84pt off 2.51pt}]  (216.67,132.67) -- (216.67,122.67) ;
\draw  [dash pattern={on 0.84pt off 2.51pt}]  (246.67,52.67) -- (236.67,72.67) ;
\draw [color={rgb, 255:red, 0; green, 0; blue, 0 }  ,draw opacity=1 ]   (388.68,110.71) -- (428.68,130.71) ;
\draw [color={rgb, 255:red, 0; green, 0; blue, 0 }  ,draw opacity=1 ]   (369.68,233.32) -- (369.68,253.32) ;
\draw [color={rgb, 255:red, 208; green, 2; blue, 27 }  ,draw opacity=1 ]   (389.68,253.32) -- (369.68,233.32) ;
\draw [color={rgb, 255:red, 0; green, 0; blue, 0 }  ,draw opacity=1 ]   (369.68,253.32) -- (349.68,273.32) ;
\draw    (156.67,102.67) -- (156.67,112.67) ;
\draw  [dash pattern={on 0.84pt off 2.51pt}]  (176.67,82.67) -- (166.67,92.67) ;
\draw  [dash pattern={on 0.84pt off 2.51pt}]  (225.33,204) -- (215.33,204) ;
\draw    (135.33,234) -- (175.33,274) ;
\draw    (205.33,204) -- (185.33,224) ;
\draw [color={rgb, 255:red, 74; green, 144; blue, 226 }  ,draw opacity=1 ]   (243.74,81.9) -- (222.51,101.9) ;
\draw [color={rgb, 255:red, 74; green, 144; blue, 226 }  ,draw opacity=1 ]   (156.24,254.18) -- (156.33,268.1) ;
\draw    (205.33,194) -- (205.33,204) ;
\draw    (185.33,224) -- (185.33,234) ;
\draw [color={rgb, 255:red, 74; green, 144; blue, 226 }  ,draw opacity=1 ]   (135.97,254.28) -- (156.24,254.18) ;
\draw [color={rgb, 255:red, 74; green, 144; blue, 226 }  ,draw opacity=1 ]   (218.15,192.1) -- (156.24,254.18) ;
\draw  [dash pattern={on 0.84pt off 2.51pt}]  (125.33,224) -- (135.33,234) ;
\draw  [dash pattern={on 0.84pt off 2.51pt}]  (165.33,224) -- (175.33,224) ;
\draw  [dash pattern={on 0.84pt off 2.51pt}]  (205.61,188.38) -- (205.33,194) ;
\draw  [dash pattern={on 0.84pt off 2.51pt}]  (175.33,274) -- (179.79,278.72) ;
\draw [color={rgb, 255:red, 0; green, 0; blue, 0 }  ,draw opacity=1 ]   (348.68,130.71) -- (368.68,110.71) ;
\draw [color={rgb, 255:red, 0; green, 0; blue, 0 }  ,draw opacity=1 ]   (389.68,233.32) -- (389.68,253.32) ;
\draw [color={rgb, 255:red, 74; green, 144; blue, 226 }  ,draw opacity=1 ]   (222.51,101.9) -- (222.51,116.76) ;
\draw [color={rgb, 255:red, 0; green, 0; blue, 0 }  ,draw opacity=1 ]   (348.68,130.71) -- (368.68,130.71) ;
\draw [color={rgb, 255:red, 0; green, 0; blue, 0 }  ,draw opacity=1 ]   (368.68,110.71) -- (388.68,110.71) ;
\draw [color={rgb, 255:red, 208; green, 2; blue, 27 }  ,draw opacity=1 ]   (368.68,130.71) -- (368.68,110.71) ;
\draw [color={rgb, 255:red, 0; green, 0; blue, 0 }  ,draw opacity=1 ]   (368.68,130.71) -- (388.68,110.71) ;
\draw [color={rgb, 255:red, 0; green, 0; blue, 0 }  ,draw opacity=1 ]   (369.68,233.32) -- (389.68,233.32) ;
\draw [color={rgb, 255:red, 0; green, 0; blue, 0 }  ,draw opacity=1 ]   (369.68,253.32) -- (389.68,253.32) ;
\draw    (116.67,92.67) -- (126.67,102.67) ;
\draw    (166.67,92.67) -- (156.67,102.67) ;
\draw    (126.67,102.67) -- (126.67,112.67) ;
\draw  [dash pattern={on 0.84pt off 2.51pt}]  (106.67,82.67) -- (116.67,92.67) ;
\draw  [dash pattern={on 0.84pt off 2.51pt}]  (185.27,240.33) -- (185.33,234) ;
\draw    (205.33,204) -- (215.33,204) ;
\draw    (175.33,224) -- (185.33,224) ;
\draw  [dash pattern={on 0.84pt off 2.51pt}]  (145.33,244) -- (155.33,254) ;
\draw  [dash pattern={on 0.84pt off 2.51pt}]  (145.33,244) -- (155.33,254) ;

\draw (403.36,148) node [anchor=north west][inner sep=0.75pt]   [align=left] {$ $};
\draw (404.36,290.61) node [anchor=north west][inner sep=0.75pt]   [align=left] {$ $};
\draw (8.69,228.9) node [anchor=north west][inner sep=0.75pt]   [align=left] {(Hb)};
\draw (7.24,95.45) node [anchor=north west][inner sep=0.75pt]   [align=left] {(Ha)};

\end{tikzpicture}

\end{center}

\caption{Local pictures for the tropical tangency components and partial dual motifs for shapes (Ha), (Hb).}\label{fig:H}

\end{figure}
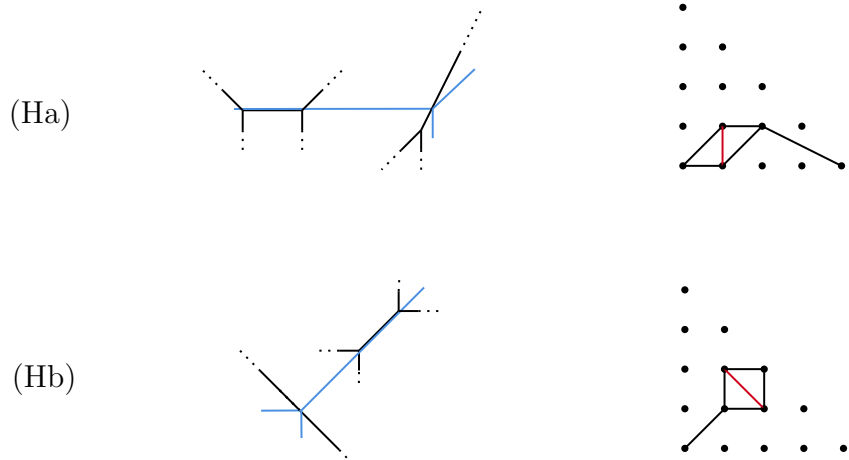

Bitangent shape (H) is a ray whose vertex corresponds to the only liftable tropical bitangent in the class. Figure \ref{fig:H} shows the local pictures of the tropicalization of the tangency points and the partial dual motifs for the two cases (Ha) and (Hb). The red edge could be shifted upwards resp.\ antidiagonally, as could the vertex of its adjacent right resp.\ upper triangle.

\begin{figure}
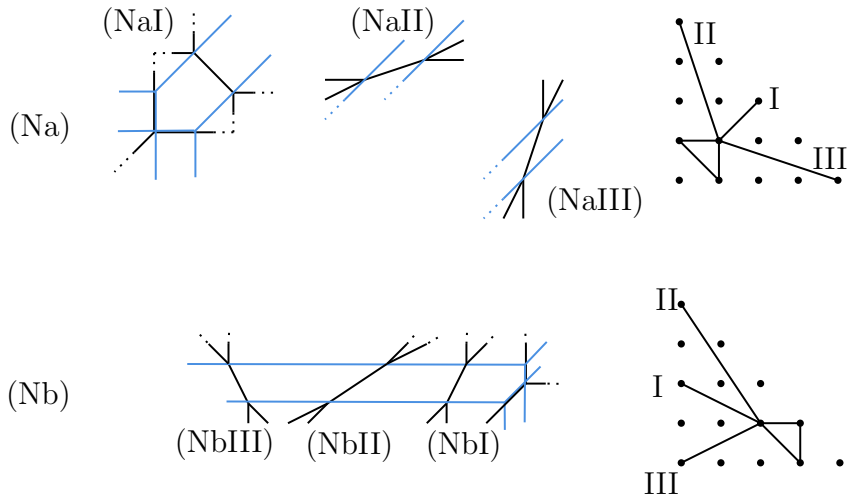

\begin{center}

\tikzset{every picture/.style={line width=0.75pt}} 



\caption{Local pictures of tropical tangency components and partial dual motifs for types (NaI), (NaII), (NaIII), (NbI), (NbII), (NbIII).}\label{fig:N}
\end{center}
\end{figure}

Bitangent shape (N) looks like a  reversed tropical line with two rays cut off. The end points of the two cut rays are the two liftable members. As for shape (G) (see Figure \ref{fig:G}), one tropical tangency is an overlap with an edge (although the two overlapping edges differ for the two liftable tropical bitangents, different from the situation in (G)), and one tropical tangency component wanders over an edge --- which can have three different possible slopes (I), (II) and (III) --- as we deform one liftable tropical tangency component to the other. Figure \ref{fig:N} shows local pictures of the tropicalization of the  tangency points and partial dual motifs, and combines the three options (I), (II) and (III) in one picture each for the two $\mathbb{S}_3/\mathbb{S}_2$-orbits (Na) and (Nb).

\begin{figure}
\begin{center}

\tikzset{every picture/.style={line width=0.75pt}} 



\end{center}

\caption{Local pictures of tropical tangency components and partial dual motifs for shapes (TaI0, (TaII), (TaIII), (TbI), (TbII) and (TbIII).}\label{fig:T}

\end{figure}

Shape (T) is an unbounded 2-dimensional cell show two vertices each lift with multiplicity two. As for shape (G) and (N), there are three possibilities for the slope of the edge along which one of the tropical tangency component wanders, which we denote by (I), (II) and (III). Figure \ref{fig:T} combines these possibilities into one picture for (Ta) and one for (Tb), which are the two orbits we have to take into account.

\begin{figure}
\begin{center}

\tikzset{every picture/.style={line width=0.75pt}} 



\end{center}
\caption{Local pictures of tropical tangency components and partial dual motifs for shapes (VaI0, (VaII), (VaIII), (VbI), (VbII) and (VbIII).}\label{fig:V}
\end{figure}

The behaviour of the tropicalization of the  tangency points for shape (V) is the same as for shape (N), it is only the possible deformations that differ, but this has no effect on Qtypes, see Figure \ref{fig:V}.

We now consider the bitangent shapes (W), (Y), (BB), (CC) and (EE) which each have four liftable members. For some of these shapes, tangencies can arise due to edges for which different slopes are possible. We label these different slopes again using Roman numbers (I), (II), (III).
Bitangent shape (W) is a parallelogram whose four vertices are the liftable members. The other shapes can be viewed as deformations of the parallelogram, where edges of the tropicalized quartic ``cut off'' pieces.
Figure \ref{fig:WI} shows the local pieces of the tropicalized quartic together with the four liftable tropical bitangents in blue for the cases (WaI) and (WbI). Next to it, the partial dual motifs are depicted. For (WaII), (WaIII), (WbIII) and (WcII), the partial dual motifs are depicted in Figure \ref{fig:W}. As the local pictures of the tropical tangency components and liftable tropical bitangents are similar to the ones shown in Figure \ref{fig:WI}, we do not include those.

\begin{figure}
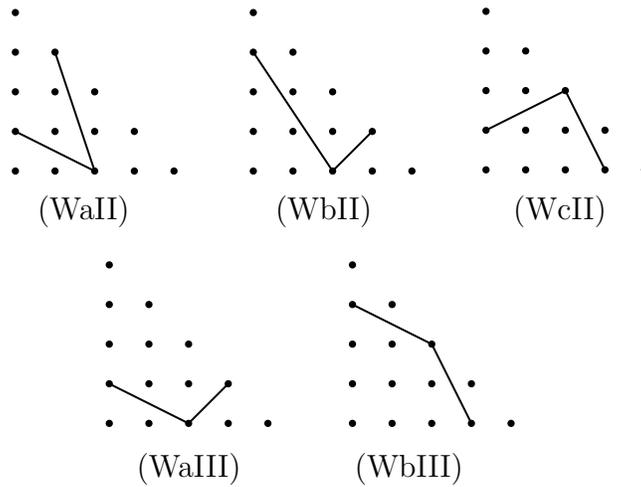

\begin{center}

\tikzset{every picture/.style={line width=0.75pt}} 



\end{center}

\caption{Partial dual motifs for the shapes (WaII), (WbII), (WcII), (WaIII), (WBIII). }\label{fig:W}
\end{figure}

Concerning shape (Y), the dual motif for (YaI) equals the one for (WbII), (YbI) equals (WcII), (YcI) equals (WaII), (YaII) and (YaIII) equals (WaIII), and (YbII) equals (WbIII).  The shape (Y) only differs from (W) by ``cutting off'' a vertex of the parallelogram --- one liftable tropical bitangent now has its vertex on an edge of the tropicalized quartic. Pictures of the partial dual  motifs can thus be found in Figure \ref{fig:W}. Figure \ref{fig:Y} depicts the $4$ liftable tropical bitangents in each case together with a local picture of the tropicalized quartic.

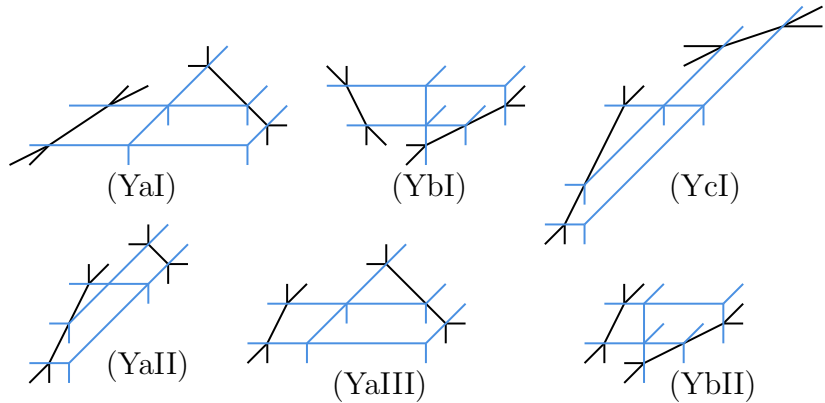
\begin{figure}
\begin{center}

\tikzset{every picture/.style={line width=0.75pt}} 

\begin{tikzpicture}[x=0.75pt,y=0.75pt,yscale=-1,xscale=1]

\draw [color={rgb, 255:red, 0; green, 0; blue, 0 }  ,draw opacity=1 ]   (190,110) -- (160,80) ;
\draw [color={rgb, 255:red, 0; green, 0; blue, 0 }  ,draw opacity=1 ]   (190,120) -- (190,110) ;
\draw [color={rgb, 255:red, 0; green, 0; blue, 0 }  ,draw opacity=1 ]   (200,110) -- (190,110) ;
\draw [color={rgb, 255:red, 0; green, 0; blue, 0 }  ,draw opacity=1 ]   (160,80) -- (150,80) ;
\draw [color={rgb, 255:red, 0; green, 0; blue, 0 }  ,draw opacity=1 ]   (160,80) -- (160,70) ;
\draw [color={rgb, 255:red, 0; green, 0; blue, 0 }  ,draw opacity=1 ]   (110,100) -- (130,90) ;
\draw [color={rgb, 255:red, 0; green, 0; blue, 0 }  ,draw opacity=1 ]   (110,100) -- (120,90) ;
\draw [color={rgb, 255:red, 0; green, 0; blue, 0 }  ,draw opacity=1 ]   (80,120) -- (110,100) ;
\draw [color={rgb, 255:red, 0; green, 0; blue, 0 }  ,draw opacity=1 ]   (70,130) -- (80,120) ;
\draw [color={rgb, 255:red, 0; green, 0; blue, 0 }  ,draw opacity=1 ]   (60,130) -- (80,120) ;
\draw [color={rgb, 255:red, 74; green, 144; blue, 226 }  ,draw opacity=1 ]   (90,100) -- (180,100) ;
\draw [color={rgb, 255:red, 74; green, 144; blue, 226 }  ,draw opacity=1 ]   (120,120) -- (170,70) ;
\draw [color={rgb, 255:red, 74; green, 144; blue, 226 }  ,draw opacity=1 ]   (140,110) -- (140,100) ;
\draw [color={rgb, 255:red, 74; green, 144; blue, 226 }  ,draw opacity=1 ]   (70,120) -- (180,120) ;
\draw [color={rgb, 255:red, 74; green, 144; blue, 226 }  ,draw opacity=1 ]   (120,130) -- (120,120) ;
\draw [color={rgb, 255:red, 74; green, 144; blue, 226 }  ,draw opacity=1 ]   (180,100) -- (190,90) ;
\draw [color={rgb, 255:red, 74; green, 144; blue, 226 }  ,draw opacity=1 ]   (180,110) -- (180,100) ;
\draw [color={rgb, 255:red, 74; green, 144; blue, 226 }  ,draw opacity=1 ]   (180,120) -- (200,100) ;
\draw [color={rgb, 255:red, 74; green, 144; blue, 226 }  ,draw opacity=1 ]   (180,130) -- (180,120) ;
\draw [color={rgb, 255:red, 0; green, 0; blue, 0 }  ,draw opacity=1 ]   (240,110) -- (230,90) ;
\draw [color={rgb, 255:red, 0; green, 0; blue, 0 }  ,draw opacity=1 ]   (230,90) -- (220,80) ;
\draw [color={rgb, 255:red, 0; green, 0; blue, 0 }  ,draw opacity=1 ]   (250,120) -- (240,110) ;
\draw [color={rgb, 255:red, 0; green, 0; blue, 0 }  ,draw opacity=1 ]   (230,90) -- (230,80) ;
\draw [color={rgb, 255:red, 0; green, 0; blue, 0 }  ,draw opacity=1 ]   (240,120) -- (240,110) ;
\draw [color={rgb, 255:red, 0; green, 0; blue, 0 }  ,draw opacity=1 ]   (270,120) -- (310,100) ;
\draw [color={rgb, 255:red, 0; green, 0; blue, 0 }  ,draw opacity=1 ]   (260,130) -- (270,120) ;
\draw [color={rgb, 255:red, 0; green, 0; blue, 0 }  ,draw opacity=1 ]   (260,120) -- (270,120) ;
\draw [color={rgb, 255:red, 0; green, 0; blue, 0 }  ,draw opacity=1 ]   (310,100) -- (320,100) ;
\draw [color={rgb, 255:red, 0; green, 0; blue, 0 }  ,draw opacity=1 ]   (310,100) -- (320,90) ;
\draw [color={rgb, 255:red, 74; green, 144; blue, 226 }  ,draw opacity=1 ]   (220,90) -- (310,90) ;
\draw [color={rgb, 255:red, 74; green, 144; blue, 226 }  ,draw opacity=1 ]   (310,110) -- (310,90) ;
\draw [color={rgb, 255:red, 74; green, 144; blue, 226 }  ,draw opacity=1 ]   (310,90) -- (320,80) ;
\draw [color={rgb, 255:red, 74; green, 144; blue, 226 }  ,draw opacity=1 ]   (270,90) -- (280,80) ;
\draw [color={rgb, 255:red, 74; green, 144; blue, 226 }  ,draw opacity=1 ]   (270,130) -- (270,90) ;
\draw [color={rgb, 255:red, 74; green, 144; blue, 226 }  ,draw opacity=1 ]   (230,110) -- (290,110) ;
\draw [color={rgb, 255:red, 74; green, 144; blue, 226 }  ,draw opacity=1 ]   (290,120) -- (290,110) ;
\draw [color={rgb, 255:red, 74; green, 144; blue, 226 }  ,draw opacity=1 ]   (270,110) -- (280,100) ;
\draw [color={rgb, 255:red, 74; green, 144; blue, 226 }  ,draw opacity=1 ]   (290,110) -- (300,100) ;
\draw [color={rgb, 255:red, 0; green, 0; blue, 0 }  ,draw opacity=1 ]   (340,160) -- (370,100) ;
\draw [color={rgb, 255:red, 0; green, 0; blue, 0 }  ,draw opacity=1 ]   (370,100) -- (380,90) ;
\draw [color={rgb, 255:red, 0; green, 0; blue, 0 }  ,draw opacity=1 ]   (330,170) -- (340,160) ;
\draw [color={rgb, 255:red, 0; green, 0; blue, 0 }  ,draw opacity=1 ]   (370,100) -- (370,90) ;
\draw [color={rgb, 255:red, 0; green, 0; blue, 0 }  ,draw opacity=1 ]   (340,170) -- (340,160) ;
\draw [color={rgb, 255:red, 0; green, 0; blue, 0 }  ,draw opacity=1 ]   (420,70) -- (450,60) ;
\draw [color={rgb, 255:red, 0; green, 0; blue, 0 }  ,draw opacity=1 ]   (400,80) -- (420,70) ;
\draw [color={rgb, 255:red, 0; green, 0; blue, 0 }  ,draw opacity=1 ]   (450,60) -- (470,50) ;
\draw [color={rgb, 255:red, 0; green, 0; blue, 0 }  ,draw opacity=1 ]   (420,70) -- (400,70) ;
\draw [color={rgb, 255:red, 0; green, 0; blue, 0 }  ,draw opacity=1 ]   (470,60) -- (450,60) ;
\draw [color={rgb, 255:red, 74; green, 144; blue, 226 }  ,draw opacity=1 ]   (350,140) -- (430,60) ;
\draw [color={rgb, 255:red, 74; green, 144; blue, 226 }  ,draw opacity=1 ]   (350,160) -- (460,50) ;
\draw [color={rgb, 255:red, 74; green, 144; blue, 226 }  ,draw opacity=1 ]   (360,100) -- (410,100) ;
\draw [color={rgb, 255:red, 74; green, 144; blue, 226 }  ,draw opacity=1 ]   (340,140) -- (350,140) ;
\draw [color={rgb, 255:red, 74; green, 144; blue, 226 }  ,draw opacity=1 ]   (350,150) -- (350,140) ;
\draw [color={rgb, 255:red, 74; green, 144; blue, 226 }  ,draw opacity=1 ]   (330,160) -- (350,160) ;
\draw [color={rgb, 255:red, 74; green, 144; blue, 226 }  ,draw opacity=1 ]   (350,170) -- (350,160) ;
\draw [color={rgb, 255:red, 74; green, 144; blue, 226 }  ,draw opacity=1 ]   (410,110) -- (410,100) ;
\draw [color={rgb, 255:red, 74; green, 144; blue, 226 }  ,draw opacity=1 ]   (390,110) -- (390,100) ;
\draw [color={rgb, 255:red, 0; green, 0; blue, 0 }  ,draw opacity=1 ]   (140,180) -- (130,170) ;
\draw [color={rgb, 255:red, 0; green, 0; blue, 0 }  ,draw opacity=1 ]   (140,190) -- (140,180) ;
\draw [color={rgb, 255:red, 0; green, 0; blue, 0 }  ,draw opacity=1 ]   (130,170) -- (130,160) ;
\draw [color={rgb, 255:red, 0; green, 0; blue, 0 }  ,draw opacity=1 ]   (150,180) -- (140,180) ;
\draw [color={rgb, 255:red, 0; green, 0; blue, 0 }  ,draw opacity=1 ]   (130,170) -- (120,170) ;
\draw [color={rgb, 255:red, 0; green, 0; blue, 0 }  ,draw opacity=1 ]   (110,180) -- (100,190) ;
\draw [color={rgb, 255:red, 0; green, 0; blue, 0 }  ,draw opacity=1 ]   (80,230) -- (70,240) ;
\draw [color={rgb, 255:red, 0; green, 0; blue, 0 }  ,draw opacity=1 ]   (80,230) -- (80,240) ;
\draw [color={rgb, 255:red, 0; green, 0; blue, 0 }  ,draw opacity=1 ]   (100,180) -- (100,190) ;
\draw [color={rgb, 255:red, 0; green, 0; blue, 0 }  ,draw opacity=1 ]   (100,190) -- (80,230) ;
\draw [color={rgb, 255:red, 74; green, 144; blue, 226 }  ,draw opacity=1 ]   (90,210) -- (140,160) ;
\draw [color={rgb, 255:red, 74; green, 144; blue, 226 }  ,draw opacity=1 ]   (90,230) -- (150,170) ;
\draw [color={rgb, 255:red, 74; green, 144; blue, 226 }  ,draw opacity=1 ]   (90,190) -- (130,190) ;
\draw [color={rgb, 255:red, 74; green, 144; blue, 226 }  ,draw opacity=1 ]   (70,230) -- (90,230) ;
\draw [color={rgb, 255:red, 74; green, 144; blue, 226 }  ,draw opacity=1 ]   (90,220) -- (90,210) ;
\draw [color={rgb, 255:red, 74; green, 144; blue, 226 }  ,draw opacity=1 ]   (130,200) -- (130,190) ;
\draw [color={rgb, 255:red, 74; green, 144; blue, 226 }  ,draw opacity=1 ]   (90,240) -- (90,230) ;
\draw [color={rgb, 255:red, 74; green, 144; blue, 226 }  ,draw opacity=1 ]   (80,210) -- (90,210) ;
\draw [color={rgb, 255:red, 0; green, 0; blue, 0 }  ,draw opacity=1 ]   (280,210) -- (250,180) ;
\draw [color={rgb, 255:red, 0; green, 0; blue, 0 }  ,draw opacity=1 ]   (250,180) -- (250,170) ;
\draw [color={rgb, 255:red, 0; green, 0; blue, 0 }  ,draw opacity=1 ]   (280,220) -- (280,210) ;
\draw [color={rgb, 255:red, 0; green, 0; blue, 0 }  ,draw opacity=1 ]   (250,180) -- (240,180) ;
\draw [color={rgb, 255:red, 0; green, 0; blue, 0 }  ,draw opacity=1 ]   (290,210) -- (280,210) ;
\draw [color={rgb, 255:red, 0; green, 0; blue, 0 }  ,draw opacity=1 ]   (200,200) -- (190,220) ;
\draw [color={rgb, 255:red, 0; green, 0; blue, 0 }  ,draw opacity=1 ]   (200,190) -- (200,200) ;
\draw [color={rgb, 255:red, 0; green, 0; blue, 0 }  ,draw opacity=1 ]   (190,220) -- (190,230) ;
\draw [color={rgb, 255:red, 0; green, 0; blue, 0 }  ,draw opacity=1 ]   (210,190) -- (200,200) ;
\draw [color={rgb, 255:red, 0; green, 0; blue, 0 }  ,draw opacity=1 ]   (190,220) -- (180,230) ;
\draw [color={rgb, 255:red, 74; green, 144; blue, 226 }  ,draw opacity=1 ]   (190,200) -- (270,200) ;
\draw [color={rgb, 255:red, 74; green, 144; blue, 226 }  ,draw opacity=1 ]   (210,220) -- (260,170) ;
\draw [color={rgb, 255:red, 74; green, 144; blue, 226 }  ,draw opacity=1 ]   (270,220) -- (290,200) ;
\draw [color={rgb, 255:red, 74; green, 144; blue, 226 }  ,draw opacity=1 ]   (180,220) -- (270,220) ;
\draw [color={rgb, 255:red, 74; green, 144; blue, 226 }  ,draw opacity=1 ]   (270,200) -- (280,190) ;
\draw [color={rgb, 255:red, 74; green, 144; blue, 226 }  ,draw opacity=1 ]   (210,230) -- (210,220) ;
\draw [color={rgb, 255:red, 74; green, 144; blue, 226 }  ,draw opacity=1 ]   (230,210) -- (230,200) ;
\draw [color={rgb, 255:red, 74; green, 144; blue, 226 }  ,draw opacity=1 ]   (270,230) -- (270,220) ;
\draw [color={rgb, 255:red, 74; green, 144; blue, 226 }  ,draw opacity=1 ]   (270,210) -- (270,200) ;
\draw [color={rgb, 255:red, 0; green, 0; blue, 0 }  ,draw opacity=1 ]   (360,220) -- (370,200) ;
\draw [color={rgb, 255:red, 0; green, 0; blue, 0 }  ,draw opacity=1 ]   (370,200) -- (370,190) ;
\draw [color={rgb, 255:red, 0; green, 0; blue, 0 }  ,draw opacity=1 ]   (370,200) -- (380,190) ;
\draw [color={rgb, 255:red, 0; green, 0; blue, 0 }  ,draw opacity=1 ]   (350,230) -- (360,220) ;
\draw [color={rgb, 255:red, 0; green, 0; blue, 0 }  ,draw opacity=1 ]   (360,230) -- (360,220) ;
\draw [color={rgb, 255:red, 0; green, 0; blue, 0 }  ,draw opacity=1 ]   (380,230) -- (420,210) ;
\draw [color={rgb, 255:red, 0; green, 0; blue, 0 }  ,draw opacity=1 ]   (420,210) -- (430,200) ;
\draw [color={rgb, 255:red, 0; green, 0; blue, 0 }  ,draw opacity=1 ]   (420,210) -- (430,210) ;
\draw [color={rgb, 255:red, 0; green, 0; blue, 0 }  ,draw opacity=1 ]   (370,230) -- (380,230) ;
\draw [color={rgb, 255:red, 0; green, 0; blue, 0 }  ,draw opacity=1 ]   (370,240) -- (380,230) ;
\draw [color={rgb, 255:red, 74; green, 144; blue, 226 }  ,draw opacity=1 ]   (360,200) -- (420,200) ;
\draw [color={rgb, 255:red, 74; green, 144; blue, 226 }  ,draw opacity=1 ]   (420,200) -- (430,190) ;
\draw [color={rgb, 255:red, 74; green, 144; blue, 226 }  ,draw opacity=1 ]   (420,220) -- (420,200) ;
\draw [color={rgb, 255:red, 74; green, 144; blue, 226 }  ,draw opacity=1 ]   (380,240) -- (380,200) ;
\draw [color={rgb, 255:red, 74; green, 144; blue, 226 }  ,draw opacity=1 ]   (380,200) -- (390,190) ;
\draw [color={rgb, 255:red, 74; green, 144; blue, 226 }  ,draw opacity=1 ]   (400,220) -- (410,210) ;
\draw [color={rgb, 255:red, 74; green, 144; blue, 226 }  ,draw opacity=1 ]   (380,220) -- (390,210) ;
\draw [color={rgb, 255:red, 74; green, 144; blue, 226 }  ,draw opacity=1 ]   (400,230) -- (400,220) ;
\draw [color={rgb, 255:red, 74; green, 144; blue, 226 }  ,draw opacity=1 ]   (350,220) -- (400,220) ;

\draw (107,132) node [anchor=north west][inner sep=0.75pt]   [align=left] {(YaI)};
\draw (251,132) node [anchor=north west][inner sep=0.75pt]   [align=left] {(YbI)};
\draw (391,132) node [anchor=north west][inner sep=0.75pt]   [align=left] {(YcI)};
\draw (107,222) node [anchor=north west][inner sep=0.75pt]   [align=left] {(YaII)};
\draw (221,232) node [anchor=north west][inner sep=0.75pt]   [align=left] {(YaIII)};
\draw (391,232) node [anchor=north west][inner sep=0.75pt]   [align=left] {(YbII)};

\end{tikzpicture}

\end{center}

\caption{Local pictures of liftable tropical bitangent lines for (YaI), (YbI), (YcI), (YaII), (YaIII), (YbII).}\label{fig:Y}

\end{figure}

Shape (BB) differs from shape (W) by cutting off two vertices of the parallelogram. The dual motif for (BBa) equals the one for (WaIII), the one for (BBb) equals (WbIII). Again, we refer to Figure \ref{fig:W} for these partial dual motifs. The $4$ liftable tropical bitangents together with local pictures of the tropicalized quartic are found in Figure \ref{fig:BB}.

\begin{figure}
\begin{center}

\tikzset{every picture/.style={line width=0.75pt}} 

\begin{tikzpicture}[x=0.75pt,y=0.75pt,yscale=-1,xscale=1]

\draw [color={rgb, 255:red, 0; green, 0; blue, 0 }  ,draw opacity=1 ]   (290,90) -- (320,30) ;
\draw [color={rgb, 255:red, 0; green, 0; blue, 0 }  ,draw opacity=1 ]   (320,30) -- (330,20) ;
\draw [color={rgb, 255:red, 0; green, 0; blue, 0 }  ,draw opacity=1 ]   (280,100) -- (290,90) ;
\draw [color={rgb, 255:red, 0; green, 0; blue, 0 }  ,draw opacity=1 ]   (320,30) -- (320,20) ;
\draw [color={rgb, 255:red, 0; green, 0; blue, 0 }  ,draw opacity=1 ]   (290,100) -- (290,90) ;
\draw [color={rgb, 255:red, 0; green, 0; blue, 0 }  ,draw opacity=1 ]   (310,100) -- (350,80) ;
\draw [color={rgb, 255:red, 0; green, 0; blue, 0 }  ,draw opacity=1 ]   (300,110) -- (310,100) ;
\draw [color={rgb, 255:red, 0; green, 0; blue, 0 }  ,draw opacity=1 ]   (300,100) -- (310,100) ;
\draw [color={rgb, 255:red, 0; green, 0; blue, 0 }  ,draw opacity=1 ]   (350,80) -- (360,80) ;
\draw [color={rgb, 255:red, 0; green, 0; blue, 0 }  ,draw opacity=1 ]   (350,80) -- (360,70) ;
\draw [color={rgb, 255:red, 74; green, 144; blue, 226 }  ,draw opacity=1 ]   (310,30) -- (350,30) ;
\draw [color={rgb, 255:red, 74; green, 144; blue, 226 }  ,draw opacity=1 ]   (350,90) -- (350,30) ;
\draw [color={rgb, 255:red, 74; green, 144; blue, 226 }  ,draw opacity=1 ]   (330,90) -- (340,80) ;
\draw [color={rgb, 255:red, 74; green, 144; blue, 226 }  ,draw opacity=1 ]   (310,50) -- (320,40) ;
\draw [color={rgb, 255:red, 74; green, 144; blue, 226 }  ,draw opacity=1 ]   (310,110) -- (310,50) ;
\draw [color={rgb, 255:red, 74; green, 144; blue, 226 }  ,draw opacity=1 ]   (280,90) -- (330,90) ;
\draw [color={rgb, 255:red, 74; green, 144; blue, 226 }  ,draw opacity=1 ]   (330,100) -- (330,90) ;
\draw [color={rgb, 255:red, 74; green, 144; blue, 226 }  ,draw opacity=1 ]   (310,90) -- (320,80) ;
\draw [color={rgb, 255:red, 74; green, 144; blue, 226 }  ,draw opacity=1 ]   (350,30) -- (360,20) ;
\draw [color={rgb, 255:red, 0; green, 0; blue, 0 }  ,draw opacity=1 ]   (230,50) -- (215.45,35.45) -- (210,30) ;
\draw [color={rgb, 255:red, 0; green, 0; blue, 0 }  ,draw opacity=1 ]   (230,60) -- (230,50) ;
\draw [color={rgb, 255:red, 0; green, 0; blue, 0 }  ,draw opacity=1 ]   (210,30) -- (210,20) ;
\draw [color={rgb, 255:red, 0; green, 0; blue, 0 }  ,draw opacity=1 ]   (240,50) -- (230,50) ;
\draw [color={rgb, 255:red, 0; green, 0; blue, 0 }  ,draw opacity=1 ]   (210,30) -- (200,30) ;
\draw [color={rgb, 255:red, 0; green, 0; blue, 0 }  ,draw opacity=1 ]   (190,30) -- (180,40) ;
\draw [color={rgb, 255:red, 0; green, 0; blue, 0 }  ,draw opacity=1 ]   (150,100) -- (140,110) ;
\draw [color={rgb, 255:red, 0; green, 0; blue, 0 }  ,draw opacity=1 ]   (150,100) -- (150,110) ;
\draw [color={rgb, 255:red, 0; green, 0; blue, 0 }  ,draw opacity=1 ]   (180,30) -- (180,40) ;
\draw [color={rgb, 255:red, 0; green, 0; blue, 0 }  ,draw opacity=1 ]   (180,40) -- (150,100) ;
\draw [color={rgb, 255:red, 74; green, 144; blue, 226 }  ,draw opacity=1 ]   (160,80) -- (220,20) ;
\draw [color={rgb, 255:red, 74; green, 144; blue, 226 }  ,draw opacity=1 ]   (180,100) -- (240,40) ;
\draw [color={rgb, 255:red, 74; green, 144; blue, 226 }  ,draw opacity=1 ]   (170,40) -- (220,40) ;
\draw [color={rgb, 255:red, 74; green, 144; blue, 226 }  ,draw opacity=1 ]   (140,100) -- (180,100) ;
\draw [color={rgb, 255:red, 74; green, 144; blue, 226 }  ,draw opacity=1 ]   (160,90) -- (160,80) ;
\draw [color={rgb, 255:red, 74; green, 144; blue, 226 }  ,draw opacity=1 ]   (220,50) -- (220,40) ;
\draw [color={rgb, 255:red, 74; green, 144; blue, 226 }  ,draw opacity=1 ]   (180,110) -- (180,100) ;
\draw [color={rgb, 255:red, 74; green, 144; blue, 226 }  ,draw opacity=1 ]   (150,80) -- (160,80) ;
\draw [color={rgb, 255:red, 74; green, 144; blue, 226 }  ,draw opacity=1 ]   (300,50) -- (310,50) ;
\draw [color={rgb, 255:red, 74; green, 144; blue, 226 }  ,draw opacity=1 ]   (200,50) -- (200,40) ;
\draw [color={rgb, 255:red, 74; green, 144; blue, 226 }  ,draw opacity=1 ]   (220,40) -- (230,30) ;

\draw (370,52) node [anchor=north west][inner sep=0.75pt]   [align=left] {(BBb)};
\draw (111,42) node [anchor=north west][inner sep=0.75pt]   [align=left] {(BBa)};

\end{tikzpicture}

\end{center}

\caption{Local pictures of liftable tropical  bitangent lines for (BBa) and (BBb).}\label{fig:BB}

\end{figure}
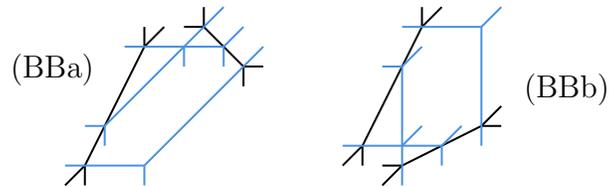

In shape (CC), we also cut off two vertices, but we cut one of those off even further. The dual motif for (CCa) equals (WaIII), the one for (CCb) (WbIII), see Figure \ref{fig:W}. The corresponding local pictures of the tangencies are found in Figure \ref{fig:CC}. (CCaI) and (CCaII) differ by which of the two vertices is cut off further. For (CCb), this is symmetric.

\begin{figure}
\begin{center}

\tikzset{every picture/.style={line width=0.75pt}} 

\begin{tikzpicture}[x=0.75pt,y=0.75pt,yscale=-1,xscale=1]

\draw [color={rgb, 255:red, 0; green, 0; blue, 0 }  ,draw opacity=1 ]   (240.29,139) -- (300.29,19) ;
\draw [color={rgb, 255:red, 0; green, 0; blue, 0 }  ,draw opacity=1 ]   (300,20) -- (310,10) ;
\draw [color={rgb, 255:red, 0; green, 0; blue, 0 }  ,draw opacity=1 ]   (230.29,149) -- (240.29,139) ;
\draw [color={rgb, 255:red, 0; green, 0; blue, 0 }  ,draw opacity=1 ]   (300,20) -- (300,10) ;
\draw [color={rgb, 255:red, 0; green, 0; blue, 0 }  ,draw opacity=1 ]   (240.29,149) -- (240.29,139) ;
\draw [color={rgb, 255:red, 0; green, 0; blue, 0 }  ,draw opacity=1 ]   (250.29,149) -- (290.29,129) ;
\draw [color={rgb, 255:red, 0; green, 0; blue, 0 }  ,draw opacity=1 ]   (240.29,159) -- (250.29,149) ;
\draw [color={rgb, 255:red, 0; green, 0; blue, 0 }  ,draw opacity=1 ]   (240.29,149) -- (250.29,149) ;
\draw [color={rgb, 255:red, 0; green, 0; blue, 0 }  ,draw opacity=1 ]   (290.29,129) -- (300.29,129) ;
\draw [color={rgb, 255:red, 0; green, 0; blue, 0 }  ,draw opacity=1 ]   (290.29,129) -- (300.29,119) ;
\draw [color={rgb, 255:red, 74; green, 144; blue, 226 }  ,draw opacity=1 ]   (280.29,39) -- (290.29,39) ;
\draw [color={rgb, 255:red, 74; green, 144; blue, 226 }  ,draw opacity=1 ]   (290.29,139) -- (290.29,39) ;
\draw [color={rgb, 255:red, 74; green, 144; blue, 226 }  ,draw opacity=1 ]   (270.29,139) -- (280.29,129) ;
\draw [color={rgb, 255:red, 74; green, 144; blue, 226 }  ,draw opacity=1 ]   (250.29,119) -- (260.29,109) ;
\draw [color={rgb, 255:red, 74; green, 144; blue, 226 }  ,draw opacity=1 ]   (250.29,159) -- (250.29,119) ;
\draw [color={rgb, 255:red, 74; green, 144; blue, 226 }  ,draw opacity=1 ]   (230.29,139) -- (270.29,139) ;
\draw [color={rgb, 255:red, 74; green, 144; blue, 226 }  ,draw opacity=1 ]   (270.29,149) -- (270.29,139) ;
\draw [color={rgb, 255:red, 74; green, 144; blue, 226 }  ,draw opacity=1 ]   (250.29,139) -- (260.29,129) ;
\draw [color={rgb, 255:red, 74; green, 144; blue, 226 }  ,draw opacity=1 ]   (290.29,39) -- (300.29,29) ;
\draw [color={rgb, 255:red, 0; green, 0; blue, 0 }  ,draw opacity=1 ]   (220,50) -- (205.45,35.45) -- (200,30) ;
\draw [color={rgb, 255:red, 0; green, 0; blue, 0 }  ,draw opacity=1 ]   (220,60) -- (220,50) ;
\draw [color={rgb, 255:red, 0; green, 0; blue, 0 }  ,draw opacity=1 ]   (200,30) -- (200,20) ;
\draw [color={rgb, 255:red, 0; green, 0; blue, 0 }  ,draw opacity=1 ]   (230,50) -- (220,50) ;
\draw [color={rgb, 255:red, 0; green, 0; blue, 0 }  ,draw opacity=1 ]   (200,30) -- (190,30) ;
\draw [color={rgb, 255:red, 0; green, 0; blue, 0 }  ,draw opacity=1 ]   (190,30) -- (180,40) ;
\draw [color={rgb, 255:red, 0; green, 0; blue, 0 }  ,draw opacity=1 ]   (120,160) -- (110,170) ;
\draw [color={rgb, 255:red, 0; green, 0; blue, 0 }  ,draw opacity=1 ]   (120,160) -- (120,170) ;
\draw [color={rgb, 255:red, 0; green, 0; blue, 0 }  ,draw opacity=1 ]   (180,30) -- (180,40) ;
\draw [color={rgb, 255:red, 0; green, 0; blue, 0 }  ,draw opacity=1 ]   (180,40) -- (120,160) ;
\draw [color={rgb, 255:red, 74; green, 144; blue, 226 }  ,draw opacity=1 ]   (170,60) -- (210,20) ;
\draw [color={rgb, 255:red, 74; green, 144; blue, 226 }  ,draw opacity=1 ]   (120,150) -- (230,40) ;
\draw [color={rgb, 255:red, 74; green, 144; blue, 226 }  ,draw opacity=1 ]   (170,40) -- (210,40) ;
\draw [color={rgb, 255:red, 74; green, 144; blue, 226 }  ,draw opacity=1 ]   (120,140) -- (130,140) ;
\draw [color={rgb, 255:red, 74; green, 144; blue, 226 }  ,draw opacity=1 ]   (170,70) -- (170,60) ;
\draw [color={rgb, 255:red, 74; green, 144; blue, 226 }  ,draw opacity=1 ]   (210,50) -- (210,40) ;
\draw [color={rgb, 255:red, 74; green, 144; blue, 226 }  ,draw opacity=1 ]   (130,150) -- (130,140) ;
\draw [color={rgb, 255:red, 74; green, 144; blue, 226 }  ,draw opacity=1 ]   (160,60) -- (170,60) ;
\draw [color={rgb, 255:red, 0; green, 0; blue, 0 }  ,draw opacity=1 ]   (480,110) -- (410,40) ;
\draw [color={rgb, 255:red, 0; green, 0; blue, 0 }  ,draw opacity=1 ]   (410,40) -- (410,30) ;
\draw [color={rgb, 255:red, 0; green, 0; blue, 0 }  ,draw opacity=1 ]   (480,120) -- (480,110) ;
\draw [color={rgb, 255:red, 0; green, 0; blue, 0 }  ,draw opacity=1 ]   (410,40) -- (400,40) ;
\draw [color={rgb, 255:red, 0; green, 0; blue, 0 }  ,draw opacity=1 ]   (490,110) -- (480,110) ;
\draw [color={rgb, 255:red, 0; green, 0; blue, 0 }  ,draw opacity=1 ]   (380,60) -- (360,100) ;
\draw [color={rgb, 255:red, 0; green, 0; blue, 0 }  ,draw opacity=1 ]   (380,50) -- (380,60) ;
\draw [color={rgb, 255:red, 0; green, 0; blue, 0 }  ,draw opacity=1 ]   (360,100) -- (360,110) ;
\draw [color={rgb, 255:red, 0; green, 0; blue, 0 }  ,draw opacity=1 ]   (390,50) -- (380,60) ;
\draw [color={rgb, 255:red, 0; green, 0; blue, 0 }  ,draw opacity=1 ]   (360,100) -- (350,110) ;
\draw [color={rgb, 255:red, 74; green, 144; blue, 226 }  ,draw opacity=1 ]   (370,60) -- (430,60) ;
\draw [color={rgb, 255:red, 74; green, 144; blue, 226 }  ,draw opacity=1 ]   (370,80) -- (420,30) ;
\draw [color={rgb, 255:red, 74; green, 144; blue, 226 }  ,draw opacity=1 ]   (470,100) -- (480,90) ;
\draw [color={rgb, 255:red, 74; green, 144; blue, 226 }  ,draw opacity=1 ]   (350,100) -- (470,100) ;
\draw [color={rgb, 255:red, 74; green, 144; blue, 226 }  ,draw opacity=1 ]   (430,60) -- (440,50) ;
\draw [color={rgb, 255:red, 74; green, 144; blue, 226 }  ,draw opacity=1 ]   (370,90) -- (370,80) ;
\draw [color={rgb, 255:red, 74; green, 144; blue, 226 }  ,draw opacity=1 ]   (390,70) -- (390,60) ;
\draw [color={rgb, 255:red, 74; green, 144; blue, 226 }  ,draw opacity=1 ]   (470,110) -- (470,100) ;
\draw [color={rgb, 255:red, 74; green, 144; blue, 226 }  ,draw opacity=1 ]   (430,70) -- (430,60) ;
\draw [color={rgb, 255:red, 74; green, 144; blue, 226 }  ,draw opacity=1 ]   (240.29,119) -- (250.29,119) ;
\draw [color={rgb, 255:red, 74; green, 144; blue, 226 }  ,draw opacity=1 ]   (190,50) -- (190,40) ;
\draw [color={rgb, 255:red, 74; green, 144; blue, 226 }  ,draw opacity=1 ]   (360,80) -- (370,80) ;
\draw [color={rgb, 255:red, 74; green, 144; blue, 226 }  ,draw opacity=1 ]   (210,40) -- (220,30) ;

\draw (298,62) node [anchor=north west][inner sep=0.75pt]   [align=left] {(CCb)};
\draw (164,132) node [anchor=north west][inner sep=0.75pt]   [align=left] {(CCaI)};
\draw (381,112) node [anchor=north west][inner sep=0.75pt]   [align=left] {(CCaII)};

\end{tikzpicture}

\end{center}

\caption{Local pictures of liftable tropical bitangent lines for (CCaI), (CCaII), (CCb).}\label{fig:CC}

\end{figure}
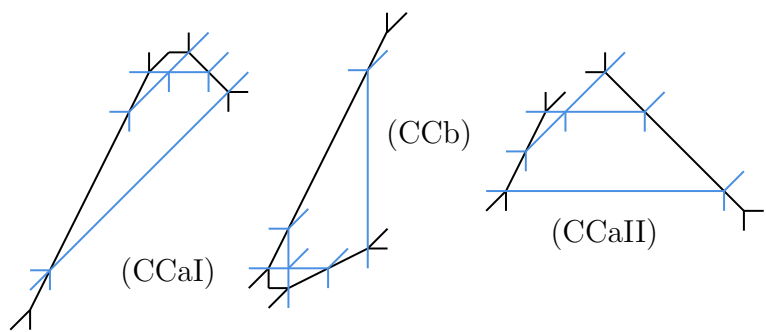

In shape (EE), we only cut off one vertex, but we cut this one off even further again. The dual motif for (EEaI) and (EEaII) equals the one for (WaIII) (those two cases only differ by the choice which vertex of the parallelogram we cut off), the dual motif for (EEaIII) equals the one for  (WaII), (EEbI) equals (WbIII), (EEbII) equals (WbII) and (EEcI) equals (WcII), see Figure \ref{fig:W}. The corresponding local pictures of the tangencies appear in Figure \ref{fig:EE}.

\begin{figure}
\begin{center}

\tikzset{every picture/.style={line width=0.75pt}} 

\begin{tikzpicture}[x=0.75pt,y=0.75pt,yscale=-1,xscale=1]

\draw [color={rgb, 255:red, 0; green, 0; blue, 0 }  ,draw opacity=1 ]   (210,100) -- (170,60) ;
\draw [color={rgb, 255:red, 0; green, 0; blue, 0 }  ,draw opacity=1 ]   (210,110) -- (210,100) ;
\draw [color={rgb, 255:red, 0; green, 0; blue, 0 }  ,draw opacity=1 ]   (220,100) -- (210,100) ;
\draw [color={rgb, 255:red, 0; green, 0; blue, 0 }  ,draw opacity=1 ]   (170,60) -- (160,60) ;
\draw [color={rgb, 255:red, 0; green, 0; blue, 0 }  ,draw opacity=1 ]   (170,60) -- (170,50) ;
\draw [color={rgb, 255:red, 0; green, 0; blue, 0 }  ,draw opacity=1 ]   (110,70) -- (130,60) ;
\draw [color={rgb, 255:red, 0; green, 0; blue, 0 }  ,draw opacity=1 ]   (110,70) -- (120,60) ;
\draw [color={rgb, 255:red, 0; green, 0; blue, 0 }  ,draw opacity=1 ]   (80,90) -- (110,70) ;
\draw [color={rgb, 255:red, 0; green, 0; blue, 0 }  ,draw opacity=1 ]   (70,100) -- (80,90) ;
\draw [color={rgb, 255:red, 0; green, 0; blue, 0 }  ,draw opacity=1 ]   (60,100) -- (80,90) ;
\draw [color={rgb, 255:red, 74; green, 144; blue, 226 }  ,draw opacity=1 ]   (90,70) -- (180,70) ;
\draw [color={rgb, 255:red, 74; green, 144; blue, 226 }  ,draw opacity=1 ]   (140,90) -- (180,50) ;
\draw [color={rgb, 255:red, 74; green, 144; blue, 226 }  ,draw opacity=1 ]   (160,80) -- (160,70) ;
\draw [color={rgb, 255:red, 74; green, 144; blue, 226 }  ,draw opacity=1 ]   (70,90) -- (200,90) ;
\draw [color={rgb, 255:red, 74; green, 144; blue, 226 }  ,draw opacity=1 ]   (140,100) -- (140,90) ;
\draw [color={rgb, 255:red, 74; green, 144; blue, 226 }  ,draw opacity=1 ]   (180,70) -- (190,60) ;
\draw [color={rgb, 255:red, 74; green, 144; blue, 226 }  ,draw opacity=1 ]   (180,80) -- (180,70) ;
\draw [color={rgb, 255:red, 74; green, 144; blue, 226 }  ,draw opacity=1 ]   (200,90) -- (210,80) ;
\draw [color={rgb, 255:red, 74; green, 144; blue, 226 }  ,draw opacity=1 ]   (200,100) -- (200,90) ;
\draw [color={rgb, 255:red, 0; green, 0; blue, 0 }  ,draw opacity=1 ]   (270,130) -- (310,50) ;
\draw [color={rgb, 255:red, 0; green, 0; blue, 0 }  ,draw opacity=1 ]   (310,50) -- (320,40) ;
\draw [color={rgb, 255:red, 0; green, 0; blue, 0 }  ,draw opacity=1 ]   (260,140) -- (270,130) ;
\draw [color={rgb, 255:red, 0; green, 0; blue, 0 }  ,draw opacity=1 ]   (310,50) -- (310,40) ;
\draw [color={rgb, 255:red, 0; green, 0; blue, 0 }  ,draw opacity=1 ]   (270,140) -- (270,130) ;
\draw [color={rgb, 255:red, 0; green, 0; blue, 0 }  ,draw opacity=1 ]   (280,150) -- (300,140) ;
\draw [color={rgb, 255:red, 0; green, 0; blue, 0 }  ,draw opacity=1 ]   (270,160) -- (280,150) ;
\draw [color={rgb, 255:red, 0; green, 0; blue, 0 }  ,draw opacity=1 ]   (270,150) -- (280,150) ;
\draw [color={rgb, 255:red, 0; green, 0; blue, 0 }  ,draw opacity=1 ]   (300,140) -- (310,140) ;
\draw [color={rgb, 255:red, 0; green, 0; blue, 0 }  ,draw opacity=1 ]   (300,140) -- (310,130) ;
\draw [color={rgb, 255:red, 74; green, 144; blue, 226 }  ,draw opacity=1 ]   (290,70) -- (300,70) ;
\draw [color={rgb, 255:red, 74; green, 144; blue, 226 }  ,draw opacity=1 ]   (300,140) -- (300,70) ;
\draw [color={rgb, 255:red, 74; green, 144; blue, 226 }  ,draw opacity=1 ]   (300,130) -- (310,120) ;
\draw [color={rgb, 255:red, 74; green, 144; blue, 226 }  ,draw opacity=1 ]   (280,110) -- (290,100) ;
\draw [color={rgb, 255:red, 74; green, 144; blue, 226 }  ,draw opacity=1 ]   (280,160) -- (280,110) ;
\draw [color={rgb, 255:red, 74; green, 144; blue, 226 }  ,draw opacity=1 ]   (260,130) -- (300,130) ;
\draw [color={rgb, 255:red, 74; green, 144; blue, 226 }  ,draw opacity=1 ]   (300,150) -- (300,140) ;
\draw [color={rgb, 255:red, 74; green, 144; blue, 226 }  ,draw opacity=1 ]   (280,130) -- (290,120) ;
\draw [color={rgb, 255:red, 74; green, 144; blue, 226 }  ,draw opacity=1 ]   (300,70) -- (310,60) ;
\draw [color={rgb, 255:red, 0; green, 0; blue, 0 }  ,draw opacity=1 ]   (330,180) -- (370,100) ;
\draw [color={rgb, 255:red, 0; green, 0; blue, 0 }  ,draw opacity=1 ]   (370,100) -- (380,90) ;
\draw [color={rgb, 255:red, 0; green, 0; blue, 0 }  ,draw opacity=1 ]   (320,190) -- (323.7,186.3) -- (330,180) ;
\draw [color={rgb, 255:red, 0; green, 0; blue, 0 }  ,draw opacity=1 ]   (370,100) -- (370,90) ;
\draw [color={rgb, 255:red, 0; green, 0; blue, 0 }  ,draw opacity=1 ]   (330,190) -- (330,180) ;
\draw [color={rgb, 255:red, 0; green, 0; blue, 0 }  ,draw opacity=1 ]   (410,70) -- (440,60) ;
\draw [color={rgb, 255:red, 0; green, 0; blue, 0 }  ,draw opacity=1 ]   (390,80) -- (410,70) ;
\draw [color={rgb, 255:red, 0; green, 0; blue, 0 }  ,draw opacity=1 ]   (440,60) -- (460,50) ;
\draw [color={rgb, 255:red, 0; green, 0; blue, 0 }  ,draw opacity=1 ]   (410,70) -- (390,70) ;
\draw [color={rgb, 255:red, 0; green, 0; blue, 0 }  ,draw opacity=1 ]   (460,60) -- (440,60) ;
\draw [color={rgb, 255:red, 74; green, 144; blue, 226 }  ,draw opacity=1 ]   (350,140) -- (430,60) ;
\draw [color={rgb, 255:red, 74; green, 144; blue, 226 }  ,draw opacity=1 ]   (340,160) -- (450,50) ;
\draw [color={rgb, 255:red, 74; green, 144; blue, 226 }  ,draw opacity=1 ]   (360,100) -- (400,100) ;
\draw [color={rgb, 255:red, 74; green, 144; blue, 226 }  ,draw opacity=1 ]   (340,140) -- (350,140) ;
\draw [color={rgb, 255:red, 74; green, 144; blue, 226 }  ,draw opacity=1 ]   (350,150) -- (350,140) ;
\draw [color={rgb, 255:red, 74; green, 144; blue, 226 }  ,draw opacity=1 ]   (330,160) -- (340,160) ;
\draw [color={rgb, 255:red, 74; green, 144; blue, 226 }  ,draw opacity=1 ]   (340,170) -- (340,160) ;
\draw [color={rgb, 255:red, 74; green, 144; blue, 226 }  ,draw opacity=1 ]   (400,110) -- (400,100) ;
\draw [color={rgb, 255:red, 74; green, 144; blue, 226 }  ,draw opacity=1 ]   (390,110) -- (390,100) ;
\draw [color={rgb, 255:red, 0; green, 0; blue, 0 }  ,draw opacity=1 ]   (160,150) -- (150,140) ;
\draw [color={rgb, 255:red, 0; green, 0; blue, 0 }  ,draw opacity=1 ]   (160,160) -- (160,150) ;
\draw [color={rgb, 255:red, 0; green, 0; blue, 0 }  ,draw opacity=1 ]   (150,140) -- (150,130) ;
\draw [color={rgb, 255:red, 0; green, 0; blue, 0 }  ,draw opacity=1 ]   (170,150) -- (160,150) ;
\draw [color={rgb, 255:red, 0; green, 0; blue, 0 }  ,draw opacity=1 ]   (150,140) -- (140,140) ;
\draw [color={rgb, 255:red, 0; green, 0; blue, 0 }  ,draw opacity=1 ]   (130,150) -- (120,160) ;
\draw [color={rgb, 255:red, 0; green, 0; blue, 0 }  ,draw opacity=1 ]   (80,240) -- (70,250) ;
\draw [color={rgb, 255:red, 0; green, 0; blue, 0 }  ,draw opacity=1 ]   (80,240) -- (80,250) ;
\draw [color={rgb, 255:red, 0; green, 0; blue, 0 }  ,draw opacity=1 ]   (120,150) -- (120,160) ;
\draw [color={rgb, 255:red, 0; green, 0; blue, 0 }  ,draw opacity=1 ]   (120,160) -- (80,240) ;
\draw [color={rgb, 255:red, 74; green, 144; blue, 226 }  ,draw opacity=1 ]   (110,180) -- (160,130) ;
\draw [color={rgb, 255:red, 74; green, 144; blue, 226 }  ,draw opacity=1 ]   (90,220) -- (170,140) ;
\draw [color={rgb, 255:red, 74; green, 144; blue, 226 }  ,draw opacity=1 ]   (110,160) -- (150,160) ;
\draw [color={rgb, 255:red, 74; green, 144; blue, 226 }  ,draw opacity=1 ]   (80,220) -- (90,220) ;
\draw [color={rgb, 255:red, 74; green, 144; blue, 226 }  ,draw opacity=1 ]   (110,190) -- (110,180) ;
\draw [color={rgb, 255:red, 74; green, 144; blue, 226 }  ,draw opacity=1 ]   (150,170) -- (150,160) ;
\draw [color={rgb, 255:red, 74; green, 144; blue, 226 }  ,draw opacity=1 ]   (90,230) -- (90,220) ;
\draw [color={rgb, 255:red, 74; green, 144; blue, 226 }  ,draw opacity=1 ]   (100,180) -- (110,180) ;
\draw [color={rgb, 255:red, 0; green, 0; blue, 0 }  ,draw opacity=1 ]   (300,230) -- (250,180) ;
\draw [color={rgb, 255:red, 0; green, 0; blue, 0 }  ,draw opacity=1 ]   (250,180) -- (250,170) ;
\draw [color={rgb, 255:red, 0; green, 0; blue, 0 }  ,draw opacity=1 ]   (300,240) -- (300,230) ;
\draw [color={rgb, 255:red, 0; green, 0; blue, 0 }  ,draw opacity=1 ]   (250,180) -- (240,180) ;
\draw [color={rgb, 255:red, 0; green, 0; blue, 0 }  ,draw opacity=1 ]   (310,230) -- (300,230) ;
\draw [color={rgb, 255:red, 0; green, 0; blue, 0 }  ,draw opacity=1 ]   (200,200) -- (190,220) ;
\draw [color={rgb, 255:red, 0; green, 0; blue, 0 }  ,draw opacity=1 ]   (200,190) -- (200,200) ;
\draw [color={rgb, 255:red, 0; green, 0; blue, 0 }  ,draw opacity=1 ]   (190,220) -- (190,230) ;
\draw [color={rgb, 255:red, 0; green, 0; blue, 0 }  ,draw opacity=1 ]   (210,190) -- (200,200) ;
\draw [color={rgb, 255:red, 0; green, 0; blue, 0 }  ,draw opacity=1 ]   (190,220) -- (180,230) ;
\draw [color={rgb, 255:red, 74; green, 144; blue, 226 }  ,draw opacity=1 ]   (190,200) -- (270,200) ;
\draw [color={rgb, 255:red, 74; green, 144; blue, 226 }  ,draw opacity=1 ]   (210,220) -- (260,170) ;
\draw [color={rgb, 255:red, 74; green, 144; blue, 226 }  ,draw opacity=1 ]   (290,220) -- (300,210) ;
\draw [color={rgb, 255:red, 74; green, 144; blue, 226 }  ,draw opacity=1 ]   (180,220) -- (290,220) ;
\draw [color={rgb, 255:red, 74; green, 144; blue, 226 }  ,draw opacity=1 ]   (270,200) -- (280,190) ;
\draw [color={rgb, 255:red, 74; green, 144; blue, 226 }  ,draw opacity=1 ]   (210,230) -- (210,220) ;
\draw [color={rgb, 255:red, 74; green, 144; blue, 226 }  ,draw opacity=1 ]   (230,210) -- (230,200) ;
\draw [color={rgb, 255:red, 74; green, 144; blue, 226 }  ,draw opacity=1 ]   (290,230) -- (290,220) ;
\draw [color={rgb, 255:red, 74; green, 144; blue, 226 }  ,draw opacity=1 ]   (270,210) -- (270,200) ;
\draw [color={rgb, 255:red, 0; green, 0; blue, 0 }  ,draw opacity=1 ]   (360,220) -- (370,200) ;
\draw [color={rgb, 255:red, 0; green, 0; blue, 0 }  ,draw opacity=1 ]   (370,200) -- (370,190) ;
\draw [color={rgb, 255:red, 0; green, 0; blue, 0 }  ,draw opacity=1 ]   (370,200) -- (380,190) ;
\draw [color={rgb, 255:red, 0; green, 0; blue, 0 }  ,draw opacity=1 ]   (350,230) -- (360,220) ;
\draw [color={rgb, 255:red, 0; green, 0; blue, 0 }  ,draw opacity=1 ]   (360,230) -- (360,220) ;
\draw [color={rgb, 255:red, 0; green, 0; blue, 0 }  ,draw opacity=1 ]   (380,230) -- (460,190) ;
\draw [color={rgb, 255:red, 0; green, 0; blue, 0 }  ,draw opacity=1 ]   (460,190) -- (470,180) ;
\draw [color={rgb, 255:red, 0; green, 0; blue, 0 }  ,draw opacity=1 ]   (460,190) -- (470,190) ;
\draw [color={rgb, 255:red, 0; green, 0; blue, 0 }  ,draw opacity=1 ]   (370,230) -- (380,230) ;
\draw [color={rgb, 255:red, 0; green, 0; blue, 0 }  ,draw opacity=1 ]   (370,240) -- (380,230) ;
\draw [color={rgb, 255:red, 74; green, 144; blue, 226 }  ,draw opacity=1 ]   (360,200) -- (440,200) ;
\draw [color={rgb, 255:red, 74; green, 144; blue, 226 }  ,draw opacity=1 ]   (440,200) -- (450,190) ;
\draw [color={rgb, 255:red, 74; green, 144; blue, 226 }  ,draw opacity=1 ]   (440,210) -- (440,207.34) -- (440,200) ;
\draw [color={rgb, 255:red, 74; green, 144; blue, 226 }  ,draw opacity=1 ]   (380,240) -- (380,200) ;
\draw [color={rgb, 255:red, 74; green, 144; blue, 226 }  ,draw opacity=1 ]   (380,200) -- (390,190) ;
\draw [color={rgb, 255:red, 74; green, 144; blue, 226 }  ,draw opacity=1 ]   (400,220) -- (410,210) ;
\draw [color={rgb, 255:red, 74; green, 144; blue, 226 }  ,draw opacity=1 ]   (380,220) -- (390,210) ;
\draw [color={rgb, 255:red, 74; green, 144; blue, 226 }  ,draw opacity=1 ]   (400,230) -- (400,220) ;
\draw [color={rgb, 255:red, 74; green, 144; blue, 226 }  ,draw opacity=1 ]   (350,220) -- (400,220) ;
\draw [color={rgb, 255:red, 74; green, 144; blue, 226 }  ,draw opacity=1 ]   (270,110) -- (280,110) ;
\draw [color={rgb, 255:red, 74; green, 144; blue, 226 }  ,draw opacity=1 ]   (130,170) -- (130,160) ;

\draw (107,102) node [anchor=north west][inner sep=0.75pt]   [align=left] {(EEbII)};
\draw (225,82) node [anchor=north west][inner sep=0.75pt]   [align=left] {(EEbI)};
\draw (391,132) node [anchor=north west][inner sep=0.75pt]   [align=left] {(EEaIII)};
\draw (125,192) node [anchor=north west][inner sep=0.75pt]   [align=left] {(EEaI)};
\draw (221,232) node [anchor=north west][inner sep=0.75pt]   [align=left] {(EEaII)};
\draw (391,232) node [anchor=north west][inner sep=0.75pt]   [align=left] {(EEcI)};

\end{tikzpicture}

\end{center}

\caption{Local pictures of liftable tropical bitangent lines for (EEaI), (EEaII), (EEaIII), (EEbI), (EEbII), (EEcI).}\label{fig:EE}

\end{figure}
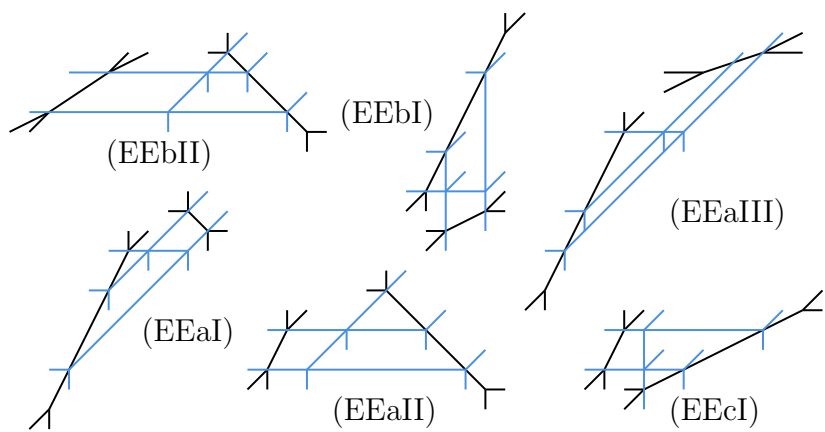

Finally, Figure \ref{fig-II} shows the dual motifs for the bitangent shapes (IIa), (IIb) and (IIc). For (IIa), there is also a local picture of the tropicalized quartic with the liftable tropical bitangent lines. The two left ones have lifting multiplicity one, the right one lifting multiplicity two. The local pictures for (IIb) and (IIc) are similar. For the two left tropical bitangents, the two tangency points tropicalize to the same point, namely a vertex of the tropicalized quartic, where the tropical line and the tropicalized quartic meet with intersection multiplicity four.

\begin{figure}
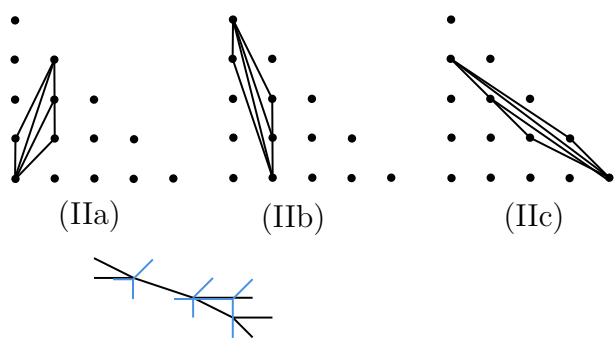

\begin{center}

\tikzset{every picture/.style={line width=0.75pt}} 



\end{center}

\caption{Dual Motifs for (IIa), (IIb) and (IIc). For (IIa), a local picture involving the three liftable tropical bitangent lines and their tropical tangency components.}\label{fig-II}
\end{figure}

\subsection[Details on Theorem \ref{thm-2H}  including case-by-case analysis ]{\for{toc}{Details on Theorem \ref{thm-2H}  including case-by-case analysis}\except{toc}{Details on Theorem \ref{thm-2H} and its proof including the case-by-case analysis of bitangent shapes}}\label{app:2H}

The following Theorem spells out the  details of the case-by-case analysis of bitangent shapes for Theorem \ref{thm-2H}:

\begin{theorem}\label{thm-2Hdetails}

If a tropical bitangent class is of shape (A), (B), (C), (D), (E), (F), (G), (H), (Na), (Oa), (Pa), (Qa), (Qc), (Ra), (Sa), (IIa), (IIb), (T), (Ua), (Uc), (Va), (W), (YcI), (YaII), (CCaII), or  (EE), 
then the total contribution of its $\Qtype$ is $2\mathbb{H}$.

\end{theorem}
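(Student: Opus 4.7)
The plan is to extend the approach used in Theorem~\ref{thm-2H} for shape (D) uniformly across the list, organizing the 25+ shapes into three natural groups and handling each by the same pairing principle: that the four $\Qtype$ contributions come in pairs $\langle a\rangle, \langle -a\rangle$ whose sum is $\mathbb{H}$, giving $2\mathbb{H}$ in total. When a lift is not defined over $K$, Remark~\ref{rem:degree2extension} and Example~\ref{ex:degree4extension} take over: a lifting pair living in a quadratic extension contributes $\mathbb{H}$ automatically, and a class whose four lifts live in a degree-four extension (obtained by adjoining two square roots) contributes $2\mathbb{H}$ outright. So the only real work concerns lifts defined over $K$, and these are the cases where Theorem~\ref{thm-Qtypetropical} gives explicit formulas via Lemma~\ref{lem-monomialout}.

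First I would dispose of the zero-dimensional shapes (A), (B), (C). The classification in Appendix~\ref{app:classification} together with Theorem~\ref{thm-4lifts} shows that their four lifts generically live in a degree-four extension generated by two square roots, so Example~\ref{ex:degree4extension} applies directly, yielding $2\mathbb{H}$ without any further calculation. Next I would handle the one-dimensional shapes (D), (E), (F), (G), (H), each of which has two liftable tropical bitangents, each with lifting multiplicity $2$. The horizontal/vertical orientations (Da), (Dc), (Ea), (Ec), (Fa), (Fc), (Ga), (Gc), (Ha), (Hb) are covered by Lemma~\ref{lem-edgeonedgetangency}: within each liftable bitangent the two algebraic lifts differ only in their tangency point on the overlap, so their contributions are negatives of each other. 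The diagonal variants (Db), (Eb), (Fb), (Gb) require the pairing from Lemma~\ref{lem-edgeonedgediagonal} together with the explicit computation carried out in the proof of Theorem~\ref{thm-2H} for (Db), which shows that the tangency component on the other side of the bitangent class produces the negation across the two liftable bitangents.

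For the two-dimensional shapes I would exploit Lemma~\ref{lem-movehorizontal}: whenever the horizontal or vertical ray of a tropical bitangent can glide along a bounded edge between two liftable positions, the two endpoint contributions differ by a sign (up to squares). This immediately handles (Na), (Oa), (Pa), (Qa), (Qc), (Ra), (Sa), (Ua), (Uc), (Va) and the compact (T)--shapes once one combines the glide with the invariant tangency component, and, combined with Lemma~\ref{lem-Hb}, covers the mixed-slope configurations where the moving component rides along a $(1,-1)$--edge while meeting a diagonal ray. Shape (W) is the parallelogram case, where the four liftable vertices split into two pairs related by horizontal/vertical glides along two opposite edges of the parallelogram, and each pair yields $\mathbb{H}$ by Lemma~\ref{lem-movehorizontal}. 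Shapes (YcI), (YaII), (CCaII), (EE) are all obtained from (W) by truncating one or two vertices with an edge of $\Trop(C)$; the pairing survives because the truncation only affects one vertex of each glide pair, and the explicit solutions from Section~\ref{subsec-locallift} continue to relate the two contributions by a sign. Shapes (IIa), (IIb) are the special ``three-segment'' configurations from Proposition~\ref{prop:vertexEll}; here one bitangent has lifting multiplicity $2$ producing an $\mathbb{H}$ by Example~\ref{ex:degree2trace} or by direct sign-reversal, and the other two single lifts produce a second $\mathbb{H}$ by the analogue of Lemma~\ref{lem-edgeonedgetangency} applied at the shared vertex of the triangle dual to the central intersection.

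The main obstacle is bookkeeping. For each shape one must identify which tropical tangency component is common to which pair of lifts, write $\ini_p(Q)$ for each relevant point, and verify that after plugging the local lifting solutions from Section~\ref{subsec-locallift} into the formulas of Theorem~\ref{thm-Qtypetropical} the resulting monomials in the $a_{ij}$ are equal up to sign and square. The four auxiliary lemmas \ref{lem-edgeonedgetangency}, \ref{lem-edgeonedgediagonal}, \ref{lem-movehorizontal}, \ref{lem-Hb} are designed exactly to package this verification, so that for each shape on the list one need only recognize which lemma applies to which pair of tangency components; the remaining shapes in Appendix~\ref{app-tableQtypes} are precisely those for which no such pairing exists and a genuine $\langle a_{ij}\cdots\rangle$ summand survives in the final count.
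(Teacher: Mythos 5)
Your overall strategy — pair the four lifts via Lemmas~\ref{lem-edgeonedgetangency}, \ref{lem-edgeonedgediagonal}, \ref{lem-movehorizontal}, \ref{lem-Hb} when the lifts are $K$-rational, and fall back on Remark~\ref{rem:degree2extension} and Example~\ref{ex:degree4extension} when they are not — is exactly the one the paper uses. However, your treatment of the zero-dimensional shapes (A), (B), (C) has a genuine gap. You claim their four lifts ``generically live in a degree-four extension generated by two square roots, so Example~\ref{ex:degree4extension} applies directly \ldots{} without any further calculation.'' But by Theorem~\ref{thm-allornone} either all four lifts are defined over $K$ or none are, and the first alternative occurs whenever the relevant radicands happen to be squares in $k$; the theorem must hold in that case too. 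When all four lifts are $K$-rational, Example~\ref{ex:degree4extension} says nothing, and one must still run the pairing argument: each of the two tangency components sits in the interior of an edge, at least one of which is horizontal or vertical, so Lemma~\ref{lem-edgeonedgetangency} pairs the four sign choices into two pairs contributing $\langle a\rangle+\langle -a\rangle=\mathbb{H}$ each. This is precisely how the paper disposes of (A), (B), (C), and it cannot be skipped.

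A second, smaller issue is that you fold several shapes into lemma-only groups when they in fact require a direct computation beyond the four lemmas. The paper handles (Qc), (IIa), (IIb), and (Uc) by explicit computation (e.g.\ (Qc) yields $\langle 1\rangle+\langle 1\rangle+\langle -1\rangle+\langle -1\rangle$ directly, not via a $\langle a\rangle/\langle -a\rangle$ pairing), treats (Fc) and (Hb) with Lemma~\ref{lem-edgeonedgediagonal} and Lemma~\ref{lem-Hb} rather than Lemma~\ref{lem-edgeonedgetangency}, and for (YcI), (YaII), (CCaII), (EEaI), (EEaIII) pairs only two of the four lifts with Lemma~\ref{lem-movehorizontal}, verifying the remaining pair by a computation analogous to (Fc). Your claim that for (IIa), (IIb) the two multiplicity-one lifts ``produce a second $\mathbb{H}$ by the analogue of Lemma~\ref{lem-edgeonedgetangency}'' is an assertion the paper does not make and which needs independent verification, since those two lifts are not the two branches of a single square root. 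None of this changes the architecture of the proof, but as written the proposal would not close without filling in these computations and correcting the (A), (B), (C) case.
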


\begin{proof}
The strategy of the proof is to use Lemmas \ref{lem-edgeonedgetangency}, \ref{lem-movehorizontal} and \ref{lem-Hb} to argue that the four lifts come in pairs contribution $\langle \pm a\rangle$ for some $a$, which sums up to $2\mathbb{H}$.
This works if the lifts exist in the field, else we use Remark \ref{rem:degree2extension} and Example \ref{ex:degree4extension} to show that nevertheless, we obtain $2\mathbb{H}$ as total contribution.

If it is of shape (A) or (B), there are two tropical tangency components in the interior of edges, of which at least one has to be vertical or horizontal. Using Lemma \ref{lem-edgeonedgetangency}, we can pair up the lifts in such a way that in total we obtain  $2\mathbb{H}$.
The argument is analogous for shape (C), as locally the tropical tangency components are in the interior of edges here, too.
%
Shape (D) was already discussed in detail in the proof of Theorem \ref{thm-2H}.

For (E),(G), one of the tropical tangency components has to be on the horizontal or vertical ray, and we can use Lemma \ref{lem-edgeonedgetangency} resp.\ Lemma \ref{lem-movehorizontal}, depending on which tropical tangency component it is, to pair up.

For (Fa) and (Fb), we have a tropical tangency component in the interior of a horizontal or vertical edge, and we can use Lemma \ref{lem-edgeonedgetangency}  to pair up.
For (Fc), we have two liftable tropical bitangents. Both have a segment of intersection with $\Trop(Q)$ on the diagonal ray, so by Lemma \ref{lem-edgeonedgediagonal}, we get the same contribution. The second tropical tangency component differs: for the first, it is, as in (Ec), an intersection with the horizontal ray, for the second, it is an intersection with the vertex. A computation shows that the contributions are negative of each other, so we can pair up and obtain $2\mathbb{H}$.

For (Ha), one tropical tangency component is in the interior of a horizontal edge. For (Hb), we can pair up using Lemma \ref{lem-Hb} and Lemma \ref{lem-edgeonedgediagonal}.

For (Na), (Oa), (Pa), (Ra), (Sa) we can pair up using the tropical tangency components on the horizontal resp.\ vertical ray.
For (Qa), for one liftable point we can use the tropical tangency component on the horizontal edge to pair up, for the other, the tropical tangency component at the vertex of the tropical line as in Lemma \ref{lem-Hb}.
For (Qc), a computation shows that we obtain $\langle 1 \rangle+\langle 1\rangle+\langle -1\rangle+\langle -1\rangle= 2\mathbb{H}$.
For (IIa) and (IIb), we can use a computation to show that we obtain $2\mathbb{H}$.
For (Ta), Lemma \ref{lem-Hb} shows that we can pair up.
For (Tb), we can use Lemma \ref{lem-movehorizontal} to pair up. 
For (Ua), we can use Lemma \ref{lem-Hb} and Lemma \ref{lem-edgeonedgetangency} to pair up.

For (Uc), a computation shows that the lifts of the right tropical tangency components yield a contribution of $+2$ for the lower liftable tropical bitangent, and $-2$ for the upper liftable tropical bitangent. Combining with Lemma \ref{lem-movehorizontal}, we can pair up.

For (Va), we can use the tropical tangency components on the horizontal and vertical edge to pair up, using Lemma \ref{lem-edgeonedgetangency}.
For (W), one of the tropical tangency component has to be on a horizontal or vertical ray, and thus we can use Lemma \ref{lem-movehorizontal} to pair up. 
For (EEaII), (EEbI), (EEbII), (EEcI, and (EEcII)  we can use Lemma \ref{lem-movehorizontal} to pair up. For (YcI), (YaII), (CCaII), (EEaI) and (EEaIII), we can pair up two liftable tropical bitangents using Lemma \ref{lem-movehorizontal}. For the remaining two, a computation shows that we can again lift up, similar to (Fc).

\end{proof}

Following this classification we can prove the version of this Theorem stated in the introduction:

\begin{proof}[Proof of Theorem \ref{thm:2Hintro}]

The compact bitangent shapes are (A), (B), (C), (D), (E), (F), (G) and (W).
These all yield  $2\mathbb{H}$ as $\Qtype$ by Theorem \ref{thm-2Hdetails}.
\end{proof}

\subsection{Exceptional $\GW$-multiplicities}\label{app-tableQtypes}\label{subsec-Qtypetable}

Here we discuss the $\GW$-multiplicities of the special bitangent shapes which do not yield $2\mathbb{H}$ by Theorem \ref{thm-2H} resp.\ \ref{thm-2Hdetails}.

The shapes (Nb), (Ob), (Oc), (Pb), (Qb), (Rb), (Rc), (Sb), (Ub), (Vb), (IIc), (YbI), (YbII) and (CCb) all have similar behaviour of the tropical tangency components as described in the classification in Appendix \ref{app:classification}.
The $\GW$-multiplicity of a bitangent class of one of these shapes equals
$$ \langle	1 \rangle +\langle 1 \rangle+\mathbb{H}.$$

The following table shows the $\GW$-multiplicities of the remaining exceptional cases.

\hspace{0.2cm}

\begin{tabular}{|c|c|c|c|c|}
\hline
bitangent & &&&\\ 
\hline

(YaI) & $\langle a_{20}a_{31}a_{30}a_{03}\rangle$ &$\langle  2\rangle$ & $\mathbb{H}$ &  \\
(YaIII)  & $\langle -a_{01}a_{20}a_{31}a_{30} \rangle$ &$\langle 2 \rangle$ &  $\mathbb{H}$&  \\
(BBa)  & $\langle  -a_{01}a_{20}a_{31}a_{30} \rangle$ &$\langle 2 \rangle$ & $\mathbb{H}$ &  \\
(BBb)  & $\langle 1 \rangle $ &$\langle  1 \rangle$ & $\langle  1 \rangle$ & $\langle 1 \rangle$ \\
(CCaI)  & $\langle -2a_{20}a_{31}a_{30}a_{01} \rangle$  &$\langle 2 \rangle $ & $\mathbb{H}$ & \\

\hline
\end{tabular}

\end{document}